\tikzstyle{v} = [circle, draw, inner sep=1.5pt, minimum size=3pt, fill=black]
\theoremstyle{plain}
\newtheorem{theorem}{Theorem}[section]
\newtheorem{corollary}[theorem]{Corollary}
\newtheorem{lemma}[theorem]{Lemma}
\newtheorem{claim}[theorem]{Claim}
\newtheorem{proposition}[theorem]{Proposition}
\newtheorem{question}[theorem]{Question}
\theoremstyle{definition}
\newtheorem{definition}[theorem]{Definition}
\newtheorem{notation}[theorem]{Notation}
\newtheorem{def-prop}[theorem]{Definition \& Proposition}
\newtheorem{assumption}[theorem]{Assumption \& Notation}
\newtheorem{example}[theorem]{Example}
\newtheorem{conjecture}[theorem]{Conjecture}
\theoremstyle{remark}
\newtheorem{remark}[theorem]{Remark}
\newcommand{\A}{\mathcal{A}}
\newcommand{\B}{\mathcal{B}}
\newcommand{\R}{\mathbb{R}}
\newcommand{\Qc}{\mathcal{Q}}
\newcommand{\scR}{\mathcal{R}}
\newcommand{\Pc}{\mathcal{P}}
\newcommand{\Z}{\mathbb{Z}}
\newcommand{\V}{{\mathcal{V}}}
\newcommand{\E}{{\mathcal{E}}}
\newcommand{\I}{{\mathcal{I}}}
\newcommand{\LRV}{\mathsf{LRV}}
\newcommand{\RV}{\mathsf{RV}}
\newcommand{\MG}{\mathsf{MG}}
\newcommand{\MCG}{\mathsf{MCG}}
\newcommand{\mcK}{\mathcal{K}}
\newcommand{\tbf}{\textbf}
\newcommand{\rk}{\operatorname{rk}}
\renewcommand{\part}{\operatorname{par}}
\newcommand{\nbd}{\operatorname{Nbd}}
\newcommand{\hooklongrightarrow}{\lhook\joinrel\longrightarrow}
\newcolumntype{K}[1]{>{\centering\arraybackslash}p{#1}}
\begin{document}

\title[Vines and MAT-labeled graphs]{Vines and MAT-labeled graphs}

\begin{abstract}
The present paper explores a connection between two concepts arising from different fields of mathematics. The first concept, called vine, is a graphical model for dependent random variables. This concept first appeared in a work of Joe (1994), and the formal definition was given later by Cooke (1997). Vines have nowadays become an active research area whose applications can be found in probability theory and uncertainty analysis. The second concept, called MAT-freeness, is a combinatorial property in the theory of  freeness of logarithmic derivation modules of hyperplane arrangements. This concept  was first studied by Abe-Barakat-Cuntz-Hoge-Terao (2016), and soon afterwards  investigated further  by Cuntz-M{\"u}cksch (2020). 

In the particular case of graphic arrangements, the last two authors (2023) recently proved that the MAT-freeness is completely characterized by the existence of certain edge-labeled graphs, called MAT-labeled graphs. In this paper, we first introduce a poset characterization of a vine, the so-called vine. Then we show that, interestingly, there exists an explicit equivalence between the categories of locally regular vines and MAT-labeled graphs. In particular, we obtain an equivalence between the categories of regular vines and MAT-labeled complete graphs. 

Several applications will be mentioned to illustrate the interaction between the two concepts. Notably, we give an affirmative answer to a question of Cuntz-M{\"u}cksch that MAT-freeness can be characterized by a generalization of the root poset in the case of graphic arrangements.
 \end{abstract}

\author{Hung Manh Tran}
\address{Hung Manh Tran, Department of Mathematics, National University of Singapore - 10, Lower Kent Ridge Road -- Singapore 119076}
\email{e0511873@u.nus.edu}

\author{Tan Nhat Tran}
\address{Tan Nhat Tran, Institut f\"ur Algebra, Zahlentheorie und Diskrete Mathematik, Fakult\"at f\"ur Mathematik und Physik, Leibniz Universit\"at Hannover, Welfengarten 1, D-30167 Hannover, Germany.}
\email{tan.tran@math.uni-hannover.de}

\author{Shuhei Tsujie}
\address{Shuhei Tsujie, Department of Mathematics, Hokkaido University of Education, Asahikawa, Hokkaido 070-8621, Japan}
\email{tsujie.shuhei@a.hokkyodai.ac.jp}

\subjclass[2020]{Primary 06A07, Secondary 52C35}
\keywords{MAT-labeling, graph, poset, vine copula,  hyperplane arrangement, MAT-freeness}

\date{\today}
\maketitle

\tableofcontents

\section{Introduction}
\label{sec:intro}

\subsection{Motivation}
\label{subsec:mov}
The starting point of our paper is a question of Cuntz-M{\"u}cksch  \cite{CM20} (Question \ref{ques:CM}) in the theory of \emph{free hyperplane arrangements}. 

Let $V$ be a finite dimensional vector space. 
A \tbf{hyperplane} in $V$ is a $1$-codimensional linear subspace of $V$. 
Let $ \{x_{1}, \dots, x_{\ell}\} $ be a basis for the dual space $V^{\ast} $. 
Any hyperplane  in $V$ can be described by a linear equation of the form $a_1x_1+\cdots+a_\ell x_\ell=0$ where at least one of the $a_{i}$'s is non-zero. 

A \tbf{hyperplane arrangement} $\A$ is a finite set of  hyperplanes in $V$.
The  \tbf{intersection lattice} of   $\A$ is the set of all intersections of hyperplanes in $\A$, which is often referred to as the   \tbf{combinatorics} of $\A$. 
An arrangement is said to be  \tbf{free} if its  \emph{module of logarithmic derivations} is a free module.  
For basic definitions and properties of free arrangements, we refer the interested reader to \cite{T80, OT92}. 
Freeness is an algebraic property of hyperplane arrangements which has been a major topic of research since the 1970s. 
A central question in the theory is to study the freeness of an arrangement by combinatorial structures, especially by the intersection lattice of the arrangement. 

Among others, \emph{MAT-freeness} is an important concept which was first used by Abe-Barakat-Cuntz-Hoge-Terao \cite{ABCHT16} to settle the conjecture of Sommers-Tymoczko \cite{ST06} on the freeness of \emph{ideal subarrangements of Weyl arrangements}. 
This concept is formally defined later by Cuntz-M{\"u}cksch  \cite{CM20} and we will use their definition throughout.
For a hyperplane $H \in \A$, define the \textbf{restriction} ${\A}^{H}$ of ${\A}$ to $H$ by 
$${\A}^{H}:= \{ K \cap H \mid K \in \A \setminus\{H\}\}.$$
\begin{definition}[{MAT-partition and MAT-free arrangement \cite{CM20}}]
\label{def:MAT-partition-free}
Let $\A$ be a nonempty arrangement. 
A partition (disjoint union of nonempty subsets) $\pi= (\pi_{1}, \dots, \pi_{n}) $ of $\A$ is called an \textbf{MAT-partition} if  the following three conditions hold for every $1 \le k \le n $. 
\begin{enumerate}[(MP1)]
\item\label{definition MAT partition 1} The hyperplanes in $\pi_k$ are linearly independent.
\item\label{definition MAT partition 2} There does not exist $ H^{\prime} \in \mathcal{A}_{k-1} $ such that $ \bigcap_{H \in \pi_{k}}H \subseteq H^{\prime} $, where $ \mathcal{A}_{k-1} := \pi_{1} \sqcup \dots \sqcup \pi_{k-1} $ (disjoint union) and $ \A_{0} := \emptyset$ is the empty arrangement. 
\item\label{definition MAT partition 3} For each $ H \in \pi_{k} $, $ |\mathcal{A}_{k-1}| - |(\mathcal{A}_{k-1} \cup \{H\})^{H}| = k-1 $. 
\end{enumerate}
An arrangement  is called \textbf{MAT-free} if it is empty or  admits an MAT-partition. 
\end{definition}

As the name suggests, any MAT-free arrangement is a  free arrangement. 
This follows from the remarkable Multiple Addition Theorem by Abe-Barakat-Cuntz-Hoge-Terao \cite[Theorem 3.1]{ABCHT16} (justifying the abbreviation MAT). 
MAT-freeness is a helpful combinatorial tool (as it depends only on the intersection lattice) to examine the freeness of arrangements. 
One of its most famous applications we mentioned earlier is a proof that the ideal subarrangements of Weyl arrangements are free. 
The MAT-freeness has received increasing attention in recent years, see  \cite{AT16, AT19, MR21, CK22} for some other applications.

Let  $V = \Bbb R^\ell$ with the standard inner product $(\cdot,\cdot)$.
 Let $\Phi$ be an irreducible (crystallographic) root system in $V$, with a fixed positive system $\Phi^+ \subseteq \Phi$ and the associated set of simple roots $\Delta := \{\alpha_1,\ldots,\alpha_\ell \}$. 
 For  $\alpha \in  \Phi$, define $H_{\alpha} :=\{x\in V \mid(\alpha,x)=0\}.$ 
 For $\Sigma\subseteq\Phi^+$, the \textbf{Weyl subarrangement} $\A_{\Sigma}$ is  defined by $\A_{\Sigma}:= \{H_{\alpha} \mid \alpha\in\Sigma\}$. 
In particular, $\A_{\Phi^+}$ is called the \textbf{Weyl arrangement}. 

We can make $\Phi^+$ into a \emph{poset} (partially ordered set) by defining a partial order $\le$ on $\Phi^+$ as follows: $\beta_1 \le \beta_2$ if $\beta_2-\beta_1 \in\sum_{i=1}^\ell \Z_{\ge 0}\alpha_i$. 
The poset $(\Phi^+, \le)$ is called the \tbf{root poset} of $\Phi$. 
For an \emph{ideal} $I$  (Definition \ref{def:ideal}) of the root poset $\Phi^+$, the corresponding Weyl subarrangement $\A_{I}$ is called the \textbf{ideal subarrangement}.

\begin{theorem}[{\cite[Theorem 1.1]{ABCHT16}}]
\label{thm:ideal-free}
Any ideal subarrangement $\A_{I}$ is MAT-free, hence free.
\end{theorem}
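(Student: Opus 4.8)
The plan is to exhibit an explicit MAT-partition of $\A_I$ and then invoke the implication ``MAT-free $\Rightarrow$ free'' recorded just after Definition \ref{def:MAT-partition-free}: the Multiple Addition Theorem already delivers freeness once such a partition exists, so all the work lies in producing the partition and verifying (MP1)--(MP3). I partition the roots of $I$ by height. Writing $\operatorname{ht}\!\left(\sum_i c_i\alpha_i\right):=\sum_i c_i$ and $h:=\max\{\operatorname{ht}(\alpha):\alpha\in I\}$, I set $\pi_k:=\{H_\alpha:\alpha\in I,\ \operatorname{ht}(\alpha)=k\}$ for $1\le k\le h$, so that $\A_{k-1}=\{H_\alpha:\alpha\in I,\ \operatorname{ht}(\alpha)<k\}$. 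Since $I$ is down-closed, each $\A_{k-1}$ is itself an ideal subarrangement; this is exactly the structural feature that makes the delicate third axiom work out. The sizes $(|\pi_1|,\dots,|\pi_h|)$ then recover the height distribution of $I$, whose dual partition is the list of ideal exponents predicted by Sommers--Tymoczko \cite{ST06}, consistent with the exponents produced by the Multiple Addition Theorem.

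Axioms (MP1) and (MP2) are the ``independence-type'' conditions and I would handle them together through structural facts about equal-height roots. For (MP1) I must show that positive roots sharing a common height are linearly independent, which I would record as a lemma on root posets (it is transparent in the coordinate models of the classical types). Granting (MP1), the subspace $X_k:=\bigcap_{\alpha\in\pi_k}H_\alpha$ has codimension $|\pi_k|$, and $H_\beta\supseteq X_k$ for some $\beta\in\A_{k-1}$ would force $\beta$ to lie in the span of the height-$k$ roots of $I$; thus (MP2) amounts to showing that no root of height $<k$ lies in that span, again a statement I would extract from the root-poset structure.

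The main obstacle is (MP3), which for a fixed $H_\alpha\in\pi_k$ demands $|\A_{k-1}|-|(\A_{k-1}\cup\{H_\alpha\})^{H_\alpha}|=k-1$. I would first read the left-hand side as a collision count $\sum_\Psi(m_\Psi-1)$, where $\Psi$ runs over the rank-two root subsystems containing $\alpha$ and $m_\Psi$ counts the hyperplanes $H_\beta\in\A_{k-1}$ with $\beta\in\Psi$, using that $H_\beta$ and $H_{\beta'}$ meet $H_\alpha$ in the same subspace exactly when $\beta,\beta'$ lie in a common rank-two subsystem through $\alpha$. In the simply-laced case every contributing $\Psi$ is an $A_2$ of the form $\{\beta,\gamma,\alpha\}$ with $\beta+\gamma=\alpha$, so each contributes a single collision and the count reduces to the number of decompositions $\alpha=\beta+\gamma$ into two positive roots; here down-closedness of $I$ is essential, since it guarantees that both $\beta,\gamma<\alpha$ already belong to $I$ and hence to $\A_{k-1}$, so every decomposition is genuinely realized. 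The genuinely technical part is then to prove that this tally equals $\operatorname{ht}(\alpha)-1$, and to redo the local bookkeeping for the multiply-laced subsystems $B_2$ and $G_2$, where a single $\Psi$ can contain several roots of $\A_{k-1}$ and contributes more than one collision; this uniform root-string analysis is where I expect the real effort to concentrate. Once (MP1)--(MP3) hold for every $k$, the tuple $(\pi_1,\dots,\pi_h)$ is an MAT-partition, so $\A_I$ is MAT-free and therefore free.
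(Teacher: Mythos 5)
First, a remark on the comparison itself: the paper does not prove Theorem \ref{thm:ideal-free} at all --- it is quoted from \cite[Theorem 1.1]{ABCHT16} and used as a black box --- so there is no internal proof to measure your attempt against, only the original one. Your strategy, namely partitioning $\A_I$ by root height via $\pi_k=\{H_\alpha \mid \alpha \in I,\ \operatorname{ht}(\alpha)=k\}$ and verifying (MP1)--(MP3) of Definition \ref{def:MAT-partition-free}, is exactly the strategy of that original proof, and your identification of where down-closedness of $I$ enters (every decomposition $\alpha=\beta+\gamma$ into positive roots has $\beta,\gamma<\alpha$, hence $\beta,\gamma\in I$ and $H_\beta,H_\gamma\in\A_{k-1}$) is the correct key observation.

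As written, however, this is an outline rather than a proof: each axiom is reduced to a root-theoretic statement that is then deferred. Concretely: (i) the linear independence of the positive roots of a fixed height, which you need for (MP1), is a genuine lemma requiring a uniform argument, not just an inspection of the classical coordinate models; (ii) your reduction of (MP2) to ``no root of height $<k$ in $I$ lies in the span of the height-$k$ roots of $I$'' is correct linear algebra, but the resulting span statement is itself the content and is nowhere established; and (iii) the heart of the matter --- that the collision count $|\A_{k-1}|-|(\A_{k-1}\cup\{H_\alpha\})^{H_\alpha}|$ equals $\operatorname{ht}(\alpha)-1$ --- is precisely the main technical theorem of \cite{ABCHT16}, uniform over all types including the multiply-laced root-string corrections you allude to, and you explicitly leave it unproven. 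You also use, without stating it, that $H_\beta\cap H_\alpha=H_\gamma\cap H_\alpha$ exactly when $\beta,\gamma,\alpha$ lie in a common rank-two subsystem; this is standard but load-bearing for your collision formula. So the architecture is right and faithful to the cited source, but none of the substantive steps is actually carried out, and each of (i)--(iii) is a real gap rather than a routine verification.
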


The ideal subarrangements form a significant subclass of MAT-free arrangements.
However, there are many MAT-free arrangements (or MAT-partitions of a given MAT-free arrangement) that do not arise from ideal subarrangements (Example \ref{ex:C-vine}). 
One may wonder if the hyperplanes in an arbitrary MAT-free arrangement satisfy some poset structure similar to the root poset? 
This question was asked by Cuntz-M{\"u}cksch  \cite{CM20} and is the main motivation of our work.
\begin{question}[{\cite[Problem 47]{CM20}}]
\label{ques:CM}
Given an MAT-free arrangement $\A$, can we characterize all possible MAT-partitions of $\A$ by a poset structure generalizing the classical root poset? 
\end{question}

Cuntz-M{\"u}cksch's question is difficult in general as the number of different MAT-partitions of a given MAT-free arrangement  might be very large. 
Also, the definition of an MAT-partition itself does not reveal a natural choice of the desirable partial order. 
In the present paper, we pursue this question along \emph{graphic arrangements}, a well-behaved class of arrangements in which both freeness and MAT-freeness are completely characterized by combinatorial properties of graphs.

Let $G$ be a simple graph (i.e.~no loops and no multiple edges) with vertex (or node) set $N_G = \{v_{1}, \dots, v_{\ell}\}$ and edge set $E_G$.
The  \textbf{graphic arrangement} $\A_G$ is an arrangement in an $\ell$-dimensional vector space $V$ defined by
$$\A_G:= \{x_i - x_j=0 \mid \{v_i,v_j\} \in E_G \}.$$

A graph is  \textbf{chordal} if it does not  contain an induced \emph{cycle} of length greater than three. 
A chordal graph  is  \textbf{strongly chordal} if it  does not  contain a \emph{sun graph} as an induced subgraph (Definition \ref{def:strongly-chordal}). 
\begin{theorem}[\cite{St72}, {\cite[Theorem 3.3]{ER94}}]
\label{thm:free-chordal}
The graphic arrangement $\A_G$ is free if and only if $G$ is chordal.
\end{theorem}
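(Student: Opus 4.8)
The plan is to prove the two implications by quite different methods.

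For the direction $(\Leftarrow)$, suppose $G$ is chordal and argue by induction on the number of vertices $\ell$, the base case being the empty arrangement. For the inductive step I would use the classical fact (Dirac) that every chordal graph has a \emph{simplicial} vertex $v$, i.e.\ one whose neighborhood $N(v)$ is a clique, say of size $d$; moreover $G-v$ is again chordal, so $\A_{G-v}$ is free by induction. Writing $\R^{\ell}=\R^{N_G\setminus\{v\}}\times\R$ with the last factor carrying the coordinate $x_v$, the arrangement $\A_{G-v}$ extends to a free arrangement in $\R^{\ell}$ by adjoining the constant derivation $\partial_{x_v}$, and $\A_G$ is recovered from it by inserting the $d$ hyperplanes $\{x_v=x_w\mid w\in N(v)\}$. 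I would add these one at a time and apply Terao's Addition Theorem at each step: the restriction of the partially built arrangement to a newly added hyperplane $\{x_v=x_w\}$ is the graphic arrangement of the graph obtained by contracting the edge $vw$, which, since $N(v)$ is a clique, is again chordal on fewer vertices and hence free by induction, so that matching exponents across the triple $(\A,\A',\A'')$ propagates freeness. Equivalently, a perfect elimination ordering exhibits the bond lattice $L(\A_G)$ as supersolvable, and supersolvable arrangements are free.

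For the direction $(\Rightarrow)$, I would argue by contraposition: assume $G$ is not chordal, so it contains an induced cycle $C_k$ with $k\ge 4$. The key point is that the arrangement of an induced connected subgraph occurs as a \emph{localization} of $\A_G$. Identifying $L(\A_G)$ with the lattice of partitions of $N_G$ into connected blocks, let $X$ be the flat whose single nontrivial block is the (connected) vertex set $S$ of $C_k$. Then the localization $(\A_G)_X=\{H\in\A_G\mid H\supseteq X\}$ consists precisely of the hyperplanes $\{x_i=x_j\}$ with $i,j\in S$, that is, of the edges of $G[S]=C_k$; thus $(\A_G)_X$ is the arrangement $\A_{C_k}$ together with an empty arrangement in the remaining coordinates, and is free if and only if $\A_{C_k}$ is. Since every localization of a free arrangement is free, freeness of $\A_G$ would force $\A_{C_k}$ to be free.

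It then remains to rule out freeness of $\A_{C_k}$ for $k\ge 4$, where Terao's Factorization Theorem enters: were $\A_{C_k}$ free with exponents $e_1,\dots,e_k$, its characteristic polynomial would split as $\prod_{i=1}^{k}(t-e_i)$ with nonnegative integer $e_i$. But the characteristic polynomial of $\A_{C_k}$ is the chromatic polynomial of the cycle,
\[
(t-1)^{k}+(-1)^{k}(t-1)=(t-1)\bigl((t-1)^{k-1}+(-1)^{k}\bigr),
\]
whose nonzero roots are the $(k-1)$-st roots of a number of modulus one; for $k-1\ge 3$ not all of these lie on the real axis, so the polynomial does not split over $\R$. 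This contradiction finishes the contrapositive.

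The main obstacle I anticipate is the inductive step in $(\Leftarrow)$: executing the Addition Theorem requires tracking the ambient dimension carefully as vertices are adjoined and verifying the exponent-matching hypothesis at each single-hyperplane addition. The cleanest way to organize this bookkeeping is to route it through supersolvability of the bond lattice, handling all $d$ edges at the simplicial vertex at once by a modular-coatom argument rather than one hyperplane at a time.
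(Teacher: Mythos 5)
The paper does not prove this statement; it is quoted as a known result of Stanley and Edelman--Reiner, so there is no in-paper argument to compare against. Your proposal is correct and is essentially the standard proof from those sources: the ``if'' direction via a simplicial vertex and addition--deletion (equivalently, supersolvability of the bond lattice), and the ``only if'' direction by localizing at the flat of an induced $C_k$ ($k\ge 4$) and observing that the chromatic polynomial $(t-1)\bigl((t-1)^{k-1}+(-1)^k\bigr)$ has non-real roots, contradicting Terao's Factorization Theorem. The only step needing care is the one you flag: the restriction of the partially built arrangement to a new hyperplane $x_v=x_w$ equals $\A_{G-v}$ precisely because $N(v)$ is a clique, so the contracted edges are already present and the exponent bookkeeping $(\exp(\A_{G-v}),k)\mapsto(\exp(\A_{G-v}),k+1)$ goes through.
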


 \begin{theorem}[{\cite[Theorem 2.10]{TT23}}]
\label{thm:MAT=SC}
The graphic arrangement $\A_G$ is MAT-free if and only if $G$ is strongly chordal.
\end{theorem}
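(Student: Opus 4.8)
The plan is to prove both directions by connecting the MAT-partition structure of $\A_G$ to the combinatorial structure of $G$, using the bridge between MAT-freeness and chordality-type conditions. The key observation is that for a graphic arrangement, the relevant intersection-lattice data in Definition \ref{def:MAT-partition-free} translate into purely graph-theoretic statements: a set of hyperplanes $\{x_i - x_j = 0\}$ is linearly independent (MP\ref{definition MAT partition 1}) exactly when the corresponding edges form a forest, and the restriction/intersection conditions (MP\ref{definition MAT partition 2}) and (MP\ref{definition MAT partition 3}) encode how adding an edge interacts with the connected components and cliques built up so far. So the first step is to set up this dictionary carefully, rewriting each of the three MAT-partition conditions in terms of edges, forests, and neighborhoods in the graph.

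For the forward direction (MAT-free $\Rightarrow$ strongly chordal), I would argue contrapositively: assuming $G$ is not strongly chordal, it either fails to be chordal or contains a sun as an induced subgraph (Definition \ref{def:strongly-chordal}). In the non-chordal case, Theorem \ref{thm:free-chordal} already tells us $\A_G$ is not even free, so it cannot be MAT-free. The substantive case is when $G$ is chordal but contains an induced sun; here I would show that the presence of a sun obstructs the existence of any MAT-partition. The natural approach is to use the characterization of strongly chordal graphs via \emph{strong perfect elimination orderings} (equivalently, the existence of a \emph{simple vertex} at each stage): an MAT-partition, read through the dictionary above, should force a vertex ordering whose elimination structure is incompatible with a sun. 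Concretely, condition (MP\ref{definition MAT partition 3}) with the count $k-1$ governs how many previously added neighbors each new edge can have, and I expect this to encode precisely the nested-neighborhood condition that suns violate.

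For the reverse direction (strongly chordal $\Rightarrow$ MAT-free), the plan is constructive. Strongly chordal graphs admit a strong elimination ordering of the vertices, and I would use such an ordering to explicitly build an MAT-partition of $\A_G$. The idea is to process vertices (or edges) according to the strong elimination ordering and group the hyperplanes into blocks $\pi_1, \dots, \pi_n$ so that at each stage the newly added edges attach to an already-completed clique; one then verifies directly that the linear independence (MP\ref{definition MAT partition 1}), the no-containment condition (MP\ref{definition MAT partition 2}), and the exponent-count condition (MP\ref{definition MAT partition 3}) all hold. The strongness of the elimination ordering (the nested-neighborhood property of simple vertices) is exactly what guarantees that the $k-1$ count in (MP\ref{definition MAT partition 3}) comes out correctly, rather than merely chordality, which only yields freeness.

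The main obstacle I anticipate is the bookkeeping in condition (MP\ref{definition MAT partition 3}): verifying the precise equality $|\A_{k-1}| - |(\A_{k-1} \cup \{H\})^H| = k-1$ requires understanding how the restriction $(\A_{k-1}\cup\{H\})^H$ collapses hyperplanes that become equal after intersecting with $H$, which graph-theoretically means counting how edges merge when two endpoints are identified. Getting this count to equal exactly $k-1$ for \emph{every} hyperplane in the block, simultaneously, is the delicate point, and it is precisely where the full strength of strong chordality (as opposed to ordinary chordality) must be invoked; I would expect the cleanest route is to phrase the entire argument in terms of a suitable edge-labeling of $G$ that records the block index, effectively foreshadowing the MAT-labeled graphs of \cite{TT23} that the paper goes on to study.
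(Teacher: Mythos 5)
First, a point of order: the paper does not prove Theorem \ref{thm:MAT=SC} at all --- it is quoted from \cite[Theorem 2.10]{TT23} --- so there is no in-paper proof to compare against. Judged on its own terms, your proposal reproduces the correct high-level architecture of the known proof (contrapositive via Theorem \ref{thm:free-chordal} plus a sun obstruction in one direction; a constructive elimination-type argument in the other; and the translation of (MP\ref{definition MAT partition 1})--(MP\ref{definition MAT partition 3}) into the edge-labeling conditions of Definition \ref{definition MAT-labeling}), but both substantive steps are stated as expectations rather than arguments, and each conceals a genuine difficulty.

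In the forward direction, the claim that an induced sun ``obstructs the existence of any MAT-partition'' is precisely what has to be proved, and the most tempting route --- restrict the MAT-partition/labeling to the induced subgraph on the sun's vertices and derive a contradiction there --- fails: MAT-labelings do not restrict to arbitrary induced subgraphs, because the triangle count in (ML\ref{definition MAT-labeling triangle}) may involve conditioning vertices lying outside the subgraph (the paper only records closure under restriction to intersections of maximal cliques, Lemma \ref{lem:restriction to intersection of maximal cliques}). One must instead argue directly with the labels the ambient labeling assigns to the sun's edges, and your sketch does not identify the constraint that is violated. In the reverse direction, ``group the hyperplanes into blocks so that newly added edges attach to an already-completed clique'' is not yet a construction: the entire content of the theorem is producing a label for each edge so that the count in (MP\ref{definition MAT partition 3}) equals exactly $k-1$ simultaneously for every edge of every block, and a strong elimination ordering does not hand you these labels directly. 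In \cite{TT23} this is carried out via the clique intersection poset of \cite{NR15} together with a merging/gluing argument, and verified through MAT-perfect elimination orderings (Definition \ref{def:MAT-PEO}, Theorem \ref{thm:MAT-PEO}) --- machinery your final sentence gestures at but does not supply. As written, the proposal is a plausible plan with the two load-bearing steps missing.
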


While the definition of an MAT-free arrangement may seem technical at first glance, Theorem \ref{thm:MAT=SC} enables us to view MAT-freeness as a rather natural property. 
Furthermore, the correspondence between MAT-freeness and strong chordality establishes a nice analog\footnote{Many important concepts in the classical theory such as simplicial vertex and perfect elimination ordering of chordal graphs have their analogs in MAT-labeled graphs (see Remark \ref{rem:analog}).} of the classical correspondence between freeness and chordality. 

The good thing about graphs is that MAT-partition of a graphic arrangement can be rephrased in terms of a special edge-labeling of graphs, the so-called \emph{MAT-labeling} (Definition \ref{definition MAT-labeling}). 
A graph together with such a labeling is called an \tbf{MAT-labeled graph}.
To approach Question \ref{ques:CM} for graphic arrangements, the first question would be  how many non-isomorphic MAT-labelings can a (strongly chordal) graph have? 
A computation aided by computer for complete graphs on up to $8$ vertices gives us the sequence $1,1  ,1 , 2 , 6,40, 560, 17024$. 
Surprisingly, we found out that this sequence coincides with the number of equivalence classes of \emph{(graphical) regular vines} (or  \emph{R-vines}) in dimension up to $8$ given in   \cite[\S 10.3]{KJ11}.  
This observation is indeed compelling as it leads us to the notion of the \emph{node poset}   of a \emph{graphical vine} (Definitions \ref{def:vine} and \ref{def:node poset}), which is a perfect candidate for the poset structure we are looking for.

\subsection{Main result}
\label{subsec:main-result}

In this paper, we first introduce a poset realization of graphical (R-)vines  (Definitions \ref{def-prop:V-poset} and \ref{def:RV-poset}). 
Our aim is to convert the important terms and properties of graphical vines into the language of posets in which considerable power and development of poset theory would be brought to bear.
We also introduce the notion of a \emph{locally regular vine} (or  \emph{LR-vine}) (Definition \ref {def:LRV-poset}). 
Roughly speaking, an LR-vine is a vine that ``locally" looks like an R-vine. 
It is worth mentioning that any ideal of an R-vine, or  \emph{m-vine} (Definition \ref {def:m-vine}) gets characterized by an LR-vine (Theorem \ref{thm:LR-characterize}). 

Having introduced the concepts, we define the category $\mathsf{MG}$ of MAT-labeled graphs and the category $\mathsf{LRV}$ of LR-vines (Definitions \ref{def:category-MG} and \ref{def:category-LR}). 
Our main result is that these categories are equivalent (Theorem \ref{thm:1-to-1}). 
In particular, we obtain the equivalence between the category of MAT-labeled complete graphs and the category of R-vines (Corollary \ref{cor:1-to-1-restrict}). 
The correspondences are summarized in Table \ref{tab:concepts}. 

To prove the equivalence between $\mathsf{MG}$  and $\mathsf{LRV}$, we construct two functors $\Psi\colon\MG \longrightarrow \LRV $ and $\Omega \colon \LRV\longrightarrow  \MG$. 
The former amounts to constructing an LR-vine from a given MAT-labeled graph which will be presented in Definition \ref{def:G-to-P} and Theorem \ref{thm:MAT-to-poset}. 
The proof is direct and largely dependent upon the notion of \emph{MAT-perfect elimination ordering} (Definition \ref{def:MAT-PEO}) developed in an earlier work of the last two authors \cite{TT23}. 
The argument on the functor $\Omega$ is however more complicated. 
We need to show some new properties of LR-vines in \S\ref{subsec:newprop} before giving the construction in Definition \ref{def:P-to-G} and Theorem \ref{thm:poset-to-MAT}. 

It is known that graphic arrangements are equivalent to Weyl subarrangements when the root system is of type $A$. 
It would be interesting to extend our main result to the root systems of other types. 
However, it is quite challenging since a complete characterization of either MAT-freeness or freeness of Weyl subarrangements is unknown except for type $A$.

\subsection{Applications}
\label{subsec:appl}
From the view point of category theory, the equivalence establishes a strong similarity between the categories and allows many properties and structures to be translated from one to the other. 
We obtain three main applications from LR-vines to MAT-labeled graphs. 
First, our main theorem (\ref{thm:1-to-1}) gives a new poset characterization of the MAT-free graphic arrangements compared with the characterization by strong chordality in Theorem \ref{thm:MAT=SC}. 
In particular, LR-vine is an answer for Question \ref{ques:CM} in the case of graphic arrangements (see \S\ref{subsubsec:characterize}). 
We find it interesting that although the class of MAT-free arrangements strictly contains that of ideal subarrangements in general, any MAT-free \emph{graphic} arrangement is characterized by being an ideal of a poset structure. 
Second, an explicit formula for the number of non-isomorphic MAT-labelings of complete graphs is obtained as it equals the number of equivalence classes of regular vines (see \S\ref{subsubsec:MATCG}). 
Third, the notion of \emph{upper truncation} (Definition \ref{def:truncation}) of an LR-vine gives rise to a nontrivial graph operation which produces a new MAT-labeled graph from a given one (see \S\ref{subsubsec:UT}).

A vine is a graphical tool for representing the joint distribution of random variables. 
The first construction of a vine was given by Joe \cite{Joe94}, and the formal definition was given and refined further by Cooke, Bedford and Kurowicka \cite{Coo97, BC02, KC03}.
Vines have been studied extensively and proved to have various applications in probability theory and related areas. 
We refer the reader to \cite{KJ11} for a comprehensive handbook of vines. 
Our main result gives a new appearance and applications of vines in the arrangement theory. 
In the present paper, we do not pursue the probabilistic or applied aspects of vines (neither does the proof of the main result) but emphasize and develop more on the theoretical aspects. 
There are several new combinatorial properties of vines presented throughout, hoping that they will be useful for the future research on vines.  
For instance, we give an alternative way to associate an m-vine to a strongly chordal graph compared with the work of Zhu-Kurowicka \cite{ZK22} (see \S\ref{subsubsec:SC-mvine}), an extension of the notion of  \emph{sampling order} \cite{CKW15} from R-vine to LR-vine  (see \S\ref{subsubsec:SO}), and a conjectural formula for the number of ideals in a C-vine (see \S\ref{subsubsec:D-vine}).

\vskip .5em
\begin{table}[htbp]
\centering
{\renewcommand\arraystretch{1.5} 
\begin{tabular}{c|c}
Vine theory& Graph theory \\
\hline\hline
   R-vine & $\begin{array}{c}
     \text{MAT-labeled complete graph} 
    \end{array}$
     \\
\hline
$\begin{array}{c}
     \text{LR-vine}   \\
  \text{($=$ m-vine, or ideal of R-vine)} 
    \end{array}$
          & $\begin{array}{c}
     \text{MAT-labeled graph}   \\
  \text{($=$ MAT-free graphic arrangement)} 
    \end{array}$
\end{tabular}
}
\bigskip
\caption{\small Correspondence between concepts in vine and graph theories.}
\label{tab:concepts}
\end{table}

\section{MAT-labelings of graphs}
\label{sec:MAT-labeling of graphs}
\subsection{Graphs}
\label{subsec:graphs}
In this subsection, we recall some basic definitions and notions of graphs. 
All graphs in this paper are undirected, finite and simple.
Let $ G = (N_{G}, E_{G}) $ be a graph with the set $N_G$ of vertices (or nodes) and the set $E_G$ of edges (unordered pairs of vertices). 
In this paper, a vertex and a node  in a graph are synonyms. 
The former will be used more often for graphs, while the latter will be used for an element in a poset.

For $S \subseteq N_{G}$, denote by $G[S] = (S, E_{G[S]})$ where $E_{G[S]} =\{ \{u,v\} \in E_G \mid u, v \in S\}$ the \textbf{(vertex-)induced subgraph} of $S$. 
Denote by $ G\setminus S$ the induced subgraph $G[N_G\setminus S]$. 
In particular, $G\setminus  v:=G \setminus\{v\}$  when $v$ is a vertex of $G$. 
For $ F \subseteq E_{G} $, define the subgraph $ G\setminus F := (N_{G}, E_{G} \setminus F) $. 
In particular, $G\setminus e:=G \setminus \{e\} $  when $e$ is an edge of $G$.

A \textbf{complete graph} $K_{n} $ ($n \ge 0$) is a graph with vertex set $N_{K_{n}} = \{u_{1}, \dots, u_{n}\} $ and edge set 
\begin{align*}
E_{K_{n}} = \{\{u_{i},u_{j}\} \mid 1 \leq i < j \leq n\}. 
\end{align*}

In other words, a complete graph is a graph such that each pair of vertices is connected by an edge. 
Here the \tbf{order-zero graph} $K_0$, i.e.~ the graph having no vertices is also considered to be a complete graph.
A \textbf{clique} $C$ of $G$ is a subset of $N_{G}$ such that the induced subgraph $G[C]$ is a complete graph. 

For each $v \in N_{G}$, the \textbf{neighborhood} $ \nbd_{G}(v) $ of $v$ in $G$ is defined by $ \nbd_{G}(v) :=\{ u \in N_{G} \mid \{u,v\} \in E_G\}$. 
The \tbf{degree}  of $ v $ in $ G $ is defined by $ \deg_{G}(v) :=|\nbd_{G}(v) |$. 
A \tbf{leaf} is a vertex of degree $1$. 
A vertex is called \textbf{simplicial} if  its neighborhood is a clique. 
An ordering $ (v_{1}, \dots, v_{\ell}) $ of vertices of a graph $ G $ is called a \textbf{perfect elimination ordering (PEO)}  if $ v_{i} $ is simplicial in the induced subgraph $ G[\{v_{1}, \dots, v_{i}\}] $ for each $1 \le  i \le \ell $.

A \textbf{maximal clique} is a clique that it is not a subset of any other clique. 
A \textbf{largest clique} is a clique that has the largest possible number of vertices. 
Denote by $\mcK(G)$ the set of all maximal cliques of $G$. 
In particular, $ |\mcK(G)| =0$ or $1 $ if and only if $G$ is a complete graph. 
The \textbf{clique number} of $G$, denoted $\omega(G)$, is the number of vertices in a largest clique of $ G $.

A \tbf{walk} $W$ in a graph $G$ is a sequence of edges $(e_1, e_2,\ldots, e_{p})$ of $G$ for which there is a sequence of vertices $(v_1, v_2, \ldots, v_{p+1})$ of $G$ such that  $e_i = \{v_i, v_{i + 1}\}$ for $1 \le i \le p$. 
The vertices $v_1$ and $v_{p+1} $ are said to be \tbf{connected} by the walk $W$, called the \tbf{initial} and \tbf{final vertices} of $W$, respectively. 
The \textbf{length} of a walk is the number of edges in the walk (hence the length of $W$ is $p \ge 0$).
Throughout the paper,  a walk $W$ is denoted by its \tbf{vertex sequence} $W=(v_1, v_2, \ldots, v_{p+1})$. 
If $W_1=(v_1, v_2, \ldots, v_{p+1})$ and $W_2=(v_{p+1}, v_{p+2},\ldots, v_{n})$ are walks, then  the \tbf{concatenation} of $W_1$ and $W_2$ is the walk $(v_1, v_2, \ldots, v_{n})$.

A \tbf{path} $P=(v_1, v_2, \ldots, v_{p+1})$ is a walk with no repeated vertices except possibly the initial and final vertex. 
A \tbf{subpath} of $P$ is a path of the form $(v_i, v_{i+1}, \ldots, v_{j}) $ for some $1 \le i \le j \le p+1$.
The following fact is well-known.
	\begin{lemma} 
	\label{lem:walk-path} 
	Given two vertices $a,b $ in a graph $G$, every walk connecting $a$ and $b $ contains a path connecting $a$ and $b $. 
	\end{lemma}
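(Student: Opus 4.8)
The plan is to prove Lemma~\ref{lem:walk-path} by induction on the length $p$ of the walk $W=(v_1,v_2,\ldots,v_{p+1})$ connecting $a=v_1$ and $b=v_{p+1}$. The base cases $p=0$ and $p=1$ are immediate: a walk of length $0$ is the single vertex $a=b$, which is already a (trivial) path, and a walk of length $1$ has no repeated vertices, hence is itself a path. So I would assume $p\ge 2$ and that every walk of length strictly less than $p$ contains a path between its endpoints.

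For the inductive step, I would split into two cases according to whether $W$ repeats a vertex. If the vertices $v_1,\ldots,v_{p+1}$ are pairwise distinct (except possibly $v_1=v_{p+1}$, which is permitted for a path), then $W$ is already a path and there is nothing to prove. Otherwise there exist indices $1\le i<j\le p+1$ with $v_i=v_j$ and $(i,j)\neq(1,p+1)$, so that the repetition is genuinely interior. I would then excise the closed portion between these two equal vertices, forming the shorter walk
\begin{equation*}
W'=(v_1,\ldots,v_i,v_{j+1},\ldots,v_{p+1}),
\end{equation*}
which is well-defined since $e_{i}=\{v_{i-1},v_i\}=\{v_{i-1},v_j\}$ and $e_{j}=\{v_j,v_{j+1}\}=\{v_i,v_{j+1}\}$ keep the edge sequence consistent after the deletion. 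This $W'$ still connects $a$ and $b$, and because we removed at least the one vertex $v_j$ (together with the loop back to $v_i$), its length is strictly less than $p$. By the induction hypothesis $W'$ contains a path connecting $a$ and $b$, and any such path is also contained in the original $W$ since every edge of $W'$ is an edge of $W$.

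The only point demanding care is the bookkeeping of the splicing: I must check that after deleting the segment $v_{i+1},\ldots,v_j$ the remaining vertex sequence still corresponds to a legitimate walk, i.e.\ that consecutive vertices $v_i$ and $v_{j+1}$ are joined by an edge of $G$. This holds precisely because $v_i=v_j$ and $\{v_j,v_{j+1}\}\in E_G$, so $\{v_i,v_{j+1}\}\in E_G$ as well. I would therefore expect the main (though still minor) obstacle to be stating the index manipulation cleanly enough that the strict decrease in length and the edge-consistency are both manifest; once those are in place, the induction closes immediately.
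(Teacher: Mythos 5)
Your proof is correct: the induction on walk length with excision of the closed segment between two occurrences of a repeated vertex is the standard argument, and your edge-consistency check at the splice point is exactly the detail that needs verifying. The paper itself offers no proof, stating the lemma as a well-known fact, so there is nothing to compare against beyond noting that your argument is the canonical one.
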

	
	A graph $G$ is called  \tbf{connected} if any two vertices are connected by a walk (hence by a path by Lemma \ref{lem:walk-path} above).
 
A \textbf{$p$-cycle} $C_{p}=(v_1, v_2, \ldots, v_{p})$ for $p\ge3$ is a path with $v_{p}=v_1$. 
The $3$-cycle is also called a \textbf{triangle}.
A \textbf{chord} of a cycle is an edge connecting two non-adjacent vertices of the cycle.
  A \tbf{forest} is a graph containing no cycles. A \tbf{tree} is a connected forest. 
  In a forest (resp.~tree), any two distinct vertices are connected by at most (resp.~exactly) one path.

An \textbf{$n$-sun} $ S_{n} $ ($n \ge 3$) is a graph with vertex set $N_{S_{n}} = \{u_{1}, \dots, u_{n}\} \cup \{v_{1}, \dots, v_{n}\} $ and edge set 
\begin{align*}
E_{S_{n}} = \{\{u_{i},u_{j}\} \mid 1 \leq i < j \leq n\} \cup \{\{v_{i}, u_{j}\} \mid 1 \leq i \leq n, j \in \{i, i+1\} \}, 
\end{align*}
where we let $ u_{n+1} = u_{1} $.

\begin{definition}[Strongly chordal graph]
\label{def:strongly-chordal}
 A simple graph is  \textbf{strongly chordal} if it is $C_p$-free\footnote{In general, a graph is called $H$-free if it does not contain $H$ as an induced subgraph. It is not to be confused with ``MAT-free arrangement".} for $p\ge4$ and $S_n$-free for $n\ge3$. 
\end{definition}

The following property of a strongly chordal graph is a special case of \cite[Lemma 5.9]{TT23}.
	\begin{lemma} 
	\label{lem:max} 
 Let $ G $ be a strongly chordal graph with $ |\mcK(G)| \geq 2 $.
Then there exist distinct maximal cliques $ X_{0}, Y_{0} \in \mcK(G) $ such that $ X_{0} \cap Y_{0} \supseteq X_{0} \cap Y $ for all $ Y \in \mcK(G)  \setminus \{X_{0}\} $. 

	\end{lemma}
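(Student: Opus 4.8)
The plan is to reduce the statement to the standard \emph{clique tree} of a chordal graph; since a strongly chordal graph is in particular chordal (Definition~\ref{def:strongly-chordal}), only chordality will be used. Recall that a chordal graph $G$ admits a clique tree: a tree $T$ with node set $\mcK(G)$ such that, for every vertex $w\in N_G$, the maximal cliques containing $w$ induce a connected subtree of $T$ (Buneman--Gavril--Walter); for disconnected $G$ one takes a clique tree on each connected component. The key point is that the pair $(X_0,Y_0)$ must be chosen to be a \emph{leaf} of $T$ together with its unique neighbor, and not an arbitrary maximal clique.

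First I would dispose of the degenerate case in which every connected component of $G$ is complete. Then distinct maximal cliques lie in distinct components and meet in $\emptyset$, so any two distinct cliques $X_0,Y_0$ satisfy $X_0\cap Y=\emptyset=X_0\cap Y_0$ for every $Y\ne X_0$. Otherwise some component $G'$ has $|\mcK(G')|\ge 2$, so its clique tree $T'$ has at least two nodes and therefore a leaf $X_0$ with a unique neighbor $Y_0$ in $T'$; this is the pair I claim works.

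The heart of the proof is the induced-subtree property. Let $Y\in\mcK(G)\setminus\{X_0\}$. If $Y$ lies in a component other than $G'$ then $X_0\cap Y=\emptyset\subseteq X_0\cap Y_0$. If $Y\in\mcK(G')$, take any $w\in X_0\cap Y$; the maximal cliques containing $w$ form a connected subtree of $T'$ containing both $X_0$ and $Y$, hence containing the entire $T'$-path from $X_0$ to $Y$. Since $X_0$ is a leaf whose only neighbor is $Y_0$, this path begins $X_0,Y_0,\dots$, so $Y_0$ contains $w$. Thus $X_0\cap Y\subseteq Y_0$, and as $X_0\cap Y\subseteq X_0$ trivially, we obtain $X_0\cap Y\subseteq X_0\cap Y_0$, which is the desired inclusion.

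The one genuine subtlety, and the step I would treat most carefully, is the selection of $X_0$: an \emph{interior} maximal clique of $T$ can fail the conclusion. For example, for the central triangle of a $3$-sun the three separators with the surrounding cliques are pairwise incomparable, so no dominating $Y_0$ exists; the same obstruction occurs among strongly chordal graphs (e.g.\ attach a triangle along each of two edges of a $K_4$, making the $K_4$ interior with two incomparable separators). Choosing $X_0$ to be a leaf is precisely what forces every separator $X_0\cap Y$ to be nested inside the single separator $X_0\cap Y_0$. Finally, I note that strong chordality is not actually needed for this statement; it presumably enters the full \cite[Lemma~5.9]{TT23} to secure additional conclusions.
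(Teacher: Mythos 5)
Your proof is correct. Note first that the paper does not actually prove this lemma: it simply cites it as a special case of \cite[Lemma 5.9]{TT23}, so there is no in-paper argument to compare against line by line. Your clique-tree argument is a clean, self-contained substitute: taking a clique tree $T$ of a chordal component, choosing $X_0$ to be a leaf with unique neighbor $Y_0$, and invoking the induced-subtree property to see that any $w\in X_0\cap Y$ must lie on every clique of the $T$-path from $X_0$ to $Y$, hence in $Y_0$, is exactly the right mechanism, and your handling of disconnected graphs and of the all-components-complete case is complete. Your two supporting observations are also accurate: the choice of a \emph{leaf} is essential (the central triangle of the $3$-sun, and the $K_4$ with two pendant triangles, do exhibit interior cliques with incomparable separators, and the latter is indeed strongly chordal), and strong chordality is not needed for the statement as isolated here --- it holds for every chordal graph with at least two maximal cliques. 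The only caveat is that the cited \cite[Lemma 5.9]{TT23} is presumably a stronger statement whose full force does use strong chordality; your argument establishes precisely the special case quoted, which is all that is required.
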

 
 
\subsection{MAT-labeled graphs}
\label{subsec:MAT-labeled graphs}
In this subsection, we recall some preliminary definitions and facts of MAT-labeled graphs following \cite{TT23}.
An \textbf{edge-labeled graph} is pair $ (G,\lambda) $ where $ G $ is a simple graph and  $ \lambda \colon E_{G} \longrightarrow \mathbb{Z}_{>0} $ is a map, called \textbf{(edge-)labeling}. 
The following definition of an MAT-labeling is equivalent to the original one in \cite[Definition 4.2]{TT23}.

\begin{definition}[MAT-labeling]
\label{definition MAT-labeling}
Let $ (G,\lambda) $ be an  edge-labeled graph. 
For $ k \in \mathbb{Z}_{>0} $, let $ \pi_{k}:= \lambda^{-1}(k) \subseteq E_{G}$ denote the set of edges of label $k$. 
Define $ \pi_{\le k} := \pi_{1} \sqcup \dots \sqcup \pi_{k} $  and $ \pi_{<1}:= \varnothing $. 
The labeling $ \lambda $ is an \textbf{MAT-labeling} if the following two conditions hold for every $ k \in \mathbb{Z}_{>0} $. 
\begin{enumerate}[(ML1)]
\item\label{definition MAT-labeling cycle} Any edge  $e \in \pi_{\le k} $ does not form a cycle with edges in $\pi_k$.
\item\label{definition MAT-labeling triangle} Every edge  $e \in \pi_{k} $ forms exactly $ k-1 $ triangles with edges in $ \pi_{<k} $. 
\end{enumerate}
 
\end{definition}

 Given an edge  $e \in \pi_{k} $, a \tbf{conditioning vertex} of $e$ is a vertex that together with the endvertices of $e$ forms two edges both of label $<k$. 
 Condition (ML\ref{definition MAT-labeling triangle}) above can be rephrased as every edge  $e $ of label $k$ has exactly $k-1$ conditioning vertices.

\begin{definition}[MAT-labeled (complete) graph]
\label{def:MAT-labeled graph}
An edge-labeled graph $ (G,\lambda) $ is an \tbf{MAT-labeled graph} if $ \lambda $ is an  MAT-labeling of $G$. 
In particular, an MAT-labeled graph $ (G,\lambda) $ is an \tbf{MAT-labeled complete graph} if $G$ is a complete graph.
\end{definition}

MAT-partition of a graphic arrangement is nothing but MAT-labeling of the underlying graph
\cite[Proposition 4.3]{TT23}. 
Thus, MAT-free graphic arrangement and MAT-labeled graph are essentially the same object.
The following properties of an MAT-labeled graph are deduced thanks to the relation with freeness.

\begin{lemma}[{\cite[Proposition 4.8]{TT23}}]
\label{lem:largest-label}
 Let $(G,\lambda)$  be an MAT-labeled graph with clique number $ \omega(G)$. 
 Then the following statements hold. 
\begin{enumerate}[(1)]
\item The   largest label of edges in $(G,\lambda)$ is equal to $ \omega(G)-1 $. 
\item There exists a bijection between the set of largest cliques of $ G $ and $ \pi_{n}$  where $n = \omega(G)-1$ via the relation: For each $ e \in  \pi_{n}$, there exists a unique largest clique of $G$ containing the endvertices of $e$. 
\end{enumerate}
\end{lemma}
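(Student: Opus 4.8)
\emph{The plan} is to read off the two cardinality statements from freeness, and then to upgrade the resulting numerical coincidence into the asserted explicit bijection by a short incidence count that uses only the two axioms of an MAT-labeling.

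First I would invoke freeness. Since $\lambda$ is an MAT-labeling, $\A_G$ is MAT-free, so by the Multiple Addition Theorem \cite[Theorem 3.1]{ABCHT16} it is free and its exponents are governed by the block sizes: for every $k\ge 1$ the number of exponents that are $\ge k$ equals $|\pi_k|$. On the other hand $G$ is chordal (Theorem \ref{thm:free-chordal}; indeed strongly chordal by Theorem \ref{thm:MAT=SC}), so $\chi(\A_G,t)$ is the chromatic polynomial of $G$ and the exponents of $\A_G$ are exactly the back-degrees $d_i$ recorded along any perfect elimination ordering $(v_1,\dots,v_\ell)$, where $d_i$ is the number of earlier neighbours of $v_i$ (equivalently the size of the clique that $v_i$ completes). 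Two elementary facts about a PEO then settle the numerics: $\max_i d_i=\omega(G)-1$, since the clique completed at $v_i$ has size $d_i+1\le\omega(G)$ and a largest clique is completed at its last vertex; and $\#\{i:d_i=\omega(G)-1\}$ equals the number of largest cliques, since each largest clique has a unique last vertex in the ordering whose earlier neighbours are precisely the rest of that clique. Comparing the two descriptions of the exponent multiset, the largest label $n$ (the largest $k$ with $|\pi_k|\neq0$) equals the top exponent value $\omega(G)-1$, which is statement (1); moreover $|\pi_n|=\#\{i:d_i=\omega(G)-1\}$ equals the number of largest cliques, which is the cardinality half of statement (2).

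It remains to promote this equality of cardinalities to the bijection $e\mapsto C_e$, and here I would argue purely from (ML\ref{definition MAT-labeling cycle}) and (ML\ref{definition MAT-labeling triangle}). The key local observation is that no triangle carries two edges of the maximal label $n$: if $\{u,v\},\{u,w\}\in\pi_n$ and $\{v,w\}$ is an edge, then $\{v,w\}\in\pi_{\le n}$ would close a triangle with the two $\pi_n$-edges $\{u,v\},\{u,w\}$, violating (ML\ref{definition MAT-labeling cycle}). Consequently, whenever a largest clique $C$ (of size $\omega(G)=n+1$) contains an edge $e=\{u,v\}\in\pi_n$, every remaining vertex $z\in C$ satisfies $\lambda(\{u,z\}),\lambda(\{v,z\})<n$, so the $n-1$ vertices of $C\setminus\{u,v\}$ are all conditioning vertices of $e$; since $e$ has exactly $n-1$ of them by (ML\ref{definition MAT-labeling triangle}), we conclude $C\setminus\{u,v\}$ is precisely the set of conditioning vertices of $e$, whence $C$ is determined by $e$. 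This yields uniqueness: each $e\in\pi_n$ lies in \emph{at most} one largest clique. For existence I would run the same counting once more, now on a clique: if a largest clique $C$ contained no edge of label $n$, let $M<n$ be the largest label inside $C$ and take $e'=\{x,y\}\subseteq C$ with $\lambda(e')=M$; the triangle argument applied to the within-$C$ maximal label $M$ forces all $n-1$ vertices of $C\setminus\{x,y\}$ to be conditioning vertices of $e'$, contradicting that $e'$ has only $M-1\le n-2$ of them. Hence every largest clique contains \emph{at least} one edge of label $n$. Counting incidences in the relation ``$e\subseteq C$'' between $\pi_n$ and the largest cliques, the two previous bounds together with the cardinality equality $|\pi_n|=\#\{\text{largest cliques}\}$ from the first paragraph force all inequalities to be equalities, so each $e\in\pi_n$ lies in exactly one largest clique $C_e$ and $e\mapsto C_e$ is the desired bijection.

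The hard part, and the reason for routing through freeness at all, is \emph{existence}: proving directly that the $n-1$ conditioning vertices of a top edge are pairwise adjacent (so that with the endpoints they span a largest clique) does not seem to follow from chordality or strong chordality alone, since the relevant four-vertex configuration $\{u,v,w_i,w_j\}$ is already chordal and sun-free, nor from (ML\ref{definition MAT-labeling cycle})--(ML\ref{definition MAT-labeling triangle}) by any purely local manipulation I can see. The incidence count sidesteps this obstacle by deducing existence from the exact cardinality $|\pi_n|=\#\{\text{largest cliques}\}$ supplied by the Multiple Addition Theorem, so the freeness input does genuine work here rather than mere bookkeeping.
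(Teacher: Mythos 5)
The paper does not prove this lemma: it is imported verbatim from \cite[Proposition 4.8]{TT23}, with only the remark that such properties ``are deduced thanks to the relation with freeness.'' Your proof follows exactly that route and, as far as I can check, is correct. The first half is standard: iterating the Multiple Addition Theorem shows that $\exp(\A_G)$ is the dual partition of $(|\pi_1|,\dots,|\pi_n|)$, so the number of exponents $\ge k$ is $|\pi_k|$; Terao factorization plus the chromatic-polynomial factorization for chordal graphs identifies the exponents with the back-degrees of any PEO; and the two elementary PEO facts you cite (top back-degree equals $\omega(G)-1$, and the vertices attaining it are in bijection with the largest cliques via ``last vertex of the clique'') are easily verified. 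This yields (1) and the cardinality half of (2). The second half is a clean, self-contained upgrade: the observation that (ML\ref{definition MAT-labeling cycle}) forbids two edges of the locally maximal label in a single triangle gives both that an edge of $\pi_n$ lies in \emph{at most} one largest clique (its $n-1$ co-clique vertices must be exactly its conditioning vertices) and that a largest clique must contain \emph{at least} one edge of label $n$ (otherwise an edge of maximal label $M<n$ inside the clique would acquire $n-1>M-1$ conditioning vertices, contradicting (ML\ref{definition MAT-labeling triangle})); the incidence count then converts the cardinality equality into the asserted bijection. You are also right that existence is the genuinely nontrivial direction --- indeed the paper's Lemma \ref{lem:edge-clique}, which shows the conditioning vertices of an edge actually span a clique, is \emph{deduced from} the present lemma, so deriving existence from the exponent count rather than locally is a sensible design. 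The only caveat is degenerate: the statement and your argument implicitly require $E_G\neq\varnothing$ (so that a largest label exists and $n\ge 1$), which is harmless.
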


\begin{lemma}[{\cite[Proposition 4.4]{TT23}}]
\label{lem:card-complete}
If $ \lambda $ is an MAT-labeling of the complete graph $ K_{\ell} $, then $ |\pi_{k}| = \ell-k $ for all $1 \le k \le \ell-1$. 
\end{lemma}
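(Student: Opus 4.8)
The plan is to argue by induction on $\ell$, peeling off one vertex at a time. The engine is the observation that restricting $\lambda$ to the edges of label at most $k$ again yields an MAT-labeled graph $G_k:=(N_{K_\ell},\pi_{\le k})$, since conditions (ML\ref{definition MAT-labeling cycle}) and (ML\ref{definition MAT-labeling triangle}) at a level $j\le k$ only refer to $\pi_{\le j}\subseteq\pi_{\le k}$. Applying Lemma \ref{lem:largest-label} to each $G_k$ gives $\omega(G_k)=k+1$ and a bijection between $\pi_k$ and the set of largest cliques of $G_k$; in particular, for $k=\ell-1$ and $\omega(K_\ell)=\ell$, the top label is $\ell-1$ and $\pi_{\ell-1}$ consists of the single edge $e_0=\{u,v\}$ matching the unique largest clique $K_\ell$. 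This already yields $|\pi_{\ell-1}|=1=\ell-(\ell-1)$ and anchors the induction.

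For the inductive step I would delete a suitable vertex and compare label classes. Call a vertex $w$ \emph{good} if it is the apex of no ``bad'' triangle, i.e. $\lambda(\{x,y\})\le\max(\lambda(\{w,x\}),\lambda(\{w,y\}))$ for all $x,y\ne w$ (equivalently, $w$ is a conditioning vertex of no edge). Deleting a good $w$ preserves the MAT-labeling of the resulting $K_{\ell-1}$: condition (ML\ref{definition MAT-labeling cycle}) is automatic, since each $\pi_k$ restricted to the remaining vertices is still a forest and deleting a vertex cannot merge components; and (ML\ref{definition MAT-labeling triangle}) is preserved precisely because no surviving edge loses a conditioning vertex. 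Moreover a good vertex has pairwise distinct incident labels: if $\lambda(\{w,a\})=\lambda(\{w,b\})=j$, then (ML\ref{definition MAT-labeling cycle}) forces $\lambda(\{a,b\})>j$ (else the triangle closes a cycle inside $\pi_{\le j}$ through $\pi_j$), making $w$ a bad apex. Hence the $\ell-1$ edges at a good $w$ carry $\ell-1$ distinct labels from $\{1,\dots,\ell-1\}$, so each label occurs exactly once; in particular $w$ is incident to the unique top edge $e_0$. Writing $m_k,m_k'$ for the label counts before and after deletion, the induction hypothesis gives $m_k'=(\ell-1)-k$, and since exactly one edge of each label is removed, $m_k=m_k'+1=\ell-k$.

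The crux is therefore the existence of a good vertex; as just shown, any good vertex must be an endpoint of $e_0$, so it remains to prove that at least one endpoint of $e_0$ is good. This is the MAT-analogue of the classical fact that every chordal graph has a simplicial vertex, and is exactly the content packaged by the MAT-perfect elimination orderings of \cite{TT23}. The most promising self-contained route begins with a localization: for any top edge $e=\{x,y\}$ of an MAT-labeled graph with top label $m$, its conditioning vertices are exactly $Q\setminus\{x,y\}$, where $Q$ is the associated largest clique. Indeed, by Lemma \ref{lem:largest-label}(2) the clique $Q$ contains only one label-$m$ edge, namely $e$, so every vertex of $Q\setminus\{x,y\}$ is joined to $x,y$ by edges of label $<m$ and is thus conditioning; as there are $|Q|-2=m-1$ of them and (ML\ref{definition MAT-labeling triangle}) permits no more, equality holds. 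Consequently any bad apex of $\{x,y\}$ lies inside $Q$, whereas if a top-edge endpoint were a bad apex the offending label would satisfy $m<\ell-1$, so $\{u,v\}\notin G_m$ and $u,v$ cannot both lie in $Q$. Pushing this down one level — a size-$m$ subclique of $Q$ is a largest clique of $G_{m-1}$ and hence must again contain a unique top edge — drives toward a contradiction for $m\ge3$; the residual low-level case $m=2$ and the bookkeeping of which endpoint survives are where the argument is most delicate, and where I expect to invoke the MAT-simplicial vertex theory of \cite{TT23}.

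Finally, I would record a slicker but less self-contained alternative: an MAT-partition $(\pi_1,\dots,\pi_n)$ of a free arrangement has $|\pi_k|$ equal to the number of exponents that are $\ge k$, a consequence of the Multiple Addition Theorem \cite[Theorem 3.1]{ABCHT16}. Since $\A_{K_\ell}$ is the braid arrangement, with exponents $0,1,\dots,\ell-1$, this immediately gives $|\pi_k|=\#\{\,j:0\le j\le\ell-1,\ j\ge k\,\}=\ell-k$.
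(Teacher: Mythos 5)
The paper does not actually prove this lemma: it is quoted from \cite[Proposition 4.4]{TT23}, and the surrounding text states that such properties are ``deduced thanks to the relation with freeness'' --- which is precisely the route you sketch in your final paragraph. That closing argument (an MAT-partition of a free arrangement has $|\pi_k|$ equal to the number of exponents that are $\ge k$, by the Multiple Addition Theorem, and the braid arrangement $\A_{K_\ell}$ has exponents $0,1,\dots,\ell-1$) is correct and complete, so your proposal does contain a valid proof, and it coincides with the source's approach rather than replacing it.

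Your primary, self-contained route, however, has a genuine gap exactly where you flag it. The induction scaffolding is sound: $|\pi_{\ell-1}|=1$ follows from Lemma \ref{lem:largest-label}; deleting a ``good'' vertex preserves (ML\ref{definition MAT-labeling cycle}) and (ML\ref{definition MAT-labeling triangle}); and a good vertex carries each label $1,\dots,\ell-1$ exactly once, so the count drops by one in each class. But everything hinges on the existence of a good vertex, which is exactly the content of Lemma \ref{lem:MATS-existence} (the endvertices of the top edge are MAT-simplicial), and your localization argument does not establish it: the ``pushing down one level'' step is never carried out, and the deferred case analysis is where the real work lies. Invoking the MAT-simplicial machinery of \cite{TT23} to close this hole also risks circularity, since in that paper the MAT-PEO and MAT-simplicial results are developed after, and partly on top of, Propositions 4.4 and 4.8. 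A smaller point: the claim $\omega(G_k)=k+1$ for intermediate $k$ presupposes $\pi_k\neq\varnothing$, which is part of what is being proved (though you never actually use this for $k<\ell-1$, so it is harmless).
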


Now we present some results on the restrictions of MAT-labelings.

\begin{definition}[MAT-labeled subgraph]
\label{def:MAT-labeled subgraph}
Let $(G,\lambda)$  be an MAT-labeled graph. 
An edge-labeled graph $(G', \lambda')$ is an \tbf{MAT-labeled subgraph} of $(G, \lambda)$, written $(G', \lambda') \le (G,\lambda)$, if $G'$ is a subgraph of $G$,  $\lambda' = \lambda|_{E_{G'}}$ i.e.~$\lambda' $ is the restriction of $\lambda$ on $E_{G'}$, and $(G', \lambda')$ itself is an MAT-labeled graph.
 
\end{definition}

\begin{lemma}[{\cite[Lemma 4.7]{TT23}}]
\label{lem:restriction to union}
Let $(G,\lambda)$  be an MAT-labeled graph and let $ F_{1}, F_{2} \subseteq E_{G} $. 
If $ \lambda|_{F_{1}} $ and $ \lambda|_{F_{2}} $ are MAT-labelings of the subgraphs $(N_{G}, F_1)$ and $(N_{G}, F_2)$, respectively, then $ \lambda|_{F_{1} \cup F_{2}} $ is an MAT-labeling of $(N_{G}, F_{1} \cup F_{2})$. 
\end{lemma}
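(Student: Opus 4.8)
The plan is to verify the two defining conditions (ML\ref{definition MAT-labeling cycle}) and (ML\ref{definition MAT-labeling triangle}) of Definition \ref{definition MAT-labeling} directly for $\lambda|_{F_1\cup F_2}$, exploiting throughout that $F_1\cup F_2\subseteq E_G$, so that the ambient MAT-labeling $\lambda$ on all of $G$ keeps everything under control. Since $\lambda|_{F_1\cup F_2}$ assigns to each edge the same label as $\lambda$, the edges of label $k$ in $F_1\cup F_2$ are exactly $(\pi_k\cap F_1)\cup(\pi_k\cap F_2)$, which is a subset of $\pi_k$; similarly the edges of label $<k$ in $F_1\cup F_2$ form a subset of $\pi_{<k}$. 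Both conditions must be checked for every $k\in\Z_{>0}$.

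First I would dispose of (ML\ref{definition MAT-labeling cycle}), which is a purely monotone condition. If some edge $e\in\pi_{\le k}\cap(F_1\cup F_2)$ formed a cycle together with edges in $\pi_k\cap(F_1\cup F_2)$, then, as all these edges already lie in $E_G$ and retain their labels, the very same cycle would witness a violation of (ML\ref{definition MAT-labeling cycle}) for $\lambda$ on the whole graph $G$. Since $(G,\lambda)$ is an MAT-labeled graph, no such cycle exists, so (ML\ref{definition MAT-labeling cycle}) holds for $\lambda|_{F_1\cup F_2}$.

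The substance of the argument is (ML\ref{definition MAT-labeling triangle}), and here I would count conditioning vertices. Fix an edge $e=\{u,v\}\in\pi_k\cap(F_1\cup F_2)$, and for $F\subseteq E_G$ let $U_F$ denote the set of vertices $w$ with $\{u,w\},\{v,w\}\in F$ both of label $<k$, i.e.\ the conditioning vertices of $e$ within $F$; by the rephrasing following Definition \ref{definition MAT-labeling}, the number of triangles $e$ forms with edges in $\pi_{<k}\cap F$ equals $|U_F|$. Because labels are inherited from $\lambda$, one has $U_F\subseteq U_{E_G}$ for every such $F$, and $|U_{E_G}|=k-1$ by (ML\ref{definition MAT-labeling triangle}) applied to $\lambda$ on $G$. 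Now assume without loss of generality $e\in F_1$ (the case $e\in F_2$ is symmetric). Since $\lambda|_{F_1}$ is an MAT-labeling, $|U_{F_1}|=k-1$; combined with $U_{F_1}\subseteq U_{E_G}$ and $|U_{E_G}|=k-1$, this forces $U_{F_1}=U_{E_G}$. Consequently
\[
U_{E_G}=U_{F_1}\subseteq U_{F_1\cup F_2}\subseteq U_{E_G},
\]
so $|U_{F_1\cup F_2}|=k-1$, which is exactly (ML\ref{definition MAT-labeling triangle}) for the edge $e$.

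The only place where care is needed — and where a naive approach could go wrong — is the possibility of \emph{new} triangles on $e$ appearing in $F_1\cup F_2$, formed by pairing an edge of $F_1$ with an edge of $F_2$ that never occurred together in either $F_i$ alone. The counting above shows this cannot inflate the triangle count: the set of conditioning vertices is already saturated at its ambient maximum $k-1$ inside $F_1$ (or $F_2$), and the double inclusion traps $U_{F_1\cup F_2}$ between two sets of equal cardinality $k-1$. Having checked (ML\ref{definition MAT-labeling cycle}) and (ML\ref{definition MAT-labeling triangle}) for all relevant edges and labels, I conclude that $\lambda|_{F_1\cup F_2}$ is an MAT-labeling of $(N_G,F_1\cup F_2)$.
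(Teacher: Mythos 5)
Your proof is correct: (ML\ref{definition MAT-labeling cycle}) is a forbidden-configuration condition and so passes automatically to any edge-subset of $E_G$, and your sandwich argument for (ML\ref{definition MAT-labeling triangle}) — the conditioning-vertex set of $e$ inside $F_1$ already saturates the ambient set of size $k-1$, so the union cannot add any — is exactly the right mechanism. Note that the paper itself imports this lemma from \cite[Lemma 4.7]{TT23} without proof, and your argument is essentially the standard one given there, so there is nothing further to compare.
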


\begin{lemma}[{\cite[Lemma 4.9]{TT23}}]
\label{lem:restriction to intersection of maximal cliques}
Let $(G,\lambda)$  be an MAT-labeled graph. 
Let $X$ denote the intersection of some maximal cliques of $G$. 
Then $(G[X],  \lambda|_{E_{G[X]}} ) \le (G, \lambda)$.
\end{lemma}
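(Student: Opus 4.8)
The plan is to verify that the restricted labeling $\lambda|_{E_{G[X]}}$ satisfies the two defining conditions of an MAT-labeling, after first stripping away the part that is automatic. Write $X = C_1 \cap \dots \cap C_m$ with each $C_j$ a maximal clique. Since $X \subseteq C_1$, the set $X$ is itself a clique, so $G[X]$ is a complete graph and $\lambda' := \lambda|_{E_{G[X]}}$ assigns to each edge the label it already carries in $G$. Condition (ML\ref{definition MAT-labeling cycle}) is then inherited for free: $G[X]$ is an induced subgraph, so any cycle formed inside $G[X]$ by an edge of $\pi_{\le k}$ together with edges of $\pi_k$ is already such a cycle in $G$, which (ML\ref{definition MAT-labeling cycle}) for $(G,\lambda)$ forbids. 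Everything thus reduces to (ML\ref{definition MAT-labeling triangle}): because $G[X]$ is complete, the triangles that an edge $e = \{u,v\} \subseteq X$ of label $k$ forms with smaller-label edges inside $G[X]$ correspond exactly to the conditioning vertices of $e$ lying in $X$. As $e$ has precisely $k-1$ conditioning vertices in $G$, condition (ML\ref{definition MAT-labeling triangle}) for $\lambda'$ is equivalent to the assertion that \emph{all} conditioning vertices of $e$ already lie in $X$.

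The key structural step is to pin down the conditioning vertices of $e$ as a clique. I would pass to the edge-labeled graph $G^{(k)} := (N_G, \pi_{\le k})$. This is again an MAT-labeled graph, because conditions (ML\ref{definition MAT-labeling cycle}) and (ML\ref{definition MAT-labeling triangle}) at each level $j \le k$ involve only edges of label $\le k$ and are unchanged, while at levels $j > k$ there are no edges to constrain. In $G^{(k)}$ the largest occurring label is $k$, so Lemma \ref{lem:largest-label}(1) gives $\omega(G^{(k)}) = k+1$, and Lemma \ref{lem:largest-label}(2) shows that $e$ lies in a \emph{unique} largest clique $D$ with $|D| = k+1$; the bijection there moreover forces $D$ to contain exactly one edge of label $k$, which must be $e$ itself. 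Hence every vertex $d \in D \setminus \{u,v\}$ is joined to both $u$ and $v$ by edges of label $< k$, so it is a conditioning vertex of $e$. Comparing cardinalities, $|D \setminus \{u,v\}| = k-1$ equals the total number of conditioning vertices, so the set of conditioning vertices of $e$ is \emph{exactly} $D \setminus \{u,v\}$. In particular $e$ together with its conditioning vertices spans the clique $D$, and the whole problem collapses to showing $D \subseteq X$.

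Since $e \subseteq X \subseteq C_j$ for every $j$, it suffices to establish the single-clique statement $(\ast)$: for each maximal clique $C$ of $G$ containing $e$, the conditioning clique $D$ of $e$ satisfies $D \subseteq C$; applying $(\ast)$ to $C_1, \dots, C_m$ then gives $D \subseteq \bigcap_j C_j = X$ and completes (ML\ref{definition MAT-labeling triangle}). Equivalently, $(\ast)$ says that $D$ lies in the intersection of \emph{all} maximal cliques through $e$, i.e.\ the conditioning vertices of $e$ are common to every maximal clique containing $e$. I expect $(\ast)$ to be the main obstacle, and it genuinely requires the labeling rather than mere chordality: a diamond (a $K_4$ with one edge deleted) is strongly chordal yet has an edge lying in two maximal cliques, so it is the label constraints that must exclude the bad configuration. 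Indeed, a small test case already shows the forcing mechanism: if $e$ lies in both a large clique $C$ and a smaller maximal clique, the forest structure of the lower label classes leaves too little ``conditioning capacity'' to realize a large label for $e$, so $e$ is pushed to a small label, keeping $D$ inside $C$.

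For $(\ast)$ I would argue by contradiction: assuming a conditioning vertex $w \in D$ with $w \notin C$, maximality of $C$ furnishes a vertex $c \in C$ non-adjacent to $w$, and I would derive a contradiction by combining two ingredients. The first is the forest structure of each label class forced by (ML\ref{definition MAT-labeling cycle}), which strictly bounds how many label-$k$ edges the smaller-label edges inside $C$ can condition; the second is the sun-free, nested-neighbourhood structure of the strongly chordal graph $G$ (Theorem \ref{thm:MAT=SC}), accessed through the extremal intersecting pair of maximal cliques supplied by Lemma \ref{lem:max}. Concretely, I would attempt an induction on $k$, using the passage from $G^{(k)}$ to $G^{(k-1)}$ to control the conditioning vertices of the lower-label edges of $C$ by the inductive hypothesis, followed by a capacity comparison inside $C$ that forces every apex conditioning $e$ to belong to $C$, that is, $D \subseteq C$. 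The delicate point — and the part I expect to consume most of the work — is making the counting inside $C$ interact correctly with the global forest conditions, since a naive count ignores that an apex outside $C$ may still appear to condition $e$ until the sun-free structure is invoked.
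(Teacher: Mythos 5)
This lemma is not proved in the paper at all: it is imported verbatim from \cite[Lemma 4.9]{TT23}, so there is no in-paper argument to compare yours against, and I can only judge the proposal on its own terms. The preparatory part of your argument is correct. $X$ is a clique, so $G[X]$ is complete; (ML\ref{definition MAT-labeling cycle}) is inherited by any induced subgraph; and because $G[X]$ is complete and induced, (ML\ref{definition MAT-labeling triangle}) for the restriction is indeed equivalent to the assertion that every conditioning vertex of every edge $e\subseteq X$ already lies in $X$. Your identification of the conditioning vertices of $e\in\pi_k$ with $D\setminus\{u,v\}$, where $D$ is the unique largest clique of $(N_G,\pi_{\le k})$ through $e$ supplied by Lemma \ref{lem:largest-label}, is also sound; this is exactly the content of the paper's Lemma \ref{lem:edge-clique}, which you are right to re-derive independently, since the paper proves that lemma \emph{using} the present one. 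The reduction to the single-clique claim $(\ast)$ is likewise valid, because $D\subseteq C_j$ for every $j$ gives $D\subseteq\bigcap_j C_j=X$.

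The gap is that $(\ast)$ is not a lemma on the way to the result --- it \emph{is} the result. The single-maximal-clique case of the statement is literally equivalent to $(\ast)$, and the general case follows from it by intersecting, so your proposal reduces the lemma to an equivalent unproved claim. The final paragraph is a plan rather than a proof: ``the forest structure strictly bounds how many label-$k$ edges the smaller-label edges inside $C$ can condition,'' ``a capacity comparison inside $C$,'' and ``induction on $k$ using the passage from $G^{(k)}$ to $G^{(k-1)}$'' are never instantiated with a concrete inequality, case analysis, or induction step, and you yourself flag the interaction between the local count in $C$ and the global forest condition as unresolved. There is also a circularity risk in the two external inputs you propose for $(\ast)$: Theorem \ref{thm:MAT=SC} and Lemma \ref{lem:max} are themselves results of \cite{TT23} whose proofs there rest on the restriction lemmas of that paper, very plausibly including the one you are trying to prove; a self-contained argument would need to avoid them or verify that the needed direction is logically independent. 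As written, the proof is incomplete at its essential step.
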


The following is an immediate consequence of Lemmas \ref{lem:restriction to union} and \ref{lem:restriction to intersection of maximal cliques} above.

\begin{corollary}
\label{cor:restriction}
 Let $ (G, \lambda) $ be an MAT-labeled graph. 
 Let $\B \subseteq \mcK(G) $ be a set of some maximal cliques of $G$. 
 Let $ G_\B$ be the subgraph of $ G $ with vertex set $ N_{G_\B}:= \cup_{Y \in \B}Y$ and edge set $ E_{G_\B}:=  \cup_{Y \in \B} E_{G[Y]}$. 
 Then  $(G_\B, \lambda|_{E_{G_\B}}) \le (G, \lambda) $. 
\end{corollary}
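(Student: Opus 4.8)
The plan is to build $(G_\B, \lambda|_{E_{G_\B}})$ from the individual clique-restrictions and then glue them together using the two preceding lemmas. First I would observe that a single maximal clique $Y \in \B$ is trivially an intersection of maximal cliques of $G$ --- namely the intersection of the one-element family $\{Y\}$ --- so Lemma \ref{lem:restriction to intersection of maximal cliques} applies with $X = Y$ and yields $(G[Y], \lambda|_{E_{G[Y]}}) \le (G,\lambda)$. In particular, $\lambda|_{E_{G[Y]}}$ is an MAT-labeling of $G[Y]$ for every $Y \in \B$. This reduces the corollary to a statement purely about unions of edge sets whose restrictions are already known to be MAT-labelings.

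Next I would enumerate $\B = \{Y_1, \ldots, Y_m\}$ and set $F_i := E_{G[Y_i]}$, so that $E_{G_\B} = F_1 \cup \cdots \cup F_m$. Since each $\lambda|_{F_i}$ is an MAT-labeling of $G[Y_i]$ --- hence of $(N_G, F_i)$ after adjoining the remaining vertices of $N_G$ as isolated ones, which affects none of the conditions (ML\ref{definition MAT-labeling cycle})--(ML\ref{definition MAT-labeling triangle}) --- I would apply Lemma \ref{lem:restriction to union} inductively. It first gives that $\lambda|_{F_1 \cup F_2}$ is an MAT-labeling, then that $\lambda|_{F_1 \cup F_2 \cup F_3}$ is one, and so on, until $\lambda|_{E_{G_\B}}$ is shown to be an MAT-labeling of $(N_G, E_{G_\B})$. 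Finally, restricting the ambient vertex set from $N_G$ back to $N_{G_\B} = \cup_{Y \in \B} Y$ only discards vertices that are isolated in $(N_G, E_{G_\B})$, so this last step preserves the MAT-labeling property and produces exactly $(G_\B, \lambda|_{E_{G_\B}}) \le (G,\lambda)$.

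Since the corollary is flagged as an immediate consequence, I do not expect a serious obstacle; the one point warranting care is the bookkeeping of vertex sets, as Lemma \ref{lem:restriction to intersection of maximal cliques} outputs a subgraph on vertex set $Y$ whereas Lemma \ref{lem:restriction to union} is phrased on the common vertex set $N_G$. I would reconcile these once and for all by recording the remark that whether a labeling is an MAT-labeling is unaffected by the presence or absence of isolated vertices, since (ML\ref{definition MAT-labeling cycle}) and (ML\ref{definition MAT-labeling triangle}) are assertions about cycles and triangles formed by edges. With that remark in place, the two lemmas combine directly and the induction closes.
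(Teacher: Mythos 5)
Your proposal is correct and is exactly the argument the paper intends: the corollary is stated there as an immediate consequence of Lemma \ref{lem:restriction to union} and Lemma \ref{lem:restriction to intersection of maximal cliques}, applied precisely as you do (each $Y\in\B$ as a one-clique intersection, then an induction over unions of edge sets). Your extra remark that isolated vertices are irrelevant to conditions (ML\ref{definition MAT-labeling cycle})--(ML\ref{definition MAT-labeling triangle}) correctly disposes of the only bookkeeping issue the paper leaves implicit.
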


 \begin{notation}
\label{nota:lambda} 
For simplicity of notation,  if $\lambda \colon E_{G}  \longrightarrow \mathbb{Z}_{>0} $ is a labeling and $\{u, v\} \in E_G$ is an edge, we write $\lambda(u,v):=\lambda(\{u,v\})$ for the label of $\{u, v\} $.
\end{notation} 

The following analogs of simplicial vertex and \emph{perfect elimination ordering} of chordal graphs are important concepts in the study of MAT-labeled graphs.

\begin{definition}[MAT-simplicial vertex]
\label{definition MAT-simplicial}
Given an edge-labeled graph $ (G,\lambda) $, a vertex $ v \in N_{G} $ is  \textbf{MAT-simplicial} if the following conditions hold. 
\begin{enumerate}[(MS1)]
\item\label{definition MAT-simplicial 1} $ v $ is a simplicial vertex of $ G $.
\item\label{definition MAT-simplicial 2} The edges of $G$ incident on $v$ are labeled by labels from $1$ to $\deg_{G}(v)$, i.e.~$ \{\lambda(u,v) \in \mathbb{Z}_{>0} \mid u \in \nbd_{G}(v) \} = \{1,2, \dots, \deg_{G}(v)\} $. 
\item\label{definition MAT-simplicial 3} For any distinct vertices $ u_{1}, u_{2} \in \nbd_{G}(v) $, $ \lambda(u_{1}, u_{2}) < \max\{\lambda(u_{1}, v), \lambda(u_{2}, v)\} $. 
\end{enumerate}
\end{definition}

\begin{lemma}[{\cite[Lemma 5.2]{TT23}}]
\label{lem:MATS-existence}
If $ (G, \lambda) $ is an MAT-labeled complete graph, then the endvertices of the edge of largest label are MAT-simplicial. 
\end{lemma}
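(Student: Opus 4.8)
The plan is to treat the two endvertices of the largest-label edge symmetrically, and to identify the single genuinely hard condition, namely (MS\ref{definition MAT-simplicial 3}), from which the other two conditions will follow with little effort.

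First I would record the easy reductions. Since $G=K_\ell$ is complete, $\omega(G)=\ell$ and every vertex is simplicial, so (MS\ref{definition MAT-simplicial 1}) holds automatically, and the largest label equals $n=\ell-1$ by Lemma \ref{lem:largest-label}. By Lemma \ref{lem:card-complete} we have $|\pi_n|=\ell-n=1$, so there is a \emph{unique} edge $e=\{a,b\}$ of largest label $n$. Applying (ML\ref{definition MAT-labeling triangle}) to $e$, it has exactly $n-1=\ell-2$ conditioning vertices; as there are precisely $\ell-2$ vertices other than $a,b$, \emph{every} such vertex conditions $e$, whence $\lambda(a,c)<n$ and $\lambda(b,c)<n$ for all $c\neq a,b$. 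In particular the label $n$ occurs exactly once among the edges at $a$ (and likewise at $b$), namely on $e$ itself.

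Next I would isolate the auxiliary fact that drives everything: in any triangle of an MAT-labeled graph the maximal label is attained by a \emph{unique} edge. Indeed, if two edges of a triangle both lay in $\pi_m$, then the third edge would have label $\le m$ (else $m$ would not be maximal) and would close a cycle with two edges of $\pi_m$, contradicting (ML\ref{definition MAT-labeling cycle}). Call this fact $(\star)$. Using $(\star)$, condition (MS\ref{definition MAT-simplicial 3}) at $a$ is equivalent to the assertion that in every triangle $\{a,c,d\}$ the maximal edge is incident to $a$; equivalently, that $a$ is not a conditioning vertex of any edge. Moreover, $(\star)$ lets me deduce (MS\ref{definition MAT-simplicial 2}) \emph{from} (MS\ref{definition MAT-simplicial 3}): if two edges at $a$ shared a label, say $\lambda(a,c)=\lambda(a,d)=k$, then by $(\star)$ the third edge of $\{a,c,d\}$ satisfies $\lambda(c,d)>k=\max\{\lambda(a,c),\lambda(a,d)\}$, directly violating (MS\ref{definition MAT-simplicial 3}). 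Hence the $\ell-1$ edges at $a$ carry pairwise distinct labels; since all labels lie in $\{1,\dots,n\}$ by Lemma \ref{lem:largest-label} and $\ell-1=n$, these labels are exactly $\{1,\dots,n\}$, which is (MS\ref{definition MAT-simplicial 2}). The same reasoning applies to $b$.

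It therefore remains to prove (MS\ref{definition MAT-simplicial 3}), that the endvertex $a$ conditions no edge, and this is the main obstacle. I would argue by contradiction, choosing among all edges conditioned by $a$ one, $\{c,d\}$, of \emph{minimal} label $m=\lambda(c,d)$, so that $\lambda(a,c),\lambda(a,d)<m$, that $c,d\neq b$ (otherwise $\lambda(a,c)=n$), and that $a$ conditions no edge of label $<m$. By (ML\ref{definition MAT-labeling triangle}) the edge $\{c,d\}\in\pi_m$ has exactly $m-1$ conditioning vertices. The heart of the argument is to show that the minimality of $m$, combined with the forest structure of each $\pi_j$ (the reformulation of (ML\ref{definition MAT-labeling cycle}) that every edge of label $\le j$ joins two distinct components of the forest $\pi_j$), forces strictly more than $m-1$ vertices $w$ to satisfy $\lambda(c,w)<m$ and $\lambda(d,w)<m$, contradicting the conditioning count of $\{c,d\}$. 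Concretely, the low-label edges emanating from $a$, $c$, and $d$ propagate through the components of the $\pi_j$ with $j<m$ and over-populate the conditioning set of some label-$m$ edge, exactly the obstruction one meets when attempting to realize such a configuration on small complete graphs. Carrying out this counting rigorously is the delicate step; once it is in place the contradiction closes the proof, and the symmetric statement for $b$ follows verbatim.
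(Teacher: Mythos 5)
The paper does not actually prove this lemma; it imports it verbatim from \cite[Lemma 5.2]{TT23}, so there is no internal proof to compare against. Judged on its own terms, your proposal has a genuine gap at exactly the point where the lemma has content. Your preliminary reductions are correct and nicely organized: (MS\ref{definition MAT-simplicial 1}) is automatic for a complete graph; the fact $(\star)$ that a triangle in an MAT-labeled graph has a unique edge of maximal label is a correct consequence of (ML\ref{definition MAT-labeling cycle}); the reformulation of (MS\ref{definition MAT-simplicial 3}) at $a$ as ``$a$ is not a conditioning vertex of any edge'' is right; and the derivation of (MS\ref{definition MAT-simplicial 2}) from (MS\ref{definition MAT-simplicial 3}) via $(\star)$ and the pigeonhole on $\ell-1$ pairwise distinct labels in $\{1,\dots,\ell-1\}$ is clean. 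But all of this only reduces the lemma to the single claim that an endvertex of the top edge conditions no edge, and that claim is never established: you set up a minimal counterexample $\{c,d\}\in\pi_m$ conditioned by $a$ and then assert that the forest structure of the $\pi_j$ ``forces strictly more than $m-1$'' conditioning vertices, explicitly deferring the counting as ``the delicate step.'' That counting is the entire lemma.

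Moreover, the mechanism you gesture at does not obviously close. The natural candidates for surplus conditioning vertices of $\{c,d\}$ are the vertices $w$ with $\lambda(a,w)<m$ (e.g.\ the conditioning vertices of $\{a,c\}$ or $\{a,d\}$); but for such a $w$ one cannot deduce $\lambda(c,w)<m$ and $\lambda(d,w)<m$ from the minimality of $m$, since the triangle $\{a,d,w\}$, say, may have its maximal edge $\{d,w\}$ of label $\ge m$, which makes $a$ a conditioning vertex of an edge of label $\ge m$ and therefore does not contradict the minimal choice of $m$. So the argument as sketched is not merely unpolished: a genuinely new ingredient is needed at this step (for instance, an induction on $\ell$ exploiting that the principal clique generated by $\{c,d\}$ is a smaller MAT-labeled complete graph containing $a$, or a count relating $\sum_{u\ne a}\lambda(a,u)$ to the number of triangles whose unique maximal edge is opposite $a$) before the contradiction can be drawn.
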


\begin{lemma}[{\cite[Proposition 5.3]{TT23}}]
\label{lem:MAT-simplicial}
Let $ (G, \lambda) $ be an edge-labeled graph having at least $2$ vertices. 
Suppose that $ v  $ is an MAT-simplicial vertex of $ (G,\lambda) $. 
Then 
  $ \lambda $ is an MAT-labeling of $ G $ if and only if 
 $ \lambda|_{E_{G\setminus v}} $ is an MAT-labeling of $ G\setminus v $. 
 
\end{lemma}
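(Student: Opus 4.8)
The plan is to verify the two MAT-labeling axioms (ML\ref{definition MAT-labeling cycle}) and (ML\ref{definition MAT-labeling triangle}) directly on each side of the equivalence, exploiting the rigid structure forced by MAT-simpliciality. First I would fix notation: write $d = \deg_{G}(v)$ and, using (MS\ref{definition MAT-simplicial 2}) (which gives a bijection between the $v$-edges and $\{1,\dots,d\}$), label the neighbors $\nbd_{G}(v) = \{w_{1}, \dots, w_{d}\}$ so that $\lambda(v,w_{i}) = i$. By (MS\ref{definition MAT-simplicial 1}) these vertices form a clique, so every edge $\{w_{i}, w_{j}\}$ is present, and by (MS\ref{definition MAT-simplicial 3}) its label satisfies $\lambda(w_{i}, w_{j}) < \max\{i, j\}$. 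The single fact driving both directions, which I would isolate from (MS\ref{definition MAT-simplicial 3}) at the outset, is this: if $\{a,b\}$ is an edge of label $k$ with $a,b \in \nbd_{G}(v)$, then $\max\{\lambda(a,v), \lambda(b,v)\} > k$, so the two edges joining $a$ and $b$ to $v$ cannot both have label $< k$. Equivalently, $v$ is never the apex of a triangle whose two legs have label smaller than its base.

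For the forward implication, assume $\lambda$ is an MAT-labeling of $G$. Passing to $G\setminus v$ deletes exactly the edges incident to $v$ and creates no new cycles or triangles, so (ML\ref{definition MAT-labeling cycle}) is inherited immediately: a cycle formed by an edge $e \in \pi_{\le k}$ with $\pi_{k}$-edges inside $G\setminus v$ would already be such a cycle in $G$. For (ML\ref{definition MAT-labeling triangle}), the only triangles on an edge $e = \{a,b\} \in \pi_{k}(G\setminus v)$ that deletion of $v$ could destroy are those with apex $v$; such a triangle would require $\{a,v\}$ and $\{b,v\}$ both of label $< k$, which the observation above forbids. Hence every label-$k$ edge of $G\setminus v$ retains all $k-1$ of its triangles and (ML\ref{definition MAT-labeling triangle}) persists.

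The reverse implication is the substantial part, and the cycle analysis of (ML\ref{definition MAT-labeling cycle}) for the new edges is the main obstacle. Assume $\lambda|_{E_{G\setminus v}}$ is an MAT-labeling; I must verify (ML\ref{definition MAT-labeling cycle}) and (ML\ref{definition MAT-labeling triangle}) in $G$ at every label $k$. Since the only possible $v$-edge of label $k$ is $\{v, w_{k}\}$, the vertex $v$ has degree at most one in $\pi_{k}(G)$, so $\pi_{k}(G)$ is still a forest and every $\pi_{k}(G)$-cycle avoids $v$; consequently any cycle formed by a base edge not incident to $v$ already lives in $\pi_{k}(G\setminus v)$, contradicting (ML\ref{definition MAT-labeling cycle}) in $G\setminus v$. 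The delicate case is a base edge $e = \{v, w_{i}\}$ with $i < k$: a cycle of $e$ with $\pi_{k}(G)$-edges must leave $v$ along $e$ and re-enter $v$ along the only $\pi_{k}$-edge at $v$, namely $\{v, w_{k}\}$ (if $k > d$ there is none and we are done), thereby forcing a $w_{i}$--$w_{k}$ path inside the forest $\pi_{k}(G\setminus v)$. But the clique edge $\{w_{i}, w_{k}\}$ has label $< k$ by (MS\ref{definition MAT-simplicial 3}), so (ML\ref{definition MAT-labeling cycle}) in $G\setminus v$ places $w_{i}$ and $w_{k}$ in distinct trees of $\pi_{k}(G\setminus v)$; no such path exists, and (ML\ref{definition MAT-labeling cycle}) holds.

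Finally I would verify (ML\ref{definition MAT-labeling triangle}) in $G$. For a label-$k$ edge not incident to $v$, the observation again shows $v$ contributes no new apex triangle, so its triangle count is unchanged from $G\setminus v$ and equals $k-1$. For the edge $e = \{v, w_{k}\}$, every triangle on $e$ has its apex in $\nbd_{G}(v)$, i.e.\ equals some $w_{j}$ with $j \ne k$, and its legs are $\{v, w_{j}\}$ (label $j$) and $\{w_{j}, w_{k}\}$ (label $< k$ automatically, by (MS\ref{definition MAT-simplicial 3})). Both legs have label $< k$ precisely when $j < k$, and by (MS\ref{definition MAT-simplicial 2}) the indices $j \in \{1, \dots, k-1\}$ all occur, yielding exactly $k-1$ such triangles. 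Thus (ML\ref{definition MAT-labeling triangle}) holds for every edge of $G$, and together with (ML\ref{definition MAT-labeling cycle}) this shows $\lambda$ is an MAT-labeling of $G$, completing the equivalence.
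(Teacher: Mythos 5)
Your proof is correct. Note that the paper does not actually prove this lemma: it is imported verbatim from \cite[Proposition~5.3]{TT23}, so there is no in-paper argument to compare against. Your direct verification of (ML1) and (ML2) is sound and is the natural route: the isolated consequence of (MS3) --- that $v$ can never be the apex of a triangle both of whose legs have label smaller than the base --- correctly disposes of the triangle counts in both directions, and in the reverse direction the observation that $\{w_i,w_k\}\in\pi_{<k}$ forces $w_i$ and $w_k$ into distinct components of the forest $\pi_k(G\setminus v)$ correctly rules out the only possible cycles through $v$. The one spot worth tightening in writing is the case $e=\{v,w_k\}\in\pi_k$ itself in (ML1), which you cover implicitly via ``$v$ has degree at most one in $\pi_k(G)$''; making that case explicit would make the argument airtight.
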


\begin{definition}[MAT-PEO]
\label{def:MAT-PEO}
Let $ (G,\lambda) $ be an edge-labeled graph  on $\ell$ vertices. 
An ordering $ (v_{1}, \dots, v_{\ell}) $ of vertices in $ G $ is an \textbf{MAT-perfect elimination ordering (MAT-PEO)} of $ (G,\lambda) $ if $ v_{i} $ is MAT-simplicial in $ (G_{i}, \lambda_{i}) $ for each $1 \le  i \le \ell $, where $ G_{i}:= G[\{v_{1}, \dots, v_{i}\}] $ and $ \lambda_{i}:= \lambda|_{E_{G_{i}}} $. 
\end{definition}

\begin{theorem}[{\cite[Theorem 5.5]{TT23}}]
\label{thm:MAT-PEO}
An edge-labeled graph $ (G, \lambda) $  is an MAT-labeled graph if and only if there exists an MAT-PEO of $ (G, \lambda) $. 
 
\end{theorem}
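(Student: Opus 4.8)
Both implications run by induction on the number $\ell$ of vertices, with the two halves of Lemma~\ref{lem:MAT-simplicial} as the engine. For the ``if'' direction I would peel off the last vertex of the ordering: given an MAT-PEO $(v_1,\dots,v_\ell)$ of $(G,\lambda)$, the vertex $v_\ell$ is MAT-simplicial in $(G_\ell,\lambda_\ell)=(G,\lambda)$ by the case $i=\ell$ of Definition~\ref{def:MAT-PEO}, while $(v_1,\dots,v_{\ell-1})$ is an MAT-PEO of $(G\setminus v_\ell,\lambda|_{E_{G\setminus v_\ell}})=(G_{\ell-1},\lambda_{\ell-1})$ since the defining conditions for $i\le\ell-1$ only involve the induced subgraphs $G_i\subseteq G_{\ell-1}$. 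By induction the deletion is MAT-labeled (the base case $\ell\le 1$ being trivial, as an edgeless graph is MAT-labeled), and Lemma~\ref{lem:MAT-simplicial} upgrades this to an MAT-labeling of $(G,\lambda)$.

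The ``only if'' direction is symmetric once one knows where to delete, so everything reduces to the following existence statement, which I regard as the heart of the theorem: \emph{every MAT-labeled graph with at least one vertex has an MAT-simplicial vertex.} Granting it, pick an MAT-simplicial vertex $v$; by Lemma~\ref{lem:MAT-simplicial} the deletion $(G\setminus v,\lambda|_{E_{G\setminus v}})$ is MAT-labeled, hence by induction admits an MAT-PEO $(v_1,\dots,v_{\ell-1})$, and appending $v_\ell:=v$ produces an MAT-PEO of $(G,\lambda)$.

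To produce an MAT-simplicial vertex I would dispose of the complete case directly by Lemma~\ref{lem:MATS-existence}, and treat the case $|\mcK(G)|\ge 2$ as follows. Here $G$ is strongly chordal (Theorem~\ref{thm:MAT=SC}), so Lemma~\ref{lem:max} supplies distinct maximal cliques $X_0,Y_0$ with $X_0\cap Y\subseteq X_0\cap Y_0$ for all $Y\in\mcK(G)\setminus\{X_0\}$; then $X_0\setminus Y_0$ is nonempty and each of its vertices lies in no maximal clique but $X_0$, hence is simplicial in $G$. Now $G[X_0]$ is an MAT-labeled complete graph (Lemma~\ref{lem:restriction to intersection of maximal cliques}), and its top-label edge $e$ has label $|X_0|-1$ (Lemma~\ref{lem:largest-label}); meanwhile $G[X_0\cap Y_0]$ is MAT-labeled (Lemma~\ref{lem:restriction to intersection of maximal cliques}) with largest label $|X_0\cap Y_0|-1<|X_0|-1$, so $e$ cannot have both endvertices inside $X_0\cap Y_0$. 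Thus some endvertex $v$ of $e$ lies in $X_0\setminus Y_0$; it is MAT-simplicial in $G[X_0]$ by Lemma~\ref{lem:MATS-existence}, and since $\nbd_{G}(v)=X_0\setminus\{v\}=\nbd_{G[X_0]}(v)$ the label conditions (MS2) and (MS3) transfer verbatim from $G[X_0]$ to $G$, making $v$ MAT-simplicial in $G$.

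The step I expect to be the real obstacle is exactly this reconciliation of ``private to a clique'' with ``MAT-simplicial inside that clique.'' Simple examples---a largest clique with pendant vertices attached at the endvertices of its top-label edge---show that the MAT-simplicial vertex of $G$ need not lie on a largest clique, so one cannot simply take $X_0$ to be largest, and the MAT-simplicial vertices produced inside an arbitrary clique need not be private to it. What unlocks the argument is the label bound from Lemma~\ref{lem:restriction to intersection of maximal cliques}, which pushes the top-label edge of $G[X_0]$ out of the overlap $X_0\cap Y_0$ and into the private part $X_0\setminus Y_0$; paired with the extremal choice of $X_0,Y_0$ from Lemma~\ref{lem:max}, it localizes an MAT-simplicial vertex of $G$.
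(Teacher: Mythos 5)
The paper does not actually prove this theorem; it is imported verbatim from \cite[Theorem 5.5]{TT23}, so there is no in-text proof to compare against. Judged on its own, your argument is correct and is assembled entirely from results the paper does quote. The ``if'' direction and the inductive skeleton of ``only if'' are the routine part (peel off one vertex and apply the appropriate direction of Lemma \ref{lem:MAT-simplicial}), and you correctly isolate the real content: every nonempty MAT-labeled graph has an MAT-simplicial vertex. Your localization of such a vertex --- take the extremal pair $X_0,Y_0$ from Lemma \ref{lem:max}, observe that every vertex of $X_0\setminus Y_0$ lies in no maximal clique other than $X_0$ and is therefore simplicial with $\nbd_G(v)=X_0\setminus\{v\}$, and then use the label bound from Lemmas \ref{lem:restriction to intersection of maximal cliques} and \ref{lem:largest-label} to push an endvertex of the top-label edge of $G[X_0]$ out of $X_0\cap Y_0$ --- is sound, and the transfer of (MS2)--(MS3) from $G[X_0]$ to $G$ via the equality of neighborhoods is exactly right. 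This is the same mechanism that drives Proposition \ref{prop:extend}, which is presumably why the paper records Lemma \ref{lem:max} in the form it does.

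Two small points to tidy up. First, Lemma \ref{lem:max} may hand you an $X_0$ that is a singleton (e.g.\ $G$ a disjoint union of an edge and an isolated vertex), in which case $G[X_0]$ has no top-label edge; the conclusion survives trivially because an isolated vertex is vacuously MAT-simplicial, but the case should be stated. Second, you invoke Theorem \ref{thm:MAT=SC} to obtain strong chordality. Within this paper that theorem is a quoted black box, so the step is legitimate; but since Theorem \ref{thm:MAT-PEO} itself lives in \cite{TT23}, a reconstruction of the original proof must confirm that the implication ``MAT-labeled $\Rightarrow$ strongly chordal'' is established there independently of the MAT-PEO characterization (it is, via freeness/chordality together with a separate sun-free argument), as otherwise the appeal would be circular.
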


\begin{remark} 
\label{rem:analog} 
It is known that a graph is chordal if and only if it has a perfect elimination ordering \cite{FG65}. 
Theorem \ref{thm:MAT-PEO} is an analog of this classical result for MAT-labeled graphs.
\end{remark}

The method of merging regular vines was given in \cite{CKW15, ZK22}. 
We have a very similar\footnote{These methods are actually equivalent in the sense that they produce the same output as a consequence of our Corollary \ref{cor:1-to-1-restrict}.} method for merging MAT-labeled complete graphs.

\begin{lemma}[Merging MAT-labeled complete graphs {\cite[Lemma 5.7]{TT23}}]
\label{lem:merge}
Let $ (G_1, \lambda_1) $ and $ (G_2, \lambda_2) $ be MAT-labeled complete graphs. 
Suppose $G_1[N_{G_1}  \cap N_{G_2}] = G_2[N_{G_1}  \cap N_{G_2}]$ and denote this common complete graph by $G'$. 
Assume that there exists an MAT-labeling $\lambda'$ of $G'$ such that $(G',\lambda')\le (G_1, \lambda_1) $ and $(G',\lambda')\le  (G_2, \lambda_2) $. 
Let $G$ denote the complete graph with vertex set $N_{G_1}  \cup N_{G_2}$. 
Then there exists an MAT-labeling $ \lambda $ of $ G $ such that   $ (G_1, \lambda_1) \le (G, \lambda) $ and $ (G_2, \lambda_2) \le (G, \lambda) $.
\end{lemma}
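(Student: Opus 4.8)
The plan is to construct the desired MAT-labeling $\lambda$ on the big complete graph $G$ by first building it on a suitable \emph{spanning subgraph} and then extending to all of $G$ using the machinery of MAT-PEO from Theorem \ref{thm:MAT-PEO}. Since $G$ is complete on $N_{G_1}\cup N_{G_2}$, its edge set decomposes as $E_{G}=E_{G_1}\cup E_{G_2}\cup E_{\mathrm{cross}}$, where $E_{\mathrm{cross}}$ consists of the edges joining a vertex of $N_{G_1}\setminus N_{G_2}$ to a vertex of $N_{G_2}\setminus N_{G_1}$; these cross edges are not labeled by either input. The heart of the argument is therefore to assign labels to the cross edges so that the whole thing becomes an MAT-labeling restricting correctly to $(G_1,\lambda_1)$ and $(G_2,\lambda_2)$.

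First I would set up notation: write $N_0:=N_{G_1}\cap N_{G_2}$, so $G'=G[N_0]$ carries the common sublabeling $\lambda'$. By Lemma \ref{lem:MATS-existence} and the existence half of Theorem \ref{thm:MAT-PEO}, each $(G_i,\lambda_i)$ admits an MAT-PEO; the key structural input I want is an MAT-PEO of $(G_1,\lambda_1)$ in which the vertices of $N_0$ come \emph{last}, i.e.\ an ordering $(a_1,\dots,a_s,c_1,\dots,c_t)$ with $\{c_1,\dots,c_t\}=N_0$ and $(c_1,\dots,c_t)$ an MAT-PEO of $(G',\lambda')$. The point of eliminating the vertices of $N_{G_1}\setminus N_0$ one at a time on top of the fixed common core $G'$ is that each such vertex is MAT-simplicial, so by Lemma \ref{lem:MAT-simplicial} adding it back preserves the MAT property. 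I expect that such a ``$N_0$-last'' MAT-PEO can be extracted from the MAT-PEO characterization together with the fact that $(G',\lambda')\le(G_1,\lambda_1)$; concretely, one eliminates a largest-label endvertex lying outside $N_0$ at each step, and because $G'$ is itself MAT-labeled the process terminates at $G'$.

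The central construction then proceeds by building $(G,\lambda)$ as a tower over $G'$. I would start from $(G',\lambda')$, which is MAT-labeled, and successively add back the vertices of $N_{G_1}\setminus N_0$ using the orderings above; each addition is MAT-simplicial into the already-constructed MAT-labeled graph, so repeated application of Lemma \ref{lem:MAT-simplicial} yields $(G_1,\lambda_1)$ as an MAT-labeled graph with $(G',\lambda')\le(G_1,\lambda_1)$. Next I would add back the vertices $b_1,\dots,b_u$ of $N_{G_2}\setminus N_0$ on top of $(G_1,\lambda_1)$, one at a time, using the MAT-PEO of $(G_2,\lambda_2)$ restricted to its new vertices. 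The subtlety is that when $b_j$ is re-inserted it must be made MAT-simplicial not just over $G_2$ but over the current graph, which already contains all of $N_{G_1}$; this forces a choice of labels for the cross edges $\{a_i,b_j\}$. Here I would use condition (MS\ref{definition MAT-simplicial 2}): each $b_j$ has prescribed labels on its edges into $N_{G_2}$ coming from $\lambda_2$, and I would label the cross edges into $N_{G_1}\setminus N_0$ so that the full incident-edge label set of $b_j$ is exactly $\{1,\dots,\deg(b_j)\}$ while respecting the max-inequality (MS\ref{definition MAT-simplicial 3}).

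The hard part will be verifying that the cross-edge labels can be chosen consistently so that each reinserted $b_j$ is genuinely MAT-simplicial in the growing graph, i.e.\ simultaneously satisfying the simplicial condition, the consecutive-labels condition (MS\ref{definition MAT-simplicial 2}), and the strict max condition (MS\ref{definition MAT-simplicial 3}) against \emph{all} neighbors, including those in $N_{G_1}\setminus N_0$. Because $G$ is complete, each $b_j$ is adjacent to every earlier vertex, so its degree is large and the labels $1,\dots,\deg(b_j)$ must be distributed across both the $\lambda_2$-inherited edges and the cross edges without collision. I would handle this by an inductive bookkeeping argument: the labels already used on the $G_2$-side are determined by the MAT-simpliciality of $b_j$ inside $(G_2,\lambda_2)$, and the remaining labels (those not realized by neighbors in $N_{G_2}$) must be assigned bijectively to the cross edges, with the max condition checked using the fact that for $a_i\in N_{G_1}\setminus N_0$ and the common core, the label $\lambda(a_i,c)$ on edges into $N_0$ is already controlled by the MAT-simpliciality of $a_i$. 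Once every $b_j$ is shown to be MAT-simplicial at its insertion stage, Theorem \ref{thm:MAT-PEO} immediately gives that the final $(G,\lambda)$ is an MAT-labeled graph, and by construction $\lambda$ restricts to $\lambda_1$ on $E_{G_1}$ and to $\lambda_2$ on $E_{G_2}$, yielding $(G_1,\lambda_1)\le(G,\lambda)$ and $(G_2,\lambda_2)\le(G,\lambda)$ as required.
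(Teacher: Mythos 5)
The paper never proves this lemma: it is imported verbatim from \cite[Lemma 5.7]{TT23}, so there is no in-paper argument to compare against and your proposal has to stand on its own. Its overall shape is sound and completable with the tools quoted in this paper: peel $(G_1,\lambda_1)$ down to $(G',\lambda')$, then rebuild the full complete graph by re-inserting the vertices of $N_{G_2}\setminus N_0$ one at a time as MAT-simplicial vertices and invoke Theorem \ref{thm:MAT-PEO}. Two remarks on the setup. First, with the convention of Definition \ref{def:MAT-PEO} the vertices of $N_0$ must form the \emph{initial} segment of the ordering (so that the outside vertices are eliminated first); you wrote them as the final segment. Second, the existence of such an ordering does follow as you suggest, but deserves a sentence: by Lemma \ref{lem:card-complete} the unique top-label edge of any intermediate complete graph $K_m\supseteq G'$ has label $m-1$, which exceeds every label occurring in $G'$, so it has an endvertex outside $N_0$, and that endvertex is MAT-simplicial by Lemma \ref{lem:MATS-existence}; removing it preserves everything by Lemma \ref{lem:MAT-simplicial}. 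The same fact is needed for $G_2$ over $N_0$, which you use implicitly.

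The genuine gap is the step you yourself call ``the hard part'': you never exhibit the cross-edge labels, and that assignment is the actual content of the lemma --- ``inductive bookkeeping'' is a placeholder where the one real idea should go. It can be made explicit. Put $t=|N_0|$, order $N_{G_1}\setminus N_0=\{a_1,\dots,a_s\}$ and $N_{G_2}\setminus N_0=\{b_1,\dots,b_u\}$ so that $(c_1,\dots,c_t,a_1,\dots,a_s)$ and $(c_1,\dots,c_t,b_1,\dots,b_u)$ are MAT-PEOs of $(G_1,\lambda_1)$ and $(G_2,\lambda_2)$ extending one of $(G',\lambda')$, and set $\lambda(a_i,b_j):=t+j-1+i$. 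When $b_j$ is inserted on top of the complete graph on $N_{G_1}\cup\{b_1,\dots,b_{j-1}\}$, its $\lambda_2$-edges carry exactly the labels $\{1,\dots,t+j-1\}$ by (MS\ref{definition MAT-simplicial 2}) for $b_j$ in $G_2[N_0\cup\{b_1,\dots,b_j\}]$, so the total incident label set is $\{1,\dots,t+s+j-1\}=\{1,\dots,\deg(b_j)\}$, giving (MS\ref{definition MAT-simplicial 2}); and (MS\ref{definition MAT-simplicial 3}) is checked case by case: $\lambda_1(a_i,a_{i'})\le t+\max(i,i')-1<t+j-1+\max(i,i')$ and $\lambda_1(a_i,c)\le t+i-1$ for $c\in N_0$ (both from (MS\ref{definition MAT-simplicial 2}) applied to $a_{i'}$, resp.\ $a_i$, in the peeling order of $G_1$ --- this is precisely where that order is needed), $\lambda(a_i,b_{j'})=t+j'-1+i<t+j-1+i$ for $j'<j$, and pairs inside $N_0\cup\{b_1,\dots,b_{j-1}\}$ are covered by the MAT-simpliciality of $b_j$ in $G_2$. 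Until some such assignment and verification is written down, the proposal is a correct plan rather than a proof.
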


\begin{proposition} 
\label{prop:extend}
Let $ (G, \lambda) $ be an MAT-labeled graph and $K$ be the complete graph with vertex set $N_G$. 
Then there exists an MAT-labeling $\widetilde\lambda$ of $K$  such that $ (G, \lambda) \le (K,\widetilde\lambda) $.

\end{proposition}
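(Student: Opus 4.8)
The plan is to argue by induction on the number of vertices $\ell = |N_G|$. When $\ell \le 1$ the graph $G$ has no edges and coincides with $K$, so the empty labeling does the job and there is nothing to prove. For the inductive step I assume the statement for every MAT-labeled graph on fewer than $\ell$ vertices.

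First I would extract an MAT-simplicial vertex to peel off. Since $(G,\lambda)$ is MAT-labeled, Theorem \ref{thm:MAT-PEO} supplies an MAT-PEO $(v_1,\dots,v_\ell)$, and by definition its last vertex $v:=v_\ell$ is MAT-simplicial in $(G,\lambda)$. As $v$ is simplicial, its neighborhood $\nbd_G(v)$ is a clique, so $M:=\nbd_G(v)\cup\{v\}$ is a clique; moreover $M$ is \emph{maximal}, since any vertex completing it to a larger clique would be adjacent to $v$ and hence already lie in $\nbd_G(v)$. Consequently $(G[M],\lambda|_{E_{G[M]}})$ is an MAT-labeled complete graph by Lemma \ref{lem:restriction to intersection of maximal cliques}, and it carries precisely the edges of $G$ incident on $v$.

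Next I would apply the induction hypothesis to the deletion of $v$. By Lemma \ref{lem:MAT-simplicial}, $(G\setminus v,\lambda|_{E_{G\setminus v}})$ is again MAT-labeled, so the hypothesis produces an MAT-labeling $\widetilde{\lambda}^-$ of the complete graph $K^-$ on $N_G\setminus\{v\}$ with $(G\setminus v,\lambda|_{E_{G\setminus v}})\le(K^-,\widetilde{\lambda}^-)$. The two MAT-labeled complete graphs $(K^-,\widetilde{\lambda}^-)$ and $(G[M],\lambda|_{E_{G[M]}})$ meet exactly in $\nbd_G(v)$, and on the clique $\nbd_G(v)$ both labelings agree with $\lambda|_{\nbd_G(v)}$: indeed the edges inside $\nbd_G(v)$ already belong to $G\setminus v$, where $\widetilde{\lambda}^-$ restricts to $\lambda$. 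Taking $\lambda':=\lambda|_{\nbd_G(v)}$ (an MAT-labeling, again by restriction) as the common labeling, the hypotheses of the merging Lemma \ref{lem:merge} are met.

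Finally, Lemma \ref{lem:merge} yields an MAT-labeling $\widetilde\lambda$ of the complete graph on $(N_G\setminus\{v\})\cup M=N_G$, i.e.~of $K$, such that both $(K^-,\widetilde{\lambda}^-)\le(K,\widetilde\lambda)$ and $(G[M],\lambda|_{E_{G[M]}})\le(K,\widetilde\lambda)$. It then remains to check that $\widetilde\lambda|_{E_G}=\lambda$: every edge of $G$ not incident on $v$ lies in $G\setminus v\subseteq K^-$, where $\widetilde\lambda=\widetilde{\lambda}^-=\lambda$, while every edge incident on $v$ lies in $G[M]$, where $\widetilde\lambda=\lambda$; hence $(G,\lambda)\le(K,\widetilde\lambda)$, closing the induction. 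I expect the only delicate point to be confirming that the two complete graphs overlap precisely on $\nbd_G(v)$ with identical labels so that the merging lemma genuinely applies, together with the bookkeeping in this last restriction check; the remainder is a direct assembly of the restriction and merging results already available.
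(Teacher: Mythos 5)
Your proof is correct, but it runs along a genuinely different induction from the paper's. The paper inducts on the number of maximal cliques $|\mcK(G)|$: it invokes Theorem \ref{thm:MAT=SC} to see that $G$ is strongly chordal, uses Lemma \ref{lem:max} to isolate a maximal clique $X_0$ whose intersections with the remaining cliques are all contained in a single $X_0\cap Y_0$, extends the union $G_\B$ of the other maximal cliques by induction, and merges with $(G[X_0],\lambda|_{E_{G[X_0]}})$ over the complete graph $G[X_0\cap Y_0]$ via Lemma \ref{lem:merge}. You instead induct on $|N_G|$, peel off the last vertex $v$ of an MAT-PEO (Theorem \ref{thm:MAT-PEO}), and merge $(G[M],\lambda|_{E_{G[M]}})$ for $M=\nbd_G(v)\cup\{v\}$ with the inductively extended $(K^-,\widetilde\lambda^-)$ over the complete graph on $\nbd_G(v)$. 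Your route is more self-contained: it bypasses the strong-chordality characterization and Lemma \ref{lem:max} entirely, needing only the MAT-PEO machinery plus the same merging lemma; the paper's route keeps the argument aligned with its clique-decomposition viewpoint. The one place where your citation should be sharpened is the parenthetical ``an MAT-labeling, again by restriction'' for $\lambda'=\lambda|_{E_{G[\nbd_G(v)]}}$: the set $\nbd_G(v)$ need not be an intersection of maximal cliques of $G$, so Lemma \ref{lem:restriction to intersection of maximal cliques} does not apply directly. Instead, observe that conditions (MS1)--(MS3) for $v$ refer only to $v$, its neighbors, and the edges among them, so $v$ remains MAT-simplicial in the MAT-labeled complete graph $(G[M],\lambda|_{E_{G[M]}})$, and Lemma \ref{lem:MAT-simplicial} then shows that deleting $v$ leaves an MAT-labeled complete graph on $\nbd_G(v)$; with that fix every hypothesis of Lemma \ref{lem:merge} is verified and your bookkeeping at the end is sound.
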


   \begin{proof}
     We argue by induction on the number $ |\mcK(G)| $ of maximal cliques of $G$. 
     If $ |\mcK(G)| =0$ or $1 $, then the assertion follows since $G$ is a complete graph. 
   Suppose  $ |\mcK(G)| \geq 2 $.   

Note that by Theorem \ref{thm:MAT=SC}, $G$ is a strongly chordal graph.
By Lemma \ref{lem:max}, there exist distinct $ X_{0}, Y_{0} \in \mcK(G) $ such that $ X_{0} \cap Y_{0} \supseteq X_{0} \cap Y $ for all $ Y \in \mcK(G)  \setminus \{X_{0}\} $. 
Denote $\B := \mcK(G) \setminus\{X_{0}\} $. 
Let $ G_\B$ be the subgraph of $ G $ with vertex set $ N_{G_\B}:= \cup_{Y \in \B}Y$ and edge set $ E_{G_\B}:=  \cup_{Y \in \B} E_{G[Y]}$. 
Thus $ G_\B$ has $ |\mcK(G)| -1$ maximal cliques. 
Moreover, $(G_\B, \lambda|_{E_{G_\B}})$ and $(G[X_0], \lambda|_{E_{G[X_0]}})$ are MAT-labeled graphs by Corollary \ref{cor:restriction}. 
By the induction hypothesis, there exists an MAT-labeling $ \lambda_\B$ of the complete graph $K_\B$ with vertex set $N_{G_\B}$ such that $ (G_\B, \lambda|_{E_{G_\B}}) \le (K_\B,\lambda_\B)  $.

Now we are in the setting of Lemma \ref{lem:merge} with $ (G_1, \lambda_1)=(G[X_0], \lambda|_{E_{G[X_0]}})$ and $ (G_2, \lambda_2) =(K_\B,\lambda_\B) $. 
Indeed, first note that
\begin{align*}
N_{G_1}  \cap N_{G_2} = 
X_{0} \cap \left(\bigcup_{Y \in \B}Y\right)
= \bigcup_{Y \in \B}(X_{0} \cap Y) = X_{0} \cap Y_{0}.
\end{align*}
Hence $G_1[N_{G_1}  \cap N_{G_2}] = G_2[N_{G_1}  \cap N_{G_2}] =G[X_{0} \cap Y_{0}]$. 
Denote this common complete graph by $G'$. 
 By Lemma \ref{lem:restriction to intersection of maximal cliques}, the restriction $\lambda':=\lambda|_{E_{G'}}$ is an MAT-labeling $G'$. 
 Hence $(G',\lambda')\le (G_1, \lambda_1) $ and $(G',\lambda')\le  (G_2, \lambda_2) $.
 
Therefore, by Lemma \ref{lem:merge}, there exists an MAT-labeling $ \widetilde\lambda $ of the complete graph $K$ with vertex set $N_{G_1}  \cup N_{G_2}=N_{G}$ such that   $ (G_1, \lambda_1) \le(K,\widetilde\lambda)$ and $ (G_2, \lambda_2) \le (K,\widetilde\lambda)$. 
In particular, $ (G, \lambda) \le (K,\widetilde\lambda) $.
 
\end{proof}

The following ``gluing trick"  plays an important role in the construction of an MAT-labeling for a given strongly chordal graph in \cite[\S5.2]{TT23}.

\begin{lemma}[Gluing MAT-labeled graphs  {\cite[Theorem 5.8]{TT23}}]
\label{lem:glue}
Let $ (G_1, \lambda_1) $ and $ (G_2, \lambda_2) $ be MAT-labeled graphs such that $G_1[N_{G_1}  \cap N_{G_2}] = G_2[N_{G_1}  \cap N_{G_2}]$. 
Suppose that this common subgraph, denoted $G'$, is a complete graph. 
Suppose further that there exists an MAT-labeling $ \lambda' $ of $ G' $ such that   $ (G', \lambda')   \le(G_1, \lambda_1)$ and $ (G', \lambda')   \le (G_2, \lambda_2) $.
Define an edge-labeled graph $ (G, \lambda) $ by $ N_{G} = N_{G_1}  \cup N_{G_2}$, $E_{G} = E_{G_1} \cup E_{G_2} $, $\lambda|_{E_{G_1}} = \lambda_{1}, \lambda|_{E_{G_2}} = \lambda_{2}$.
Then $ (G, \lambda) $ is an MAT-labeled graph.

\end{lemma}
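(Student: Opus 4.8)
The plan is to verify the two conditions in Definition \ref{definition MAT-labeling} directly for the glued labeling $\lambda$ on $G = (N_{G_1}\cup N_{G_2},\, E_{G_1}\cup E_{G_2})$. The essential structural fact I would establish first is that every edge of $G$ lies in $E_{G_1}$ or $E_{G_2}$, and crucially that any triangle of $G$ has all three of its edges inside a single $E_{G_i}$. This is where the completeness of the shared subgraph $G'$ does the work: if a triangle had two vertices in $N_{G_1}\setminus N_{G_2}$ (or had edges split between the two sides), the third vertex and the connecting edges would force a vertex in $N_{G_1}\cap N_{G_2}$ with an edge leaving to each side, but since $G'=G_1[N_{G_1}\cap N_{G_2}]=G_2[N_{G_1}\cap N_{G_2}]$ is complete and the labelings agree on it, one checks that the ``mixed'' edge cannot exist. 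So I would prove a separation lemma: there are no edges of $G$ joining $N_{G_1}\setminus N_{G_2}$ to $N_{G_2}\setminus N_{G_1}$, hence every cycle and every triangle of $G$ is contained entirely in $G_1$ or entirely in $G_2$.

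Granting that separation, condition (ML\ref{definition MAT-labeling triangle}) is almost immediate: take an edge $e\in\pi_k$. By the separation lemma every triangle on $e$ lives in the same $G_i$ that contains $e$ (if $e\in E_{G'}$ it lies in both, but the triangle count is the same since $\lambda',\lambda_1,\lambda_2$ agree on $G'$). Since $(G_i,\lambda_i)$ is an MAT-labeled graph and $\lambda|_{E_{G_i}}=\lambda_i$, the edge $e$ forms exactly $k-1$ triangles with edges of label $<k$ inside $G_i$, and by separation these are all the triangles on $e$ in $G$. I would note carefully that the labels are unchanged by gluing, so the set $\pi_{<k}$ relevant to $e$ is the same whether computed in $G$ or in $G_i$.

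For condition (ML\ref{definition MAT-labeling cycle}), suppose for contradiction that some edge $e\in\pi_{\le k}$ forms a cycle with edges of $\pi_k$. By the separation lemma the entire cycle lies in a single $G_i$; restricting to that $G_i$ and using $\lambda|_{E_{G_i}}=\lambda_i$, we get a cycle in $(G_i,\lambda_i)$ witnessing a violation of (ML\ref{definition MAT-labeling cycle}) there, contradicting that $(G_i,\lambda_i)$ is MAT-labeled. A subtle point to address is that the cycle in question could in principle route through the shared region $N_{G_1}\cap N_{G_2}$ and thereby appear to ``cross'' between the two graphs; this is precisely what the separation lemma rules out, since a walk passing between $N_{G_1}\setminus N_{G_2}$ and $N_{G_2}\setminus N_{G_1}$ must use a mixed edge, which does not exist. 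Here I would invoke Lemma \ref{lem:walk-path} to reduce walk statements to path statements where convenient.

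The main obstacle, and the crux of the whole argument, is the separation lemma — proving that no edge crosses directly between the two private vertex sets and that consequently every cycle is confined to one side. Everything else is a routine transfer of the MAT conditions from $G_i$ to $G$ once labels are recognized as unchanged. I expect this separation to follow from the completeness of $G'$ together with the hypothesis $G_1[N_{G_1}\cap N_{G_2}]=G_2[N_{G_1}\cap N_{G_2}]$: an edge of $G=(N_G, E_{G_1}\cup E_{G_2})$ comes from $E_{G_1}$ or $E_{G_2}$ by construction, and an edge in $E_{G_i}$ has both endpoints in $N_{G_i}$, so it cannot have one endpoint in $N_{G_1}\setminus N_{G_2}$ and the other in $N_{G_2}\setminus N_{G_1}$. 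This makes the separation essentially a consequence of the edge-set construction itself, and the real content is ensuring triangles and cycles respect it, which I would phrase and prove as a short standalone observation before invoking it twice.
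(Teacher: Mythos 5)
Your separation lemma is correct but much weaker than what you use it for. It says only that no \emph{single edge} joins $N_{G_1}\setminus N_{G_2}$ to $N_{G_2}\setminus N_{G_1}$; it does not imply that every cycle of $G$ lies entirely in one $G_i$. A cycle can pass from the private part of $G_1$ into the shared clique $N_{G_1}\cap N_{G_2}$, exit into the private part of $G_2$, and return, using only edges of $E_{G_1}$ and $E_{G_2}$ and never a ``mixed'' edge: for instance $a\,\textendash\,c\,\textendash\,b\,\textendash\,d\,\textendash\,a$ with $a\in N_{G_1}\setminus N_{G_2}$, $b\in N_{G_2}\setminus N_{G_1}$, $c,d\in N_{G_1}\cap N_{G_2}$. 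So your proof of (ML\ref{definition MAT-labeling cycle}) breaks exactly at the point you flag as subtle: the claim that ``a walk passing between $N_{G_1}\setminus N_{G_2}$ and $N_{G_2}\setminus N_{G_1}$ must use a mixed edge'' is false, because the walk can route through intermediate vertices of the shared region. Handling such weaving cycles is the real content of the lemma; one must exploit the completeness of $G'$ to produce chords in $E_{G'}$ and control their labels (or sidestep cycles entirely by assembling an MAT-PEO of $G$ from MAT-PEOs of $G_1$ and $G_2$ via Theorem \ref{thm:MAT-PEO}), and your argument supplies none of this.

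The argument for (ML\ref{definition MAT-labeling triangle}) also has a gap for edges $e\in E_{G'}$, even though triangles (unlike longer cycles) really are confined to a single $G_i$. For such an $e$ of label $k$, the set of conditioning vertices of $e$ in $G$ is the union of those in $G_1$ and those in $G_2$, and these two sets need not coincide: $G_1$ may contribute conditioning vertices lying in $N_{G_1}\setminus N_{G_2}$ and $G_2$ different ones in $N_{G_2}\setminus N_{G_1}$, so ``the triangle count is the same'' does not follow from the labelings merely agreeing on $G'$. The correct count is by inclusion--exclusion: writing ${\rm CV}_H(e)$ for the conditioning vertices of $e$ in $H$, one has $|{\rm CV}_G(e)|=|{\rm CV}_{G_1}(e)|+|{\rm CV}_{G_2}(e)|-|{\rm CV}_{G'}(e)|=(k-1)+(k-1)-(k-1)$, where the last term equals $k-1$ precisely because the hypothesis $(G',\lambda')\le (G_i,\lambda_i)$ makes $(G',\lambda')$ itself an MAT-labeled graph satisfying (ML\ref{definition MAT-labeling triangle}). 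Your proof never invokes this hypothesis, and without it the conclusion fails: the shared edge could pick up extra conditioning vertices from each side and violate the count.
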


  We close this subsection by introducing the notion of \emph{principal cliques} in an MAT-labeled graph. 
  This notion will be useful for the construction of an LR-vine from a given MAT-labeled graph in Definition \ref{def:G-to-P}.
		\begin{lemma}[Principal clique]
\label{lem:edge-clique}
Let $(G,\lambda)$ be an MAT-labeled graph. 
Let $e=\{i,j\} \in \pi_k$ be an edge in $G$ of label $k$ and $h_1, \ldots, h_{k-1}$ be the conditioning vertices of $e$. 
Then the set $K_e:=\{i,j, h_1, \ldots, h_{k-1}\}$ is a clique of $G$. 
Moreover, $(G[K_e],  \lambda|_{E_{G[K_e]}} ) \le (G, \lambda)$ and all the edges in $G[K_e] \setminus e$ have label $<k$.
We call   $K_e$ the \tbf{principal clique generated by $e$}.
\end{lemma}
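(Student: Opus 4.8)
The plan is to reduce everything to the case where one endpoint of $e$ is MAT-simplicial, since there all three assertions can be read off directly from (MS\ref{definition MAT-simplicial 2}) and (MS\ref{definition MAT-simplicial 3}). First I would invoke Theorem \ref{thm:MAT-PEO} to fix an MAT-PEO $(v_1,\dots,v_\ell)$ of $(G,\lambda)$; iterating Lemma \ref{lem:MAT-simplicial} shows that each $(G_m,\lambda_m)$ is an MAT-labeled graph. Writing $e=\{v_p,v_q\}$ with $p<q$, the vertex $v_q$ is MAT-simplicial in the MAT-labeled graph $(G_q,\lambda_q)$, and $e$ is incident on $v_q$ with $\lambda(v_p,v_q)=k$.

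The key step is to identify the conditioning vertices. By (MS\ref{definition MAT-simplicial 2}) there are exactly $k-1$ neighbors $h$ of $v_q$ in $G_q$ with $\lambda(v_q,h)<k$; for each such $h$, membership in the clique $\nbd_{G_q}(v_q)$ gives $\{v_p,h\}\in E_{G_q}$, while (MS\ref{definition MAT-simplicial 3}) forces $\lambda(v_p,h)<\max\{k,\lambda(h,v_q)\}=k$, so $h$ is a conditioning vertex of $e$ in $G_q$. Since $(G_q,\lambda_q)$ and $(G,\lambda)$ are both MAT-labeled, condition (ML\ref{definition MAT-labeling triangle}) says $e$ has exactly $k-1$ conditioning vertices in each of them; as these $k-1$ vertices remain conditioning vertices in the larger graph $G$, they must be precisely $h_1,\dots,h_{k-1}$. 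Thus $\{v_p,h_1,\dots,h_{k-1}\}$ is exactly the set of neighbors of $v_q$ carrying labels $1,\dots,k$, all lying in the clique $\nbd_{G_q}(v_q)$; hence $K_e=\{v_q\}\cup\{v_p,h_1,\dots,h_{k-1}\}$ is a clique, and (MS\ref{definition MAT-simplicial 3}) again bounds every edge of $G[K_e]$ other than $e$ by $<k$. This settles the clique and label claims.

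For the MAT-labeled subgraph assertion I would isolate an auxiliary statement about complete graphs: \emph{if $(K,\mu)$ is an MAT-labeled complete graph with MAT-simplicial vertex $v$ and $0\le t\le\deg_K(v)$, then the initial segment $\{v\}\cup\{h\in\nbd_K(v)\mid\mu(v,h)\le t\}$ induces an MAT-labeled complete subgraph of $(K,\mu)$.} I expect this to be the main obstacle, because the restriction of an MAT-labeling to an arbitrary sub-clique need not be an MAT-labeling, so the segment must be exploited carefully. The proof I have in mind is induction on $|K|$: when $t=|K|-1$ the segment is all of $K$; otherwise Lemmas \ref{lem:largest-label} and \ref{lem:card-complete} provide a unique largest-label edge $\{v,w\}$ with $\mu(v,w)=|K|-1>t$ (incident on $v$ by (MS\ref{definition MAT-simplicial 2})), so $w$ lies outside the segment. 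Then $(K\setminus w,\mu)$ is an MAT-labeled complete graph by Lemmas \ref{lem:MATS-existence} and \ref{lem:MAT-simplicial}, the vertex $v$ remains MAT-simplicial in it, and the induction hypothesis applies verbatim since the segment is unchanged.

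To conclude the lemma, I would observe that $M:=\{v_q\}\cup\nbd_{G_q}(v_q)$ is a maximal clique of $G_q$ (as $v_q$ is simplicial), so $(G_q[M],\lambda|_{E_{G_q[M]}})$ is an MAT-labeled complete graph by Lemma \ref{lem:restriction to intersection of maximal cliques}. Applying the auxiliary statement to $(G_q[M],\lambda|)$ with $v=v_q$ and $t=k$ shows that the induced subgraph on $\{v_q\}\cup\{v_p,h_1,\dots,h_{k-1}\}=K_e$ is MAT-labeled. Since $G[K_e]$ is an induced subgraph of $G$ carrying the restricted labeling and is itself an MAT-labeled graph, we obtain $(G[K_e],\lambda|_{E_{G[K_e]}})\le(G,\lambda)$, as required.
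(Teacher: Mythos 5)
Your proof is correct, but it takes a genuinely different route from the paper's. The paper argues globally: it deletes all edges of label $>k$ (which preserves the MAT-labeling property), so that $e$ becomes a largest-label edge of the resulting graph $G'$; Lemma \ref{lem:largest-label}(2) then supplies a unique largest clique $C$ of $G'$ containing the endvertices of $e$, Lemma \ref{lem:restriction to intersection of maximal cliques} gives $(G[C],\lambda|_{E_{G[C]}})\le(G,\lambda)$, and a count of conditioning vertices identifies $C=K_e$ --- about four lines in total. You instead localize at an MAT-simplicial endpoint of $e$ via an MAT-PEO (Theorem \ref{thm:MAT-PEO}), read the conditioning vertices off (MS\ref{definition MAT-simplicial 2}) and (MS\ref{definition MAT-simplicial 3}) together with the count from (ML\ref{definition MAT-labeling triangle}), and then prove a new ``initial segment'' lemma for MAT-labeled complete graphs by peeling off the largest-label edge's endpoint (using Lemmas \ref{lem:card-complete}, \ref{lem:MATS-existence} and \ref{lem:MAT-simplicial}); all the steps check out, including the verification that $v$ stays MAT-simplicial after deleting $w$ and that the segment is unaffected. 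What the paper's route buys is brevity, since Lemma \ref{lem:largest-label}(2) already packages the existence and uniqueness of the clique; what your route buys is an explicit description of $K_e$ inside the neighborhood of an MAT-simplicial vertex as the vertices of label at most $k$, and your auxiliary segment lemma is a reusable statement in its own right (it is essentially the mechanism behind the paper's Lemma \ref{lem:pc-cover}). One stylistic remark: your first two paragraphs already establish that $K_e$ is contained in the maximal clique $M=\{v_q\}\cup\nbd_{G_q}(v_q)$ and that all its non-$e$ edges have label $<k$, so an alternative finish closer to the paper's would be to apply Lemma \ref{lem:restriction to intersection of maximal cliques} to $M$ and then restrict by labels; but your induction is equally valid.
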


		\begin{proof}
		Let $G'$ denote the graph obtained from $G$ by removing all edges of labels $>k$. 
By definition, $(G', \lambda') \le (G,\lambda)$ where $\lambda' := \lambda|_{E_{G'}}$.
Since $e$ is an edge of largest label in $(G', \lambda')$, by Lemma \ref{lem:largest-label}, there exists a unique largest clique $C$ of $G'$ containing  the endvertices of $e$. 
Note also that $C$ does not contain the endvertices of any edge of label $k$ apart from $e$. 
Moreover, $(G[C],  \lambda|_{E_{G[C]}} ) \le (G', \lambda') $ by
Lemma  \ref{lem:restriction to intersection of maximal cliques}. 
Thus the number of conditioning vertices of $e$ is $k-1$ in both $G$ and $G[C]$.
Hence $C=K_e$. 
	\end{proof}
	
	The converse of  Lemma \ref{lem:edge-clique} is also true.
	
			\begin{lemma}
\label{lem:max-prin}
If $C$ is a clique in an MAT-labeled graph  $ (G, \lambda) $ such that  $(G[C],  \lambda|_{E_{G[C]}} ) \le (G, \lambda) $, then $C$ is a principal clique.
In particular, any maximal clique is principal.
\end{lemma}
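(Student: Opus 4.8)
The plan is to pinpoint the generating edge of $C$ directly. Since $C$ is a clique whose induced labeled subgraph is itself an MAT-labeled graph, $G[C]$ is a complete graph, so $(G[C], \lambda|_{E_{G[C]}})$ is an \emph{MAT-labeled complete graph} on $m := |C|$ vertices (I may assume $m \ge 2$, as a clique with fewer than two vertices carries no edge and hence generates no principal clique). By Lemma \ref{lem:largest-label}(1) the largest label occurring in $G[C]$ equals $\omega(G[C]) - 1 = m - 1$, and by Lemma \ref{lem:card-complete} applied with $k = m - 1$ there is exactly one edge $e = \{i, j\}$ carrying that label. I would take $e$ as the candidate generator and aim to prove $C = K_e$.

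First I would transfer the label of $e$ to the ambient graph. Because $\lambda|_{E_{G[C]}}$ is the restriction of $\lambda$, the edge $e$ still has label $k := m - 1$ in $G$, so $e \in \pi_k$; by condition (ML\ref{definition MAT-labeling triangle}) of Definition \ref{definition MAT-labeling}, $e$ has exactly $k - 1 = m - 2$ conditioning vertices in $G$. Consequently the principal clique of Lemma \ref{lem:edge-clique} satisfies $|K_e| = 2 + (m - 2) = m$.

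The crux is the inclusion $C \subseteq K_e$. For each $w \in C \setminus \{i, j\}$, the edges $\{i, w\}$ and $\{j, w\}$ lie in the complete graph $G[C]$, and since $e$ is the \emph{unique} edge of the maximal label $m - 1$ there, both of these edges have label strictly less than $k$; as their labels are unchanged in $G$, the vertex $w$ is a conditioning vertex of $e$. Thus $C \setminus \{i, j\}$ is contained in the set of conditioning vertices of $e$, giving $C \subseteq K_e$. Since $|C| = m = |K_e|$, the inclusion forces $C = K_e$, so $C$ is the principal clique generated by $e$. I do not anticipate a genuine obstacle; the only delicate point is the cardinality bookkeeping that upgrades $C \subseteq K_e$ to equality, which rests on counting the conditioning vertices of $e$ through (ML\ref{definition MAT-labeling triangle}).

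For the \emph{in particular} clause, I would note that a maximal clique $C$ is trivially the intersection of the single maximal clique $C$, so Lemma \ref{lem:restriction to intersection of maximal cliques} yields $(G[C], \lambda|_{E_{G[C]}}) \le (G, \lambda)$; the main statement then applies to show that $C$ is principal.
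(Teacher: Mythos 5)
Your proposal is correct and follows essentially the same route as the paper: both identify, via Lemma \ref{lem:card-complete}, the unique edge $e$ of maximal label $|C|-1$ in the MAT-labeled complete graph $(G[C], \lambda|_{E_{G[C]}})$ and conclude that $C$ is the principal clique generated by $e$, with the ``in particular'' clause handled by Lemma \ref{lem:restriction to intersection of maximal cliques} in both cases. Your write-up merely makes explicit the conditioning-vertex count that upgrades ``principal in $G[C]$'' to ``principal in $G$,'' a step the paper leaves implicit.
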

		\begin{proof}
		Let $G':= G[C]$ and $  \lambda':=\lambda|_{E_{G[C]}}$. 
 By the assumption, $(G', \lambda')$   is an MAT-labeled complete graph. 
 Thus $(G', \lambda')$  has a unique edge of maximal label by Lemma \ref{lem:card-complete}. 
Hence $C$ is a principal clique in $(G', \lambda')$ (generated by this unique edge) hence in $(G,\lambda)$. 
The consequence follows directly from Lemma  \ref{lem:restriction to intersection of maximal cliques}. 
 
	\end{proof}

			\begin{lemma}
\label{lem:pc-cover}
Let $(G,\lambda)$ be an MAT-labeled graph. 
Let $e=\{i,j\} $ be an edge in $G$ and $K_e$ the principal clique generated by $e$. 
Then both $C_1:=K_e\setminus\{i\}$ and $C_2:=K_e\setminus\{j\}$ are  principal cliques in $(G,\lambda)$. 
Moreover, if $C$ is a  principal clique in $(G,\lambda)$ such that $C \subsetneq K_e$, then either $C \subseteq C_1$ or $C \subseteq C_2$.
\end{lemma}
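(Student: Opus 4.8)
The plan is to reduce both assertions to the characterization of principal cliques supplied by Lemmas \ref{lem:edge-clique} and \ref{lem:max-prin}: a clique $C$ is principal in $(G,\lambda)$ precisely when $(G[C], \lambda|_{E_{G[C]}}) \le (G,\lambda)$. Throughout I will exploit the following facts about $K_e = \{i,j,h_1,\ldots,h_{k-1}\}$, all coming from Lemma \ref{lem:edge-clique} together with Lemma \ref{lem:card-complete}: it has exactly $k+1$ vertices, $(G[K_e], \lambda|_{E_{G[K_e]}})$ is an MAT-labeled complete graph in which $e$ is the unique edge of largest label $k$, and every other edge of $G[K_e]$ carries a label strictly less than $k$.

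For the first assertion I would start from Lemma \ref{lem:MATS-existence}: since $e$ is the edge of largest label in the MAT-labeled complete graph $(G[K_e], \lambda|_{E_{G[K_e]}})$, both endvertices $i$ and $j$ are MAT-simplicial there. I then feed this into Lemma \ref{lem:MAT-simplicial}: deleting the MAT-simplicial vertex $i$ preserves the MAT-labeling, and since $G[K_e]\setminus i = G[C_1]$, the pair $(G[C_1], \lambda|_{E_{G[C_1]}})$ is again an MAT-labeled complete graph. As $G[C_1]$ is a subgraph of $G$ carrying the restricted labeling, this says $(G[C_1], \lambda|_{E_{G[C_1]}}) \le (G,\lambda)$, so $C_1$ is principal by Lemma \ref{lem:max-prin}. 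Running the same argument with $j$ in place of $i$ handles $C_2$.

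For the second assertion I would first observe that ``$C \subseteq C_1$ or $C \subseteq C_2$'' is equivalent to saying that $C$ fails to contain both $i$ and $j$, and then argue by contradiction. Suppose a principal clique $C \subsetneq K_e$ contained both endvertices of $e$. Then $e \in E_{G[C]}$, and because every edge of $G[K_e]$ other than $e$ has label $<k$, the edge $e$ is the unique edge of largest label $k$ in the MAT-labeled complete graph $(G[C], \lambda|_{E_{G[C]}})$. Lemma \ref{lem:largest-label}(1) then forces $k = \omega(G[C]) - 1 = |C| - 1$, i.e.\ $|C| = k+1 = |K_e|$, which contradicts the strict inclusion $C \subsetneq K_e$. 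Hence at least one of $i,j$ lies outside $C$, yielding $C \subseteq C_1$ or $C \subseteq C_2$.

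The only real subtlety, and the step I would handle most carefully, is the cardinality count closing the second assertion: the strict inclusion $C \subsetneq K_e$ must be used as a strict inequality on vertex counts so that it genuinely clashes with $|C| = |K_e|$. Everything else is a direct chaining of the MAT-simplicial deletion lemma with the clique-number/largest-label identity, so I expect no obstacle beyond keeping the restrictions of $\lambda$ straight.
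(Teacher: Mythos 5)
Your proposal is correct and follows essentially the same route as the paper: the first assertion via Lemmas \ref{lem:MATS-existence}, \ref{lem:MAT-simplicial}, and \ref{lem:max-prin}, and the second via the label/cardinality identity for the generating edge of a principal clique. The only difference is cosmetic — the paper argues directly that $\lambda(f)=|C|-1<|K_e|-1=\lambda(e)$ forces $e\notin E_{G[C]}$, while you run the same count as a proof by contradiction.
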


		\begin{proof}
Let $G':= G[K_e]$ and $  \lambda':=\lambda|_{E_{G[K_e]}}$. 
By Lemma \ref{lem:edge-clique}, $(G', \lambda')$ is an MAT-labeled complete graph in which $e$ is the unique edge of largest label. 
By Lemma \ref{lem:MATS-existence}, the endvertices $i$ and $j$ of $e$ are MAT-simplicial. 
By Lemma \ref{lem:MAT-simplicial}, $(G'[C_1], \lambda'|_{E_{G'[C_1]}})$ and $(G'[C_2], \lambda'|_{E_{G'[C_2]}})$ are MAT-labeled complete graphs. 
By Lemma \ref{lem:max-prin}, $C_1 $ and $C_2 $ are  principal cliques in $(G,\lambda)$. 

Let $C$ be a  principal clique in $(G,\lambda)$ such that $C \subsetneq K_e$. 
Then $C = K_f$ for some edge $f$ in $G[K_e]$. 
Moreover,  $ \lambda(f) = |C| -1 < |K_e| - 1 = \lambda(e).$
Thus $e$ is not an edge of $G[C]$ by Lemma \ref{lem:edge-clique}.
Hence $C$ cannot contain both $i$ and $j$.
It follows that either $C \subseteq C_1$ or $C \subseteq C_2$.

	\end{proof}

\section{Vines: graphical and poset definitions}
\subsection{Posets}
\label{subsec:posets}
In this subsection, we recall some basic definitions and notions of posets. 
All posets $\Pc=(\Pc, \le_{\Pc})$  in this paper are finite.
For a poset $\Pc$, an element $a \in\Pc$ is called \tbf{maximal} (resp.~\tbf{minimal}) if there is no other element $b \in\Pc$ such that $a < b$ (resp.~ $a > b$). 
Denote by  $\max(\Pc)$ (resp.~$\min(\Pc)$) the set of all maximal (resp.~minimal) elements in $\Pc$.

\begin{definition}[Join] 
\label{def:join}
 Let $\Pc$ be a poset and let $x,y \in \Pc$. An element $v \in \Pc$ is called the \tbf{join} of $x$ and $y$, denoted $ x\vee y$, if the following two conditions are satisfied:
 \begin{enumerate}[(1)]
		\item  $x \le v$ and $y \le v$.
		\item For any $w \in \Pc$, if $x \le w$ and $y \le w$, then $v \le w$.
	\end{enumerate}
The join $ x\vee y$ is unique if it exists.
\end{definition}

\begin{definition}[Induced subposet] 
\label{def:ind-subposet}
A poset $(\Qc,\le_\Qc)$ is an \textbf{induced subposet} of a poset $(\Pc,\le_\Pc)$ if $\Qc\subseteq \Pc$ and for any $a,b \in \Qc$ it holds that $a \le_\Qc b$ if and only if $a \le_\Pc b.$
\end{definition}

For $x, y \in \Pc$, by $y$ \tbf{covers} $x$, we mean $x<y$ and $x\le z<y$ implies $x = z$.

\begin{definition}[Graded poset] 
\label{def:gposet}
A finite poset $\Pc$ is \tbf{graded} if there exists a \tbf{rank function} $\rk=\rk_\Pc: \Pc  \longrightarrow \Z_{\ge0}$ satisfying the following three properties:
 \begin{enumerate}[(1)]
		\item For any $x,y \in \Pc$, if $x < y$ then $\rk(x) < \rk(y)$.
		\item If $y$ covers $x$, then $\rk(x) =\rk(y)-1$.
		\item All minimal elements of $\Pc$ have the same rank.
		In this paper, we assume\footnote{A motivation for this assumption is the equivalence between D-vine and root poset of type $A$ (Remark \ref{rem:D-vine-RP}). The latter is graded by heights of positive roots, and all the minimal elements (simple roots) have rank (height) $1$.}
		  $\rk(x)=1$ for all $x \in \min(\Pc)$.
	\end{enumerate}
Equivalently, for every $x \in \Pc$, all maximal chains among those with $x$ as greatest element have the same length. 	

The \tbf{dimension}\footnote{The term ``dimension" of a poset may have a different meaning in the other context. The present definition is to make a compatibility for dimensions of a vine (Remark \ref{rem:graded}) and the ambient space of graphic arrangements.}
 $\dim(\Pc)$ of $\Pc$ is defined as $\dim(\Pc):= |\min(\Pc)|$.
The  \tbf{rank} $\rk(\Pc)$ of a graded poset $\Pc$ with rank function $\rk$ is defined as
$$\rk(\Pc):=\max \{\rk(x) \mid x \in \Pc\}.$$
\end{definition}

\begin{definition}[Ideal, principal ideal] 
\label{def:ideal}
 Let $\Pc$ be a poset. 
An (order) \tbf{ideal} $\I$ of $\Pc$ is a downward-closed subset, i.e.~for every $x \in \Pc$ and $y \in \I$, $x \le y$ implies that $x  \in \I$. 
For $a \in \Pc$, the ideal
$$\Pc_{\le a} : = \{ x \in \Pc \mid x \le a\}$$
is called the \tbf{principal} ideal of $\Pc$ generated by $a$.
\end{definition}

\begin{definition}[Poset homomorphism] 
\label{def:poset-iso}
	Let $\Pc$ and $ \Pc'$ be posets. 
A  \tbf{(poset) homomorphism} $\varphi : \Pc \longrightarrow \Pc'$ is an order-preserving map, i.e.~$x \le y$ implies $\varphi(x) \le \varphi(y)$ for all  $x, y \in \Pc$. 

We call $\varphi$ a  \tbf{join-preserving} homomorphism if for any $x, y \in \Pc$ such that the join $ x\vee y$ exists, then $ \varphi (x) \vee \varphi(y)$ exists and $\varphi( x\vee y) = \varphi (x) \vee \varphi(y)$.

We call $\varphi$ an  \tbf{isomorphism} if $\varphi$ is bijective and its inverse is a homomorphism. 
The posets $\Pc$ and $ \Pc'$ are said to be  \tbf{isomorphic}, written $\Pc \simeq \Pc'$ if there exists an isomorphism  $\varphi : \Pc \longrightarrow\Pc'$. 

When $\Pc=(\Pc, \rk)$ and $\Pc'=(\Pc', \rk')$ are graded posets,   a homomorphism $\varphi : \Pc \longrightarrow \Pc'$  is called \tbf{rank-preserving} if $\rk'( \varphi (x) ) = \rk(x)$ for all $x \in \Pc $. 
\end{definition}

A rank-preserving homomorphism $\varphi : \Pc \longrightarrow \Pc'$ sends a minimal element to a minimal element, i.e.~$\varphi(\min(\Pc)) \subseteq \min(\Pc')$. 
Any isomorphism between two graded posets is a homomorphism preserving rank and join. 
 
\subsection{Vines (graphical definition)}
\label{subsec:vines}
First we recall the graphical definition of a  \emph{vine}  following \cite[Definition 4.1]{BC02}. 

\begin{definition}[Graphical definition of vine] 
\label{def:vine}
Let $1\le n \le \ell$ be positive integers.
A \tbf{(graphical) vine} $\V$ on $\ell$ elements $[\ell]= \{1,\ldots,\ell\} $ (or more generally, on an $\ell$-element set called $N_1$) is an ordered $n$-tuple $\V=(F_1,F_2,\ldots,F_{n})$ such that
\begin{enumerate}[(1)]
		\item $F_1$ is a forest with nodes $N_1 = [\ell]$ and a set of edges denoted $E_1$,
		\item  for $2\le i \le n$, $F_i $  is a  forest  with nodes $N_i = E_{{i-1}}$ and edge set $E_i$. 
	\end{enumerate}
\end{definition} 
	  We call $F_i$ the \tbf{$i$-th associated forest} of $\V$. 
 A graphical vine is uniquely determined by its associated forests. 
	Denote by $N(\V)=N_1 \cup \cdots \cup N_n$   the set  of nodes (of associated forests) of $\V$. 
	We call the numbers $n$ and $\ell$ the \tbf{rank} and \tbf{dimension} of $\V$, respectively.

If node $u$ is an element of node $v$, i.e.~$u \in v$, we say that $u$ is a \tbf{child} of $v$.
If $v$ is reachable from $u$ via the membership relation: $u \in u_1 \in \cdots \in v$, we say that $u$ is a \tbf{descendant} of $v$.

\begin{definition}[Node poset] 
\label{def:node poset}
 Let $\V$ be a graphical vine with node set $N(\V)$. 
 The \tbf{node poset} $\Pc=\Pc(\V)$ of $\V$ is the poset $(N(\V), \le)$ defined as follows: For any $u,v \in N(\V)$, 
$$\mbox{$ u \le v$ \quad if  \quad $u$ is a descendant of $v$.}$$
\end{definition} 

\begin{remark} 
\label{rem:graded}
We emphasize that a graphical vine is uniquely determined by its node poset. 
The terminology ``rank" of a vine has motivation from poset theory. 
If a vine $\V$ is an ordered $n$-tuple, then $\Pc=\Pc(\V)$ is a graded poset with rank function $\rk(v) = i$ for $v \in N_i$ ($1 \le i \le n$). 
Thus this number $n$ equals the rank   of $\Pc$. 
In addition, the dimension of $\V$ equals the number of minimal elements in $\Pc$, or the dimension of $\Pc$.
\end{remark}

Now we introduce the notion of an \emph{induced subvine}, or more generally, a \emph{subvine} of a vine following \cite[\S5]{CKW15}, \cite[\S2.2.1]{ZK22}. 

\begin{definition}[Subvine, induced subvine] 
\label{def:subvine}
 Let $\V=(F_1,F_2,\ldots,F_{n})$ be a graphical vine. 
 
 \begin{enumerate}[1.]
\item
An ordered $p$-tuple $\V'=(F'_1,F'_2,\ldots,F'_{p})$ for $p \le n$ is called a \tbf{subvine} of $\V$ if $F'_i$ is a subgraph of $F_i$ for each $1\le i \le p$ and $\V'$ itself is a vine.
	
\item Given a subset $S \subseteq N_1$, define a vine $\V[S] = (F'_1,F'_2,\ldots,F'_{p})$ on the set $S$ as follows: 
 \begin{enumerate}[(1)]
		\item $F'_1 = F_1[S]$ with edge set $E'_1 \subseteq E_1 =N_2$,
		\item  for $2\le i \le p$, $F'_i = F_i[E'_{i-1}]$   with  edge set $E'_i \subseteq E_{i}=N_{i+1}$. 
	\end{enumerate}
	We call   $\V[S]$ the \tbf{subvine of $\V$ induced by $S$}.
	\end{enumerate}
 
\end{definition}

\begin{remark} 
\label{rem:Indsubv<subv} 
Any induced subvine is obviously a subvine but the converse is not necessarily true. 
For example, let $\V=(F_1,F_2)$ be a vine of dimension $2$ with $N_1=\{1,2\}$, $N_2=E_1 = \{ \{1,2\}\}$, $E_2=\varnothing$. 
The subvine $\V'=(F'_1)$ defined by $N'_1=\{1,2\}$, $E'_1=\varnothing$ is not an induced subvine of $\V$.
\end{remark}

\subsection{Vines (poset definition)}
\label{subsec:V-posets}
\begin{assumption}
\label{aspn:V-poset} 
From now on, unless otherwise stated we assume that $\Pc$ is a finite graded poset with a rank function $\rk: \Pc  \longrightarrow \Z_{>0}$. 
Denote $n:=\rk(\Pc)$   and  $\ell :=\dim(\Pc)$. 
For $v\in \Pc$, denote by $\E(v)$ the set of elements covered by $v$. 
For $i \ge 0$, define $\Pc_i := \{ v \in \Pc \mid \rk(v) =i\}$ and $\E(\Pc_i) := \{ \E(v) \mid v \in \Pc_i \}$.
If $\Pc$ is an $\ell$-dimensional poset, we assume $\Pc_1=\min(\Pc)=[\ell]$.
\end{assumption} 

As noted earlier in Remark \ref{rem:graded},  we may think of a graphical vine and its node poset essentially as the same object. 
It is thus natural to look for a characterization of the node poset of a vine. 
We give below such a characterization obtained immediately from Definition \ref{def:vine}.

\begin{def-prop}[Poset definition of vine] 
\label{def-prop:V-poset} 
A finite graded poset  $\Pc$ is the node poset of a graphical vine if and only if $\Pc$  satisfies the following conditions: 
\begin{enumerate}[(1)]
		\item Every non-minimal node covers exactly two other nodes, and any two distinct nodes of the same rank are covered by at most one node. 
		\item For each $1\le i \le n = \rk(\Pc)$, the graph $F_i = (N_i, E_i)$   with node set $N_i:=\Pc_{i} $ and edge set $E_i:=\E(\Pc_{i+1}) $ is a  forest.
	\end{enumerate}
\end{def-prop}

\begin{assumption}
\label{aspn:vine-to-Vposet} 
From now on, unless otherwise stated, by a vine $\Pc$ we mean a finite graded poset satisfying the two conditions in \ref{def-prop:V-poset}. 
We will also retain the notion $i$-th associated forest $F_i =(\Pc_i, \E(\Pc_{i+1}))$ ($1\le i \le n$) of $\Pc$. 
If $v$ is a node in a vine $\Pc$ and $\E(v) =\{a,b\}$, we will often abuse notation and write $v=\{a,b\}$. 
This notation is compatible with the notation of node/edge in the graphical definition of a vine.
\end{assumption} 

The main reason why we choose the poset definition of a vine is because many terms and properties of a (graphical) vine have natural meanings in the language of posets. 
For example, subvine corresponds to ideal (Lemma \ref{lem:subvine=ideal}), conditioned set corresponds to join (Lemma \ref{lem:LR-paths}), and m-vine corresponds to LR-vine or local regularity of vine (Theorem \ref{thm:LR-characterize}).

Under this consideration, the following poset definition of a  \emph{regular vine} is equivalent to the well-known graphical definition of it in the literature, e.g.~\cite[Definition 4.1]{BC02}. 

\begin{definition}[R-vine] 
\label{def:RV-poset}
A vine $\Pc$  is a \tbf{regular vine}, or an \emph{R-vine} for short, if $\Pc$  satisfies the following conditions: \begin{enumerate}[(1)]
		\item $\rk(\Pc) =\dim(\Pc)$, i.e.~ $n=\ell$.
		\item Each associated forest $F_i =(\Pc_i, \E(\Pc_{i+1}))$  is a  tree ($1\le i \le n$).
		\item \tbf{Proximity}: For any distinct nodes $a,b \in \Pc_i$ for $i\ge2$, if $a$ and $b$ are covered by a common node, then $a$ and $b$ cover a common node.
	\end{enumerate}
 
\end{definition}

\begin{remark} 
\label{rem:card-R-vine} 
If $\Pc$ is an R-vine of rank $n$, then $|\Pc_i| = n+1-i$ for each $1 \le i \le n$. 
In particular, $|\Pc| = n(n+1)/2$.
\end{remark}

Next we introduce the notion of a \emph{locally}  regular vine.

\begin{definition}[LR-vine] 
\label{def:LRV-poset}
A vine $\Pc$  is a \tbf{locally regular vine}, or an \emph{LR-vine} for short, if every principal ideal of $\Pc$ is an R-vine.
\end{definition}

\begin{remark} 
\label{rem:ideal-LRvine} 
Intuitively, an LR-vine is a vine that ``locally" looks like an R-vine. 
In particular, any R-vine is an LR-vine (see Proposition \ref{prop:LR=P}). 
Any ideal of a vine (resp.~an LR-vine) is itself a vine (resp.~an LR-vine).
\end{remark}

\begin{lemma} 
\label{lem:subvine=ideal}
  Let $\V$ be a graphical vine with the node poset $\Pc(\V)$. 
  A subset $\I$ is an ideal of $\Pc(\V)$ if and only if $\I = \Pc(\V')$ where $\V'$ is a subvine of $\V$ uniquely determined by $\I$. 
  
  As a result, there is a one-to-one correspondence between the subvines of a graphical vine and the ideals of its node poset.
\end{lemma}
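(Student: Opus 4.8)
The plan is to make the correspondence explicit by exhibiting the two maps $\V' \mapsto N(\V')$ and $\I \mapsto \V'$ and checking they are mutually inverse and land in the correct targets. The one structural observation that drives everything is that, for a node $v$ of rank $i\ge 2$ in $\Pc(\V)$, the set $\E(v)$ of elements covered by $v$ is exactly the pair of endpoints of $v$ viewed as an edge of $F_{i-1}$: by Definition~\ref{def:node poset} the children of $v$ are precisely its endpoints (which have rank $i-1$), and these are the only elements $v$ covers since anything strictly below them is already below one of them. This is the content of the first clause of \ref{def-prop:V-poset}, and it is what lets us pass freely between the ``endpoint'' relation of the forests and the covering relation of the poset.

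For the implication that a subvine yields an ideal, let $\V'=(F'_1,\ldots,F'_p)$ be a subvine (Definition~\ref{def:subvine}) and set $\I:=N(\V')$. I would prove downward-closure in one-step form: if $v\in\I$ has rank $i\ge 2$, then $v$ is an edge of the subgraph $F'_{i-1}$, so both its endpoints lie in $N'_{i-1}\subseteq\I$; since the endpoints of $v$ are the same in $F'_{i-1}$ and in $F_{i-1}$, the structural observation identifies them with $\E(v)$, so every element covered by $v$ lies in $\I$. Iterating along a saturated chain gives $u\in\I$ whenever $u\le v\in\I$, so $\I$ is an ideal. Because the endpoints agree in $F'_{i-1}$ and $F_{i-1}$, the order of $\Pc(\V')$ is the restriction of the order of $\Pc(\V)$ to $\I$, whence $\I=\Pc(\V')$ as an induced subposet.

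For the converse I would reconstruct a subvine from a (nonempty) ideal $\I$ level by level: put $N'_i:=\I\cap N_i$ and declare the edge set of $F'_i$ to be $E'_i:=\I\cap N_{i+1}$. The verifications are then (a) each $e\in E'_i$, being a rank-$(i+1)$ element of the ideal, has its two children in $\I\cap N_i=N'_i$, so $F'_i$ is genuinely a subgraph of $F_i$; (b) $F'_i$ is a forest, being a subgraph of the forest $F_i$; and (c) $N'_i=E'_{i-1}$ holds by construction, so the clauses of Definition~\ref{def:vine} are met with $p:=\max\{\rk(v)\mid v\in\I\}$. Here I would point out that no rank is skipped below the top: a rank-$p$ node of $\I$ has descendants at every lower rank, so $N'_i\ne\varnothing$ for all $1\le i\le p$ and $\V'$ is a genuine $p$-tuple. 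By construction $N(\V')=\I$, and the order matches as before, so $\Pc(\V')=\I$.

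Uniqueness and the resulting bijection are then formal: a graphical vine is determined by its node poset (Remark~\ref{rem:graded}), so any two subvines of $\V$ with node poset $\I$ coincide; this makes the assignment $\I\mapsto\V'$ well defined and a two-sided inverse of $\V'\mapsto N(\V')=\Pc(\V')$. The only place requiring more than routine checking is step (c) of the converse, namely confirming that the level-wise restrictions $\I\cap N_\bullet$ reassemble into an object satisfying every clause of the vine definition—in particular that each $F'_i$ is a legitimate subgraph and that the levels are nonempty up to $p$; all of this follows from downward-closure together with the structural observation, so I expect no substantive obstacle beyond careful bookkeeping.
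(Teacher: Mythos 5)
Your proof is correct and follows essentially the same route as the paper: subvine $\Rightarrow$ ideal via downward closure on the two children of each node, and ideal $\Rightarrow$ subvine via the observation that an ideal of a vine is itself a vine together with the fact that a vine is uniquely determined by its node poset. The only difference is that the paper outsources the converse to Remarks \ref{rem:ideal-LRvine} and \ref{rem:graded}, whereas you explicitly carry out the level-by-level reconstruction ($N'_i = \I\cap N_i$, $E'_i=\I\cap N_{i+1}$) that those remarks implicitly rely on.
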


   \begin{proof}
   Let $\V'$ be a subvine of $\V$. 
   Since $\V'$ itself is a vine, if $v$ is a node in $\Pc(\V')$, then both children hence all descendants of $v$ are also nodes in $\Pc(\V')$. 
 Hence  $\Pc(\V')$ is an ideal of $\Pc(\V)$. 
   Conversely, let $\I$ be an ideal of the vine $\Pc(\V)$. 
By Remark \ref{rem:ideal-LRvine}, $\I$ itself is a vine.
By Remark \ref{rem:graded}, $\I$ uniquely determines  a vine $\V'$  which is a subvine of $\V$ and satisfies $\I = \Pc(\V')$. 

\end{proof}

We close this section by recalling the definition of an \emph{m-saturated vine} from \cite[Definition 4.2]{KC06}.
\begin{definition}[M-vine] 
\label{def:m-vine}
 A  vine $\Pc$ is called an \tbf{m-saturated vine}, or an \emph{m-vine} for short, if  $\Pc$ is an ideal of an R-vine.
\end{definition} 

By Remark \ref{rem:ideal-LRvine}, any m-vine is an LR-vine. 
We will see in Theorem \ref{thm:LR-characterize} that the converse also holds true.

 
\section{From MAT-labeled graphs to LR-vines}
\label{sec:MAT-to-vine}

\subsection{Some known properties of vines}
\label{subsec:known}

We begin by defining some statistics on the nodes of a vine. 
They play an important role in probabilistic applications of vines, e.g.~\cite[Theorem 3]{BC01}.

\begin{definition}[$k$-fold union, complete union] 
\label{def:union}
Let $\Pc$  be a vine of  rank $n$. 
For any node $v_i \in \Pc_i$ ($1 \le i \le n$) and integer $k$ with $0 \le k \le i-1$, the \tbf{$k$-fold union} of $v_i$  is the subset $U_{v_i} (k) \subseteq \Pc_{i-k}$ defined by 
$$U_{v_i} (k) :=\{ x \in \Pc_{i-k} \mid   x \le v_i \}.$$

The \tbf{complete union} $U_{v_i}$ of $v_i \in \Pc_i$ is defined as the $(i-1)$-fold union of $v_i$, i.e.~
$$U_{v_i} := U_{v_i} (i-1) \subseteq \Pc_1.$$
 
\end{definition}

\begin{definition}[Conditioned set, conditioning set] 
\label{def:ccc-sets}
Let $\Pc$  be a vine of  rank $n$. 
Let $v_i = \{a, b\}\in \Pc_i$ ($2 \le i \le n$) with $a,b \in \Pc_{i-1}$ (see notation in \ref{aspn:vine-to-Vposet}).
The \tbf{conditioning set} $D_{v_i}$ associated  with $v_i$ is defined by 
$$D_{v_i} := U_a \cap U_b,$$
and the \tbf{conditioned set} $C_{v_i}$ associated  with $v_i$ is defined by 
$$C_{v_i} := U_a \,\triangle\, U_b,$$
where $\triangle$ denotes the symmetric difference.
\end{definition}

It is easily seen that the nodes of an LR-vine satisfy the proximity condition. 
 The following properties were proved for an R-vine in \cite{BC02,KC03, KC06}. 
The arguments therein apply to a vine satisfying proximity condition as well since we only need the proximity of principal ideals.
 
\begin{lemma}
\label{lem:UCD}
Let $\Pc$ be a vine  of  rank $n$ and $v_i \in \Pc_i$ ($2 \le i \le n$). 
Suppose that the proximity condition holds. The following hold:
\begin{enumerate}[(a)]
\item  $|U_{v_i} (k) |= k + 1$ for $0 \le k \le i-1$. In particular,  $|U_{v_i}   |= i = \rk(v_i)$.
\item $|D_{v_i}   |= i- 2$ and $|C_{v_i}|=2$.
\end{enumerate}
\end{lemma}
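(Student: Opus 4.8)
The plan is to prove both parts by induction on the rank $i$ of the node $v_i$, exploiting the cover relation $v_i = \{a,b\}$ with $a,b \in \Pc_{i-1}$ and the proximity condition applied within the principal ideal $\Pc_{\le v_i}$, which is an R-vine (hence satisfies proximity at every level). For part (a), the base case $k=0$ is immediate since $U_{v_i}(0) = \{v_i\}$. The key recursive relation is that for $1 \le k \le i-1$, a node $x \in \Pc_{i-k}$ satisfies $x \le v_i$ if and only if $x$ is a descendant of either $a$ or $b$; that is, $U_{v_i}(k) = U_a(k-1) \cup U_b(k-1)$, where $U_a(k-1) \subseteq \Pc_{i-k}$ and similarly for $b$. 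By the inductive hypothesis applied to $a$ and $b$ (both of rank $i-1$), each of these has size $k$, so I would get $|U_{v_i}(k)| \le 2k$ unless I control the overlap.

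The heart of the argument, and the step I expect to be the main obstacle, is computing $|U_a(k-1) \cap U_b(k-1)|$ precisely, since everything reduces to understanding how the two descendant-sets of the children of $v_i$ overlap. The plan is to show by a parallel induction that this intersection has size exactly $k-1$, which combined with inclusion-exclusion yields $|U_{v_i}(k)| = k + k - (k-1) = k+1$. The proximity condition is exactly what forces this: because $a$ and $b$ are covered by the common node $v_i$, proximity guarantees that $a$ and $b$ cover a common node $c \in \Pc_{i-2}$, giving a nonempty ``shared descendant'' structure. I would argue that the common descendants of $a$ and $b$ at level $i-k$ are precisely the descendants of this shared node $c$ (of rank $i-2$), reducing the intersection count to $|U_c(k-2)|$, which by induction equals $k-1$. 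Care must be taken that the descendants of $a$ not through $c$ and those of $b$ not through $c$ are disjoint; this disjointness is where I would again invoke proximity together with the forest condition (no two distinct nodes of the same rank are covered by more than one node) to rule out spurious coincidences among descendants.

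Once part (a) is established, part (b) follows quickly. The statement $|U_{v_i}| = i$ is the special case $k = i-1$ of part (a). For the conditioning and conditioned sets, I would use $U_{v_i} = U_a \cup U_b$ at the bottom level $\Pc_1$ together with the counts $|U_a| = |U_b| = i-1$ (by part (a) applied to $a,b$ at rank $i-1$) and $|U_{v_i}| = i$. Inclusion-exclusion then gives $|U_a \cap U_b| = |U_a| + |U_b| - |U_a \cup U_b| = (i-1) + (i-1) - i = i-2$, which is exactly $|D_{v_i}| = i-2$. For the conditioned set, the symmetric difference satisfies $|U_a \triangle U_b| = |U_a| + |U_b| - 2|U_a \cap U_b| = (i-1) + (i-1) - 2(i-2) = 2$, giving $|C_{v_i}| = 2$ as claimed.

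I would organize the writing so that the delicate overlap computation is isolated as the single inductive claim $|U_a(k-1) \cap U_b(k-1)| = k-1$, proved simultaneously with part (a), since the two quantities feed into each other at each step. The references to earlier results are light here: the proof is essentially self-contained combinatorics driven by the cover structure of \ref{def-prop:V-poset} and the proximity axiom of \ref{def:RV-poset}, with local regularity (Definition \ref{def:LRV-poset}) ensuring proximity holds inside every principal ideal so that the induction never leaves the regular setting.
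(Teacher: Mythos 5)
Your part (b) is fine, and your reduction $U_{v_i}(k)=U_a(k-1)\cup U_b(k-1)$ (with $v_i=\{a,b\}$) is correct, but the core of part (a) is not actually established. Writing $a=\{c,d\}$ and $b=\{c,e\}$ with $c$ the common child guaranteed by proximity (and unique, since two distinct nodes of the same rank are covered by at most one node, so also $d\neq e$), your identity $U_a(k-1)\cap U_b(k-1)=U_c(k-2)$ is equivalent to the disjointness statement $\bigl(U_d(k-2)\setminus U_c(k-2)\bigr)\cap\bigl(U_e(k-2)\setminus U_c(k-2)\bigr)=\varnothing$, and a short inclusion--exclusion computation shows that this disjointness is \emph{exactly} equivalent to $|U_{v_i}(k)|=k+1$. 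So the step you flag as ``where care must be taken'' is not a technical verification on the side --- it is the entire content of the lemma. Moreover, your induction cannot close on it as set up: your parallel inductive claim controls $|U_y(\cdot)\cap U_z(\cdot)|$ only when $y,z$ are the two children of a common parent (i.e.\ siblings, joined by an edge of the relevant forest), whereas $d$ and $e$ are \emph{not} siblings --- they sit at distance two in $F_{i-2}$ via $c$ --- so the inductive hypothesis simply does not apply to the pair $(d,e)$. Naming ``proximity plus the forest condition'' is not a proof here; for instance, the forest condition only rules out the configuration where $d$ and $e$ are themselves adjacent, not deeper coincidences among their descendants. (A smaller point: you should not invoke ``$\Pc_{\le v_i}$ is an R-vine,'' since in this paper that fact is Proposition \ref{prop:LR=P}, which is itself deduced from Lemma \ref{lem:UCD}; you only need the proximity hypothesis on $\Pc$, which restricts harmlessly.)

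The gap is fillable, but it requires strengthening the induction. One clean route: prove by induction on $k$ that $U_{v_i}(k-1)$, regarded as a set of edges of the forest $F_{i-k}$, spans a \emph{connected} subgraph whose vertex set is $U_{v_i}(k)$. Connectivity propagates because two vertices of the level-$(i-k+1)$ subtree joined by an edge $u\in U_{v_i}(k-2)$ are two nodes covered by $u$, hence by proximity they cover a common node, i.e.\ as edges of $F_{i-k}$ they share an endpoint; acyclicity is free since $F_{i-k}$ is a forest. A tree has one more vertex than edges, so $|U_{v_i}(k)|=|U_{v_i}(k-1)|+1$, and part (a) follows. (Equivalently, one can strengthen your intersection claim to: any common descendant of $y,z\in\Pc_j$ is a descendant of every node on the unique path joining $y$ and $z$ in $F_j$, which then does apply to $d$ and $e$.) For what it is worth, the paper itself does not prove this lemma but cites \cite{BC02,KC03,KC06}, noting only that the arguments there need nothing beyond proximity of principal ideals; so any comparison is really with that literature, where the argument runs along the connected-subtree lines just sketched rather than along your sibling-intersection induction.
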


We show below that local regularity and proximity of a vine are actually equivalent. 
 
\begin{proposition}
\label{prop:LR=P}
A vine $\Pc$  is  locally regular  if and only if the proximity condition holds for the nodes of $\Pc$. 
\end{proposition}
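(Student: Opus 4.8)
The plan is to prove the two implications separately, exploiting the relationship between principal ideals and the $k$-fold unions developed in Definition \ref{def:union} and Lemma \ref{lem:UCD}.

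First I would prove the easy direction: if $\Pc$ is locally regular, then proximity holds. This is essentially the remark already made before Lemma \ref{lem:UCD}. Suppose $a, b \in \Pc_i$ ($i \ge 2$) are distinct and covered by a common node $v \in \Pc_{i+1}$, so $v = \{a,b\}$. Then $a, b \le v$, and the principal ideal $\Pc_{\le v}$ is an R-vine by local regularity. Since $a$ and $b$ both lie in this R-vine and are covered by the common node $v$ inside it, the proximity condition built into the R-vine structure (Definition \ref{def:RV-poset}(3)) immediately yields that $a$ and $b$ cover a common node in $\Pc_{\le v}$, hence in $\Pc$. So the forward implication is a direct unwinding of the definitions.

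For the converse, assume proximity holds for all nodes of $\Pc$; I must show every principal ideal $\Pc_{\le v}$ is an R-vine. The key observation is that a principal ideal is itself a vine (Remark \ref{rem:ideal-LRvine}) and inherits the proximity condition. So the task reduces to verifying the three defining conditions of an R-vine for $\I := \Pc_{\le v}$ where $\rk(v) = m$, say. The crucial structural input is Lemma \ref{lem:UCD}(a): under proximity, $|U_{v}(k)| = k+1$ for $0 \le k \le m-1$, and more generally $|U_w(k)| = k+1$ for any node $w$ in $\I$. I would argue that $\I_{m-k} \cap \Pc_{\le v} = U_v(k)$ has exactly $k+1$ elements, which for $k = m-1$ gives $|\min(\I)| = m = \dim(\I)$ and shows $\rk(\I) = \dim(\I)$, establishing condition (1). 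To get condition (2), that each associated forest $F_i$ of $\I$ is in fact a tree (not merely a forest), I would count: the $i$-th forest of $\I$ has node set $\I_i = U_v(m-i)$ of size $m-i+1$ and edge set $\E((\I)_{i+1}) = U_v(m-i-1)$ of size $m-i$; a forest on $m-i+1$ nodes with $m-i$ edges is connected, hence a tree. Condition (3), proximity for $\I$, is inherited directly from $\Pc$.

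The main obstacle I expect is the careful bookkeeping needed to show that the $k$-fold unions really do coincide with the rank-$i$ parts of the principal ideal, i.e.~that $\{x \in \Pc_{i} \mid x \le v\} = U_v(m-i)$ and that these sets have the cardinalities asserted by Lemma \ref{lem:UCD}(a). In particular, I must make sure the cardinality count in Lemma \ref{lem:UCD} genuinely uses only proximity and not full R-vineness, which the remark preceding the lemma asserts but which I would want to confirm applies at the level of each principal ideal. Once the cardinality $|U_v(k)| = k+1$ is in hand, the verification that each associated forest of the ideal is a tree is a routine application of the elementary fact that a forest with one more node than edges (per connected piece) is connected precisely when the global edge count is node count minus one; assembling these for all ranks gives the R-vine structure and completes the argument.
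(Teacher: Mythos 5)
Your proposal is correct and follows essentially the same route as the paper: the forward direction is the direct unwinding of definitions that the paper treats as immediate, and the converse uses Lemma \ref{lem:UCD}(a) to count $k+1$ nodes and $k$ edges in each associated forest of the principal ideal, forcing each forest to be a tree, with proximity inherited by the ideal. The cardinality statements in Lemma \ref{lem:UCD} are indeed stated under the proximity hypothesis alone, so the dependence you flag is already resolved in the paper.
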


    \begin{proof}
It remains to show proximity implies local regularity. 
Let  $v \in \Pc$. 
We need to show that the principal ideal $\Pc_{\le v}$ itself is an R-vine.
Write $\Pc_{\le v}=(T_1,T_2,\ldots,T_{p})$ where $p= \rk(v) \le n$. 
By Lemma \ref{lem:UCD}(a), the rank $p$ and dimension $|U_{v}|$ of $\Pc_{\le v}$ are equal. 
Also by Lemma \ref{lem:UCD}(a), each forest  $T_{p-k}$ ($0\le k \le p-1$) has $k+1$ nodes and $k$ edges. 
Thus these forests must be trees. 
Clearly, proximity of a vine is preserved under taking ideals. 
It follows that  $\Pc_{\le v}$ is an R-vine.
\end{proof}

 \begin{lemma}
\label{lem:ind-subvine}
Let $\Pc=(F_1, \ldots,F_{n})$ be an LR-vine and $\V$ be the graphical vine defined by $\Pc$. 
Then  for every $a \in \Pc$, the ideal $\Pc_{\le a}$ coincides with the node poset of the induced subvine $\V[U_a]$. 
\end{lemma}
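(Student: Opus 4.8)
The plan is to show that the two node posets, namely $\Pc_{\le a}$ and $\Pc(\V[U_a])$, coincide as induced subposets of $\Pc(\V)$. Since both are determined by their underlying node sets together with the inherited order (Remark \ref{rem:graded}), it suffices to prove that they have the same set of nodes at each rank; the covering relations then match automatically because both are induced subposets of the single ambient poset $\Pc$. So the whole argument reduces to a rank-by-rank comparison of node sets.

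First I would set up notation. Write $a \in \Pc_p$ where $p = \rk(a)$, and recall from Lemma \ref{lem:UCD}(a) that $|U_a| = p$, so $U_a \subseteq \Pc_1 = [\ell]$ is a $p$-element subset of the ground set. On the poset side, the node set of $\Pc_{\le a}$ is $\{x \in \Pc \mid x \le a\}$, and its rank-$i$ part is the $(p-i)$-fold union $U_a(p-i) = \{x \in \Pc_i \mid x \le a\}$. On the graphical side, $\V[U_a]$ is the induced subvine on the vertex set $S = U_a$, whose associated forests are $F'_1 = F_1[U_a]$ and $F'_i = F_i[E'_{i-1}]$ for $i \ge 2$ (Definition \ref{def:subvine}); its rank-$i$ node set is precisely $E'_{i-1}$, the edge set of $F'_{i-1}$. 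Translating through the correspondence between nodes and edges (Assumption \ref{aspn:vine-to-Vposet}), a node $v \in \Pc_i$ belongs to $\Pc(\V[U_a])$ if and only if all of its children, and hence all of its descendants in $\Pc_1$, lie in $U_a$, i.e.\ if and only if $U_v \subseteq U_a$.

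The heart of the lemma is therefore the equivalence, for $v \in \Pc_i$ with $i \le p$,
\[
v \le a \quad\Longleftrightarrow\quad U_v \subseteq U_a.
\]
The forward direction is immediate: if $v \le a$ then every minimal descendant of $v$ is a descendant of $a$, so $U_v \subseteq U_a$. The reverse direction is the main obstacle and is where local regularity must be used. The plan is to argue by induction on $i$ (or, equivalently, on $p - i$ going downward from $a$), exploiting that $\Pc_{\le a}$ is an R-vine by local regularity. The delicate point is that $U_v \subseteq U_a$ is a statement only about rank-one descendants, and I must promote it to the full order relation $v \le a$; this needs the rigidity coming from proximity together with the cardinality count $|U_v| = i = \rk(v)$ from Lemma \ref{lem:UCD}(a). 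Concretely, I would show that within the R-vine $\Pc_{\le a}$ the map sending a node to its complete union is injective on each rank, so that the $p-i+1$ nodes of rank $i$ in $\Pc_{\le a}$ have distinct $i$-element complete unions all contained in the $p$-element set $U_a$; a counting or uniqueness argument then forces any $v \in \Pc_i$ with $U_v \subseteq U_a$ to be exactly one of these nodes, hence $v \le a$.

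Once this equivalence is established, the node set of $\Pc_{\le a}$ at rank $i$ equals $\{v \in \Pc_i \mid U_v \subseteq U_a\}$, which is exactly the rank-$i$ node set of $\Pc(\V[U_a])$ identified above. As both posets carry the order induced from $\Pc$, they coincide, completing the proof. I expect the injectivity-of-complete-union step to be the only part requiring real care; everything else is a translation between the graphical and poset languages already set up in Section 3.
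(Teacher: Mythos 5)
Your reduction of the lemma to the equivalence $v \le a \Leftrightarrow U_v \subseteq U_a$ is correct, and both the forward direction and the identification of the node set of $\Pc(\V[U_a])$ with $\{v \in \Pc \mid U_v \subseteq U_a\}$ are fine. The gap is in the reverse implication, which is the entire content of the lemma (it is essentially Corollary \ref{cor:order-preserving}, which the paper deduces \emph{from} this lemma, so it cannot be quoted here). The mechanism you propose --- injectivity of $v \mapsto U_v$ on each rank of $\Pc_{\le a}$, giving $p-i+1$ distinct $i$-element subsets of the $p$-element set $U_a$ --- does not force a node $v \in \Pc_i$ with $U_v \subseteq U_a$ to lie in $\Pc_{\le a}$: there are $\binom{p}{i}$ candidate $i$-subsets of $U_a$ and only $p-i+1$ of them are realized below $a$, so nothing rules out, a priori, a node outside $\Pc_{\le a}$ whose complete union is some \emph{other} $i$-subset of $U_a$. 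The ``counting or uniqueness argument'' you defer to is therefore exactly the missing idea, and the count you set up is not the right one; contrary to your closing remark, injectivity of the complete-union map is not where the difficulty lies.

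The count that does work is the one the paper uses, and it lives at the level of edge sets of the associated forests rather than of complete unions: writing $\Pc_{\le a} = (T_1,\ldots,T_p)$ (all trees, by local regularity) and $\V[U_a] = (F'_1,\ldots,F'_q)$, one observes that $T_1$ is a subgraph of $F_1$ with node set $U_a$, hence $T_1 \subseteq F'_1 = F_1[U_a]$; since $F'_1$ is a forest on $|U_a|$ nodes it has at most $|U_a|-1$ edges, while the tree $T_1$ has exactly $|U_a|-1$, forcing $T_1 = F'_1$, and iterating rank by rank finishes the proof. (Equivalently, node by node: if $v \in \Pc_i$ has both children in $T_{i-1}$, then $v$ is an edge of the forest $F_{i-1}$ joining two nodes of the subtree $T_{i-1}$, and uniqueness of paths in a forest forces $v$ to be an edge of $T_{i-1}$.) Either version would complete your argument; as written, the crucial step is asserted but not supplied.
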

    \begin{proof}
Write $\V[U_a]=(F'_1, \ldots,F'_{q})$ and $\Pc_{\le a}=(T_1,\ldots,T_{p})$ where $T_i$'s  all are trees.
Note that $F'_1 = F_1[U_a]$ is a forest with at most $|U_a|-1$ edges. 
However, the tree $T_1$ is a subgraph of $F_1$ with node set $U_a$ that has exactly $|U_a|-1$ edges. 
Hence $F'_1 =T_1$. 
A repeated application of this argument yields $p=q$ and $F'_k = T_k$ for all $1\le k \le p$. 
Therefore, $\Pc_{\le a}$ is the node poset of  $\V[U_a]$.
 \end{proof}

  	\begin{corollary}
\label{cor:order-preserving}
 Let $\Pc$ be an LR-vine  and $a,b$ be nodes in $\Pc$.
If $U_a \subseteq U_b$ then $a \le b$.	
In particular,   if  $U_a = U_b$, then $a = b$.	
\end{corollary}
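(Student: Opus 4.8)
The plan is to reduce the claim to the correspondence between subvines and ideals established in Lemma \ref{lem:subvine=ideal}, using the identification of principal ideals with induced subvines from Lemma \ref{lem:ind-subvine}. Let $\V$ be the graphical vine defined by $\Pc$. By Lemma \ref{lem:ind-subvine}, the principal ideals in question coincide with node posets of induced subvines, namely $\Pc_{\le a} = \Pc(\V[U_a])$ and $\Pc_{\le b} = \Pc(\V[U_b])$. The strategy is to exhibit $\V[U_a]$ as an induced subvine of $\V[U_b]$ and then transport this containment back to the posets.

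The key step I would carry out is to verify the transitivity of taking induced subvines: since $U_a \subseteq U_b \subseteq N_1$, I claim $\V[U_a] = (\V[U_b])[U_a]$. This I would check level by level directly from Definition \ref{def:subvine}. The first forests agree because $(F_1[U_b])[U_a] = F_1[U_a]$ whenever $U_a \subseteq U_b$, as an induced subgraph of an induced subgraph of $F_1$ is simply the induced subgraph on the smaller vertex set. Consequently the surviving edge sets $E'_1$ at the first level coincide, and an induction up the ranks, applying the identity $F_i[E'_{i-1}]$ at each stage, shows that all higher forests agree as well. Hence the two vines are literally equal, not merely isomorphic.

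With this identification in hand, $(\V[U_b])[U_a]$ is an induced subvine, hence a subvine, of the graphical vine $\V[U_b]$, whose node poset is $\Pc_{\le b}$. By Lemma \ref{lem:subvine=ideal}, its node poset $\Pc(\V[U_a]) = \Pc_{\le a}$ is therefore an ideal of $\Pc_{\le b}$; in particular $\Pc_{\le a} \subseteq \Pc_{\le b}$. Since $a \in \Pc_{\le a}$, this yields $a \in \Pc_{\le b}$, that is, $a \le b$. For the final assertion, if $U_a = U_b$, then applying the inclusion in both directions gives $a \le b$ and $b \le a$, so $a = b$ by antisymmetry of $\le_\Pc$.

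The main obstacle I expect is the careful bookkeeping needed to establish $\V[U_a] = (\V[U_b])[U_a]$ as an equality of vines rather than only up to isomorphism, since this requires matching the edge sets $E'_i$ generated at each level of the two constructions; once that compatibility is in place, the conclusion is a direct application of Lemmas \ref{lem:ind-subvine} and \ref{lem:subvine=ideal}, with local regularity entering only through Proposition \ref{prop:LR=P} to guarantee that the complete unions $U_a, U_b$ behave as in Lemma \ref{lem:UCD}.
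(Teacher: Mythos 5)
Your proposal is correct and follows essentially the same route as the paper's proof: pass to the graphical vine, observe that $U_a \subseteq U_b$ makes $\V[U_a]$ a subvine of $\V[U_b]$, and apply Lemma \ref{lem:ind-subvine} (together with Lemma \ref{lem:subvine=ideal}) to get $\Pc_{\le a}\subseteq\Pc_{\le b}$, hence $a\le b$. The only difference is that you spell out the verification $\V[U_a]=(\V[U_b])[U_a]$, which the paper leaves implicit.
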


		\begin{proof}
Let $\V$ be the graphical vine defined by $\Pc$. 
If $U_a \subseteq U_b$, then $\V[U_a]$ is a subvine of $\V[U_b]$. 
By Lemma \ref{lem:ind-subvine}, $\Pc_{\le a}\subseteq\Pc_{\le b}$. 
Hence $a \le b$. 
If $U_a = U_b$, then by the first assertion, $a \le b$ and $b \le a$. 
Thus $a = b$.
	\end{proof}

\begin{remark} 
\label{rem:sub-LRvine}
By definition, an R-vine has a unique maximal element.  
Thus an ideal $\I$ of an LR-vine $\Pc$ is regular if and only if $\I$ is a principal ideal.
\end{remark}
 
If two posets $\Pc$ and $ \Pc'$ are isomorphic and $\Pc$ is an (L)R-vine, then $\Pc'$ is also an (L)R-vine. 
The result below enables us to represent a node in an LR-vine by its complete union (see  \ref{ex:D-vine} for an example). 
  		\begin{proposition}
	\label{prop:Phat} 
	Let $\Pc$ be an LR-vine. 
	Let $ \widehat\Pc$ be the poset consisting of the complete unions of the nodes in $\Pc$, i.e.~ $ \widehat\Pc =\{ U_a \mid a \in \Pc\}$ with partial order given by set inclusion.
		Define a map
	$$\eta_\Pc\colon \Pc  \longrightarrow \widehat\Pc  \quad \text{via} \quad a \mapsto U_a.$$
Then $\eta_\Pc$ is a poset isomorphism hence $\Pc \simeq  \widehat\Pc $. 
	\end{proposition}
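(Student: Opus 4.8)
The plan is to show that $\eta_\Pc\colon \Pc \longrightarrow \widehat\Pc$, $a \mapsto U_a$, is a bijective, order-preserving map whose inverse is also order-preserving; by Definition \ref{def:poset-iso} this is exactly what it means to be a poset isomorphism. The map is surjective by the very construction of $\widehat\Pc$ as $\{U_a \mid a \in \Pc\}$, so the real content is injectivity together with the two-sided order characterization. The crucial observation that packages everything together is Corollary \ref{cor:order-preserving}: for nodes $a,b$ in an LR-vine, $U_a \subseteq U_b$ implies $a \le b$. This single implication will do almost all the work.

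First I would establish injectivity. Suppose $\eta_\Pc(a) = \eta_\Pc(b)$, i.e.~$U_a = U_b$. Then Corollary \ref{cor:order-preserving} (its ``in particular'' clause) gives $a = b$ immediately, so $\eta_\Pc$ is injective, and combined with surjectivity it is a bijection. Next I would verify that $\eta_\Pc$ is order-preserving. If $a \le b$ in $\Pc$, then every minimal descendant of $a$ is a descendant of $b$, so $U_a \subseteq U_b$; more cleanly, since $U_a = \{x \in \Pc_1 \mid x \le a\}$ by Definition \ref{def:union}, transitivity of $\le$ yields $U_a \subseteq U_b$, which is precisely $\eta_\Pc(a) \le \eta_\Pc(b)$ in $\widehat\Pc$ (ordered by inclusion). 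Finally, for the inverse: I must show that $U_a \subseteq U_b$ implies $a \le b$, but this is exactly the forward direction of Corollary \ref{cor:order-preserving}. Hence $\eta_\Pc^{-1}$ is order-preserving as well, completing the proof that $\eta_\Pc$ is an isomorphism and therefore $\Pc \simeq \widehat\Pc$.

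There is essentially no hard obstacle here, because Corollary \ref{cor:order-preserving} has already done the heavy lifting. The only point requiring a moment of care is the order-preserving direction $a \le b \Rightarrow U_a \subseteq U_b$, which must be checked directly from the definition of the complete union rather than quoted from the corollary (the corollary supplies the converse, not this implication). This follows at once from transitivity of the partial order, so I expect the entire argument to be short. The one subtlety worth flagging is that one should confirm $\widehat\Pc$ is genuinely a poset under inclusion, which is automatic since set inclusion is a partial order on any collection of sets; the LR-vine hypothesis enters only through Corollary \ref{cor:order-preserving} to guarantee that distinct nodes have distinct complete unions and that inclusion of unions reflects the order.
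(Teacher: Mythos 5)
Your proposal is correct and follows essentially the same route as the paper's proof: surjectivity by construction, injectivity from the ``in particular'' clause of Corollary \ref{cor:order-preserving}, order-preservation from transitivity of $\le$, and order-reflection (hence the inverse being a homomorphism) from the main implication of that same corollary. The paper compresses the order-preserving direction into ``clearly a homomorphism,'' but your explicit justification is the same underlying argument.
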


	\begin{proof} 
	Clearly, $\eta_\Pc$ is a surjective homomorphism. 
	By Corollary \ref{cor:order-preserving}, for any $a,b \in \Pc$, if  $U_a= U_b$ then $a = b$.
	Thus $\eta_\Pc$ is injective hence  bijective.
	Again by Corollary \ref{cor:order-preserving}, for any $a,b \in \Pc$, $U_a \subseteq U_b$ if and only if $a \le b$.
	Thus the inverse of $\eta_\Pc$ is a poset homomorphism.
	We conclude that   $\eta_\Pc$ is an isomorphism.
	\end{proof}

Given a vine, it is important to know which induced subposet is again a vine. 
This motivates the following notion of \emph{truncation} of a vine \cite{BCA12}. 

\begin{definition}[Truncation] 
\label{def:truncation}
Let $(\Pc,\rk)$  be a finite graded poset of  rank $n$ and let $1\le k \le n$. 
The induced subposet $\overline\Pc_{\le k} := \{ x \in \Pc \mid   \rk(x) \le k \} = \bigcup_{i=1}^k \Pc_i$ with the rank function $\overline{\rk} = \rk$ is called the \tbf{$k$-lower truncation} of $\Pc$. 

Likewise, the induced subposet $\overline\Pc_{\ge k}:=   \{ x \in \Pc \mid   \rk(x) \ge k \} = \bigcup_{i=k}^{n} \Pc_i$ with the rank function $\overline{\rk}(v) = \rk(v) - k+1$ for all $v \in \overline\Pc_{\ge k}$ is called the \tbf{$k$-upper truncation} of $\Pc$. 

An induced subposet $\Qc$ of $\Pc$ is called a  \tbf{lower} (resp.~an  \tbf{upper})  \tbf{truncation} if $\Qc = (\overline\Pc_{\le k},\overline{\rk} )  $ (resp.~$\Qc = (\overline\Pc_{\ge k},\overline{\rk} )  $) for some $k$. 
A truncation $\Qc$ of $\Pc$ is called   \tbf{proper} if $\Qc \ne \Pc$.
\end{definition}

\begin{remark} 
\label{rem:low-upp}
Any lower truncation of a vine is an ideal. 
Hence by Remark \ref{rem:ideal-LRvine}, any lower truncation  of a vine (resp.~an LR-vine) is itself a vine (resp.~an LR-vine). 
However, a proper lower truncation  of an R-vine of rank $>1$ is not an R-vine. 
See Figure~\ref{fig:LRV} for an example of a lower truncation.

A proper upper truncation of a vine of rank $>1$ is not an ideal. 
However, proximity is preserved under taking  either upper or lower truncation.
Hence by Proposition \ref{prop:LR=P}, any upper truncation  of an LR-vine (resp.~a vine) is an LR-vine  (resp.~a vine). 
Unlike the lower truncation case, any upper truncation  of an R-vine is an R-vine by Remark \ref{rem:card-R-vine}. 
\end{remark}

The discussion above indicates that LR-vines are closed under either upper or lower truncation, while R-vines are only closed under upper truncation. 
We will see in \S\ref{subsubsec:SO} that these classes are also closed under  ``vertical" truncation, or more precisely, \emph{marginalization}.

\subsection{Construct an LR-vine from a given MAT-labeled graph}
\label{subsec:G-to-P}

	\begin{definition} 
\label{def:G-to-P}
Let $(G,\lambda)$ be an MAT-labeled graph with $N_G  =[\ell] $ and clique number $\omega(G)$.
Define a finite graded poset $\Pc=(\Pc, \le_\Pc, \rk_\Pc)$ from $(G,\lambda)$ as follows: 
\begin{enumerate}[(1)]
		\item $\Pc$ consists of the sets $\{i\}$ for $1 \le i \le \ell$ and all the principal cliques in $(G,\lambda)$ (Lemma \ref{lem:edge-clique}).
 	\item  For   $u,v \in \Pc$, $u \le_\Pc v$ if $u$ is a subset of $v$. 
		\item   $\rk_\Pc(v) = |v|$ for all $v \in \Pc$. 
	\end{enumerate}
\end{definition}

\begin{remark} 
\label{rem:constructed-LRvine}
It is easily seen that $\min(\Pc) =\{ \{i\} \mid 1 \le i \le \ell \}$.
The poset $\Pc$ is graded by $\rk_\Pc$ because by Lemma \ref{lem:pc-cover}, for every edge $e=\{i,j\}$ in $G$, the principal clique $K_e$ generated by $e$ covers exactly two principal cliques $K_e\setminus\{i\}$ and $K_e\setminus\{j\}$.
Note also that $\rk(\Pc)=\omega(G)$ by Lemma \ref{lem:largest-label}. 
\end{remark}

	\begin{theorem} 
	\label{thm:MAT-to-poset}
The poset $\Pc=(\Pc, \le_\Pc, \rk_\Pc)$  from Definition \ref{def:G-to-P}  is an LR-vine. 
In particular, if $(G,\lambda)$ is an MAT-labeled complete graph, then $\Pc$ is an R-vine.

	\end{theorem}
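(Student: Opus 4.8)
The plan is to verify that the poset $\Pc$ from Definition \ref{def:G-to-P} satisfies the two conditions of the poset definition of a vine (Definition \& Proposition \ref{def-prop:V-poset}) together with local regularity, and then treat the complete-graph case as a special instance. First I would confirm that $\Pc$ is indeed a well-defined finite graded poset: by Remark \ref{rem:constructed-LRvine} this is already essentially in hand, since Lemma \ref{lem:pc-cover} shows that every principal clique $K_e$ generated by an edge $e=\{i,j\}$ covers exactly the two principal cliques $K_e\setminus\{i\}$ and $K_e\setminus\{j\}$, and the minimal elements are precisely the singletons $\{i\}$ with $\rk_\Pc(\{i\})=1$. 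This simultaneously handles the first half of condition (1) of \ref{def-prop:V-poset} (every non-minimal node covers exactly two others).

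Next I would establish the remaining structural requirements. For the second half of condition (1) — that any two distinct nodes of the same rank are covered by at most one common node — I would argue that if a principal clique $C$ of size $k$ is covered by two principal cliques $K_{e}$ and $K_{f}$ of size $k+1$, then $K_e$ and $K_f$ each contain $C$ plus one additional vertex; using Lemma \ref{lem:pc-cover} in the form that the two cliques covered by $K_e$ are exactly $K_e\setminus\{i\}$ and $K_e\setminus\{j\}$, I would show the vertex added to $C$ is forced, so $C$ cannot have two distinct covers sharing the same added vertex, and distinctness of the covers then pins down the configuration. For condition (2), that each $F_i=(\Pc_i,\E(\Pc_{i+1}))$ is a forest, I would identify $\Pc_i$ with the principal cliques of size $i$ and $\E(\Pc_{i+1})$ with the pairs of size-$i$ cliques covered by a common size-$(i+1)$ clique, and argue acyclicity by relating a hypothetical cycle back to the acyclicity condition (ML\ref{definition MAT-labeling cycle}) of the MAT-labeling via the principal-clique correspondence.

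The main obstacle I expect is proving local regularity, i.e.\ that every principal ideal of $\Pc$ is an R-vine. By Proposition \ref{prop:LR=P} it suffices to verify the proximity condition globally, which is the cleaner route: for distinct $a,b\in\Pc_i$ ($i\ge 2$) covered by a common node $v$, I must produce a common node they both cover. Translating through Definition \ref{def:G-to-P}, $v$ is a principal clique $K_e$ of size $i+1$ and $a,b$ are two of the size-$i$ principal cliques below it; by Lemma \ref{lem:pc-cover}, $a$ and $b$ are each obtained from $K_e$ by deleting one vertex, so $a\cap b=K_e\setminus\{i,j\}$ is a clique of size $i-1$, and I would need to show this intersection is itself a principal clique covered by both $a$ and $b$. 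The delicate point is verifying that $a\cap b$ is genuinely a principal clique (not merely a clique); I expect to invoke Lemma \ref{lem:max-prin} after checking that $(G[a\cap b],\lambda|_{E_{G[a\cap b]}})\le(G,\lambda)$, which should follow from restricting the MAT-labeled complete graph structure on $K_e$ and applying the MAT-simplicial deletions of Lemma \ref{lem:MAT-simplicial} twice.

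Finally, for the complete-graph case, I would observe that when $G=K_\ell$ there is a unique maximal clique, so $\Pc$ has a unique maximal element, namely $N_G$ itself. Combined with local regularity and Remark \ref{rem:sub-LRvine} (an ideal of an LR-vine is regular precisely when it is principal), the fact that $\Pc$ equals its own principal ideal generated by the top element forces $\Pc$ to be an R-vine directly. Alternatively, one checks the R-vine conditions of Definition \ref{def:RV-poset} from Lemma \ref{lem:card-complete}, which gives $|\pi_k|=\ell-k$ and hence $|\Pc_i|=\ell+1-i$, matching Remark \ref{rem:card-R-vine}; this cardinality count forces $\rk(\Pc)=\dim(\Pc)$ and promotes each forest $F_i$ to a tree, with proximity already established.
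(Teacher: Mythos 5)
Your overall strategy --- verifying the vine axioms of Definition \& Proposition \ref{def-prop:V-poset} directly and then checking proximity --- is genuinely different from the paper's proof, which proceeds by induction along an MAT-PEO: it deletes the last vertex $a_\ell$, applies the induction hypothesis, and reattaches the edges $\{a_\ell,b_1\},\dots,\{a_\ell,b_d\}$ one at a time in label order, observing that each associated forest changes only by acquiring a pendant or isolated node, so the forest and proximity conditions are inherited for free. Parts of your plan do go through: the covering structure is exactly Lemma \ref{lem:pc-cover}; the ``at most one common cover'' condition is immediate since a common cover $v$ of distinct $a,b\in\Pc_k$ satisfies $v\supseteq a\cup b$ and $|a\cup b|\ge k+1=|v|$, forcing $v=a\cup b$; and your proximity argument is essentially correct --- $K_e\setminus\{i,j\}$ is a principal clique by two applications of Lemmas \ref{lem:MATS-existence} and \ref{lem:MAT-simplicial} followed by Lemma \ref{lem:max-prin}, and Lemma \ref{lem:pc-cover} then forces it to be one of the two nodes covered by each of $a=K_e\setminus\{i\}$ and $b=K_e\setminus\{j\}$. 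Your derivation of the complete-graph case from local regularity, the unique maximal element, and Remark \ref{rem:sub-LRvine} is also clean, granted the first assertion.

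The genuine gap is the forest condition. For $i=1$ the identification of $F_1$ with $(N_G,\pi_1)$ does reduce acyclicity to (ML\ref{definition MAT-labeling cycle}) with $k=1$, but for $i\ge 2$ the claim that a hypothetical cycle in $F_i$ ``relates back'' to (ML\ref{definition MAT-labeling cycle}) is unsubstantiated, and it is in fact the hard point of any direct proof. A cycle $C_1,\dots,C_m,C_1$ in $F_i$ produces size-$(i+1)$ principal cliques $D_j=C_j\cup C_{j+1}$ with generating edges $e_j=\{x_j,y_j\}\in\pi_i$, where $x_j\in C_j\setminus C_{j+1}$ and $y_j\in C_{j+1}\setminus C_j$; chaining these yields only a closed walk in $G$ alternating label-$i$ edges with possibly several connector edges of label $<i$ inside each $C_{j+1}$. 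Condition (ML\ref{definition MAT-labeling cycle}) only forbids cycles in which at most one edge has label below the common label of the others, so it does not directly exclude such a configuration; one would need an induction on cycle length that produces chords, which is precisely the machinery the paper has to build for the converse direction in Lemma \ref{lem:ML1} via Lemmas \ref{lem:m-cycle} and \ref{lem:m-cycle-LR}. Until you supply that argument (or switch to the MAT-PEO induction, under which each $F_i$ grows only by pendant or isolated nodes and acyclicity is automatic), the proof is incomplete. A minor further point: your paragraph on ``two distinct nodes covered by at most one node'' actually argues the different statement that a single node has at most two covers; the intended statement has the one-line proof indicated above.
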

	
		\begin{proof}
		First we prove the first assertion. 
We argue by induction on the number $\ell$ of vertices of $G$. 
The assertion is clearly true when $\ell=1$. 
Suppose $\ell \ge 2$.
By Theorem \ref{thm:MAT-PEO}, there exists an MAT-PEO $(a_1,\ldots, a_\ell)$ of $(G,\lambda)$. 
Denote $G' := G \setminus a_\ell$ and $\lambda' :=  \lambda|_{E_{G'}} $. 
By Lemma \ref{lem:MAT-simplicial}, $(G',\lambda')$ is an MAT-labeled graph. 
Let $\Pc'$ be the  poset defined by $(G',\lambda')$ using Definition \ref{def:G-to-P}. 
By the induction hypothesis, $\Pc'$ is an LR-vine. 

Denote $d:= \deg_G(a_\ell)$. 
If $d=0$ i.e.~$a_\ell$ is an isolated vertex, then $\Pc= \Pc' \cup \{\{a_\ell\}\}$ is clearly an LR-vine. 
Suppose $d \ge 1$.
Write $\nbd_G(a_\ell) :=\{b_q \mid 1 \le q \le d\} \subseteq N_{G'}$ so that $\{a_\ell, b_q\} \in \pi_q$. 
For $1 \le q \le d$, define the following subgraph $H_q$ of $G$ 
$$H_q := G \setminus \{	\{a_\ell, b_{q+1}\},	\ldots,\{a_\ell, b_{d}\}	\}.$$
It is not hard to check that $a_\ell$ is an MAT-simplicial vertex in $(H_q, \lambda|_{E_{H_q}})$ for each $1 \le q \le d$. 
By Lemma \ref{lem:MAT-simplicial}, each $(H_q, \lambda|_{E_{H_q}})$ is an MAT-labeled graph since $G' = H_q \setminus a_\ell$. 

For each $1 \le q \le d$, let $\scR_q$ be the poset defined by $(H_q, \lambda|_{E_{H_q}})$ from Definition \ref{def:G-to-P}. 
We may write
$$\scR_q = \Pc' \cup \{v_0,\ldots,v_q\} = \Pc \setminus \{ v_{q+1}, \ldots, v_d\},$$
where $v_0 := \{a_\ell\}$ and each $v_q$ is the principal clique generated by $\{a_\ell, b_q\}$. 
We claim that  for each $1 \le q \le d$, the poset $\scR_q$ is an LR-vine.
In particular, the case $q=d$ whence $H_d=G$ and $\scR_d =\Pc$ yields the first assertion of Theorem \ref{thm:MAT-to-poset}. 

We argue by induction on $q$. 
The case $q=1$ is simple. 
The new non-minimal node $v_1 = \{a_\ell, b_1\}$ covers exactly two nodes $v_0=\{a_\ell\}$ and $\{b_1\}$. 
Also,  the new minimal node $v_0$ is only covered by $v_1$  since $a_\ell \notin v$ for all $v \in \Pc'$. 
The first associated forest of $\scR_1$ is given by that of $\Pc'$ with $a_\ell$ added so that $a_\ell$ is only connected to $b_1$. 
The second associated forest of $\scR_1$ is given by that of $\Pc'$ with an isolated vertex  $v_1$ added. 
The remaining associated forests of $\scR_1$ are the same as those of $\Pc'$. 
The proximity condition holds trivially. 
Hence $\scR_{1}$ is an LR-vine by Proposition \ref{prop:LR=P}. 

Suppose that the claim is true for some $1 \le q <d$. 
First note that $H_{q+1}\setminus\{a_\ell, b_{q+1}\}  = H_q$ and $\scR_{q+1}\setminus\{v_{q+1}\}  = \scR_q$.
Since $a_\ell$ is an MAT-simplicial vertex in $(H_q, \lambda|_{E_{H_q}})$, by (MS\ref{definition MAT-simplicial 3}), the edges in the complete subgraph $H_q[v_q]$ of $H_q$ induced by the vertices in $v_q$ have label $\le q-1$ except $\{a_\ell, b_q\} \in \pi_q$. 
Similarly, any edge of the form $\{b_{q+1},h\}$ where $h$ is a vertex in $v_q\setminus\{a_\ell\}$ has label $\le q$. 
Therefore, $v_q\setminus\{a_\ell\}$ consists of the $q$ conditioning vertices of $\{a_\ell, b_{q+1}\}$ in  $H_{q+1}$. 
Hence $v_{q+1} = v_q \cup \{b_{q+1}\}$. 

By Lemma \ref{lem:pc-cover}, $u:=v_{q+1} \setminus\{a_\ell\}$ is a principal clique (of cardinality $q+1$) in $(G',\lambda')$ hence a vertex in $\Pc'$.
Let $c$ be the vertex in $u$ such that $c$ is largest with respect to the MAT-PEO $(a_1,\ldots, a_{\ell-1})$ of $(G',\lambda')$. 
Since the edges in the complete subgraph $G'[u]$ have label $\le q$, by (MS\ref{definition MAT-simplicial 2}), there exists among these edges an edge $e $ incident on $c$ of label $q$. 
By the preceding paragraph, this edge $e$ must be $\{b_{q+1},h^*\}$ for some vertex $h^*$ in $v_q\setminus\{a_\ell\}$.
Moreover, such an edge $e$ is unique which is guaranteed by (ML\ref{definition MAT-labeling cycle}). 
Therefore, $u$ is generated by $e$. 
In particular, $u$ covers $v_q\setminus\{a_\ell\}$ in $\Pc'$ by Lemma \ref{lem:pc-cover}.
(See Figure \ref{fig:illustration} for a pictorial illustration of the proof.)

Now $v_{q+1}$ covers exactly two nodes $v_q$ and $u$ in $\scR_{q+1}$. 
Also, $v_q$ is covered  only by $v_{q+1}$  since $a_\ell \notin v$ for all $v \in \Pc'$.
The $(q+1)$-th associated forest of $\scR_{q+1}$ is given by that of $\scR_{q}$ with $v_q$ added so that $v_q$ is only connected to $u$. 
The $(q+2)$-th associated forest of $\scR_{q+1}$ is given by that of $\scR_{q}$  with an isolated vertex  $v_{q+1}$ added. 
The remaining associated forests of $\scR_{q+1}$ are the same as those of $\scR_{q}$.
This follows that $\scR_{q+1}$ is a vine. 
Furthermore, both $v_q \in \scR_{q}$ and $u \in \Pc'$ cover $v_q\setminus\{a_\ell\}$. 
Therefore, the proximity condition holds in $\scR_{q+1}$. 
Hence $\scR_{q+1}$ is an LR-vine by Proposition \ref{prop:LR=P}. 

Finally, we show the second assertion of Theorem \ref{thm:MAT-to-poset}. 
If $(G,\lambda)$ is an MAT-labeled complete graph, then $\rk(\Pc) =\dim(\Pc)=\ell$.
The proofs for the proximity of $\Pc$ and the fact the associated forests of $\Pc$ are trees run essentially along the same line as the proof of the first assertion. 
 	\end{proof}

	\begin{figure}[htbp!]
   \centering
\begin{tikzpicture}[scale=1]
\draw (0,-1) node[v](aell){} node [below]{$a_\ell$};
\draw (-2.2,2) node[v](bq+1){} node [above]{$b_{q+1}$};
\draw (1.2,1.8) node[v](bq){} node [right]{$b_{q}$};
\draw (0,2.3) node[v](h){} node [right]{$h^*$};

 \draw (0,3.5) node [above]{{\tiny $u$}};
  \draw (0,2.7) node [above]{{\tiny $v_q\setminus\{a_\ell\}$}};
  \draw (3,3.5) node [above]{$G'$};
  
  \draw (0,2) ellipse (4cm and 2cm);
    \draw[dashed] (0,2) ellipse (3cm and 1.5cm);
        \draw[dashed] (.3,2) ellipse (2cm and .7cm);
        
        \draw[blue, thick] (bq+1)--(aell) node [midway, left, blue] {{\tiny $q+1$}};
                \draw[red, thick] (bq)--(aell) node [midway, left, red] {{\tiny $q$}};
                       \draw[red, thick] (bq+1)--(h) node [midway, below, red] {{\tiny $q$}};

 \end{tikzpicture}
 \caption{\small A pictorial illustration of the proof of Theorem \ref{thm:MAT-to-poset}.}
\label{fig:illustration}
\end{figure}
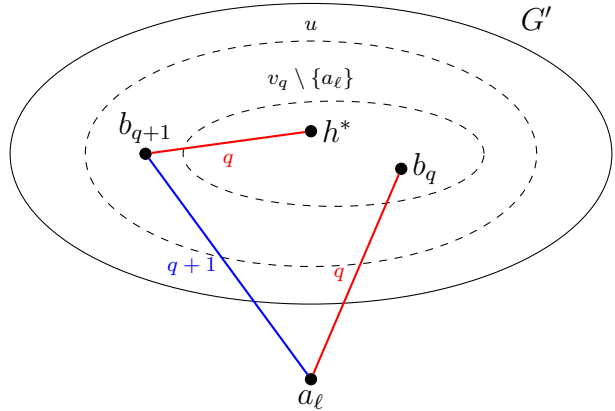

	\begin{corollary}
\label{cor:cond-sets}
 Given an MAT-labeled graph $(G,\lambda) $, let $\Pc$ denote the LR-vine defined by  $(G,\lambda)$ from Definition \ref{def:G-to-P}. 
 Then a  node $v \in \Pc$ has complete union $U_v = \{ \{a\} \mid a \in v\}$. 
 Moreover, if $v=K_e \in \Pc$ is a non-minimal node where $e=\{i,j\} \in E_G$, then $v$ has conditioned set $C_{v}=\{\{i\},\{j\}\}$.
\end{corollary}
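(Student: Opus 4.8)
The plan is to unpack the definitions of complete union and conditioned set (\ref{def:union}, \ref{def:ccc-sets}) in terms of the concrete poset $\Pc$ built from principal cliques in \ref{def:G-to-P}. Recall that in $\Pc$ the minimal nodes are the singletons $\{a\}$ for $a \in N_G = [\ell]$, the order is set inclusion, and $\rk_\Pc(v) = |v|$. For the first assertion, I would fix a node $v \in \Pc$, which is either a singleton or a principal clique $K_e$, and compute $U_v = \{ x \in \Pc_1 \mid x \le v \}$ directly. Since $\Pc_1 = \min(\Pc) = \{\{a\} \mid a \in [\ell]\}$ and the order is inclusion, a singleton $\{a\}$ satisfies $\{a\} \le v$ precisely when $a \in v$. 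Thus $U_v = \{\{a\} \mid a \in v\}$, which also recovers $|U_v| = |v| = \rk_\Pc(v)$, consistent with Lemma \ref{lem:UCD}(a).

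For the second assertion, I would take a non-minimal node $v = K_e$ with $e = \{i,j\} \in \pi_k$, so $\rk_\Pc(v) = |v| = k+1$ and $v = \{i,j,h_1,\ldots,h_{k-1}\}$ where the $h_m$ are the conditioning vertices of $e$. By \ref{aspn:vine-to-Vposet} I write $v = \{a,b\}$ where $a,b \in \Pc_k$ are the two nodes covered by $v$; by Lemma \ref{lem:pc-cover} these are exactly the principal cliques $a = K_e \setminus \{i\}$ and $b = K_e \setminus \{j\}$. Using the first assertion, $U_a = \{\{c\} \mid c \in K_e \setminus \{i\}\}$ and $U_b = \{\{c\} \mid c \in K_e \setminus \{j\}\}$. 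The conditioned set is the symmetric difference $C_v = U_a \,\triangle\, U_b$, and since $K_e \setminus \{i\}$ and $K_e \setminus \{j\}$ differ exactly in the elements $i$ and $j$, their symmetric difference is $\{\{i\},\{j\}\}$.

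The main step requiring care is the identification, via Lemma \ref{lem:pc-cover}, that the two nodes covered by $v = K_e$ are precisely $K_e \setminus \{i\}$ and $K_e \setminus \{j\}$, matching the decomposition $v = \{a,b\}$ from \ref{aspn:vine-to-Vposet}; this is what lets me translate the set-theoretic symmetric difference into the removed endvertices. I would also note that $C_v$ has cardinality $2$, agreeing with Lemma \ref{lem:UCD}(b), which serves as a consistency check. Overall I expect no serious obstacle here: once the first assertion is established, the second is a direct computation of a symmetric difference of two sets differing in exactly two elements, so the entire proof is short and essentially definitional.
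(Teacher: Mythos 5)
Your proof is correct and matches the paper's (one-line) argument in substance: the paper simply says to use Remark \ref{rem:constructed-LRvine} and induct on $\rk(v)$, while you observe that since $\le_\Pc$ is literally set inclusion the complete union can be read off directly, and then the conditioned set follows from the covering structure in Lemma \ref{lem:pc-cover}. This is the same definitional unwinding, just organized as a direct computation rather than an induction.
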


		\begin{proof}
Use Remark \ref{rem:constructed-LRvine} and argue by an induction on $\rk(v)$. 
	\end{proof}
	
	We close this section by giving an example to illustrate the construction in Definition \ref{def:G-to-P} and Theorem \ref{thm:MAT-to-poset}. 
	First we need a definition.

 	\begin{definition}[D-vine] 
\label{def:D-vine}
		An R-vine is called a  \tbf{D-Vine} if each associated tree has the smallest possible number of vertices of degree $1$.
		Equivalently, each associated tree is a \emph{path graph}. 
 \end{definition}

\begin{remark} 
\label{rem:D-vine-RP}
 Let $\Phi$ be an irreducible root system in $\R^\ell$ with a fixed positive system $\Phi^+ \subseteq \Phi$ and the associated set of simple roots $\Delta= \{\alpha_1,\ldots,\alpha_\ell \}$. 
 Suppose that $\Phi$ is of type $A_\ell$ and the Hasse diagram of $\Phi$ is the path graph $\alpha_1\,\textendash\,\alpha_2\,\textendash\,\cdots\,\textendash\,\alpha_\ell$.
Then the positive roots of $\Phi$ are given by 
$$\Phi^+ = \left\{ \sum_{i\le k \le j } \alpha_k \,\middle\vert\,  1\le i\le j \le m\right\}.$$
 
It is not hard to show that the D-vine $\Pc$ with the first associated tree $ 1\,\textendash\,2\,\textendash\,\cdots\,\textendash\,\ell$ is isomorphic to the root poset $\scR (A_\ell)$ of type $A_\ell$ under the following isomorphism:
$$\mu \colon \Pc  \longrightarrow \scR (A_\ell)  \quad \text{via} \quad v \mapsto  \sum_{k \in U_v} \alpha_k .$$

\end{remark}

		\begin{example} 
			\label{ex:D-vine} 	
Figure~\ref{fig:D-vine} depicts a $4$-dimensional D-vine (middle) that can be constructed in three ways. 
First, it is the node poset of a graphical vine on $[4]$ (left) under the isomorphism in Proposition \ref{prop:Phat}.
Second, it is the poset defined an MAT-labeled complete graph (right) using Definition \ref{def:G-to-P}. 
Third, it is the root poset of type $A_4$ by Remark \ref{rem:D-vine-RP}.
The elements in the poset are written without set symbol for simplicity. 
The conditioned set of a non-minimal node is given to the left of the ``$\mid$" sign, while the conditioning set appears on the right.
For example, the top node $\{ 1,2,3,4\} $ (or the largest clique generated by $\{v_1,v_4\}$) is written by \textcolor{blue}{$14|23$}.
	\end{example}

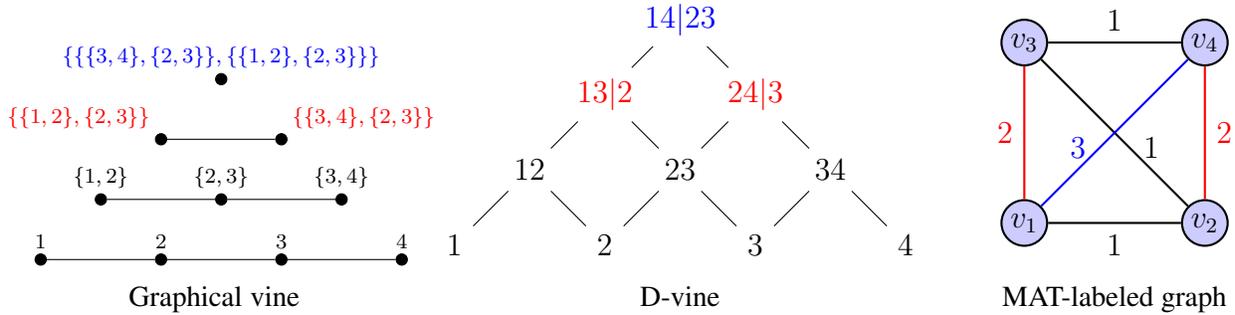
\begin{figure}[htbp!]
\begin{subfigure}{.35\textwidth}
   \centering
\begin{tikzpicture}[scale=.8]
\draw (0,3) node[v](14){} node [above]{\tiny\textcolor{blue}{$\{\{ \{3,4\}, \{2,3\}\}, \{ \{1,2\}, \{2,3\}\} \}$}};
  \draw (1,2) node[v](24){} node [above right]{\tiny\textcolor{red}{$\{ \{3,4\}, \{2,3\}\}$}};
  \draw (-1,2) node[v](13){} node [above left ]{\tiny\textcolor{red}{$\{ \{1,2\}, \{2,3\}\}$}};
  \draw (-2,1) node[v](12){} node [above]{\tiny $\{1,2\}$};
    \draw (0,1) node[v](23){} node [above]{\tiny $\{2,3\}$};
    \draw (2,1) node[v](34){} node [above]{\tiny $\{3,4\}$};    
    
      \draw (-1,0) node[v](2){} node [above]{\tiny$2$};
      \draw (1,0) node[v](3){} node [above]{\tiny$3$};
      \draw (-3,0)  node[v](1){} node [above] {\tiny$1$};
      \draw  (3, 0) node[v](4){} node [above] {\tiny$4$};   
  \draw (13) -- (24) ;
       \draw (12)-- (23) -- (34) ;
    \draw (1)--(2)--(3)--(4) ;
\end{tikzpicture}
\caption*{Graphical vine}
\end{subfigure}%
\begin{subfigure}{.4\textwidth}
   \centering
\begin{tikzpicture}[scale=1]
 
  \node (14) at (-1,3) {\textcolor{blue}{$14|23$}};
  \node (24) at (0,2) {\textcolor{red}{$24|3$}};
  \node (13) at (-2,2) {\textcolor{red}{$13|2$}};
  \node (34) at (1,1) {$34$};
  \node (23) at (-1,1) {$23$};
   \node (12) at (-3,1) {$12$};
         \node (4) at (2,0) {$4$};
       \node (3) at (0,0) {$3$};
     \node (2) at (-2,0) {$2$};
   \node (1) at (-4,0) {$1$};
   
  \draw (1)--(12) -- (13) -- (14) ;
    \draw (2)--(23) -- (24) ;
    \draw  (3)--(34) ;
  \draw (4)--(34) -- (24) -- (14) ;
      \draw (3)--(23) -- (13) ;
        \draw (2)--(12) ;
\end{tikzpicture}
\caption*{D-vine}
\end{subfigure}%
\begin{subfigure}{.3\textwidth}
  \centering
\begin{tikzpicture}[scale=1.2]
\begin{scope}[every node/.style={circle,thick,draw, inner sep=1.7pt, fill=blue!20}]
    \node (v1) at (0,0) {$v_1$};
    \node (v2) at (2,0) {$v_2$};
    \node (v4) at (2,2) {$v_4$};
    \node (v3) at (0,2) {$v_3$};
\end{scope}
\draw[thick] (v3)--(v2) node [near end, above] {$1$};
\draw[red, thick] (v4)--(v2) node [midway, right, red] {$2$};
\draw[thick] (v4)--(v3) node [midway, above] {$1$};
\draw[blue, thick] (v4)--(v1) node [near end, above, blue] {$3$};
\draw[thick] (v2)--(v1) node [midway, below] {$1$};
\draw[red, thick] (v3)--(v1) node [midway, left, red] {$2$};
\end{tikzpicture}
  \caption*{MAT-labeled graph}
\end{subfigure}
\caption{\small An MAT-labeled complete graph on $4$ vertices (right), the  D-vine  (middle) ($=$ type $A$ root poset) defined by the graph using Definition \ref{def:G-to-P}, and the corresponding  graphical vine (left).}
\label{fig:D-vine}
\end{figure}

\begin{remark} 
\label{rem:Pascal}
Max Wakefield let us know an interesting observation that the D-vine is isomorphic to the intersection lattice (with bottom element removed) of the \emph{Pascal arrangement} introduced in \cite{MW11}. 
We leave a possible generalization of the main result in \cite{MW11} to an R-vine for future research. 

\end{remark}


\section{From LR-vines to MAT-labeled graphs}
\label{sec:vine-to-MAT}
Constructing an MAT-labeled graph from a given LR-vine needs more effort.

\subsection{Some new properties of LR-vines}
\label{subsec:newprop}
The following lemma provides the key ingredient of our construction.
\begin{lemma}[Joining path] 
\label{lem:LR-paths}
Let $\Pc=(F_1,\ldots,F_{n})$ be an $\ell$-dimensional LR-vine. 
Let $i, j \in  \min(\Pc)=[\ell]$ be distinct minimal nodes. 
Let $v \in \Pc_{r}$ be a non-minimal node ($2 \le r \le n$). 
The following are equivalent: 
\begin{enumerate}[(1)]
		\item  $v =i \vee j$, i.e.~$v$ is the join of $i$ and $j$.
		\item $C_v = \{i,j\}$, i.e.~$\{i,j\}$ is the conditioned set of $v$.
		\item There exist $r$ paths (uniquely determined by $i$ and $j$) $P_k=P_k(i,j) \in E_k$   in the forests $F_k$  ($1 \le k \le r$)  satisfying the following three conditions: 
\begin{enumerate}[(a)]
		\item $P_1=(a_{1,1}, a_{1,2}, \ldots, a_{1,p_1}), p_1 \ge 2$ is a (unique) path connecting nodes $a_{1,1}:=i$ and $a_{1,p_1}:=j$ in $F_1$.
		\item For $2 \le k \le r$, $P_k=(a_{k,1}, a_{k,2},\ldots, a_{k,p_k}), p_k \ge 1$ is a (unique) path connecting nodes $a_{k,1}:=\{ a_{k-1,1}, a_{k-1,2}\}$ and $a_{k,p_k}:=\{ a_{k-1,p_{k-1}-1}, a_{k-1,p_{k-1}}\}$ in $F_k$.
		\item $P_{r}=v$.
		
	\end{enumerate}
	\end{enumerate}
In this case, we call the path $P_k(i,j) \in E_k$ ($1 \le k \le r$) the \tbf{$k$-joining path} of the pair $\{i,j\}$ (or $P_k(v)$ the $k$-joining path of $v$).
\end{lemma}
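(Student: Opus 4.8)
The plan is to prove the cyclic chain $(1)\Rightarrow(2)\Rightarrow(3)\Rightarrow(1)$, after translating everything into complete unions via the isomorphism $\eta_\Pc$ of Proposition \ref{prop:Phat}. Throughout I would use that $a\le b$ iff $U_a\subseteq U_b$; that $U_v=U_a\cup U_b$, $D_v=U_a\cap U_b$, $C_v=U_a\triangle U_b$ whenever $v$ covers $a$ and $b$; the cardinality bounds $|U_v|=\rk(v)$, $|C_v|=2$, $|D_v|=\rk(v)-2$ from Lemma \ref{lem:UCD}, which give $|U_a\setminus U_b|=|U_b\setminus U_a|=1$; and, most importantly, the fact that every principal ideal $\Pc_{\le v}$ is an R-vine, so that each of its associated forests is a \emph{tree}, together with uniqueness of paths in a forest (Lemma \ref{lem:walk-path}).

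The implication $(1)\Rightarrow(2)$ is short. Write $v=\{a,b\}$ and suppose for contradiction that one of $i,j$, say $i$, lies in $D_v=U_a\cap U_b$. Since $j\le v$ we have $j\in U_a\cup U_b$, say $j\in U_a$; then $i,j\le a$ with $a<v$, so $a$ is an upper bound of $\{i,j\}$ strictly below $v$, contradicting $v=i\vee j$. Hence $i,j\notin D_v$, and as $i,j\in U_v=C_v\sqcup D_v$ with $|C_v|=2$ we conclude $C_v=\{i,j\}$.

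For $(2)\Rightarrow(3)$ I would pass to $\Pc_{\le v}$, an R-vine with unique maximal element $v$ and $r$ minimal nodes $U_v$, where the joining paths are precisely the device computing conditioned sets. I would build $P_1,\dots,P_r$ by induction on $k$: take $P_1$ to be the unique path from $i$ to $j$ in the tree $F_1[U_v]$, and, given $P_{k-1}$, take $P_k$ to be the unique path in the tree $F_k\cap\Pc_{\le v}$ joining the first and last edges of $P_{k-1}$. Proximity (Proposition \ref{prop:LR=P}) guarantees that these successive endpoints lie in a common tree, so each $P_k$ exists and is unique; tracking conditioned sets, the two endpoints of $P_{k-1}$ carry conditioned sets of the form $\{i,m\}$ and $\{j,m\}$ sharing a single element $m$, so the telescoping identity $C_{\{x,y\}}=C_x\triangle C_y$ collapses the terminal conditioned set to $\{i,j\}$ and forces termination exactly at rank $r$ with $P_r=v$. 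Because the forests of $\Pc_{\le v}$ are induced subforests of the $F_k$ and forest paths are unique, these are also the joining paths in $\Pc$.

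Finally, for $(3)\Rightarrow(1)$, the same telescoping yields $C_v=\{i,j\}$, hence $i,j\le v$, so $v$ is an upper bound of $\{i,j\}$; it remains to show $v\le w$ for every upper bound $w$. Given such a $w$, the ideal $\Pc_{\le w}$ is an R-vine whose forests are trees containing $i$ and $j$, and running the joining-path construction for $\{i,j\}$ inside $\Pc_{\le w}$ produces paths that, by uniqueness of forest paths, coincide with $P_1,\dots,P_r$; hence $v\in\Pc_{\le w}$, i.e.\ $v\le w$, and $v=i\vee j$. I expect the main obstacle to be $(2)\Rightarrow(3)$: showing the joining paths are well defined at every level and that the process stops exactly at $v$. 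The delicate point is that proximity only supplies ``covered by a common node $\Rightarrow$ cover a common node'', whereas the recursion needs the two level-$k$ endpoints to be \emph{connected} in $F_k$; this is exactly why restricting to the R-vine $\Pc_{\le v}$ (all of whose forests are trees) is indispensable, and why the uniqueness of the shared conditioning element $m$ at each step of the telescoping must be controlled with care.
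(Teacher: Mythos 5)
Your implications $(1)\Rightarrow(2)$ and $(3)\Rightarrow(1)$ are essentially fine: the first is a clean direct argument (shorter than the paper's route, which goes through (3)), and the second --- running the construction inside $\Pc_{\le w}$ for an arbitrary upper bound $w$ and invoking uniqueness of paths in forests --- is exactly the paper's argument. The problem is the step on which the rest of your cycle leans, namely that the joining paths compute the conditioned set. Your mechanism for this is the ``telescoping identity'' $C_{\{x,y\}}=C_x\,\triangle\,C_y$ together with the claim that the two endpoints of $P_{k-1}$ have conditioned sets $\{i,m\}$ and $\{j,m\}$ with a common element $m$. Both claims are false. In the $4$-dimensional D-vine of Figure \ref{fig:D-vine} with $i=1$, $j=4$, one has $P_3=(13|2,\,24|3)$ and $P_4=14|23$; the endpoint conditioned sets are $\{1,3\}$ and $\{2,4\}$, which share no element, and $C_{13|2}\,\triangle\,C_{24|3}=\{1,2,3,4\}\neq\{1,4\}=C_{14|23}$. (In general $C_{\{x,y\}}=U_x\,\triangle\,U_y$, which is not $C_x\,\triangle\,C_y$.) So the argument that the terminal conditioned set ``collapses to $\{i,j\}$'' --- which you need both to show that the recursion in $(2)\Rightarrow(3)$ stops exactly at $v$, and, via the cycle, to obtain $(3)\Rightarrow(2)$ --- does not go through.

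What is actually true, and what must be proved, is a statement about complete unions rather than conditioned sets: $i\le a_{k,1}$ but $i\not\le b$ for every node $b$ of $F_k$ reachable from a non-initial node of $P_k$ without passing through its initial node (and symmetrically for $j$); applying this at level $r-1$ gives $C_v=U_{a_{r-1,1}}\,\triangle\,U_{a_{r-1,p_{r-1}}}=\{i,j\}$ using $|C_v|=2$ from Lemma \ref{lem:UCD}. The induction on $k$ here is delicate: knowing $i\not\le a$ for non-initial nodes $a$ of $P_k$ does not automatically yield $i\not\le b$ for non-initial nodes $b$ of $P_{k+1}$. One first needs the structural fact (the paper's Claim \ref{cla:pass-thru}) that $P_{k+1}$ visits all edges of $P_k$ in order, interleaved only with edges attached to internal vertices of $P_k$; this is precisely the gap in the earlier literature that the paper flags in Remark \ref{rem:missing}, and your telescoping does not substitute for it. The remainder of your outline (working inside the R-vine $\Pc_{\le v}$, using that its forests are trees, and using uniqueness of conditioned sets in an R-vine to force termination at $v$) is sound once this missing piece is supplied.
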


Before giving the proof of Lemma \ref{lem:LR-paths}, we address some remarks. 
\begin{remark}
\label{rem:Rvine-jp}
 If $\Pc=(T_1,\ldots,T_{\ell})$ is an R-vine, then for any distinct $i, j \in [\ell]$, the joining paths of $ \{i,j\}$ always exist since $T_k$ is a tree for each $1 \le k \le \ell$ and $T_{\ell}$ has only one node. 
\end{remark}

The implication $(2) \Leftarrow (3)$ was stated in the proof of \cite[Lemma 3.9]{Diss10} for R-vines. 
Unfortunately, the proof was not complete. 
We give below a complete proof that works for an arbitrary LR-vine.

   \begin{proof}[Proof of Lemma \ref{lem:LR-paths}.]
   First we prove $(2) \Leftarrow (3)$. 
   By definition, $P_{r-1}=\{a_{r-1,1}, a_{r-1,p_{r-1}}\}=v$.
   We need to show $C_{v} = U_{a_{r-1,1}} \,\triangle\, U_{a_{r-1,p_{r-1}}} = \{i, j\}$. 
   Since $|C_{v}|=2$ by Lemma \ref{lem:UCD}(b), it is enough to show that $i \le a_{r-1,1} , i \not\le a_{r-1,p_{r-1}}$ and $j \le a_{r-1,p_{r-1}}, j \not\le a_{r-1,1}$.
 This follows once we prove that for each $1 \le k \le r $ the following two statements hold:
 \begin{enumerate}[(S1)]
		\item $i\le a_{k,1}$ and $i\not\le b$, where $b$ is any node in $F_k$ such that there exists a (unique) path in $F_k$ connecting $b$ and some non-initial node $a_{k,t}$ ($2 \le t \le p_k$) of $P_k$ but not passing through its initial node $a_{k,1}$. \label{S1} 
		
		\item $j\le a_{k,p_k}$ and $j \not\le  c$, where $c$ is any node in $F_k$ such that there exists a (unique)  path in $F_k$ connecting $c$ and some non-final node $a_{k,t}$ ($1 \le t \le p_k-1$) of $P_k$ but not passing through its final node $a_{k,p_k}$.\label{S2} 
	\end{enumerate}
 
 Since the roles of $i$ and $j$ are symmetric, it suffices to prove Statement (S\ref{S1}). 
 First we need the following crucial property of the paths $P_k$'s. 
 
    \begin{claim} 
\label{cla:pass-thru} 
 Fix $1 \le k \le r - 1$. 
Suppose $P_k$ is given by $P_k =(\alpha_{1}, \alpha_{2},\ldots, \alpha_{p})$ for $p \ge 1$ in terms of its node sequence. 
Then node set of $P_{k+1}$ consists of all the edges $\{\alpha_{t}, \alpha_{t+1}\}$ for $1 \le t \le p-1$ of  $P_k$, and some edges of the form $\{\alpha_{s+1}, \mu\}$ for $1 \le s \le p-2$ and $\mu \in \nbd_{F_k}(\alpha_{s+1})\setminus \{\alpha_{s},\alpha_{s+2}\}$. 
\end{claim}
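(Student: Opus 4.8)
The plan is to prove Claim \ref{cla:pass-thru} by unwinding the definition of the forest $F_{k+1}$ together with the proximity condition satisfied by the LR-vine. Recall that the nodes of $F_{k+1}$ are precisely the edges of $F_k$, i.e.\ $N_{k+1} = E_k = \E(\Pc_{k+1})$, and that each node $w$ of $F_{k+1}$ corresponds to an edge $\{a,b\}$ of $F_k$ with $a,b \in \Pc_k$. First I would record that the path $P_{k+1}$, by its definition in Lemma \ref{lem:LR-paths}(3)(b), connects the node $a_{k+1,1} = \{\alpha_1,\alpha_2\}$ to the node $a_{k+1,p_{k+1}} = \{\alpha_{p-1},\alpha_p\}$ in $F_{k+1}$; both of these are edges of $P_k$, hence nodes of $F_{k+1}$.

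The heart of the argument is to identify exactly which nodes of $F_{k+1}$ can appear along this path. A node of $P_{k+1}$ is an edge $\{x,y\}$ of $F_k$ with $x,y \in \Pc_k$. I would invoke the \textbf{proximity} condition (Definition \ref{def:RV-poset}(3), available since $\Pc$ is locally regular by Proposition \ref{prop:LR=P}): if two nodes of $F_{k+1}$ are adjacent in $F_{k+1}$, i.e.\ they are covered by a common node in $\Pc_{k+2}$, then as elements of $\Pc_{k+1}$ they must cover a common node in $\Pc_k$. Translating, two consecutive edges of $F_k$ lying on $P_{k+1}$ must share an endpoint in $F_k$. Therefore the sequence of edges of $F_k$ read off from $P_{k+1}$ forms a \emph{walk} in $F_k$ (consecutive edges share a vertex); since $F_k$ is a forest and $P_{k+1}$ has no repeated nodes, this walk is in fact a path in $F_k$, and by Lemma \ref{lem:walk-path} it is the \emph{unique} path in $F_k$ joining the relevant endpoints. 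Because the two endpoints $\{\alpha_1,\alpha_2\}$ and $\{\alpha_{p-1},\alpha_p\}$ of this walk are the first and last edges of $P_k$, the unique path of edges they determine in $F_k$ must traverse all of $\alpha_1,\alpha_2,\ldots,\alpha_p$ in order. This forces every edge $\{\alpha_t,\alpha_{t+1}\}$ of $P_k$ ($1 \le t \le p-1$) to be a node of $P_{k+1}$, giving the first assertion of the claim.

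For the second assertion I would argue that any \emph{additional} node of $P_{k+1}$ beyond the consecutive edges $\{\alpha_t,\alpha_{t+1}\}$ must, by the same proximity/adjacency analysis, be an edge of $F_k$ sharing a vertex with two consecutive edges along the path and hence incident to some interior vertex $\alpha_{s+1}$; such an edge has the form $\{\alpha_{s+1},\mu\}$ with $\mu \in \nbd_{F_k}(\alpha_{s+1})$. The constraints $1 \le s \le p-2$ and $\mu \notin \{\alpha_s,\alpha_{s+2}\}$ come from the requirement that $\mu$ not lie on $P_k$ itself (otherwise $F_k$ would contain a cycle) and from the fact that the interior vertices available for branching are $\alpha_2,\ldots,\alpha_{p-1}$. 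I expect the \textbf{main obstacle} to be the bookkeeping that pins down the precise index ranges and excludes spurious edges: one must verify that no node of $P_{k+1}$ can be an edge of $F_k$ disjoint from $\{\alpha_1,\ldots,\alpha_p\}$, and this is exactly where the forest (acyclicity) structure of $F_k$ combined with proximity does the work, ruling out any ``jump'' off the path. Once this is in place, the characterization of the node set of $P_{k+1}$ follows, completing the claim.
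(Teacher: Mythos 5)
Your starting point matches the paper's: by proximity, two consecutive nodes of $P_{k+1}$, viewed as edges of $F_k$, must share an endpoint, and the initial and final nodes of $P_{k+1}$ are the first and last edges of $P_k$. The gap is in the step where you upgrade this to ``the sequence of edges of $F_k$ read off from $P_{k+1}$ forms a walk in $F_k$ \dots\ in fact a path in $F_k$.'' A sequence of edges in which consecutive members merely share a vertex is not a walk: for instance $\{\alpha_1,\alpha_2\},\ \{\alpha_2,\gamma\},\ \{\alpha_2,\alpha_3\}$ satisfies the pairwise-sharing condition but is not a walk (the third edge does not contain $\gamma$), and its union in $F_k$ is a tree with a branch at $\alpha_2$, not a path. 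This is not a hypothetical configuration: the claim you are proving explicitly allows $P_{k+1}$ to contain extra nodes $\{\alpha_{s+1},\mu\}$, and whenever such a node occurs the edge sequence is exactly of this branching type. Worse, if your reduction were valid, the node set of $P_{k+1}$ would be precisely the edge set of the unique $F_k$-path whose first and last edges are $\{\alpha_1,\alpha_2\}$ and $\{\alpha_{p-1},\alpha_p\}$, i.e.\ exactly the edges of $P_k$ with no extras --- contradicting the second assertion of the claim. So the argument proves too much and cannot be fixed by index bookkeeping alone.

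What is actually needed --- and what the paper does --- is a step-by-step march along $P_{k+1}$: by proximity the node adjacent to $\{\alpha_1,\alpha_2\}$ has the form $\{\alpha_1,\beta\}$ or $\{\alpha_2,\gamma\}$; the first option is excluded because following $P_{k+1}$ onward would produce a second path in the forest $F_k$ between $\alpha_1$ and $\alpha_p$; then $P_{k+1}$ may wander among edges containing $\alpha_2$ but must reach $\{\alpha_2,\alpha_3\}$ and never return to an $\alpha_2$-edge afterwards (else again two distinct $F_k$-paths between fixed vertices), and so on inductively through $\alpha_3,\dots,\alpha_{p-1}$. This simultaneously yields both assertions: every edge $\{\alpha_t,\alpha_{t+1}\}$ is visited, and every extra node is incident to an \emph{interior} vertex $\alpha_{s+1}$ (in particular, extra nodes of the form $\{\alpha_1,\beta\}$ or $\{\alpha_p,\beta\}$, and edges disjoint from $P_k$, are ruled out). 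Your third paragraph correctly flags this exclusion of ``jumps off the path'' as the main obstacle but defers it rather than carrying it out; since the mechanism in your second paragraph is unsound, that deferred step is in fact the entire proof.
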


\begin{proof}[Proof of Claim \ref{cla:pass-thru}.]
  By definition, the initial and final nodes in $P_{k+1}$ are $\{\alpha_{1}, \alpha_{2}\}$ and $\{\alpha_{p-1}, \alpha_{p}\}$, respectively. 
  The claim is clear true for $p\le3$. 
Suppose $p \ge 4$. 
By the proximity condition, the length of $P_{k+1}$ is at least two. 
Again by the proximity, the node adjacent to the initial node in $P_{k+1}$ must have the form either $\{\alpha_{1}, \beta\}$ where $\beta \in \nbd_{F_k}(\alpha_{1}) \setminus \{\alpha_2\}$, or $\{\alpha_{2}, \gamma\}$ where $\gamma \in \nbd_{F_k}(\alpha_{2})\setminus \{\alpha_1\}$. 

 The former cannot occur, otherwise arguing on the proximity yields two different paths in $F_k$ connecting $\alpha_{1}$ and  $\alpha_{p}$; one is $P_k$ passing through $\alpha_{2}$, the other passing through some $\beta' \in \nbd_{F_k}(\alpha_{1}) \setminus \{\alpha_2\}$. 
 Thus the latter occurs, and $P_{k+1}$ possibly continues to pass through some node of the same form as $\{\alpha_{2}, \gamma\}$. 
 The following two conditions hold: The path $P_{k+1}$ must (i) reach the node $\{\alpha_{2}, \alpha_{3}\}$ and after that (ii) does not pass through any further node of this form. 
 If either (i) or (ii) fails, then there are two different paths in $F_k$ connecting $\alpha_{2}$ and  $\alpha_{p}$. 
 Hence $P_{k+1}$ passes through some node of the form $\{\alpha_{3}, \delta\}$ where $\delta \in \nbd_{F_k}(\alpha_{3})\setminus \{\alpha_2\}$ until it reaches $\{\alpha_{3}, \alpha_{4}\}$ and so on.

  A repeated application of the argument above completes the proof of the claim.
  \begin{align*}
  P_{k}: &\, \alpha_{1}  \rightarrow \alpha_{2}  \rightarrow  \alpha_{3} \rightarrow \cdots  \rightarrow \alpha_{p},\\
P_{k+1}: &\, \{\alpha_{1}, \alpha_{2}\} \rightarrow  \{\alpha_{2}, \gamma\}  \rightarrow \cdots  \rightarrow \{\alpha_{2}, \alpha_{3}\}  \rightarrow \cdots  \rightarrow \{\alpha_{3}, \alpha_{4}\} \rightarrow \cdots  \rightarrow \{\alpha_{p-1}, \alpha_{p}\}.\qedhere
   \end{align*}
\end{proof}

Now we return to the proof of (S\ref{S1}).   
The first part is easy since by definition, $a_{1,1}=i$ and $a_{k,1} \le a_{k+1,1}$ for all $1 \le k \le r - 1$.
 We argue the second part by induction on $k$. 
 The statement is clear true for $k=1$. 
 Suppose it is true for any $1 \le k <r$. 
 Let $b=\{b_1,b_2\}$ be an arbitrary node in $F_{k+1}$ such that there exists a path in $F_{k+1}$ connecting $b$ and some non-initial node  of $P_{k+1}$ but not passing through its initial node. 
 By the relation of the paths $P_{k}$ and $P_{k+1}$ proved in Claim \ref{cla:pass-thru}, there exists a path in $F_{k}$ connecting $b_s$ ($s=1$ or $2$) and some non-initial node  of $P_{k}$ but not passing through its initial node. 
By the induction hypothesis, we must have $i \not\le b_1$ and $i\not\le b_2$. 
It follows that $i\not\le b$. 
This completes the proof of (S\ref{S1}) and hence the proof of $(2) \Leftarrow (3)$.  

To prove  $(2) \Rightarrow (3)$, the following fact is  useful. 

\begin{remark}
\label{rem:Rvine-only1}
Suppose $\Pc$ is an R-vine. 
By Remark \ref{rem:Rvine-jp}, for any $1 \le i\ne j \le \ell$, the joining paths  of $ \{i,j\}$ always exist. 
Thus by $(2) \Leftarrow (3)$, there exists a non-minimal node $v \in \Pc$ such that $C_v = \{i,j\}$. 
Moreover, by Remark \ref{rem:card-R-vine}, the number of non-minimal nodes in  $\Pc$ is equal to $\ell(\ell-1)/2$. 
Therefore, every pair of distinct elements in $[\ell]$ occurs exactly once as the conditioned set of a non-minimal node.
\end{remark}

Now we give the proof of $(2) \Rightarrow (3)$. 
Write $\Pc_{\le v}=(T_1,T_2,\ldots,T_{r})$. 
 By definition, $\Pc_{\le v}$ is an R-vine.
 If $C_v = \{i,j\}$, then $i$ and $j$ are nodes in $T_1$. 
 By Remark \ref{rem:Rvine-jp} and $(2) \Leftarrow (3)$, there exist $r'$ joining paths of $ \{i,j\}$ hence a node $v'$ in $\Pc_{\le v}$ such that $C_{v'} = \{i,j\} =C_v $. 
Hence by Remark \ref{rem:Rvine-only1}, $v=v'$ and $r=r'$. 

Next we show $(1) \Leftarrow (3)$. 
By Statements (S\ref{S1}) and  (S\ref{S2}), $i \le v$ and $j \le v$. 
Let $u\in \Pc_s$ for $1 \le s \le n$ be a node such that $i \le u$ and $j \le u$. 
In particular, $i$ and $j$ are minimal nodes in the R-vine $\Pc_{\le u}$. 
By Remark \ref{rem:Rvine-jp}, there exist $s$ joining paths of $ \{i,j\}$ in $\Pc_{\le u}$. 
By the uniqueness of the paths in the associated forests $F_k$'s, the joining paths of $ \{i,j\}$ in $\Pc_{\le u}$ and $\Pc_{\le v}$ must be the same. 
Thus $r=s$ and $v \le u$. 
Hence $v$ is the join of $i$ and $j$.

Finally we show $(1) \Rightarrow (3)$. 
Suppose  $v =i \vee j$. 
Hence there exist $s'$ joining paths of $ \{i,j\}$ hence a node $u'$ in $\Pc_{\le v}$ such that $i \le u'$ and $j \le u'$. 
By the definition of a join, we must have $v=u'$ and $s'=r$.
\end{proof}

\begin{remark} 
\label{rem:missing} 
The missing piece of the proof of \cite[Lemma 3.9]{Diss10} is that, the fact that $i \not\le a$ for any non-initial node $a$ in $P_k$ does not automatically imply that $i \not\le b$ for any non-initial node $b$ in $P_{k+1}$. 
\end{remark} 
 
		\begin{example} 
			\label{ex:jp} 	
Let $\Pc$ be the $4$-dimensional D-vine in Figure~\ref{fig:D-vine}. 
Let $i=1$ and $j=4$ be minimal nodes. 
The joining paths of $1$ and $4$ are given by $P_1 = (1,2,3,4)$, $P_2 = (12,23,34)$, $P_3 =(\textcolor{red}{13|2}, \textcolor{red}{24|3})$, $P_4 =\textcolor{blue}{14|23} $. 
The join of $1$ and $4$ is $\textcolor{blue}{14|23} $. 
The conditioned set of $\textcolor{blue}{14|23} $ is $\{1,4\}$. 
These calculations are consistent with Lemma \ref{lem:LR-paths}.
	\end{example}

 The first two corollaries below are immediate consequences of Lemma \ref{lem:LR-paths} and Claim \ref{cla:pass-thru}, respectively.

\begin{corollary}
\label{cor:C=C}
Let  $u,v$ be two non-minimal nodes in  an   LR-vine $\Pc$. 
  If  $C_u = C_v$, then $u = v$.	
\end{corollary}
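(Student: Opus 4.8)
The plan is to prove Corollary \ref{cor:C=C} by reducing the uniqueness of the conditioned set to the characterization of the join supplied by Lemma \ref{lem:LR-paths}. Suppose $u, v$ are non-minimal nodes in the LR-vine $\Pc$ with $C_u = C_v$. Since the conditioned set of a non-minimal node has cardinality $2$ by Lemma \ref{lem:UCD}(b), we may write $C_u = C_v = \{i, j\}$ for some distinct minimal nodes $i, j \in \min(\Pc) = [\ell]$.

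First I would invoke the equivalence $(1) \Leftrightarrow (2)$ of Lemma \ref{lem:LR-paths}, which asserts that for a non-minimal node $w$ and distinct minimal nodes $i, j$, the condition $C_w = \{i, j\}$ holds if and only if $w = i \vee j$. Applying this to $w = u$ gives $u = i \vee j$, and applying it to $w = v$ gives $v = i \vee j$. Since the join of $i$ and $j$ is unique whenever it exists (Definition \ref{def:join}), I conclude $u = i \vee j = v$, which is exactly the desired conclusion.

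The argument is essentially immediate once the machinery of Lemma \ref{lem:LR-paths} is in place; there is no genuine obstacle beyond correctly setting up the reduction. The only subtlety worth flagging is that I must ensure $i$ and $j$ are distinct minimal nodes so that Lemma \ref{lem:LR-paths} applies; this is guaranteed because $|C_u| = 2$, so the two elements of the conditioned set are necessarily distinct, and by construction the conditioned set consists of elements of $\Pc_1 = \min(\Pc)$ (the conditioned set is a symmetric difference of complete unions, which live in $\Pc_1$ by Definition \ref{def:union}). Thus the hypotheses of Lemma \ref{lem:LR-paths} are met, and the corollary follows.
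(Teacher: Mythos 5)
Your proof is correct and matches the paper's intended argument: the paper states Corollary \ref{cor:C=C} as an immediate consequence of Lemma \ref{lem:LR-paths}, and your reduction via the equivalence $(1)\Leftrightarrow(2)$ together with the uniqueness of joins is exactly that deduction spelled out. The side checks you flag (that the two elements of the conditioned set are distinct minimal nodes) are the right ones and are handled correctly.
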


\begin{corollary}
\label{cor:lessthanv}
Let $v$ be a non-minimal node in an   LR-vine $\Pc$. 
Then all the nodes of the joining paths of $v$ are in $\Pc_{\le v}$. 
\end{corollary}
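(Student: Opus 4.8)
The plan is to prove the statement by downward induction on the level $k$, establishing the bookkeeping fact that every node occurring on the $k$-joining path $P_k=P_k(v)$ satisfies $w\le v$, i.e.\ lies in $\Pc_{\le v}$. The base case is $k=r$: by Lemma \ref{lem:LR-paths}(3)(c) the top joining path is the single node $P_r(v)=v$, and trivially $v\le v$.

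For the inductive step I would assume that every node of $P_{k+1}$ is $\le v$ for some $1\le k\le r-1$, and write $P_k=(\alpha_1,\ldots,\alpha_p)$ in terms of its node sequence. Here $p=p_k\ge 2$, since the defining relation of $P_{k+1}$ in Lemma \ref{lem:LR-paths}(3)(b) joins the nodes $\{\alpha_1,\alpha_2\}$ and $\{\alpha_{p-1},\alpha_p\}$ and hence presupposes at least two nodes on $P_k$ (only the terminal path $P_r$ is a single node). The key input is Claim \ref{cla:pass-thru}, which guarantees that the node set of $P_{k+1}$ contains every consecutive edge $\{\alpha_t,\alpha_{t+1}\}$ of $P_k$. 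Consequently each $\alpha_t$ is a child of some node of $P_{k+1}$: for $t\le p-1$ it is covered by $\{\alpha_t,\alpha_{t+1}\}$, and $\alpha_p$ is covered by $\{\alpha_{p-1},\alpha_p\}$. Since that covering node lies in $P_{k+1}$ and is $\le v$ by the inductive hypothesis, transitivity yields $\alpha_t\le v$ for every $t$. Thus all nodes of $P_k$ lie in $\Pc_{\le v}$, which closes the induction and proves the corollary.

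The argument is genuinely routine once Claim \ref{cla:pass-thru} is available, so I do not anticipate a real obstacle; the only point requiring a moment's care is the verification that $p_k\ge 2$ for $k<r$, so that consecutive edges of $P_k$ actually exist to serve as the covering nodes. As an independent sanity check, one may instead note that $\Pc_{\le v}$ is an R-vine by local regularity, that the conditioned pair $C_v=\{i,j\}$ lies in $U_v=\min(\Pc_{\le v})$, and that the joining paths of $\{i,j\}$ computed inside $\Pc_{\le v}$ coincide with $P_1,\ldots,P_r$ by the uniqueness of paths in the forests $F_k$; this immediately places all their nodes in $\Pc_{\le v}$.
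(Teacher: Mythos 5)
Your proof is correct and matches the paper's intent: the paper simply declares this corollary an immediate consequence of Claim \ref{cla:pass-thru}, and your downward induction from $P_r=v$, using the fact that every consecutive edge of $P_k$ appears as a node of $P_{k+1}$ and covers its two endpoints, is precisely the argument that makes that ``immediate'' step explicit. Your closing sanity check via the R-vine $\Pc_{\le v}$ and uniqueness of paths in the forests $F_k$ is also sound and mirrors reasoning the paper itself uses elsewhere in the proof of Lemma \ref{lem:LR-paths}.
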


\begin{corollary}
\label{cor:conditioning}
	Let $\Pc $ be an   LR-vine. 
	Let $v$ be a non-minimal node in $\Pc$ with conditioned set $C_v =\{i,j\}$ for $i, j \in [\ell]$. 
	If $k$ is in the conditioning set of $v$, then  the joins $u= i \vee k$ and $w=j  \vee k$  exist in $\Pc$.
	Moreover, $v>u$ and $v>w$.
	
	\end{corollary}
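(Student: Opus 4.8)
The plan is to locate the desired joins inside the principal ideal $\Pc_{\le v}$, which is an R-vine by local regularity, and then to promote them to genuine joins in all of $\Pc$ via the equivalence in Lemma \ref{lem:LR-paths}. First I would record the basic set-theoretic facts. Writing $v = \{a,b\}$ with $a,b \in \Pc_{r-1}$, the complete union decomposes as $U_v = U_a \cup U_b = C_v \sqcup D_v$, where the conditioned set $C_v = U_a \,\triangle\, U_b = \{i,j\}$ and the conditioning set $D_v = U_a \cap U_b$ are disjoint. Hence $k \in D_v$ forces $k \notin \{i,j\}$, so that $i,j,k$ are three pairwise distinct minimal nodes; moreover $i,j,k \in U_v$, that is, $i,j,k \le v$.

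Next I would produce the node $u$. Since $\Pc$ is an LR-vine, the principal ideal $\Pc_{\le v}$ is an R-vine whose minimal nodes are precisely $U_v \supseteq \{i,j,k\}$. By Remark \ref{rem:Rvine-only1} applied to this R-vine, the pair $\{i,k\}$ occurs exactly once as the conditioned set of a non-minimal node $u \in \Pc_{\le v}$. Because $\Pc_{\le v}$ is an ideal, all descendants of $u$ already lie in $\Pc_{\le v}$, so the complete unions entering the definition of $C_u$ are computed identically in $\Pc_{\le v}$ and in $\Pc$; thus $C_u = \{i,k\}$ holds in $\Pc$ as well. Applying the implication $(2) \Rightarrow (1)$ of Lemma \ref{lem:LR-paths} to the ambient LR-vine $\Pc$ then yields $u = i \vee k$, so this join exists in $\Pc$. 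The identical argument applied to the pair $\{j,k\}$ produces a node $w \in \Pc_{\le v}$ with $C_w = \{j,k\}$ and $w = j \vee k$.

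Finally I would verify strictness. By construction $u,w \in \Pc_{\le v}$, so $u \le v$ and $w \le v$. Since $k \ne j$ we have $C_u = \{i,k\} \ne \{i,j\} = C_v$, and since $k \ne i$ we have $C_w = \{j,k\} \ne \{i,j\} = C_v$; Corollary \ref{cor:C=C}, asserting that equal conditioned sets force equal nodes, then gives $u \ne v$ and $w \ne v$, whence $v > u$ and $v > w$. The only point requiring genuine care---and the closest thing to an obstacle---is the consistency of the conditioned set across the ideal and the ambient poset, together with the observation that Lemma \ref{lem:LR-paths} is stated for the \emph{whole} LR-vine and therefore certifies a join in $\Pc$ rather than merely in $\Pc_{\le v}$. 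Once these two points are in place, the argument is immediate.
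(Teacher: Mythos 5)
Your proposal is correct and follows essentially the same route as the paper: both pass to the R-vine $\Pc_{\le v}$, where $i$, $j$, $k$ are minimal, and then use Lemma \ref{lem:LR-paths} to certify the joins in the ambient poset $\Pc$ (you enter the lemma through condition (2) via Remark \ref{rem:Rvine-only1}, the paper through condition (3) via Remark \ref{rem:Rvine-jp}, but these are interchangeable). One cosmetic remark: for strictness you only need the trivial direction ``$u=v \Rightarrow C_u=C_v$'', not its converse Corollary \ref{cor:C=C}, which you cite; the conclusion is unaffected.
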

	
		\begin{proof}
If $k \in D_v$, then $k$ is a minimal node of the R-vine $\Pc_{\le v}$. 
 By Remark \ref{rem:Rvine-jp},  the joining paths  of $ \{i,k\}$ and $ \{k,j\}$ always exist. 
 Thus by Lemma \ref{lem:LR-paths}, the joins $u= i \vee k$ and $w=j  \vee k$ exist in $\Pc_{\le v}$ hence in $\Pc$. 
Note that $v \ne u$ and $v \ne w$ since $C_v \ne C_u$ and $C_v \ne C_w$. 
Thus  $v>u$ and $v>w$.

	\end{proof}

 We give some further properties of the joining paths of a non-minimal node in an LR-vine.

	\begin{lemma}
	\label{lem:subpath} 
	Let $\Pc =(F_1,\ldots,F_{n})$ be an   LR-vine  and let $u,v$ be two distinct non-minimal nodes in $\Pc$. 
Suppose that there exists a number $k$ ($1\le k\le \rk(u)$) such that the $k$-joining path $P_k(u)$ is a proper subpath of the $k$-joining path $P_k(v)$. 
Then $P_q(u)$ is a proper subpath of $P_q(v)$ for all $k\le q\le \rk(u)$. 
In particular, $v>u$. 
	\end{lemma}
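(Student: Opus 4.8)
The plan is to prove the statement by induction on $q$, moving upward from $k$ to $\rk(u)$. The base case $q=k$ is exactly the hypothesis, so the inductive step is the heart of the matter: assuming $P_q(u)$ is a proper subpath of $P_q(v)$ for some $k\le q<\rk(u)$, I must show the same for $q+1$. The key tool will be Claim \ref{cla:pass-thru}, which describes precisely how the node sequence of $P_{q+1}$ is built from that of $P_q$: the edges $\{\alpha_t,\alpha_{t+1}\}$ of $P_q$ all appear as nodes of $P_{q+1}$, interspersed with certain ``detour'' nodes of the form $\{\alpha_{s+1},\mu\}$.

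\textbf{Main inductive step.} Write $P_q(u)=(\alpha_1,\ldots,\alpha_p)$ and suppose $P_q(v)$ contains this as a proper subpath, say $P_q(v)=(\ldots,\beta,\alpha_1,\ldots,\alpha_p,\beta',\ldots)$ where at least one of the two extending segments is nonempty. By Claim \ref{cla:pass-thru} applied to $u$, the node sequence of $P_{q+1}(u)$ runs from $\{\alpha_1,\alpha_2\}$ to $\{\alpha_{p-1},\alpha_p\}$ and consists exactly of the consecutive-edge nodes of $P_q(u)$ together with detour nodes attached at interior vertices $\alpha_2,\ldots,\alpha_{p-1}$. Applying the same claim to $v$, every node of $P_{q+1}(u)$ is forced to appear in $P_{q+1}(v)$: the consecutive edges and their detours are determined locally by the forest $F_q$ and the interior vertices $\alpha_2,\ldots,\alpha_{p-1}$, which are shared between the two paths. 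Since $F_{q+1}$ is a forest (uniqueness of paths), $P_{q+1}(u)$ must literally be a subpath of $P_{q+1}(v)$. It remains to argue the inclusion is \emph{proper}: because $P_q(u)$ is a proper subpath of $P_q(v)$, $P_q(v)$ reaches at least one vertex beyond $\{\alpha_1,\ldots,\alpha_p\}$, and the corresponding terminal edge of $P_q(v)$ (e.g.~$\{\alpha_p,\beta'\}$ or $\{\beta,\alpha_1\}$) becomes a node of $P_{q+1}(v)$ lying strictly outside $P_{q+1}(u)$.

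\textbf{The delicate point.} The step I expect to be the main obstacle is verifying that the detour nodes of $P_{q+1}(u)$ match exactly those of $P_{q+1}(v)$ at the shared interior vertices, and that no new detour at an endpoint $\alpha_1$ or $\alpha_p$ disrupts the subpath structure. The subtlety is that $\alpha_1$ and $\alpha_p$ are endpoints of $P_q(u)$ but genuinely interior to $P_q(v)$, so in $P_{q+1}(v)$ the path may take detours at $\alpha_1$ or $\alpha_p$ that simply do not occur in $P_{q+1}(u)$. I will need to check that such detours occur only on the \emph{extending} portion of $P_q(v)$ and thus lie beyond the subpath $P_{q+1}(u)$, rather than inside it; this is precisely the content of the ``must reach $\{\alpha_2,\alpha_3\}$ and not pass through further nodes of that form'' reasoning in the proof of Claim \ref{cla:pass-thru}, reused here with $v$ in place of the ambient path.

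\textbf{Conclusion.} Once the inductive step is complete, $P_q(u)$ is a proper subpath of $P_q(v)$ for all $k\le q\le\rk(u)$. Taking $q=\rk(u)=:r$, the single node $P_r(u)=u$ is then a proper subpath of $P_r(v)$, so $u$ is a node of $P_r(v)$; by Corollary \ref{cor:lessthanv} all nodes of the joining paths of $v$ lie in $\Pc_{\le v}$, hence $u\in\Pc_{\le v}$, i.e.~$u\le v$. Since $u\ne v$, we conclude $v>u$, as desired.
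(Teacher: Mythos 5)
Your proposal is correct and follows essentially the same route as the paper's proof: an iteration from $q$ to $q+1$ in which Claim \ref{cla:pass-thru} guarantees that $P_{q+1}(v)$ passes through the two extreme consecutive-edge nodes $\{\alpha_1,\alpha_2\}$ and $\{\alpha_{p-1},\alpha_p\}$ of $P_q(u)$, while uniqueness of paths in the forest $F_{q+1}$ forces the intervening segment of $P_{q+1}(v)$ to coincide with $P_{q+1}(u)$, with properness and the final deduction $v>u$ handled exactly as you describe. You spell out the properness check and the appeal to Corollary \ref{cor:lessthanv} more explicitly than the paper does, but there is no substantive difference in approach.
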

	\begin{proof}
	Write $P_k(v)=(a_{k,1}, a_{k,2},\ldots, a_{k,p_k}), p_k \ge 2$. 
	Since $P_k(u)$ is a proper subpath of $P_k(v)$, we may write  $P_k(u)=(a_{k,s},\ldots, a_{k,t})$, where $1\le s\le t\le p_k$ and $(s,t)\neq (1,p_k)$. 
	Note that $P_{k+1}(u)$ is the unique path connecting $\{a_{k,s},a_{k,s+1}\}$ and  $\{a_{k,t-1},a_{k,t}\}$ in the forest $F_{k+1}$. 
	Moreover, by Claim \ref{cla:pass-thru}, $P_{k+1}(v)$ passes through $\{a_{k,s},a_{k,s+1}\}$ and  $\{a_{k,t-1},a_{k,t}\}$.
	Therefore, $P_{k+1}(u)$ must be a proper subpath of $P_{k+1}(v)$. 
	Apply this argument repeatedly, we may show that $P_q(u)$ is a proper subpath of $P_q(v)$ for all $k\le q\le \rk(u)$. 
In particular, the case $q=\rk(u)$ yields $v>u$. 
	\end{proof}
	
	Before giving the next property in  Lemma \ref{lem:m-cycle-LR}, we need a technical lemma on paths in a forest. 
	
	\begin{lemma} 
	\label{lem:m-cycle} 
	Let $F$ be a forest.  
	Let $i_1,i_2, \ldots, i_m$ for $m \ge 3$ be mutually distinct nodes  in $F$. 
	For each $1 \le s \le m$, suppose that there exists a (unique) path $P_{s,s+1}$ in $F$  connecting $i_s$ and $i_{s+1}$. 
		Here we take the index modulo $m$. 
			Denote by $e'_{s,s+1}$ and $e''_{s,s+1}$ the edges in $P_{s,s+1}$ incident on $i_s$ and $i_{s+1}$, respectively. 
			Suppose that there exists $1 \le t \le m$ such that $e''_{t,t+1} \ne e'_{t+1,t+2}$. Then among the paths $P_{s,s+1}$'s for $s \notin \{t,t+1\}$ there exist two paths $P_{a,a+1}$ and $P_{b,b+1}$ (not necessarily distinct) both of length $\ge2$ containing   $e''_{t,t+1}$ and  $e'_{t+1,t+2}$, respectively. 
	\end{lemma}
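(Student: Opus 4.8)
The plan is to exploit the single structural feature of a forest that does all the work here: deleting any edge of a tree splits it into exactly two subtrees. Since the nodes $i_1,\dots,i_m$ are pairwise joined by the paths $P_{s,s+1}$, they all lie in one tree component $T$ of $F$, and I would work inside $T$ throughout. The whole argument is then a ``two shores'' bookkeeping on the cyclic list of nodes.

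First I would treat the edge $e''_{t,t+1}$, writing it as $e''_{t,t+1}=\{x,i_{t+1}\}$. Deleting $e''_{t,t+1}$ from $T$ produces two subtrees; call $T_1$ the one containing $i_{t+1}$ and $T_2$ the other. Because $P_{t,t+1}$ is the unique $i_t$--$i_{t+1}$ path and its last edge is exactly $e''_{t,t+1}$, deleting this edge leaves $i_t$ attached to $x$, so $i_t\in T_2$. This is where the hypothesis $e''_{t,t+1}\ne e'_{t+1,t+2}$ enters: the path $P_{t+1,t+2}$ leaves $i_{t+1}$ along $e'_{t+1,t+2}$ and a simple path meets $i_{t+1}$ in at most one incident edge, so $P_{t+1,t+2}$ never uses $e''_{t,t+1}$; hence $i_{t+2}$ lies on the same shore as $i_{t+1}$, i.e.\ $i_{t+2}\in T_1$. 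Now read off the cyclic list $i_{t+1},i_{t+2},\dots,i_{t}$: it begins with two consecutive nodes $i_{t+1},i_{t+2}$ in $T_1$ and ends at $i_t\in T_2$, so some consecutive pair $i_a,i_{a+1}$ must straddle the two shores, and the non-crossing of the pair $(i_{t+1},i_{t+2})$ together with the exclusion of the index $t$ forces $a\notin\{t,t+1\}$. A straddling pair is joined across the deleted edge, so its connecting path $P_{a,a+1}$ contains $e''_{t,t+1}$.

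It then remains to check $P_{a,a+1}$ has length $\ge 2$. The edge $e''_{t,t+1}$ is incident on $i_{t+1}$, and $i_{t+1}$ is not an endpoint of $P_{a,a+1}$: the endpoints are $i_a$ and $i_{a+1}$, while $a\ne t+1$ and $a+1\ne t+1$ (as $a\notin\{t,t+1\}$) and the nodes $i_1,\dots,i_m$ are pairwise distinct. Thus $i_{t+1}$ is an interior node of $P_{a,a+1}$, so the path carries an edge on each side of $i_{t+1}$ and has length at least $2$. A verbatim symmetric argument applied to $e'_{t+1,t+2}=\{i_{t+1},y\}$—deleting it to separate $i_{t+1}$ from $i_{t+2}$, and using that $P_{t,t+1}$ ends at $i_{t+1}$ via $e''_{t,t+1}\ne e'_{t+1,t+2}$ and so does not traverse $e'_{t+1,t+2}$, placing $i_t$ on $i_{t+1}$'s shore—yields a path $P_{b,b+1}$ with $b\notin\{t,t+1\}$ that contains $e'_{t+1,t+2}$ and has length $\ge 2$.

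The one point that needs genuine care, and where the hypothesis is actually used, is ensuring the crossing index avoids $\{t,t+1\}$: the pair $(i_t,i_{t+1})$ is the path $P_{t,t+1}$ we must not reuse, and the assumption $e''_{t,t+1}\ne e'_{t+1,t+2}$ is precisely what pushes $i_{t+2}$ onto $i_{t+1}$'s shore so that the crossing cannot occur at index $t+1$ either. The distinctness of the $i_s$ then upgrades ``contains the edge'' to ``length $\ge 2$'' by forcing $i_{t+1}$ to be interior. I expect no further obstacle, since everything reduces to the disconnection property of a forest and a parity count around the cycle.
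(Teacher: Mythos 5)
Your proof is correct, but it runs along a genuinely different line from the paper's. The paper first uses the hypothesis $e''_{t,t+1}\ne e'_{t+1,t+2}$ to see that the concatenation of $P_{t,t+1}$ and $P_{t+1,t+2}$ is the \emph{unique} path $P$ in the tree joining $i_t$ and $i_{t+2}$; it then observes that the remaining paths $P_{s,s+1}$, $s\notin\{t,t+1\}$, glue into a walk from $i_{t+2}$ to $i_t$, which by Lemma \ref{lem:walk-path} must contain $P$, and hence contains both edges $e''_{t,t+1}$ and $e'_{t+1,t+2}$ in one stroke (the case $m=3$ is treated separately there). You instead argue via the fundamental cut: deleting the edge in question splits the tree into two shores, the hypothesis places $i_{t+2}$ (resp.\ $i_t$) on the shore of $i_{t+1}$, and a pigeonhole count around the cyclic list of nodes produces a consecutive pair straddling the cut whose index necessarily avoids $\{t,t+1\}$; the path joining that pair must then traverse the deleted edge. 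Both proofs finish identically, deducing length $\ge 2$ from the fact that $i_{t+1}$ is an interior vertex of the path found. Your cut argument has to be run twice (once per edge) where the paper's walk-containment argument captures both edges simultaneously, but in exchange it needs no separate treatment of $m=3$ and avoids having to justify that the concatenation of the two paths is itself a path. Each of your individual steps --- that a simple path ending at $i_{t+1}$ uses exactly one edge incident on $i_{t+1}$, that the first crossing index lies in $\{t+2,\dots,t+m-1\}$ modulo $m$, and the interiority argument for the length bound --- checks out.
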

	\begin{proof} 
	Let $T$ denote the subgraph of $F$ induced by the vertices of the paths $P_{s,s+1}$'s for all $1 \le s \le m$. 
	Note that $T$ is a connected subgraph of $F$ hence $T$ is a tree. 
	
	If $m=3$, then the path $P_{t,t+2}$ of length $\ge2$ contains both $e''_{t,t+1}$ and  $e'_{t+1,t+2}$. 
	Suppose $m>3$. 
	By the assumption, the concatenation of  $P_{t,t+1}$ and  $P_{t+1,t+2}$, denoted $P$, is the unique path in $T$ connecting $i_{t}$ and $i_{t+2}  $.
	Moreover, there exists a walk in $T$ connecting $i_{t}$ and $i_{t+2}  $ whose edge set is the union of the edge sets of $P_{t+2,t+3}$, $P_{t+3,t+4}, \ldots, P_{t-1,t} $. 
	By Lemma \ref{lem:walk-path}, this walk must contain the path $P$. 
	In particular, the edge $e''_{t,t+1}$ (resp.~$e'_{t+1,t+2}$) is contained in a path $P_{a,a+1}$ (resp.~$P_{b,b+1}$) for some $a, b\notin \{t,t+1\}$. 
 Clearly, both $P_{a,a+1}$ and $P_{b,b+1}$ have lengths $\ge2$.
\end{proof}

\begin{notation}
\label{nota:vxy} 
Let $\Pc$  be  an LR-vine.
	In what follows, for two distinct minimal nodes $i ,j \in \min(\Pc)$, if the join $i \vee j$  exists in $\Pc$, we denote $v_{i,j} := i \vee j \in \Pc$. 
Sometimes, two minimal nodes in $\Pc$ are  denoted by $i_s,i_t$, in which case we write  $v_{s,t} :=v_{i_s,i_t}$. 
		Note that by  Lemma \ref{lem:LR-paths}, the nodes $v_{i,j}$'s are mutually distinct, i.e.~if $\{i,j\} \ne \{i',j'\}$ then $v_{i,j} \ne v_{i',j'}$.
\end{notation} 

	\begin{lemma} 
	\label{lem:m-cycle-LR} 
	Let $\Pc =(F_1,\ldots,F_{n})$ be an   LR-vine of rank $n$. 
	Let $i_1,i_2, \ldots, i_m  \in  \min(\Pc) $ for $m \ge 3$ be mutually distinct minimal nodes  in $\Pc$. 
	Suppose that the join $v_{s,s+1} $  exists in $\Pc$ for each $1 \le s \le m$. 
				Here again we  take the index modulo $m$. 
	Then there exist a node $i_t$ and a join $v_{a,a+1} $ for $a,t \in [m]$ such that  $a \notin \{t-1,t\}$ and $i_t$ belongs to the conditioning set of $v_{a,a+1}$.

	\end{lemma}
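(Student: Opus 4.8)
The plan is to apply the forest lemma (Lemma \ref{lem:m-cycle}) to a cyclic family of joining paths, not in $F_1$ but in a suitably chosen forest $F_k$. Fix a level $k$ and, writing $P_k(v_{s,s+1})$ for the $k$-joining path of $v_{s,s+1}$ (Lemma \ref{lem:LR-paths}), call its initial node the \emph{$k$-terminal} $j_s$; the translation I will use repeatedly is that, by Corollary \ref{cor:lessthanv}, every node lying on $P_k(v_{s,s+1})$ is $\le v_{s,s+1}$, while $i_s \le j_s$. Consequently, if at level $k$ some terminal $j_t$ happens to be an interior node of a path $P_k(v_{a,a+1})$ with $a \notin \{t-1,t\}$, then $i_t \le j_t \le v_{a,a+1}$ gives $i_t \in U_{v_{a,a+1}}$, and since $C_{v_{a,a+1}} = \{i_a,i_{a+1}\}$ (Lemma \ref{lem:LR-paths}) does not contain $i_t$, we conclude $i_t \in D_{v_{a,a+1}}$, which is exactly the assertion. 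So it suffices to locate a level and a terminal interior to a non-adjacent path.

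At level $1$ the terminals are the distinct nodes $i_1,\dots,i_m$ and $P_1(v_{s,s+1})$ connects $i_s$ and $i_{s+1}$ in $F_1$, so the hypotheses of Lemma \ref{lem:m-cycle} are in place except possibly the mismatch condition. If there is a mismatch $e''_{t,t+1}\ne e'_{t+1,t+2}$ at some terminal (in the sense of Lemma \ref{lem:m-cycle}), then that lemma yields a path $P_1(v_{a,a+1})$ of length $\ge 2$ with $a\notin\{t,t+1\}$ passing through $i_{t+1}$ as an interior node, and the translation above finishes the proof with the pair $(t+1,a)$.

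The difficulty, and the crux of the argument, is that no mismatch need occur at level $1$: for instance when $F_1$ is a star (a C-vine), all first joining paths share their terminal edges. To overcome this I will ascend through the forests. Assuming that at level $k$ all consecutive paths agree at their shared terminals (so $e''_{s-1,s}=e'_{s,s+1}$ for every $s$) and that $j_1,\dots,j_m$ are pairwise distinct, Claim \ref{cla:pass-thru} shows that the common incident edge at $j_s$ is a single node of $F_{k+1}$ serving simultaneously as the final node of $P_{k+1}(v_{s-1,s})$ and the initial node of $P_{k+1}(v_{s,s+1})$, and that each $P_{k+1}(v_{s,s+1})$ runs through these lifted terminals. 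Thus the cyclic configuration lifts to level $k+1$ (with terminals the lifted edges), where I repeat the analysis. This ascent cannot continue indefinitely: putting $r:=\min_s \rk(v_{s,s+1})$ and choosing a minimizing index $s_0$, condition (3)(c) of Lemma \ref{lem:LR-paths} forces $P_r(v_{s_0,s_0+1})$ to be the single node $v_{s_0,s_0+1}$, so the two adjacent terminals $j_{s_0}$ and $j_{s_0+1}$ must collide by level $r$ at the latest.

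It then remains to dispose of the degeneracies encountered along the ascent, each of which already yields the conclusion. A mismatch at any level is handled by Lemma \ref{lem:m-cycle} as above. A collision $j_s=j_{s'}$ of two distinct terminals gives $i_{s'}\le j_{s'}=j_s$, and since $j_s\le v_{s,s+1}$ and $j_s \le v_{s-1,s}$, one of these two joins is non-adjacent to $i_{s'}$ (here $m\ge3$ is used), placing $i_{s'}$ in its conditioning set; the adjacent collision $j_{s_0}=j_{s_0+1}=v_{s_0,s_0+1}$ at level $r$ is the special case where $v_{s_0,s_0+1}$ is the initial node of $P_r(v_{s_0+1,s_0+2})$, whence $v_{s_0,s_0+1}\le v_{s_0+1,s_0+2}$ and $i_{s_0}\in D_{v_{s_0+1,s_0+2}}$. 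The main obstacle I anticipate is precisely the bookkeeping of this ascent — verifying via Claim \ref{cla:pass-thru} that the cyclic family of joining paths lifts faithfully from $F_k$ to $F_{k+1}$ in the all-agree case, and that the terminals remain distinct until a mismatch or a rank-induced collision appears — rather than any single hard estimate.
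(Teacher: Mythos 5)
Your proposal is correct and follows essentially the same route as the paper's proof: ascend through the forests $F_k$ along the cyclic family of joining paths, invoke Lemma \ref{lem:m-cycle} at the first mismatch, and otherwise lift the configuration via Claim \ref{cla:pass-thru} until the minimal rank forces a degeneracy. The only (harmless) variations are that you close the final case by the terminal collision at level $r=\min_s\rk(v_{s,s+1})$ together with Corollary \ref{cor:lessthanv}, where the paper works one level lower with a proper-subpath argument via Lemma \ref{lem:subpath}, and that you treat possible collisions of non-adjacent terminals explicitly (a point the paper passes over).
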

	\begin{proof} 
 By  Lemma \ref{lem:LR-paths}, for each $1 \le s \le m$, there exist the $k$-joining paths $P_k(v_{s,s+1})$'s of  $v_{s,s+1} $ for $1 \le k \le \rk(v_{s,s+1})$. 
 In particular, $P_1(v_{s,s+1})$ is the unique path in $F_1$  connecting $i_s$ and $i_{s+1}$. 
			For each $1 \le s \le m$, denote by $e'_{s,s+1}$ and $e''_{s,s+1}$ the edges in $P_1(v_{s,s+1})$ incident on $i_s$ and $i_{s+1}$, respectively. 
   If there exists $t \in [m]$ such that $e''_{t,t+1} \ne e'_{t+1,t+2}$, then by Lemma \ref{lem:m-cycle}, $e''_{t,t+1}$  is an edge of a path $P_1(v_{a,a+1})$ of length $\ge2$  for some $a\in [m]\setminus \{t,t+1\}$. 
   By Claim \ref{cla:pass-thru}, $e''_{t,t+1}$  is a node in $P_2(v_{a,a+1})$. 
   By Corollary \ref{cor:lessthanv}, $i_{t+1} < e''_{t,t+1} \le v_{a,a+1}$.
   Hence $i_{t+1}$ belongs to the conditioning set of $v_{a,a+1}$.

   Now consider the case $e''_{s-1,s} = e'_{s,s+1}$  for each $1 \le s \le m$.
Denote this common edge by $j_{s}$.	
Note that these edges become nodes in $F_2$. 
By definition, $P_2(v_{s,s+1})$ is the path in $F_2$ connecting $j_{s}$ and $j_{s+1}$ for each $1 \le s \le m$. 
Denote by $f'_{s,s+1}$ and $f''_{s,s+1}$ the edges in $P_2(v_{s,s+1})$ incident on $j_{s}$ and $j_{s+1}$, respectively. 
By a similar argument as in the preceding paragraph with the aid of Lemma \ref{lem:m-cycle}, we only need to consider the case  $f''_{s-1,s} = f'_{s,s+1}$ for all $1 \le s \le m$.

A repeated application of this argument leads us to the situation that for every $k$, there exist mutually distinct nodes $h_1,h_2, \ldots, h_m$ in $F_k$ such that the $k$-joining path $P_k(v_{s,s+1})$ connects $h_{s}$ and $h_{s+1}$ for each $1 \le s \le m$. 
Furthermore, if $g'_{s,s+1}$ and $g''_{s,s+1}$ are the edges in $P_k(v_{s,s+1})$ incident on $h_{s}$ and $h_{s+1}$, respectively, then $g''_{s-1,s} = g'_{s,s+1}$ for each $1 \le s \le m$.

However, let $q \in [m]$ such that $\rk(v_{q,q+1}) =\min \{\rk(v_{s,s+1})  \mid  1 \le s \le m\}$ and let $k=\rk(v_{q,q+1})-1$. 
Then the path $P_k(v_{q,q+1})$ has length $1$ (or simply an edge in $F_k$).	
The situation in the paragraph above implies that $P_k(v_{q,q+1})$ is a proper subpath of $P_k(v_{q-1,q})$. 
By Lemma \ref{lem:subpath},  $i_{q+1} < P_k(v_{q,q+1}) \le v_{q-1,q}$.
   Hence $i_{q+1}$ belongs to the conditioning set of $ v_{q-1,q}$. 
\end{proof}

We have a stronger statement when $m=3$ in  Lemma \ref{lem:m-cycle-LR}. 
First we address a remark. 
 
	\begin{remark} 
			\label{rem:conca} 
				Let $F$ be a forest.  
	Let $i_1,i_2$, and $i_3$ be three mutually distinct nodes  in $F$. 
	Suppose there exist the paths $P_{1,2}, P_{2,3}, P_{3,1}$ in $F$ connecting $i_1$ and $i_2$, $i_2$ and $i_3$, $i_3$ and $i_1$, respectively.  
	Let $T$ denote the subgraph (tree) of $F$ induced by the vertices of the paths $P_{1,2}, P_{2,3}, P_{3,1}$.   
	One may show that there does not exist a path among $P_{{1,2}}$, $P_{2,3}$ and  $P_{3,1}$ being the concatenation of the other two paths if and only if $i_1,i_2, i_3$ are leaves in $T$.
	In particular, it is not the case if one of the paths has length $1$.	
	\end{remark}

	\begin{lemma} 
	\label{lem:conca} 
	Let $\Pc =(F_1,\ldots,F_{n})$ be an   LR-vine of rank $n$. 
	Let $a_1,b_1, c_1  \in  \min(\Pc) $ be mutually distinct minimal nodes  in $\Pc$. 
	Suppose that the joins $u=a_1 \vee b_1$, $v=b_1 \vee c_1$, and $w=a_1 \vee c_1$ exist in $\Pc$. 
	Then there exists a number $k$ $(1\le k\le n$) such that one of the three $k$-joining paths  $P_k(u)$, $P_k(v)$ and  $P_k(w)$ is the concatenation of the other two paths. 
	
	As a consequence, there exists a node among three nodes $u,v$ and $w$ strictly greater than the other two.

	\end{lemma}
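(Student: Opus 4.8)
The plan is to set up three minimal nodes $a_1, b_1, c_1$ whose pairwise joins $u = a_1 \vee b_1$, $v = b_1 \vee c_1$, $w = a_1 \vee c_1$ all exist, and to track their joining paths rank by rank until a concatenation relation is forced. First I would invoke Lemma \ref{lem:LR-paths} to obtain, for each of $u,v,w$, its family of joining paths $P_k(\cdot)$ in the forests $F_k$. At the bottom level $k=1$, the paths $P_1(u)$, $P_1(v)$, $P_1(w)$ are the unique paths in the forest $F_1$ connecting the pairs $\{a_1,b_1\}$, $\{b_1,c_1\}$, $\{a_1,c_1\}$ respectively. Let $T$ be the subtree of $F_1$ induced by the vertices of these three paths, as in Remark \ref{rem:conca}. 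The dichotomy from that remark is the starting fork: either one of $P_1(u),P_1(v),P_1(w)$ is the concatenation of the other two (in which case we are already done at $k=1$), or $a_1,b_1,c_1$ are all leaves of $T$, so that the three paths share a common branch node and pairwise overlap only on "half" of a $Y$-shaped configuration.

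In the second case I would climb up the ranks using Claim \ref{cla:pass-thru}, which controls exactly how $P_{k+1}$ sits over $P_k$: the nodes of $P_{k+1}(\cdot)$ are precisely the consecutive edges of $P_k(\cdot)$ together with possible detour-edges, and crucially $P_{k+1}$ passes through all the consecutive-edge nodes. The idea is that the $Y$-configuration of three paths meeting at a central node degrades as we pass to higher forests: the overlapping portions of the paths get progressively shorter, because the branch node of the $Y$ (the vertex where all three paths meet) forces the symmetric differences of the complete unions to behave predictably. At each step I would re-examine whether the leaf condition of Remark \ref{rem:conca} can still hold for the level-$k$ paths $P_k(u),P_k(v),P_k(w)$; the key claim is that it eventually fails, i.e.~at some rank $k$ one of the three paths becomes the concatenation of the other two.

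The cleanest way to force termination is to consider the node of smallest rank among $u,v,w$, say $\rk(u) = \min$. At the level $k = \rk(u)$ the path $P_k(u)$ degenerates to a single node $u$ (length $0$, since $P_{\rk(u)}(u) = u$ by condition (3c) of Lemma \ref{lem:LR-paths}), which sits somewhere along the still-extant paths $P_k(v)$ and $P_k(w)$. Using Corollary \ref{cor:lessthanv} (all nodes of the joining paths of $v$ lie in $\Pc_{\le v}$) together with Claim \ref{cla:pass-thru}, I would argue that $u$ being a common interior/endpoint node of $P_k(v)$ and $P_k(w)$ pins down that $P_k(v)$ and $P_k(w)$ split apart exactly at $u$, so their union along $F_k$ realizes $P_k(w)$ (or $P_k(v)$) as the concatenation through $u$. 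This yields the required concatenation relation. For the consequence, once $P_k(w) = P_k(u) * P_k(v)$ (say) is established, the $k$-joining path $P_k(u)$ is a proper subpath of $P_k(w)$, and likewise $P_k(v)$; applying Lemma \ref{lem:subpath} gives $w > u$ and $w > v$, so $w$ is the node strictly greater than the other two.

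The main obstacle I anticipate is the inductive bookkeeping in the ascent: showing rigorously that the leaf condition of Remark \ref{rem:conca} must break down at some finite rank, rather than merely shrinking. The danger is a configuration where the three paths keep their $Y$-shape up through every level without ever collapsing. Pinning this down requires combining Claim \ref{cla:pass-thru} (to know $P_{k+1}$ passes through the consecutive-edge nodes of $P_k$) with a strict decrease argument — most naturally by tracking the length of the shared stem of the $Y$, or equivalently by using that $\rk(u)$ is finite and that $P_{\rk(u)}(u)$ is a single node lying on the longer paths. Reducing to the smallest-rank node and invoking Lemma \ref{lem:subpath} is what I expect to convert this qualitative shrinking into a clean endpoint, so the real work is verifying that at level $\rk(u)$ the two surviving paths genuinely concatenate at $u$ rather than merely overlapping.
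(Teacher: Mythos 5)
Your plan is correct and follows essentially the same route as the paper: the paper's proof of Lemma \ref{lem:conca} simply repeats the level-by-level ascent of Lemma \ref{lem:m-cycle-LR}, replacing Lemma \ref{lem:m-cycle} by the dichotomy of Remark \ref{rem:conca} (concatenation versus all three endpoints being leaves of the induced subtree) and terminating at the join of minimal rank, then deduces the second assertion from Lemma \ref{lem:subpath} exactly as you do. The only cosmetic difference is that the paper stops one level earlier, at $k=\rk(u)-1$ where $P_k(u)$ is a single edge, so that the ``length $1$'' clause of Remark \ref{rem:conca} rules out the leaf configuration directly, rather than reasoning about the degenerate single-node path at level $\rk(u)$ as you propose.
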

	\begin{proof} 
	The proof of the first assertion is similar to the proof of Lemma \ref{lem:m-cycle-LR} with the use of Remark \ref{rem:conca} in place of  Lemma \ref{lem:m-cycle}. 
	The second assertion follows from the first and Lemma \ref{lem:subpath}.
\end{proof}

\subsection{Construct an MAT-labeled graph from a given LR-vine}

	\begin{definition} 
\label{def:P-to-G}
Let $\Pc$ be an   LR-vine of dimension $\ell$ and rank $n$. 
Define an edge-labeled graph  $(G,\lambda) $ from $\Pc$ as follows: 
\begin{enumerate}[(1)]
		\item The vertex set $N_{G}$ is given by the set of minimal nodes, i.e.~$N_G  :=\min(\Pc)=[\ell]$.
		\item The edge set $E_{G}$ is given by the conditioned sets of non-minimal nodes, i.e.
		\begin{align*}
E_G & :=\{C_v \mid v \in \Pc\setminus \min(\Pc)\} \\
& =\{ \{i,j\} \subseteq [\ell] \mid i\ne j \text{ and the join $v_{i,j} = i \vee j$  exists in $\Pc$} \}.
\end{align*}
(The second expression of $E_G$ above follows from Lemma \ref{lem:LR-paths}.)
		\item The labeling $\lambda: E_G  \longrightarrow \mathbb{Z}_{>0}$ is defined by 
	$$\lambda(i,j):= \rk_\Pc(v_{i,j} )-1.$$ 
	\end{enumerate}
  
\end{definition}

	\begin{theorem} 
	\label{thm:poset-to-MAT}
	The edge-labeled graph $(G,\lambda)$   from Definition \ref{def:P-to-G} is an MAT-labeled graph. 
	In particular, if $\Pc$ is an  R-vine, then $(G,\lambda)$ is an MAT-labeled complete graph. 

	\end{theorem}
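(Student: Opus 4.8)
The plan is to verify directly that the labeling $\lambda$ of Definition \ref{def:P-to-G} satisfies the two defining conditions (ML\ref{definition MAT-labeling cycle}) and (ML\ref{definition MAT-labeling triangle}) of an MAT-labeling, translating each into the language of joins and conditioned/conditioning sets through the dictionary $\lambda(i,j)=\rk_\Pc(v_{i,j})-1$ together with Lemma \ref{lem:LR-paths}. Throughout, an edge $e=\{i,j\}$ has label $k$ exactly when $v:=v_{i,j}=i\vee j$ has rank $k+1$ and $C_v=\{i,j\}$. The ``in particular'' statement then drops out: if $\Pc$ is an R-vine, Remark \ref{rem:Rvine-only1} shows that every pair of distinct elements of $[\ell]$ occurs exactly once as a conditioned set, so every pair is an edge and $G=K_\ell$; being also MAT-labeled, it is an MAT-labeled complete graph.

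For the conditioning condition (ML\ref{definition MAT-labeling triangle}) I would show that the conditioning vertices of an edge $e=\{i,j\}$ of label $k$ are \emph{exactly} the elements of the conditioning set $D_v$ of $v=v_{i,j}$, which has cardinality $(k+1)-2=k-1$ by Lemma \ref{lem:UCD}(b). One inclusion is Corollary \ref{cor:conditioning}: each $h\in D_v$ yields joins $i\vee h$ and $j\vee h$ that exist and lie strictly below $v$, so $\{i,h\},\{j,h\}\in E_G$ have labels $<k$, making $h$ a conditioning vertex of $e$. For the reverse inclusion, let $h\notin\{i,j\}$ be any conditioning vertex of $e$; then $u:=i\vee h$ and $w:=j\vee h$ exist with $\rk(u),\rk(w)\le k<k+1=\rk(v)$. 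Applying Lemma \ref{lem:conca} to the minimal nodes $i,j,h$, one of $u,w,v$ is strictly greater than the other two, and by the rank inequality this node must be $v$; hence $h\le u<v$, so $h\in U_v\setminus C_v=D_v$. Thus $e$ has precisely $k-1$ conditioning vertices.

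The main obstacle is the cycle condition (ML\ref{definition MAT-labeling cycle}). I would first record the reformulation that an edge $e\in\pi_{\le k}$ forms a cycle with edges of $\pi_k$ exactly when there is a cycle of maximum label $k$ having at most one edge of label $<k$; thus (ML\ref{definition MAT-labeling cycle}) is equivalent to the assertion that \emph{every cycle in $G$ has at least two edges whose label is strictly below the maximum label on that cycle}. I would prove this by choosing a shortest violating cycle $\gamma=(i_1,\dots,i_m)$, necessarily with $m\ge4$: the triangle case is immediate from the consequence of Lemma \ref{lem:conca}, which forces one of the three joins to be strictly largest, hence a unique largest-label edge and two strictly smaller ones. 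For $m\ge4$, Lemma \ref{lem:m-cycle-LR} supplies a vertex $i_t$ lying in the conditioning set of a cycle-join $v_{a,a+1}$ whose edge $\{i_a,i_{a+1}\}$ is not incident to $i_t$ (as $a\notin\{t-1,t\}$); by the analysis of (ML\ref{definition MAT-labeling triangle}) above, $i_t$ then produces chords $\{i_a,i_t\}$ and $\{i_{a+1},i_t\}$ of label strictly below $\lambda(i_a,i_{a+1})\le k$. Cutting $\gamma$ along such a chord splits it into strictly shorter cycles, and a short case-check (according to whether a chord coincides with a cycle edge, and where the at-most-one low-label edge lies) shows that one of these shorter cycles again has maximum label $k$ with at most one edge below $k$, contradicting minimality. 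The delicate part is exactly this label bookkeeping under the splitting, together with checking that each produced chord is either a genuine new edge or already exhibits a low-label cycle edge.

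I expect (ML\ref{definition MAT-labeling triangle}) to be the routine half and (ML\ref{definition MAT-labeling cycle}) to carry the real content, with the R-vine case needing nothing beyond the completeness observation. As a fallback to the shortest-counterexample argument, I would keep in reserve an induction on $\rk(\Pc)$ through the lower truncations $\overline\Pc_{\le k+1}$ (each an LR-vine by Remark \ref{rem:low-upp}), whose associated edge-labeled graph is precisely $(N_G,\pi_{\le k})$, so that the inductive step amounts to adjoining the top-label edges and invoking the MAT-simplicial reduction of Lemma \ref{lem:MAT-simplicial}.
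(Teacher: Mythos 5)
Your proposal is correct and follows essentially the same route as the paper: the paper also splits the verification into the cycle condition (its Lemma \ref{lem:ML1}, proved by a minimal violating cycle, with Lemma \ref{lem:conca} for triangles and Lemma \ref{lem:m-cycle-LR} plus Corollary \ref{cor:conditioning} to produce low-label chords for $m\ge 4$) and the triangle condition (its Lemma \ref{lem:ML2}, identifying the conditioning vertices of $\{i,j\}$ with $D_{v_{i,j}}$ via Corollary \ref{cor:conditioning}, Lemma \ref{lem:conca} and Lemma \ref{lem:UCD}(b)), and handles the R-vine case by Remark \ref{rem:Rvine-only1}. Your explicit case-check for chords coinciding with cycle edges is a point the paper leaves implicit, but the argument is the same.
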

		\begin{proof}
The first assertion follows from  Lemmas   \ref{lem:ML1}  and \ref{lem:ML2}  below. 
The second assertion follows from the first and Remark \ref{rem:Rvine-only1}.
	\end{proof}

	\begin{lemma}
		\label{lem:ML1} 
 $(G, \lambda)$ satisfies   (ML\ref{definition MAT-labeling cycle}). 
	\end{lemma}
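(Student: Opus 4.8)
The plan is to argue by contradiction using a shortest "bad cycle", translating everything through the joins $v_{i,j}=i\vee j$. First I would unwind Definition \ref{def:P-to-G}: an edge $\{i,j\}$ of $G$ has label $\lambda(i,j)=\rk_\Pc(v_{i,j})-1$, so $\{i,j\}\in\pi_k$ exactly when $\rk(v_{i,j})=k+1$. Thus a violation of (ML\ref{definition MAT-labeling cycle}) at level $k$ is precisely a cycle $C=(i_1,\dots,i_m,i_1)$ in $G$ ($m\ge 3$, distinct minimal nodes) together with a distinguished edge $e$ of $C$ such that $\lambda(e)\le k$ and every other edge of $C$ lies in $\pi_k$; equivalently, all edges of $C$ have label $\le k$ and at most one of them has label $<k$. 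By the definition of $E_G$, each edge $\{i_s,i_{s+1}\}$ of $C$ carries a join $v_{s,s+1}:=i_s\vee i_{s+1}$, and at least $m-1$ of these joins have rank exactly $k+1$ while all of them have rank $\le k+1$. Suppose such a bad cycle exists and pick one, $C$, of minimum length $m$.

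For the base case $m=3$ I would invoke Lemma \ref{lem:conca}: since the three joins $v_{1,2},v_{2,3},v_{3,1}$ all exist, one of them must be strictly greater than the other two, hence of strictly larger rank. This is impossible, because at least two of the three joins already attain the maximal rank $k+1$, so no single one can strictly dominate all others in rank. This contradiction settles $m=3$.

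For $m\ge 4$ I would apply Lemma \ref{lem:m-cycle-LR} (indices taken mod $m$) to obtain a minimal node $i_t$ and a cycle edge $\{i_a,i_{a+1}\}$ with $a\notin\{t-1,t\}$ (so $i_t\neq i_a,i_{a+1}$) such that $i_t$ lies in the conditioning set of $v_{a,a+1}$. Corollary \ref{cor:conditioning} then guarantees that the joins $i_a\vee i_t$ and $i_t\vee i_{a+1}$ exist and satisfy $v_{a,a+1}>i_a\vee i_t$ and $v_{a,a+1}>i_t\vee i_{a+1}$; as $\rk(v_{a,a+1})\le k+1$, the two "chords" $\{i_a,i_t\}$ and $\{i_t,i_{a+1}\}$ are edges of $G$ of label $<k$. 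Deleting $\{i_a,i_{a+1}\}$ from $C$ leaves a path from $i_a$ to $i_{a+1}$ on which $i_t$ is an interior vertex; splitting it at $i_t$ produces two arcs, and I would close the arc that avoids the (at most one) low-label edge $e$ with its corresponding chord. The resulting cycle is strictly shorter, and apart from that single chord all of its edges lie in $\pi_k$, so it is again a bad cycle, contradicting the minimality of $C$.

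The step I expect to be the main obstacle is exactly this reduction for $m\ge 4$: I must verify that the new cycle is genuine (length $\ge 3$) and really is bad. The delicate point is the degenerate case in which the chosen arc has length $1$, i.e. a chord such as $\{i_t,i_{a+1}\}$ coincides, as an edge of $G$, with a cycle edge. But then that edge would carry label $<k$ (being a chord) while simultaneously lying in $\pi_k$, or would be a second low-label edge distinct from $e$ — either way a direct contradiction. Hence the chosen arc always has length $\ge 2$ and the shorter bad cycle genuinely exists. Carefully tracking which arc contains $e$ and handling the indices modulo $m$ is the only genuinely fiddly part of the argument.
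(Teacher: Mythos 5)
Your proof is correct and follows essentially the same route as the paper's: a shortest-bad-cycle argument that settles $m=3$ via Lemma \ref{lem:conca} and reduces $m\ge 4$ via Lemma \ref{lem:m-cycle-LR} together with Corollary \ref{cor:conditioning}. The only differences are cosmetic: you merge the paper's two stages (first a cycle with one edge of label $<k$, then a cycle with all edges of label $k$) into a single induction, and you are somewhat more explicit than the paper about choosing the arc that avoids the low-label edge and ruling out the degenerate length-one arc.
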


	\begin{proof}
	Suppose to the contrary that there exist $1 \le k \le n-1$ and an  $m$-cycle $C_m$ for $m \ge 3$ with edges $\{i_1,i_2\}$, $\{i_2,i_3\},\ldots,\{i_m,i_1\}$ such that $ \lambda(i_1,i_2) < k$ and $ \lambda(i_{s},i_{s+1})=k$ for $2 \le s \le m$. 
		Here we take the index modulo $m$. 
We choose the smallest such $m$.
	
	If $m=3$, then by Lemma \ref{lem:conca}, there exists a node among the nodes $v_{1,2}$, $v_{2,3}$, $v_{3,1}$ strictly greater than the other two. 
	This is a contradiction since there are two nodes of the same rank $k+1$, while the remaining node has rank $< k+1$. 
We may assume $m>3$. 

By Lemma \ref{lem:m-cycle-LR}, there exist a minimal node $i_t$ and a join $v_{a,a+1}= i_a \vee i_{a+1}$ in $\Pc$ for $a,t \in [m]$ such that  $t \notin \{a,a+1\}$ and $i_t$ belongs to the conditioning set of $v_{a,a+1}$. 
 By Corollary \ref{cor:conditioning}, the joins $v_{a,t}$, $v_{t,a+1}$ exist in $\Pc$ and both are strictly smaller than $v_{a,a+1}$. 
 Hence the edges $\{i_a,i_t\}$, $\{i_t,i_{a+1}\}$ exist in  $(G, \lambda)$ and both have labels $<k$. 
 Therefore, there exists a cycle  in  $(G, \lambda)$ of length $<m$ with one edge of label $<k$ and the other edges of the same label $k$. 
This contradicts the minimality of $m$. 
Thus for every $ k \in \mathbb{Z}_{>0} $, an edge  $e \in \pi_{< k} $ does not form a cycle with edges in $\pi_k$.

Now suppose $(G, \lambda)$ contains an  $m$-cycle $C_m$ for $m \ge 3$ with all edges of the same label $k$ for some $1 \le k \le n-1$. 
By Lemma \ref{lem:conca}, we may assume $m>3$. 
By a similar argument as in the preceding paragraph with the aid of Lemma \ref{lem:m-cycle-LR}, the cycle $C_m$ has a chord of label $<k$. 
This contradicts the conclusion of the preceding paragraph.
	\end{proof}

	\begin{lemma}
	\label{lem:ML2} 
		Fix $1 \le k \le n-1$ and let $\{i,j\} \in \pi_k$. 
		Then  the conditioning set of $v_{i,j} =i \vee j$ coincides with the set of conditioning vertices of  $\{i,j\}$. 
		In particular, 
 $(G, \lambda)$ satisfies  (ML\ref{definition MAT-labeling triangle}). 
	\end{lemma}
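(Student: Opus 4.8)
The plan is to prove the stronger set-equality in the statement first and then read off (ML\ref{definition MAT-labeling triangle}) as a cardinality count. Write $v:=v_{i,j}=i\vee j$; since $\{i,j\}\in\pi_k$ we have $\rk_\Pc(v)=\lambda(i,j)+1=k+1$, and by Lemma \ref{lem:LR-paths} the conditioned set is $C_v=\{i,j\}$. The bookkeeping tool I will use throughout is the elementary identity $U_v = (U_a\cup U_b) = (U_a\,\triangle\,U_b)\sqcup(U_a\cap U_b)=C_v\sqcup D_v$, where $v=\{a,b\}$ and $a,b$ are the two nodes covered by $v$; in particular $D_v = U_v\setminus\{i,j\}$.

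For the inclusion ``conditioning set $\subseteq$ set of conditioning vertices'', I would take $h\in D_v$. Then $h$ lies in the conditioning set of $v$, so Corollary \ref{cor:conditioning} applies and gives that the joins $v_{i,h}=i\vee h$ and $v_{j,h}=j\vee h$ exist in $\Pc$ with $v>v_{i,h}$ and $v>v_{j,h}$. Existence of these joins means $\{i,h\},\{j,h\}\in E_G$ by Definition \ref{def:P-to-G}, and the strict inequalities give $\rk_\Pc(v_{i,h}),\rk_\Pc(v_{j,h})\le k$, whence $\lambda(i,h),\lambda(j,h)\le k-1<k$. Thus $h$ is a conditioning vertex of $\{i,j\}$.

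For the reverse inclusion, let $h$ be a conditioning vertex of $\{i,j\}$, so $\{i,h\},\{j,h\}\in E_G$ with $\lambda(i,h),\lambda(j,h)<k$; in particular $h\notin\{i,j\}$ and the three pairwise joins $v_{i,j},v_{i,h},v_{j,h}$ all exist. I would apply Lemma \ref{lem:conca} to the triple $i,j,h$, which furnishes a node among $v_{i,j},v_{i,h},v_{j,h}$ strictly greater than the other two. Since $\rk_\Pc(v_{i,h}),\rk_\Pc(v_{j,h})\le k<k+1=\rk_\Pc(v_{i,j})$, the dominating node is forced to be $v=v_{i,j}$, so $v>v_{i,h}$ and $v>v_{j,h}$. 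Then $h\le v_{i,h}<v$ gives $h\in U_v$, and as $h\notin\{i,j\}=C_v$ the decomposition $U_v=C_v\sqcup D_v$ forces $h\in D_v$.

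Combining the two inclusions yields that the conditioning set of $v_{i,j}$ equals the set of conditioning vertices of $\{i,j\}$. Finally, Lemma \ref{lem:UCD}(b) gives $|D_v|=\rk_\Pc(v)-2=k-1$, so $\{i,j\}$ has exactly $k-1$ conditioning vertices, which is precisely the rephrasing of (ML\ref{definition MAT-labeling triangle}) recorded after Definition \ref{definition MAT-labeling}. I expect the only delicate point to be the reverse inclusion: one must check that the node singled out by Lemma \ref{lem:conca} is indeed the one of largest rank (handled by the rank comparison above) and that ``$h$ a conditioning vertex'' genuinely supplies the three existing joins needed to invoke that lemma. Everything else is the $U_v=C_v\sqcup D_v$ bookkeeping together with the $\lambda(\cdot,\cdot)=\rk_\Pc(v_{\cdot,\cdot})-1$ translation between labels and ranks.
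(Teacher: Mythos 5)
Your proof is correct and follows essentially the same route as the paper: the inclusion of the conditioning set into the set of conditioning vertices via Corollary \ref{cor:conditioning}, and the reverse inclusion via Lemma \ref{lem:conca} together with the rank/label comparison. The only cosmetic difference is that the paper argues the reverse inclusion contrapositively (a vertex $h\not\le v_{i,j}$ cannot be a conditioning vertex), whereas you argue it directly and then invoke $U_v=C_v\sqcup D_v$; both hinge on exactly the same lemmas.
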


	\begin{proof} 	
	Note that $v_{i,j} \in \Pc_{k+1}$. 
 By Lemma \ref{lem:UCD}(b), the conditioning set $D_{v_{i,j}}$ of $v_{i,j}$ contains $k-1$ elements. 
 We may write it as $D_{v_{i,j}} = \{h_t \mid  1\le t\le k-1\}$. 
 By Corollary \ref{cor:conditioning}, the joins $v_{i,h_t}$, $v_{h_t,j}$  for $1\le h_t\le k-1 $ exist in $\Pc$ and all are strictly smaller than  $v_{i,j}$. 
 In particular,  the edges $\{i,h_t\}$, $\{h_t,j\}$ have label $< k$ for all $h_t $. 
 This implies that $D_{v_{i,j}}$ is contained in the set of conditioning vertices of  $\{i,j\}$.

	Now let $h \in [\ell] \setminus U_{v_{i,j}}$, i.e.~$h \not\le v_{i,j} $. 
	We show that if the joins $v_{i,h}$ and $v_{h,j}$ both exist, then either $\{i, h\}$ or $\{h, j\}$ has label $> k$. 
	It cannot happen that $v_{i,h} < v_{i,j}$ or $v_{h,j} < v_{i,j}$; otherwise $h \le v_{i,j}$. 
	Hence by Lemma \ref{lem:conca}, 
	either $v_{i,h} > v_{i,j}$ or $v_{h,j} > v_{i,j}$. 

Thus the conditioning set of $v_{i,j}$ coincides with the set of conditioning vertices of  $\{i,j\} \in \pi_k$. 
Since this is true for every $k$, we conclude that  $(G, \lambda)$ satisfies  (ML\ref{definition MAT-labeling triangle}). 
	\end{proof}

\section{Equivalences of categories}
\label{sec:equi-cat}

For basic definitions and notations of category theory, we refer the reader to \cite[Chapter 1]{Lei14}.
In this section, we will define the categories of MAT-labeled graphs and LR-vines, and construct an explicit equivalence between these two categories.


\subsection{Equivalence of MAT-labeled graphs and LR-vines}
\label{subsec:equi-GnP}

\begin{definition}[Label-preserving homomorphism]
\label{def:lp-iso}
Let $(G,\lambda)$ and $(G', \lambda')$ be edge-labeled graphs. 
A \tbf{label-preserving homomorphism} from  $(G,\lambda)$ to $(G', \lambda')$, written $\sigma \colon (G,\lambda)  \longrightarrow (G', \lambda')$, is 
a map $\sigma \colon N_{G}  \longrightarrow N_{G'}$ such that for all $u, v \in N_G$, $\{u, v\} \in E_{G}$ implies $\{\sigma(u), \sigma(v)\} \in E_{G'}$  and $ \lambda(u, v) =  \lambda'(\sigma(u), \sigma(v))$. 
 
We call $\sigma$ an \tbf{isomorphism} if $\sigma$ is bijective and its inverse is a label-preserving homomorphism. 
The edge-labeled graphs $(G,\lambda)$ and $(G', \lambda')$ are said to be \tbf{isomorphic}, written $(G,\lambda) \simeq (G', \lambda')$ if there exists an isomorphism $\sigma  \colon  (G,\lambda)  \longrightarrow (G', \lambda')$. 
If $(G,\lambda) \simeq (G, \lambda')$, we say that two labelings $\lambda$ and $ \lambda'$ are the same (or isomorphic).
\end{definition}
If  $(G,\lambda) \simeq (G', \lambda')$ and $(G,\lambda)$ is an MAT-labeled graph, then $(G', \lambda')$ is also an MAT-labeled graph.

\begin{definition}[Category of MAT-labeled (complete) graphs]
\label{def:category-MG}
 The \tbf{category $\mathsf{MG}$ of MAT-labeled graphs} is the category whose objects are the MAT-labeled graphs and whose morphisms are the label-preserving homomorphisms. 
  The \tbf{category $\mathsf{MCG}$ of MAT-labeled complete graphs} is a full subcategory of $\mathsf{MG}$ whose objects are the MAT-labeled complete graphs.
\end{definition}

Recall the definition of rank and join-preserving homomorphisms of graded posets from Definition \ref{def:poset-iso}.

\begin{definition}[Category of (L)R-vines]
\label{def:category-LR}
 The \tbf{category $\mathsf{LRV}$ of LR-vines} is the category whose objects are the LR-vines and whose morphisms are the  homomorphisms preserving rank and join. 
   The \tbf{category $\mathsf{RV}$ of R-vines} is a full subcategory of $\mathsf{LRV}$ whose objects are the R-vines.
\end{definition}

First we need some lemmas. 

			\begin{lemma}
	\label{lem:induced-iso} 
	Let $\varphi \colon \Pc  \longrightarrow \Pc'$ be a rank-preserving homomorphism of LR-vines. 
	Suppose $\varphi$ preserves join of minimal pairs, i.e.~ if $x, y \in \min(\Pc)$ such that the join $ x\vee y$ exists, then $ \varphi (x) \vee \varphi(y)$ exists and $\varphi( x\vee y) = \varphi (x) \vee \varphi(y)$. 
Then $\varphi $ induces an isomorphism $ \Pc_{\le v}  \simeq \Pc'_{\le \varphi(v)}$ for every $v\in\Pc$.	
	\end{lemma}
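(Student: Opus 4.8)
The plan is to restrict $\varphi$ to $\Pc_{\le v}$ and show directly that it is an order-isomorphism onto $\Pc'_{\le \varphi(v)}$. First I would record the basic numerology. Since $\varphi$ preserves rank, $r := \rk(v) = \rk(\varphi(v))$, and by the definition of an LR-vine both $\Pc_{\le v}$ and $\Pc'_{\le \varphi(v)}$ are R-vines of rank $r$; by Remark \ref{rem:card-R-vine} each therefore has exactly $r(r+1)/2$ nodes, and by Lemma \ref{lem:UCD}(a) each has exactly $r$ minimal nodes, namely $U_v$ and $U_{\varphi(v)}$. Because $\varphi$ is order-preserving we immediately get $\varphi(\Pc_{\le v}) \subseteq \Pc'_{\le \varphi(v)}$, and because $\varphi$ is rank-preserving it sends the minimal nodes of $\Pc_{\le v}$ into $\min(\Pc'_{\le \varphi(v)}) = U_{\varphi(v)}$. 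Since the two ideals have equal finite cardinality, it will suffice to prove that the restriction is injective; order-preservation of the inverse will then be handled separately.

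The heart of the argument, and the step I expect to be the main obstacle, is to show that $\varphi$ is injective on the set $U_v$ of minimal nodes, since this is where the join-preservation hypothesis is essential. Given distinct $i, j \in U_v$, because $\Pc_{\le v}$ is an R-vine the join $i \vee j$ exists in $\Pc_{\le v}$ and hence in $\Pc$ (Remark \ref{rem:Rvine-only1}, Lemma \ref{lem:LR-paths}, Corollary \ref{cor:C=C}), and it is a non-minimal node. By hypothesis $\varphi(i) \vee \varphi(j) = \varphi(i \vee j)$ exists and, as $\varphi$ preserves rank, has rank $\rk(i \vee j) \ge 2$. Were $\varphi(i) = \varphi(j)$, this join would equal $\varphi(i)$, a node of rank $1$, a contradiction. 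Hence $\varphi|_{U_v}$ is injective, and since $|U_v| = r = |U_{\varphi(v)}|$ it restricts to a bijection $U_v \to U_{\varphi(v)}$.

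Finally I would promote injectivity on $U_v$ to the whole ideal and identify the order. Every non-minimal $w \in \Pc_{\le v}$ satisfies $w = i \vee j$ with $C_w = \{i,j\} \subseteq U_v$ (Lemma \ref{lem:LR-paths}), so $\varphi(w) = \varphi(i) \vee \varphi(j)$ has conditioned set $\{\varphi(i), \varphi(j)\}$ in $\Pc'$. If $\varphi(w_1) = \varphi(w_2)$ with $w_1, w_2$ non-minimal, comparing conditioned sets and using injectivity of $\varphi|_{U_v}$ together with Corollary \ref{cor:C=C} forces $w_1 = w_2$; rank-preservation rules out collisions between minimal and non-minimal nodes, so $\varphi|_{\Pc_{\le v}}$ is injective, hence bijective. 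To see it is an order-isomorphism, I would check $U_{\varphi(w)} = \varphi(U_w)$ for each $w \in \Pc_{\le v}$ (the inclusion $\varphi(U_w) \subseteq U_{\varphi(w)}$ is clear, and equality follows from $|U_w| = \rk(w) = |U_{\varphi(w)}|$ and injectivity of $\varphi|_{U_v}$). Then, applying Proposition \ref{prop:Phat} in both R-vines, $w_1 \le w_2 \iff U_{w_1} \subseteq U_{w_2} \iff \varphi(U_{w_1}) \subseteq \varphi(U_{w_2}) \iff \varphi(w_1) \le \varphi(w_2)$, where the middle equivalence again uses injectivity of $\varphi|_{U_v}$. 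This shows the restriction is an order-embedding, completing the proof that $\varphi$ induces the isomorphism $\Pc_{\le v} \simeq \Pc'_{\le \varphi(v)}$.
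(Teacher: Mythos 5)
Your proposal is correct and follows essentially the same route as the paper's proof: injectivity on $U_v$ via the rank of $\varphi(i\vee j)$, the cardinality count $|\varphi(U_v)|=|U_{\varphi(v)}|$, injectivity on non-minimal nodes by writing them as joins of minimal pairs and comparing conditioned sets, bijectivity by Remark \ref{rem:card-R-vine}, and order-reflection via complete unions and Corollary \ref{cor:order-preserving}. No substantive differences.
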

	
		\begin{proof} 	
		Clearly,  $\varphi $ induces a homomorphism  $\varphi|_{ \Pc_{\le v}} \colon \Pc_{\le v}  \longrightarrow \Pc'_{\le \varphi(v)}$. 
		In particular, $\varphi(U_v) \subseteq U_{\varphi(v)}$. 
		Note that for any distinct $i, j \in U_v$, the join $i \vee j$ exists in the R-vine $\Pc_{\le v}$ by Remark \ref{rem:Rvine-only1}. 
			Since $\varphi$ preserves rank, $\rk'( \varphi ( i \vee j) ) = \rk( i \vee j) > \rk( i)$. 
	Since $\varphi$ preserves join of minimal pairs, $ \varphi (i) \vee \varphi(j)$ exists and $\varphi( i\vee j) = \varphi (i) \vee \varphi(j)$. 
	In particular,  $\varphi (i) \ne \varphi(j)$.
		Hence the elements in $\varphi(U_v)$ are pairwise distinct.
Thus  $\varphi(U_v) = U_{\varphi(v)}$ since  $|\varphi(U_v) | = |U_v  |= \rk(v) = \rk'(\varphi(v)) = |U_{\varphi(v)}  |$. 

Let $a,a' \in  \Pc_{\le v}  $ be such that $\varphi(a) = \varphi(a')$. 
We may write $a = i \vee j$ and $a' = i' \vee j'$ for minimal nodes $i \ne j$,   $i' \ne j'$. 
Thus, $ \varphi (i) \vee \varphi(j) =  \varphi (i') \vee \varphi(j')$. 
Again by Remark \ref{rem:Rvine-only1},  $\{\varphi (i), \varphi(j)\} =\{\varphi (i'), \varphi(j')\} $. 
Since the elements in $\varphi(U_v)$ are pairwise distinct, $\{ i , j\} = \{i' ,j'\}$. 
Hence $a=a'$. 
This implies that $\varphi|_{ \Pc_{\le v}}$ is injective. 
Moreover, $\left | \Pc_{\le v}  \right|=\left |  \Pc'_{\le \varphi(v)} \right|$ by Remark \ref{rem:card-R-vine}.
Hence $\varphi|_{ \Pc_{\le v}}$ is bijective. 

Now let $a,b \in  \Pc_{\le v}  $ be such that $\varphi(a) \le \varphi(b)$. 
Therefore, $U_{\varphi(a)} \subseteq U_{\varphi(b)}$ hence $\varphi(U_a)  \subseteq \varphi(U_b)$. 
It follows that $U_a  \subseteq U_b$. 
By Corollary \ref{cor:order-preserving}, $a \le b$.
	Thus the inverse of $\varphi|_{ \Pc_{\le v}}$ is a homomorphism.
	We conclude that   $\varphi|_{ \Pc_{\le v}}$ is an isomorphism.

	\end{proof}

				\begin{lemma}
	\label{lem:weak=all} 
	Let $\varphi \colon \Pc  \longrightarrow \Pc'$ be a rank-preserving homomorphism of LR-vines such that $\varphi$ preserves join of minimal pairs (Lemma \ref{lem:induced-iso}). 
Then $\varphi$ is join-preserving. 
	\end{lemma}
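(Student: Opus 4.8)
The plan is to reduce the general join-preservation to the already-available local structure provided by Lemma \ref{lem:induced-iso}. Concretely, suppose $x,y\in\Pc$ are arbitrary nodes (not necessarily minimal) whose join $v:=x\vee y$ exists in $\Pc$. I want to show $\varphi(x)\vee\varphi(y)$ exists in $\Pc'$ and equals $\varphi(v)$. The first step is to pass to the principal ideal $\Pc_{\le v}$: since $x\le v$ and $y\le v$, both $x$ and $y$ lie in $\Pc_{\le v}$, and by Lemma \ref{lem:induced-iso} the restriction $\varphi|_{\Pc_{\le v}}\colon\Pc_{\le v}\to\Pc'_{\le\varphi(v)}$ is a poset isomorphism.

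The key observation is that join is an intrinsic order-theoretic operation, so it is preserved by any poset isomorphism. First I would note that $v=x\vee y$ computed in $\Pc$ coincides with the join of $x$ and $y$ computed inside the ideal $\Pc_{\le v}$: indeed $v$ is the greatest element of $\Pc_{\le v}$, it is an upper bound of $x$ and $y$, and any upper bound of $x,y$ lying in $\Pc_{\le v}$ is automatically $\ge v$ by the defining property of the join in $\Pc$. Hence $v=x\vee y$ in $\Pc_{\le v}$ as well. Applying the isomorphism $\varphi|_{\Pc_{\le v}}$, I obtain that $\varphi(v)=\varphi(x)\vee\varphi(y)$ computed inside $\Pc'_{\le\varphi(v)}$.

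The remaining step, which is the main obstacle, is to upgrade ``join inside the ideal $\Pc'_{\le\varphi(v)}$'' to ``join inside all of $\Pc'$''. This does not follow from isomorphism alone, since an element outside the ideal could conceivably be a smaller common upper bound. To rule this out, let $w\in\Pc'$ be any common upper bound of $\varphi(x)$ and $\varphi(y)$; I must show $\varphi(v)\le w$. I would argue via complete unions and Corollary \ref{cor:order-preserving}: since $\varphi(v)=\varphi(x)\vee\varphi(y)$ in $\Pc'_{\le\varphi(v)}$ and $\varphi|_{\Pc_{\le v}}$ is an isomorphism identifying $U_{\varphi(v)}$ with $\varphi(U_v)=U_{\varphi(x)}\cup U_{\varphi(y)}$ (using that in an R-vine the complete union of a join of two nodes is the union of their complete unions, which follows from Lemma \ref{lem:LR-paths} applied inside the R-vine $\Pc_{\le\varphi(v)}$), we get $U_{\varphi(v)}=U_{\varphi(x)}\cup U_{\varphi(y)}$. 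Any common upper bound $w$ of $\varphi(x),\varphi(y)$ satisfies $U_{\varphi(x)}\subseteq U_w$ and $U_{\varphi(y)}\subseteq U_w$, so $U_{\varphi(v)}=U_{\varphi(x)}\cup U_{\varphi(y)}\subseteq U_w$, and Corollary \ref{cor:order-preserving} forces $\varphi(v)\le w$. Thus $\varphi(v)$ is genuinely the join in $\Pc'$, completing the proof.

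A subtle point I would double-check is the identity $U_{x\vee y}=U_x\cup U_y$ for two nodes inside an R-vine: this is where the regularity is essential, and it is the place where a careless argument could break down. I expect it to follow cleanly from the joining-path description in Lemma \ref{lem:LR-paths} together with Lemma \ref{lem:UCD}(a) on the cardinalities of complete unions, but making this rigorous — rather than merely plausible — is the technical heart of the argument.
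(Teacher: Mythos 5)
Your overall strategy---restrict to the principal ideal $\Pc_{\le v}$, invoke Lemma \ref{lem:induced-iso} to get the isomorphism $\Pc_{\le v}\simeq\Pc'_{\le\varphi(v)}$, and conclude that $\varphi(v)$ is the join of $\varphi(x),\varphi(y)$ \emph{inside that ideal}---is exactly the paper's argument, and you are right to flag that upgrading this to a join in all of $\Pc'$ needs a further step (the paper itself dispatches it with a bare ``hence in $\Pc'$''). The problem is that the identity you lean on for that step, $U_{x\vee y}=U_x\cup U_y$ for two nodes of an R-vine, is false. Take the $4$-dimensional C-vine of Figure \ref{fig:C-vine} and $x=13$, $y=14$: the only common upper bound is the top node $34|12$, so $x\vee y=34|12$ and $U_{x\vee y}=\{1,2,3,4\}$, whereas $U_x\cup U_y=\{1,3,4\}$. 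So the chain $U_{\varphi(v)}=U_{\varphi(x)}\cup U_{\varphi(y)}\subseteq U_w$ breaks at the first equality, and your appeal to Corollary \ref{cor:order-preserving} does not go through. You correctly identified this identity as the technical heart of the argument, but it does not ``follow cleanly'' from Lemmas \ref{lem:LR-paths} and \ref{lem:UCD}---it is simply not true.

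The gap is repairable with a weaker, true statement: if $v=x\vee y$ and $C_v=\{i,j\}$, then $i\in U_x$ and $j\in U_y$ (up to swapping), i.e.\ $C_v\subseteq U_x\cup U_y$. Indeed, let $c,d$ be the two nodes covered by $v$ with $i\in U_c\setminus U_d$, $j\in U_d\setminus U_c$; since $v$ is the join, after relabeling $x\le c$ and $y\le d$. If $i\notin U_x$ then $U_x\subseteq U_c\setminus\{i\}=U_c\cap U_d=U_e$, where $e$ is the common node covered by $c$ and $d$ (proximity plus Lemma \ref{lem:UCD}), so $x\le e\le d$ by Corollary \ref{cor:order-preserving}, making $d$ a smaller common upper bound of $x$ and $y$ than $v$ --- a contradiction. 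With this, for any common upper bound $w$ of $\varphi(x),\varphi(y)$ in $\Pc'$ one gets $C_{\varphi(v)}\subseteq U_{\varphi(x)}\cup U_{\varphi(y)}\subseteq U_w$, so both elements of $C_{\varphi(v)}$ lie below $w$, and the join of that minimal pair, which is $\varphi(v)$ by Lemma \ref{lem:LR-paths}, satisfies $\varphi(v)\le w$. In the counterexample above this is consistent: $C_{34|12}=\{3,4\}$ with $3\in U_{13}$ and $4\in U_{14}$. As written, though, your proof rests on a false lemma and is not correct.
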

	
		\begin{proof} 	
 Let $a,b \in  \Pc $ and suppose the join $a \vee b$ exists in $\Pc$. 
 Write $v =a \vee b$. 
 Note that $\varphi(a), \varphi(b) \in  \Pc'_{\le \varphi(v)}$.
By Lemma \ref{lem:induced-iso}, $\varphi|_{ \Pc_{\le v}} \colon \Pc_{\le v}  \longrightarrow \Pc'_{\le \varphi(v)}$ is a poset isomorphism. 
It follows that $\varphi|_{ \Pc_{\le v}}$ is join-preserving. 
 Therefore, $\varphi (a) \vee \varphi(b)$ exists in $  \Pc'_{\le \varphi(v)}$ hence in $\Pc'$ and $\varphi( a\vee b) = \varphi (a) \vee \varphi(b)$. 
 Thus $\varphi$ is join-preserving. 
	\end{proof}

		\begin{lemma}
	\label{lem:assoc-morphism2} 
Let $\Pc $ and $ \Pc'$ be LR-vines and suppose there is a homomorphism $\varphi \colon \Pc  \longrightarrow \Pc'$ preserving rank and join. 
Let $(G,\lambda)$ (resp.~$(G', \lambda')$) denote the MAT-labeled graph defined by $\Pc $ (resp.~$\Pc'$)  from Definition \ref{def:P-to-G} and Theorem \ref{thm:poset-to-MAT}. 
Define a map $\Omega(\varphi )\colon N_{G}  \longrightarrow N_{G'}$ by sending
each node    $i$ in $G$ to a node $\varphi(i)$ in $G'$ for  $1 \le i \le \ell$. 
Then   $\Omega(\varphi )$  is a label-preserving homomorphism from $(G,\lambda)$ to $(G', \lambda')$.
	\end{lemma}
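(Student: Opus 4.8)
The plan is to verify directly the two requirements in Definition \ref{def:lp-iso}: that $\Omega(\varphi)$ is a well-defined map $N_{G}\to N_{G'}$, and that it sends each edge of $G$ to an edge of $G'$ carrying the same label. Since $N_{G}=\min(\Pc)$ and $N_{G'}=\min(\Pc')$ are the minimal-node sets of the two LR-vines, well-definedness reduces to the observation recorded right after Definition \ref{def:poset-iso}, namely that a rank-preserving homomorphism satisfies $\varphi(\min(\Pc))\subseteq\min(\Pc')$. Hence $\varphi(i)\in\min(\Pc')=N_{G'}$ for every $i\in\min(\Pc)=N_{G}$, so $\Omega(\varphi)$ is a genuine map on vertex sets.

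For the edge condition I would take an arbitrary edge $\{i,j\}\in E_{G}$. By the description of $E_{G}$ in Definition \ref{def:P-to-G}, this means $i\ne j$ are distinct minimal nodes for which the join $v_{i,j}=i\vee j$ exists in $\Pc$, and moreover $v_{i,j}$ is non-minimal, so $\rk_{\Pc}(v_{i,j})\ge 2$. Since $\varphi$ preserves join, $\varphi(i)\vee\varphi(j)$ exists in $\Pc'$ and equals $\varphi(v_{i,j})$; since $\varphi$ preserves rank, this node has rank $\rk_{\Pc'}(\varphi(v_{i,j}))=\rk_{\Pc}(v_{i,j})\ge 2$.

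The only point that requires a short argument — and the single place where something could a priori go wrong — is that $\{\varphi(i),\varphi(j)\}$ is an honest edge, i.e.\ that $\varphi(i)\ne\varphi(j)$. Here I would invoke the rank computation just made: if $\varphi(i)=\varphi(j)$, then $\varphi(i)\vee\varphi(j)=\varphi(i)$ would be a minimal node of rank $1$, contradicting that $\varphi(i)\vee\varphi(j)=\varphi(v_{i,j})$ has rank $\ge 2$. Therefore $\varphi(i)$ and $\varphi(j)$ are distinct minimal nodes of $\Pc'$ whose join exists, whence $\{\varphi(i),\varphi(j)\}\in E_{G'}$; writing $v_{\varphi(i),\varphi(j)}:=\varphi(i)\vee\varphi(j)$ as in Notation \ref{nota:vxy}, we have $v_{\varphi(i),\varphi(j)}=\varphi(v_{i,j})$.

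Finally I would check that labels are preserved. Using the definition $\lambda'(i',j')=\rk_{\Pc'}(v_{i',j'})-1$ from Definition \ref{def:P-to-G} together with the rank-preservation of $\varphi$,
\[
\lambda'(\varphi(i),\varphi(j)) = \rk_{\Pc'}(v_{\varphi(i),\varphi(j)}) - 1 = \rk_{\Pc'}(\varphi(v_{i,j})) - 1 = \rk_{\Pc}(v_{i,j}) - 1 = \lambda(i,j).
\]
This completes the verification, so $\Omega(\varphi)$ is a label-preserving homomorphism. I do not expect any substantial obstacle: the whole argument is a direct unwinding of Definitions \ref{def:P-to-G} and \ref{def:lp-iso}, the only delicate step being the injectivity-on-edges check handled above by the rank inequality.
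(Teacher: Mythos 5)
Your proposal is correct and follows essentially the same route as the paper's proof: unwind Definition \ref{def:P-to-G}, use join-preservation to get existence of $\varphi(i)\vee\varphi(j)=\varphi(v_{i,j})$, use rank-preservation to rule out $\varphi(i)=\varphi(j)$, and compute the label from $\rk(v_{i,j})-1$. Your expansion of the injectivity step (the rank-$\ge 2$ versus rank-$1$ contradiction) is exactly the justification the paper leaves implicit in its terse ``since $\varphi$ is rank-preserving, $\varphi(i)\ne\varphi(j)$.''
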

	
		\begin{proof} 	
	Let $\{i, j\} \in E_{G}$  be an edge in $G$ for distinct nodes $i, j \in N_G$. 
	Then the join $ i \vee j$  exists in $\Pc$. 
	Since $\varphi$ is  join-preserving, $ \varphi (i) \vee \varphi(j)$ exists and $\varphi( i\vee j) = \varphi (i) \vee \varphi(j)$. 
	Since $\varphi$ is  rank-preserving, $\varphi (i) \ne \varphi(j)$.
	Therefore, $\{\varphi (i), \varphi(j)\} \in E_{G'}$. 
	Furthermore,   
	$$ \lambda'(\varphi (i), \varphi(j)) =\rk'( \varphi (i) \vee \varphi(j) ) -1= \rk( i \vee j)-1=  \lambda(i,j).$$
	Thus   $\Omega(\varphi )$  is a label-preserving homomorphism.
 
	\end{proof}

	\begin{lemma}
	\label{lem:assoc-morphism1} 
Let $(G,\lambda)$ and $(G', \lambda')$ be MAT-labeled graphs and suppose there is a label-preserving homomorphism $\sigma \colon (G,\lambda)  \longrightarrow (G', \lambda')$. 
Let $\Pc $ (resp.~$\Pc'$) denote the LR-vine defined by $(G,\lambda)  $ (resp.~$(G', \lambda')$)  from Definition \ref{def:G-to-P} and Theorem \ref{thm:MAT-to-poset}. 
Define  a map $\Psi(\sigma) \colon \Pc  \longrightarrow \Pc'$ by sending $v\in \Pc$ to $\{\sigma(a) \mid a \in v\}\in \Pc'$.
Then   $\Psi(\sigma) $ is a  homomorphism preserving rank and join between $\Pc $ and $ \Pc'$.
	\end{lemma}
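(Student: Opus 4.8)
The plan is to verify three things in turn: that $\Psi(\sigma)$ maps $\Pc$ into $\Pc'$ (well-definedness), that it preserves rank, and that it preserves join. For the last of these I would not argue join-preservation directly; instead I would reduce to join-preservation of \emph{minimal} pairs and then invoke Lemma~\ref{lem:weak=all}, which promotes a rank-preserving homomorphism that preserves joins of minimal pairs to a genuinely join-preserving one. Thus the real content is (i) well-definedness, (ii) rank-preservation, and (iii) the identity $\Psi(\sigma)(\{i\}\vee\{j\}) = \{\sigma(i)\}\vee\{\sigma(j)\}$ whenever $i,j\in[\ell]$ are distinct minimal nodes with $\{i\}\vee\{j\}$ existing in $\Pc$.

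The key observation, which I would establish first, is that $\sigma$ restricts to a label-preserving \emph{isomorphism} on every principal clique. Let $K=K_e\in\Pc$ be the principal clique generated by an edge $e=\{i,j\}$. Any two distinct vertices of $K$ are adjacent in $G$ (as $K$ is a clique), so $\sigma$ carries them to an edge of the simple graph $G'$; since $G'$ has no loops, their images are distinct. Hence $\sigma$ is injective on $K$, giving $|\sigma(K)|=|K|$ at once, and $\sigma(K)$ is a clique of $G'$. Because $G[K]$ is complete and $\sigma|_K$ is a label-preserving bijection onto the clique $\sigma(K)$ whose inverse also preserves edges and labels, $\sigma|_K$ is an isomorphism of edge-labeled graphs $(G[K],\lambda|_{E_{G[K]}})\simeq(G'[\sigma(K)],\lambda'|_{E_{G'[\sigma(K)]}})$. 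Since the former is an MAT-labeled complete graph by Lemma~\ref{lem:edge-clique}, so is the latter; thus $(G'[\sigma(K)],\lambda'|_{E_{G'[\sigma(K)]}})\le(G',\lambda')$, and Lemma~\ref{lem:max-prin} shows $\sigma(K)$ is a principal clique of $(G',\lambda')$, so $\sigma(K)\in\Pc'$. As the unique largest-label edge $\{i,j\}$ of $G[K]$ maps under the isomorphism to the unique largest-label edge $\{\sigma(i),\sigma(j)\}$ of $G'[\sigma(K)]$, the principal clique $\sigma(K)$ is precisely the one generated by $\{\sigma(i),\sigma(j)\}$, i.e.\ $\sigma(K)=K'_{\{\sigma(i),\sigma(j)\}}$. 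Together with the trivial case $\Psi(\sigma)(\{i\})=\{\sigma(i)\}$ for singletons, this proves well-definedness and rank-preservation; order-preservation is immediate since $u\subseteq v$ forces $\sigma(u)\subseteq\sigma(v)$.

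For the join of minimal pairs, suppose $\{i\}\vee\{j\}$ exists in $\Pc$; call it $w$. By Lemma~\ref{lem:LR-paths} its conditioned set is $C_w=\{\{i\},\{j\}\}$. Since $w$ is a non-minimal node it is a principal clique $K_e$ with $e\in E_G$, and Corollary~\ref{cor:cond-sets} gives $C_w=\{\{a\},\{b\}\}$ for $e=\{a,b\}$; comparing, $e=\{i,j\}\in E_G$ and $w=K_{\{i,j\}}$. Then $\{\sigma(i),\sigma(j)\}\in E_{G'}$, so the principal clique $K'_{\{\sigma(i),\sigma(j)\}}$ exists in $\Pc'$ and, again by Corollary~\ref{cor:cond-sets} and Lemma~\ref{lem:LR-paths} applied to $\Pc'$, equals $\{\sigma(i)\}\vee\{\sigma(j)\}$. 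Combining with the previous paragraph,
\[
\Psi(\sigma)(\{i\}\vee\{j\}) = \sigma(K_{\{i,j\}}) = K'_{\{\sigma(i),\sigma(j)\}} = \{\sigma(i)\}\vee\{\sigma(j)\} = \Psi(\sigma)(\{i\})\vee\Psi(\sigma)(\{j\}).
\]
Hence $\Psi(\sigma)$ preserves joins of minimal pairs, and Lemma~\ref{lem:weak=all} upgrades this to full join-preservation.

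The main obstacle is the middle paragraph: recognizing that $\sigma$ restricts to an edge-labeled isomorphism on each clique, and then pinning down its image as the principal clique generated by the image of the generating edge. Everything else — injectivity on cliques (hence rank-preservation), order-preservation, and the reduction of join-preservation to minimal pairs — follows routinely once this structural fact is in hand.
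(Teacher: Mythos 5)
Your proposal is correct and follows essentially the same route as the paper: the crux in both is that $\sigma(K_e)=K'_{\sigma(e)}$ is again a principal clique (which the paper asserts tersely and you justify in full via the clique-isomorphism argument and Lemma \ref{lem:max-prin}), followed by identifying joins of minimal pairs with principal cliques via Corollary \ref{cor:cond-sets} and upgrading to full join-preservation with Lemma \ref{lem:weak=all}. Your middle paragraph is a welcome expansion of a step the paper leaves to the reader.
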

	
			\begin{proof} 	
 Let $\varphi  =\Psi(\sigma)  $. 
Note that since $\varphi$ is   label-preserving, if $v = K_e \in \Pc$ where $K_e$ is a principal clique in $(G,\lambda)$ for $e \in E_G$, then $\sigma(K_e)=\{\sigma(a) \mid a \in K_e\}=K_{\sigma(e) }$ is a principal clique in $(G', \lambda')$. 
Hence $\varphi $ is indeed well-defined. 
It is also easily seen that $\varphi $ is a rank-preserving  homomorphism since $\lambda(e) = |K_e|-1$. 
Let $\{i\} , \{j\} \in \Pc$ be distinct minimal nodes in $\Pc$ such that $\{i\} \vee \{j\}$ exists in $\Pc$. 
By Corollary \ref{cor:cond-sets}, we may write  $K_e=\{i\} \vee \{j\} \in \Pc$ for $e=\{i,j\} \in E_G$. 
Also by Corollary \ref{cor:cond-sets}, $ K_{\sigma(e) }=\{ \sigma(i) \} \vee \{ \sigma(j)  \} \in \Pc'$ since  $\sigma(e)=\{\sigma(i) ,\sigma(j)\} \in E_{G'}$. 
Therefore,  $\varphi(  \{i\} \vee \{j\} ) = \varphi (\{i\}) \vee \varphi(\{j\})$. 
Thus $\varphi  $  preserves join of minimal pairs.
 By Lemma \ref{lem:weak=all}, $\varphi  $ is   join-preserving.

	\end{proof}

	The following two lemmas give a construction of functors  between $\MG$ and $\LRV $.
The proofs   are routine.
 
	\begin{lemma}
	\label{lem:functor2} 
Define a mapping  $\Omega \colon \LRV\longrightarrow  \MG$ by associating

\begin{enumerate}[(1)]
		\item   each object $ \Pc$ in $\LRV $ to an object 
$\Omega(\Pc) = (G,\lambda)$ in $\MG$, where $ (G,\lambda)$ is the MAT-labeled graph defined by $\Pc $  from Definition \ref{def:P-to-G} and Theorem \ref{thm:poset-to-MAT}, and 
		\item  each morphism $\varphi \colon \Pc  \longrightarrow \Pc'$ in $\LRV $ to a morphism   $\Omega(\varphi ) \colon \Omega(\Pc)  \longrightarrow\Omega(\Pc')$ in $\MG$, where $\Omega(\varphi ) $ is the label-preserving homomorphism  from Lemma \ref{lem:assoc-morphism2}.
	\end{enumerate}
Then   $\Omega $ is a functor from $\LRV $ to $\MG$.
	\end{lemma}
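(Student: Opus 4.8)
The plan is to verify the two functoriality axioms, since the assignments on objects and morphisms are already well-defined by the results just cited: Theorem \ref{thm:poset-to-MAT} guarantees that $\Omega(\Pc)$ is an MAT-labeled graph for every LR-vine $\Pc$, and Lemma \ref{lem:assoc-morphism2} guarantees that $\Omega(\varphi)$ is a label-preserving homomorphism for every morphism $\varphi$ of LR-vines. The observation that makes everything routine is that $\Omega$ acts on a morphism $\varphi\colon \Pc \longrightarrow \Pc'$ simply by restricting its underlying set-map to the minimal nodes: since $\varphi$ preserves rank, it sends $\min(\Pc)$ into $\min(\Pc')$, so the restriction $\varphi|_{\min(\Pc)}$ is well-defined, and by Definition \ref{def:P-to-G} this restriction is precisely the vertex map $N_{\Omega(\Pc)} \longrightarrow N_{\Omega(\Pc')}$ underlying $\Omega(\varphi)$.

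First I would check preservation of identities. For the identity morphism $\mathrm{id}_\Pc$, the map $\Omega(\mathrm{id}_\Pc)$ sends each minimal node $i$ to $\mathrm{id}_\Pc(i) = i$; hence $\Omega(\mathrm{id}_\Pc)$ is the identity map on $N_{\Omega(\Pc)} = \min(\Pc)$, which is exactly $\mathrm{id}_{\Omega(\Pc)}$. Next I would check preservation of composition. Given morphisms $\varphi\colon \Pc \longrightarrow \Pc'$ and $\psi\colon \Pc' \longrightarrow \Pc''$ in $\LRV$, their composite $\psi \circ \varphi$ is again rank- and join-preserving, so $\Omega(\psi \circ \varphi)$ is defined; and for each minimal node $i \in \min(\Pc)$ one has
$$\Omega(\psi \circ \varphi)(i) = (\psi \circ \varphi)(i) = \psi(\varphi(i)) = \Omega(\psi)(\varphi(i)) = \bigl(\Omega(\psi) \circ \Omega(\varphi)\bigr)(i).$$
Since a label-preserving homomorphism is completely determined by its underlying vertex map (Definition \ref{def:lp-iso}), the equality of these vertex maps yields $\Omega(\psi \circ \varphi) = \Omega(\psi) \circ \Omega(\varphi)$.

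There is no genuine obstacle here: once one records that $\Omega$ on morphisms is literally the restriction to minimal nodes, both axioms reduce to the trivial facts that the restriction of an identity map is an identity and that restriction commutes with composition. The only points worth stating explicitly are that rank-preservation keeps minimal nodes minimal (so the restricted maps land in the correct vertex sets) and that, by Definition \ref{def:lp-iso}, two morphisms in $\MG$ are equal as soon as their underlying vertex maps agree.
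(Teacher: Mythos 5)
Your proof is correct and matches the paper's intent exactly: the paper simply declares the proofs of Lemmas \ref{lem:functor2} and \ref{lem:functor1} to be routine and omits them, and your verification (that $\Omega$ on morphisms is restriction to minimal nodes, so identities and composition are preserved trivially, with morphisms in $\MG$ determined by their vertex maps) is precisely the routine check being alluded to.
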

	
		\begin{lemma}
	\label{lem:functor1} 
Define a mapping $\Psi\colon\MG \longrightarrow \LRV $ by associating
\begin{enumerate}[(1)]
		\item  each object $(G,\lambda)$ in $\MG$ to an object 
$\Psi(G,\lambda) =\Pc$ in $\LRV $, where $\Pc $ is the LR-vine defined by $(G,\lambda)  $   from Definition \ref{def:G-to-P} and Theorem \ref{thm:MAT-to-poset}, and
		\item  each morphism $\sigma \colon (G,\lambda)  \longrightarrow (G', \lambda')$ in $\MG$ to a morphism $\Psi(\sigma) \colon \Psi(G,\lambda)  \longrightarrow \Psi(G', \lambda')$  in $\LRV $, where $\Psi(\sigma) $ is the  homomorphism preserving rank and join  from Lemma \ref{lem:assoc-morphism1}.
	\end{enumerate}
Then   $\Psi $ is a functor from $\MG$ to $\LRV $.
	\end{lemma}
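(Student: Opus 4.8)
The plan is to verify the two defining axioms of a functor---preservation of identities and preservation of composition---after first recording that $\Psi$ is well defined on objects and on morphisms. Well-definedness on objects is exactly the content of Theorem~\ref{thm:MAT-to-poset}: for an MAT-labeled graph $(G,\lambda)$, the poset $\Pc=\Psi(G,\lambda)$ built in Definition~\ref{def:G-to-P} is an LR-vine, hence a genuine object of $\LRV$. Well-definedness on morphisms is precisely Lemma~\ref{lem:assoc-morphism1}: for a label-preserving homomorphism $\sigma\colon(G,\lambda)\longrightarrow(G',\lambda')$, the assignment $\Psi(\sigma)\colon v\mapsto\{\sigma(a)\mid a\in v\}$ lands in $\Pc'$ and is a rank- and join-preserving poset homomorphism, hence a genuine morphism of $\LRV$. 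Thus the only remaining task is the formal check of the two axioms, and the whole argument hinges on the observation that $\Psi(\sigma)$ is nothing but the \emph{elementwise image} under $\sigma$ of a subset of $N_G$.

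For the identity axiom, I would take $\sigma=\mathrm{id}_{N_G}$ and compute directly: for every node $v\in\Pc$ (a singleton or a principal clique, viewed as a subset of $N_G$) we get $\Psi(\mathrm{id})(v)=\{\mathrm{id}(a)\mid a\in v\}=v$, so that $\Psi(\mathrm{id}_{(G,\lambda)})=\mathrm{id}_{\Psi(G,\lambda)}$. For the composition axiom, I would take composable morphisms $\sigma\colon(G,\lambda)\to(G',\lambda')$ and $\tau\colon(G',\lambda')\to(G'',\lambda'')$ in $\MG$, whose categorical composite is the ordinary composite $\tau\circ\sigma$ of vertex maps. Then for each $v\in\Pc$ one has $\Psi(\tau\circ\sigma)(v)=\{(\tau\circ\sigma)(a)\mid a\in v\}=\{\tau(\sigma(a))\mid a\in v\}$, whereas $(\Psi(\tau)\circ\Psi(\sigma))(v)=\Psi(\tau)(\{\sigma(a)\mid a\in v\})=\{\tau(b)\mid b\in\{\sigma(a)\mid a\in v\}\}=\{\tau(\sigma(a))\mid a\in v\}$; the two sides coincide, yielding $\Psi(\tau\circ\sigma)=\Psi(\tau)\circ\Psi(\sigma)$.

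Since both axioms reduce to the elementary functoriality of the elementwise-image operation on sets, there is no genuine obstacle in this lemma: the substantive content has already been discharged in Lemma~\ref{lem:assoc-morphism1} (well-definedness of $\Psi$ on morphisms) and Theorem~\ref{thm:MAT-to-poset} (well-definedness on objects). If anything merits care, it is only the bookkeeping that the composite of two label-preserving homomorphisms is again label-preserving, so that $\tau\circ\sigma$ is indeed a morphism of $\MG$ and $\Psi(\tau\circ\sigma)$ makes sense; but this is immediate from Definition~\ref{def:lp-iso} and mirrors the routine verification that $\MG$ and $\LRV$ are bona fide categories in Definitions~\ref{def:category-MG} and~\ref{def:category-LR}. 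This is exactly why the statement can be labeled routine.
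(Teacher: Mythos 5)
Your proposal is correct and matches the paper's intent exactly: the paper simply declares the proofs of Lemmas \ref{lem:functor2} and \ref{lem:functor1} to be routine, and your write-up supplies precisely that routine verification, delegating well-definedness to Theorem \ref{thm:MAT-to-poset} and Lemma \ref{lem:assoc-morphism1} and checking identities and composition via the elementwise-image description of $\Psi(\sigma)$. Nothing is missing.
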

 	
  We are now ready to prove the main result of the paper. 
  
		\begin{theorem} 
	\label{thm:1-to-1}
The composite functor $\Psi  \Omega$ (resp.~$\Omega  \Psi$) is naturally isomorphic to the identity functor $ 1_{\LRV}$ (resp.~$1_{\MG}$). 
As a result, the categories $\MG$ and $\LRV $ are equivalent. 
	\end{theorem}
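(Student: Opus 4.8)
The plan is to verify directly that the two composite functors are naturally isomorphic to the identity functors, exhibiting explicit natural isomorphisms whose components we already have in hand. Consider first $\Omega\Psi\colon \MG \to \MG$. Given an MAT-labeled graph $(G,\lambda)$, the vine $\Psi(G,\lambda)=\Pc$ has minimal nodes $\{\{i\}\mid i\in[\ell]\}$ and non-minimal nodes the principal cliques of $(G,\lambda)$. Applying $\Omega$ and using Corollary \ref{cor:cond-sets}, the conditioned set of the principal clique $K_e$ generated by $e=\{i,j\}$ is $\{\{i\},\{j\}\}$, so under the identification $\{i\}\leftrightarrow i$ the vertex set of $\Omega\Psi(G,\lambda)$ is $N_G$ and its edge set is $E_G$; moreover $\rk_\Pc(K_e)-1=|K_e|-1=\lambda(i,j)$ by Lemma \ref{lem:edge-clique}, so the labelings agree as well. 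Thus $\Omega\Psi(G,\lambda)=(G,\lambda)$ on the nose, and I would take the component of the natural isomorphism at $(G,\lambda)$ to be this canonical identity on vertices. Since $\Psi(\sigma)$ sends $\{i\}\mapsto\{\sigma(i)\}$ and hence $\Omega\Psi(\sigma)$ agrees with $\sigma$ on vertices, every naturality square collapses to an identity, giving $\Omega\Psi\cong 1_{\MG}$ at once.

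For the other composite $\Psi\Omega\colon \LRV\to\LRV$, the natural candidate for the component at an LR-vine $\Pc$ is the isomorphism $\eta_\Pc\colon a\mapsto U_a$ of Proposition \ref{prop:Phat} onto $\widehat\Pc=\{U_a\mid a\in\Pc\}$. The crux is therefore to identify $\Psi\Omega(\Pc)$ with $\widehat\Pc$ as posets inside $2^{[\ell]}$ ordered by inclusion. Write $\Omega(\Pc)=(G,\lambda)$ and recall that $\Psi\Omega(\Pc)$ consists of the singletons $\{i\}=U_{\{i\}}$ together with the principal cliques of $(G,\lambda)$. For a non-minimal $v\in\Pc_r$, the ideal $\Pc_{\le v}$ is an R-vine, so by Remark \ref{rem:Rvine-only1} every pair in $U_v$ is the conditioned set of a node of $\Pc_{\le v}$; hence $\Omega(\Pc_{\le v})$ is the complete graph on $U_v$ with labels induced from $(G,\lambda)$, and by Theorem \ref{thm:poset-to-MAT} it is an MAT-labeled complete graph, so $(G[U_v],\lambda|_{E_{G[U_v]}})\le(G,\lambda)$. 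By Lemma \ref{lem:max-prin}, $U_v$ is a principal clique, giving $\widehat\Pc\subseteq\Psi\Omega(\Pc)$. Conversely, a principal clique $C=K_e$ with $e=\{i,j\}\in\pi_k$ has $|C|=k+1$; since $\{i,j\}\in E_G$ the join $v:=i\vee j$ exists with $\rk(v)=k+1$, so $|U_v|=\rk(v)=k+1=|C|$, and in the complete graph $G[U_v]$ the edge $\{i,j\}$ is the unique edge of largest label $k$ by Lemmas \ref{lem:largest-label} and \ref{lem:card-complete}; therefore $U_v=K_e=C$, yielding the reverse inclusion. Hence $\Psi\Omega(\Pc)=\widehat\Pc$ and $\eta_\Pc$ is an isomorphism onto $\Psi\Omega(\Pc)$.

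It then remains to check naturality of $\eta$. For a morphism $\varphi\colon\Pc\to\Pc'$, the map $\Omega(\varphi)$ acts on vertices by $i\mapsto\varphi(i)$, so $\Psi\Omega(\varphi)$ sends an element $X\in\Psi\Omega(\Pc)$ to $\{\Omega(\varphi)(x)\mid x\in X\}$. Evaluating on $U_a$ gives $\Psi\Omega(\varphi)(U_a)=\varphi(U_a)$, which equals $U_{\varphi(a)}=\eta_{\Pc'}(\varphi(a))$ by the identity $\varphi(U_a)=U_{\varphi(a)}$ established in the proof of Lemma \ref{lem:induced-iso}; thus $\Psi\Omega(\varphi)\circ\eta_\Pc=\eta_{\Pc'}\circ\varphi$ and the square commutes. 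Having $\Omega\Psi\cong 1_{\MG}$ and $\Psi\Omega\cong 1_{\LRV}$, the equivalence of $\MG$ and $\LRV$ follows. The main obstacle I anticipate is precisely the set-theoretic identification $\Psi\Omega(\Pc)=\widehat\Pc$: proving that the principal cliques of $\Omega(\Pc)$ are exactly the complete unions of the non-minimal nodes of $\Pc$ is where the R-vine structure of principal ideals (through Theorem \ref{thm:poset-to-MAT} and the uniqueness of the top-label edge in a complete MAT-labeled graph) must be used most carefully, everything else being a formal consequence of the results already in hand.
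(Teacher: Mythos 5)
Your proposal is correct and follows essentially the same route as the paper: the components of the natural isomorphisms are exactly $\eta_\Pc\colon a\mapsto U_a$ from Proposition \ref{prop:Phat} and the canonical relabeling $i\mapsto\{i\}$, with naturality checked via the identity $\varphi(U_a)=U_{\varphi(a)}$ from Lemma \ref{lem:induced-iso}. The only difference is that you spell out the object-level identification $\Psi\Omega(\Pc)=\widehat\Pc$ in more detail (via Lemma \ref{lem:max-prin} and the uniqueness of the top-label edge), whereas the paper treats this as immediate from the constructions (it follows quickly from Lemma \ref{lem:ML2}, which says the conditioning set of $v_{i,j}$ equals the set of conditioning vertices of $\{i,j\}$, so $K_{\{i,j\}}=C_{v_{i,j}}\cup D_{v_{i,j}}=U_{v_{i,j}}$); your extra care there is harmless and arguably a welcome addition.
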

		\begin{proof}
		For every  LR-vine $\Pc$, recall  from Proposition \ref{prop:Phat} the  LR-vine $ \widehat\Pc =\{ U_v \mid v \in \Pc\}$ and the isomorphism 
	$$\eta_\Pc\colon \Pc  \longrightarrow \widehat\Pc  \quad \text{via} \quad v \mapsto U_v.$$ 

		By Lemmas \ref{lem:functor1} and \ref{lem:functor2}, the  functor $\Psi  \Omega \colon \LRV\longrightarrow  \LRV$ assigns 
\begin{enumerate}[(1)]
		\item   each LR-vine $ \Pc$  to the LR-vine $\Psi \Omega(\Pc)= \widehat\Pc$, and 
		\item  each morphism $\varphi \colon \Pc  \longrightarrow \Pc'$ in $\LRV $ to a morphism   $\Psi \Omega(\varphi ) \colon \widehat\Pc \longrightarrow \widehat{\Pc'}$ in $\LRV $ defined by sending $U_v \in  \widehat\Pc$ to $\varphi(U_v)=U_{\varphi(v)} \in  \widehat{\Pc'}$ for every $ v\in \Pc$. 
		The equality holds by Lemma \ref{lem:induced-iso}.
	\end{enumerate}	
	
Similarly, the  functor $ \Omega\Psi  \colon \MG\longrightarrow  \MG$ assigns 
\begin{enumerate}[(1')]
		\item   each MAT-labeled graph $ (G,\lambda)$  to an MAT-labeled graph $ \Omega\Psi(G,\lambda)= (\widehat G,\widehat\lambda)$, where $N_{\widehat G} = \{ \{i\} \mid i \in N_G\}$, $E_{\widehat G} = \{ \{\{i\},\{j\}\} \mid \{i,j\} \in E_G\}$, and $\widehat\lambda(\{i\},\{j\}) =\lambda(i,j)$ (see also Corollary \ref{cor:cond-sets}), and
		\item  each morphism $\sigma \colon (G,\lambda)  \longrightarrow (G', \lambda')$ in $\MG$ to a morphism $ \Omega\Psi(\sigma) \colon   (\widehat G,\widehat\lambda) \longrightarrow   (\widehat {G'},\widehat{\lambda'})$  in $\MG$ defined by sending $\{i\}  \in N_{\widehat G}$ to $\{ \sigma(i)  \} \in N_{\widehat {G'}}$ for every $ i \in N_G$.		
		
	\end{enumerate}			
	
	First we prove $\Psi  \Omega \simeq  1_{\LRV}$, i.e.~$\Psi  \Omega$   is naturally isomorphic to  $ 1_{\LRV}$. 	
	For every morphism $\varphi \colon \Pc  \longrightarrow \Pc'$ in $\LRV $, we have a commutative diagram in Figure \ref{fig:psiomega}.	 

	\begin{figure*}[htbp]
\centering
\begin{subfigure}{.25\textwidth}
		\begin{tikzcd}
\Pc \arrow{r}{\eta_\Pc} \arrow[swap]{d}{\varphi } & \widehat\Pc \arrow{d}{\Psi \Omega(\varphi )} \\%
\Pc'  \arrow{r}{\eta_{\Pc'}}& \widehat{\Pc'}
\end{tikzcd}
\end{subfigure}%
\quad
\begin{subfigure}{.35\textwidth}
  \centering
 		\begin{tikzcd}
v \arrow[mapsto]{r}{\eta_\Pc} \arrow[mapsto, swap]{d}{\varphi } &U_v \arrow[mapsto]{d}{\Psi \Omega(\varphi )} \\%
\varphi(v)   \arrow[mapsto]{r}{\eta_{\Pc'}}& U_{\varphi(v)} =\varphi(U_v) 
\end{tikzcd}
 \end{subfigure}
\caption{\small Commutative diagram that shows $\Psi  \Omega \simeq  1_{\LRV}$.}
\label{fig:psiomega}
\end{figure*}

This follows that $\eta\colon  1_{\LRV}  \longrightarrow\Psi  \Omega$ with component $\eta_\Pc$ at $\Pc$ is a natural isomorphism.
Thus  $\Psi  \Omega$   is naturally isomorphic to  $ 1_{\LRV}$.  

The proof for $\Omega  \Psi \simeq 1_{\MG}$ is similar and easier. 
For every object $(G,\lambda)  $ in $\MG$, the following map is an isomorphism
	$$\epsilon_{(G,\lambda) }\colon (G,\lambda)   \longrightarrow  (\widehat G,\widehat\lambda)  \quad \text{via} \quad N_G \ni i \mapsto \{i\}  \in N_{\widehat G}.$$
	Furthermore,   $  \Omega\Psi$   is naturally isomorphic to  $ 1_{\MG}$ via the natural isomorphism $\epsilon \colon  1_{\MG}  \longrightarrow  \Omega\Psi  $ with component $\epsilon_{(G,\lambda) }$ at $(G,\lambda)$. 

 	\end{proof}

	The following corollary is straightforward from Theorem \ref{thm:1-to-1}. 
	
	\begin{corollary}
\label{cor:1-to-1-restrict}
The restriction $\Psi |_{\MCG}$ (resp.~$\Omega |_{\RV}$) is a functor from $\MCG$ (resp.~$ \RV$) to $ \RV$ (resp.~$\MCG$). 
Furthermore, the composite functor $\Psi  |_{\MCG} \Omega |_{\RV}$ (resp.~$\Omega  |_{\RV} \Psi  |_{\MCG}$) is naturally isomorphic to the identity functor $ 1_{\RV}$ (resp.~$1_{\MCG}$). 
As a result, the categories $\MCG$ and $\RV $ are equivalent. 
\end{corollary}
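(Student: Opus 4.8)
The plan is to obtain the corollary purely by restricting the functors $\Psi$, $\Omega$ and the natural isomorphisms $\eta$, $\epsilon$ already produced in Theorem \ref{thm:1-to-1}, exploiting that $\MCG$ and $\RV$ are full subcategories. The first task is to check that $\Psi$ and $\Omega$ restrict to functors between $\MCG$ and $\RV$. On objects this is exactly the content of the second assertions of Theorems \ref{thm:MAT-to-poset} and \ref{thm:poset-to-MAT}: if $(G,\lambda)$ is an MAT-labeled complete graph then $\Psi(G,\lambda)$ is an R-vine, and if $\Pc$ is an R-vine then $\Omega(\Pc)$ is an MAT-labeled complete graph. On morphisms, since $\MCG$ (resp.~$\RV$) is full in $\MG$ (resp.~$\LRV$), any morphism between objects of $\MCG$ is already a morphism of $\MG$; applying $\Psi$ yields a morphism of $\LRV$ whose source and target now lie in $\RV$, hence a morphism of $\RV$ by fullness, and dually for $\Omega$. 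Preservation of identities and composition is inherited from $\Psi$ and $\Omega$, so $\Psi|_{\MCG}$ and $\Omega|_{\RV}$ are genuine functors.

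Next I would restrict the natural isomorphisms. For every R-vine $\Pc$ one has $\Psi|_{\MCG}\Omega|_{\RV}(\Pc) = \Psi\Omega(\Pc) = \widehat\Pc$, and the component $\eta_\Pc\colon \Pc \longrightarrow \widehat\Pc$ from Theorem \ref{thm:1-to-1} is already an isomorphism; the naturality square for a morphism of $\RV$ is just the corresponding square of Figure \ref{fig:psiomega}. Thus $\eta$ restricted to the objects of $\RV$ furnishes a natural isomorphism $1_{\RV} \longrightarrow \Psi|_{\MCG}\Omega|_{\RV}$. Dually, restricting $\epsilon$ to the objects of $\MCG$ gives a natural isomorphism $1_{\MCG} \longrightarrow \Omega|_{\RV}\Psi|_{\MCG}$. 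These two restricted natural isomorphisms exhibit $\Psi|_{\MCG}$ and $\Omega|_{\RV}$ as mutually quasi-inverse, whence $\MCG$ and $\RV$ are equivalent.

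Since every step is a formal restriction, I do not anticipate any real obstacle: the only nonformal input is the object-level compatibility that the constructions of Definitions \ref{def:G-to-P} and \ref{def:P-to-G} send complete graphs to regular vines and back, and this has already been secured in Theorems \ref{thm:MAT-to-poset} and \ref{thm:poset-to-MAT}. The one mild point to keep in view is simply that fullness of the subcategories is what makes the restricted assignments land in the correct morphism sets.
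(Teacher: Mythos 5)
Your proposal is correct and matches the paper's intent: the paper simply declares the corollary ``straightforward from Theorem \ref{thm:1-to-1}'', and the details you supply (object-level compatibility from the second assertions of Theorems \ref{thm:MAT-to-poset} and \ref{thm:poset-to-MAT}, plus formal restriction of the functors and of the natural isomorphisms $\eta$ and $\epsilon$ using fullness of $\MCG$ and $\RV$) are exactly the omitted routine verification.
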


Another interesting consequence is the existence of a pushout in the category $\MG$ hence in $\LRV$ owing to the gluing method in  Lemma \ref{lem:glue}.

\begin{corollary}
\label{cor:pushout}
Suppose we are in the situation of Lemma \ref{lem:glue}.
Denote by $\mu_1\colon (G', \lambda')\hooklongrightarrow (G_1, \lambda_1)$, $\mu_2\colon(G', \lambda')\hooklongrightarrow (G_2, \lambda_2)$, $\sigma_1\colon (G_1, \lambda_1)\hooklongrightarrow (G, \lambda)$ and $\sigma_2\colon (G_2, \lambda_2)\hooklongrightarrow (G, \lambda)$ the embeddings (in particular, morphisms in $\MG$) of the MAT-labeled graphs. 
Then $((G, \lambda), \sigma_1, \sigma_2)$ is a pushout of  $ \mu_1$ and  $\mu_2$.

\end{corollary}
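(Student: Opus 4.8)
The plan is to exploit the fact that a morphism in $\MG$ is nothing but a map on vertex sets satisfying a label-preserving edge condition, so that the pushout in $\MG$ can be read off from the pushout of vertex sets. First I would record that, on the level of vertices, the four embeddings are simply the inclusions $N_{G'} \hookrightarrow N_{G_i} \hookrightarrow N_G$, where $N_{G'} = N_{G_1} \cap N_{G_2}$ and $N_G = N_{G_1} \cup N_{G_2}$. In particular the square commutes, since $\sigma_1 \circ \mu_1$ and $\sigma_2 \circ \mu_2$ both equal the inclusion $N_{G'} \hookrightarrow N_G$. Note also that $(G,\lambda)$ is a genuine object of $\MG$ by Lemma \ref{lem:glue}, so the candidate pushout lives in the correct category.

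For the universal property, I would take an arbitrary MAT-labeled graph $(H, \lambda_H)$ equipped with morphisms $\tau_1 \colon (G_1,\lambda_1) \to (H,\lambda_H)$ and $\tau_2 \colon (G_2,\lambda_2) \to (H,\lambda_H)$ satisfying $\tau_1 \circ \mu_1 = \tau_2 \circ \mu_2$, and define $\theta \colon N_G \to N_H$ by $\theta(v) := \tau_1(v)$ if $v \in N_{G_1}$ and $\theta(v) := \tau_2(v)$ if $v \in N_{G_2}$. This is well-defined on the overlap $N_{G_1} \cap N_{G_2} = N_{G'}$ precisely because the outer square commutes: the restrictions $\tau_1|_{N_{G'}} = \tau_1 \circ \mu_1$ and $\tau_2|_{N_{G'}} = \tau_2 \circ \mu_2$ agree there.

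The heart of the argument is that $\theta$ is label-preserving, which I would check edge-by-edge. Take any $\{u,v\} \in E_G = E_{G_1} \cup E_{G_2}$; then $\{u,v\}$ lies in $E_{G_1}$ or in $E_{G_2}$, say the former. Then $u, v \in N_{G_1}$, so $\theta(u) = \tau_1(u)$ and $\theta(v) = \tau_1(v)$, and since $\tau_1$ is label-preserving we obtain $\{\theta(u),\theta(v)\} \in E_H$ together with $\lambda(u,v) = \lambda_1(u,v) = \lambda_H(\theta(u),\theta(v))$, using $\lambda|_{E_{G_1}} = \lambda_1$. By construction $\theta \circ \sigma_1 = \tau_1$ and $\theta \circ \sigma_2 = \tau_2$. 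Uniqueness is then immediate, since any mediating morphism must restrict to $\tau_1$ on $N_{G_1}$ and to $\tau_2$ on $N_{G_2}$, hence is forced on all of $N_G = N_{G_1} \cup N_{G_2}$.

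I do not anticipate a serious obstacle, as the construction reduces to the set-level pushout $N_{G_1} \cup N_{G_2}$ along $N_{G_1} \cap N_{G_2}$. The only points requiring a little care are the well-definedness of $\theta$ on the overlap, which is exactly where the hypothesis $\tau_1 \circ \mu_1 = \tau_2 \circ \mu_2$ enters, and the observation that every edge of $G$ is already visible in one of the two pieces (this is the defining equality $E_G = E_{G_1} \cup E_{G_2}$), so that the label-preserving verification can always be transported to $(G_1,\lambda_1)$ or $(G_2,\lambda_2)$.
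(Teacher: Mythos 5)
Your proposal is correct and follows essentially the same route as the paper: both define the mediating morphism $\theta$ piecewise via the two cocone maps on $N_{G_1}$ and $N_{G_2}$ and conclude by uniqueness on the union. You simply spell out the well-definedness on the overlap and the label-preservation check (using $E_G = E_{G_1}\cup E_{G_2}$) that the paper leaves as "easy to see."
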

 
		\begin{proof} 	
		It is easily seen that $\sigma_1 \mu_1 = \sigma_2 \mu_2$. 
		Hence the square diagram commutes. 
		Now given another triple $((G_3, \lambda_3), \sigma'_1, \sigma'_2)$ with  $\sigma'_1 \mu_1 = \sigma'_2 \mu_2$, let $\theta\colon (G, \lambda)\longrightarrow (G_3, \lambda_3)$ be defined by $\theta(u) = \sigma'_1(u)$ for $u \in N_{G_1}$,  $\theta(u) = \sigma'_2(u)$ for $u \in N_{G_2}$. 
		Thus $\theta\sigma_1 =  \sigma'_1$ and $\theta\sigma_2 =  \sigma'_2$. 
		It is also easy to see that $\theta$ is the  unique morphism making the diagram commute. 
		$$
\begin{tikzcd}
       (G', \lambda') \ar[r, hook, "\mu_1"] \ar[d,hook, "\mu_2"]  &  (G_1, \lambda_1) \ar[d, hook, "\sigma_1"]  \ar[ddr, bend left, "\sigma'_1"]  & \\
       (G_2, \lambda_2) \ar[r,hook, "\sigma_2"] \ar[drr, bend right, "\sigma'_2"]  &   (G, \lambda) \ar[dr, dashed, "\exists!\, \theta"]  &  \\
      &  &   (G_3, \lambda_3)
\end{tikzcd}
$$

We conclude that $((G, \lambda), \sigma_1, \sigma_2)$ is a pushout of  $ \mu_1$ and  $\mu_2$.
	\end{proof}


\subsection{Equivalence of LR-vines and m-vines}
\label{subsec:equi-PnmV}

		\begin{theorem} 
	\label{thm:LR-characterize}
Let $\Pc$ be a vine. 
The following are equivalent:
\begin{enumerate}[(1)]
		\item  $\Pc$ is an m-vine, i.e.~by definition, $\Pc$ is an ideal of an R-vine.
		\item $\Pc$ satisfies the proximity condition.
		\item $\Pc$ is an LR-vine.
 
	\end{enumerate}
	
	\end{theorem}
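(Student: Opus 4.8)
The plan is to prove the three conditions equivalent by establishing the cycle $(1)\Rightarrow(3)\Leftrightarrow(2)$ together with the one substantial implication $(3)\Rightarrow(1)$. The equivalence $(2)\Leftrightarrow(3)$ is precisely Proposition \ref{prop:LR=P}, so nothing new is needed there. The implication $(1)\Rightarrow(3)$ is immediate from the remarks already recorded: an R-vine is an LR-vine, and by Remark \ref{rem:ideal-LRvine} any ideal of an LR-vine is again an LR-vine. Hence the entire content of the theorem lies in showing $(3)\Rightarrow(1)$, that is, realizing an arbitrary LR-vine as an ideal of some R-vine.

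For $(3)\Rightarrow(1)$ I would route everything through MAT-labeled graphs via the functors of \S\ref{subsec:equi-GnP}. Given an LR-vine $\Pc$, set $(G,\lambda):=\Omega(\Pc)$. By Proposition \ref{prop:extend} there is an MAT-labeling $\widetilde\lambda$ of the complete graph $K$ on $N_G$ with $(G,\lambda)\le(K,\widetilde\lambda)$. Applying $\Psi$, the poset $\Pc_K:=\Psi(K,\widetilde\lambda)$ is an R-vine by the second assertion of Theorem \ref{thm:MAT-to-poset} (cf.~Corollary \ref{cor:1-to-1-restrict}), while $\Pc_G:=\Psi(G,\lambda)=\widehat\Pc\simeq\Pc$ by Theorem \ref{thm:1-to-1}. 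The elements of both $\Pc_G$ and $\Pc_K$ are subsets of the common vertex set (the singletons together with the principal cliques), ordered by inclusion and ranked by cardinality. To see that $\Pc_G$ sits inside $\Pc_K$ as an induced subposet, I would check that every principal clique of $(G,\lambda)$ is a principal clique of $(K,\widetilde\lambda)$: an edge $e\in E_G$ retains its label in $(K,\widetilde\lambda)$, its conditioning vertices in $G$ remain conditioning vertices in $K$, and since both graphs give $e$ exactly $\lambda(e)-1$ conditioning vertices by (ML\ref{definition MAT-labeling triangle}), the two principal cliques $K_e$ coincide by Lemma \ref{lem:edge-clique}.

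The hard part will be verifying that $\Pc_G$ is an \emph{ideal} of $\Pc_K$, i.e.~downward closed. Concretely I must show that if $u$ is a principal clique of $(K,\widetilde\lambda)$ with $u\subseteq v$ for some principal clique $v$ of $(G,\lambda)$, then $u$ is already a principal clique of $(G,\lambda)$. Since $v$ is a clique of $G$ and $u\subseteq v$, the induced subgraph $G[u]$ is complete and all its edges lie in $E_G$; hence $\widetilde\lambda$ and $\lambda$ agree on $E_{G[u]}$, so $(K[u],\widetilde\lambda|_{E_{K[u]}})=(G[u],\lambda|_{E_{G[u]}})$ as edge-labeled graphs. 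Because $u$ is principal in $(K,\widetilde\lambda)$, this common graph is MAT-labeled, whence $(G[u],\lambda|_{E_{G[u]}})\le(G,\lambda)$, and Lemma \ref{lem:max-prin} forces $u$ to be a principal clique of $(G,\lambda)$, i.e.~$u\in\Pc_G$. This establishes downward-closure, so $\Pc_G$ is an ideal of the R-vine $\Pc_K$ and is therefore an m-vine by Definition \ref{def:m-vine}. Finally, as being an m-vine is invariant under poset isomorphism and $\Pc\simeq\Pc_G$, I conclude that $\Pc$ is an m-vine, completing $(3)\Rightarrow(1)$ and hence the theorem.
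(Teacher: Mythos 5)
Your proposal is correct, and its skeleton coincides with the paper's: both handle $(2)\Leftrightarrow(3)$ by Proposition~\ref{prop:LR=P}, get $(1)\Rightarrow(3)$ from Remark~\ref{rem:ideal-LRvine}, and prove $(3)\Rightarrow(1)$ by passing to $(G,\lambda)=\Omega(\Pc)$, extending to an MAT-labeled complete graph $(K,\widetilde\lambda)$ via Proposition~\ref{prop:extend}, and exhibiting $\Pc$ (up to the isomorphism $\eta_\Pc$) inside the R-vine $\Psi(K,\widetilde\lambda)$. The one place where you genuinely diverge is the verification that the subposet is downward closed. The paper does this on the vine side: for $w\le v$ in the ambient R-vine with $v\in\Pc$, it observes $C_w\subseteq U_w\subseteq U_v$, invokes Remark~\ref{rem:Rvine-only1} to find a node $w'$ of the R-vine $\Pc_{\le v}$ with $C_{w'}=C_w$, and concludes $w=w'$ by Corollary~\ref{cor:C=C}. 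You instead stay entirely on the graph side: a principal clique $u$ of $(K,\widetilde\lambda)$ contained in a principal clique $v$ of $(G,\lambda)$ has all its edges in $E_G$ with matching labels, so $(G[u],\lambda|_{E_{G[u]}})=(K[u],\widetilde\lambda|_{E_{K[u]}})$ is an MAT-labeled subgraph of $(G,\lambda)$, and Lemma~\ref{lem:max-prin} forces $u$ to be principal in $(G,\lambda)$. Both arguments are sound; yours has the mild advantage of never needing the conditioned-set machinery (Corollary~\ref{cor:C=C}, which rests on the joining-path analysis of Lemma~\ref{lem:LR-paths}), using only the elementary clique lemmas \ref{lem:edge-clique} and \ref{lem:max-prin}, at the cost of first checking (as you correctly do) that principal cliques of $(G,\lambda)$ remain principal in $(K,\widetilde\lambda)$ so that the containment of posets makes sense.
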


		\begin{proof}
$(2) \Leftrightarrow (3)$ is shown in Proposition \ref{prop:LR=P}. 
$(1) \Rightarrow (3)$ is straightforward from Remark \ref{rem:ideal-LRvine}. 
It remains to show $(1) \Leftarrow (3)$. 
The proof is based on the following diagram:
$$
\begin{tikzcd}[row sep=scriptsize,  column sep=normal]
& \widehat\Pc \ar[dd, hook, dashed, swap, "\iota'"] 
&
 \\
 \Pc \ar[ur, dashed, "\eta_\Pc"] \ar[rr, crossing over, "\Omega" near start] \ar[dd, hook, crossing over, "\iota^\prime"']
    &
      & (G,\lambda) \ar[dd, hook, crossing over, "\iota"']  \ar[ul, dashed, swap, "\Psi"]  \\
&\widehat  \scR \ar[dl, swap, "\eta_\scR^{-1}"]  & \\
 \scR &&  (K,\widetilde\lambda) \ar[ul, swap, "\Psi"] \ar[ll, crossing over, dashed] 
\end{tikzcd}
$$
 \vskip .5em
 First given an LR-vine $\Pc$, let $ (G,\lambda) = \Omega(\Pc) $ be the MAT-labeled graph associated to $\Pc$ via the functor $\Omega$ from Lemma \ref{lem:functor2}. 
 Next let $(K,\widetilde\lambda)$ be the MAT-labeled complete graph such that $ (G, \lambda) \le (K,\widetilde\lambda) $ from Proposition \ref{prop:extend}. 
 In particular, there exists an embedding  $\iota \colon (G, \lambda)\hooklongrightarrow (K,\widetilde\lambda) $. 
 Then let $\Psi |_{\MCG} (K,\widetilde\lambda)= \widehat \scR $ be the R-vine associated to $(K,\widetilde\lambda)$ via the functor $\Psi |_{\MCG}$ from Corollary \ref{cor:1-to-1-restrict}. 
 Finally, let $\scR$ be the R-vine isomorphic to $ \widehat \scR $ via the (inverse of) poset isomorphism $\eta_\scR$ from Proposition \ref{prop:Phat}. 
 
$(1) \Leftarrow (3)$ is proved once we show that $\Pc$ is an ideal of $\scR$. 
Indeed, by the construction, $\Pc$ is an induced subposet of $\scR$. 
(One may see this via the sequence $\Pc \longrightarrow  \widehat\Pc  \hooklongrightarrow \widehat  \scR \longrightarrow   \scR $ in the diagram.) 
Note also that $\Pc$ and $\scR$ have the same set of minimal nodes. 
Let  $w,v \in \scR$ be two nodes with $w \le v$ and $v \in \Pc$. 
We need to show that $w \in \Pc$. 
The assertion follows easily if either $w$ or $v$ is a minimal node. 
We may assume that both $w$ and $v$ are not minimal. 
Since $C_w \subseteq U_w \subseteq U_v$ in $\scR$, and $U_v$ in $\Pc$ is the same as $U_v$ in $\scR$, we have $C_w \subseteq U_v$ in $\Pc$. 
By Remark \ref{rem:Rvine-only1}, there exists a non-minimal node $w'$ in the R-vine $\Pc_{\le v}$ such that $C_w =C_{w'}$.
Apply Corollary \ref{cor:C=C} for two non-minimal nodes $w,w'$ in $\scR$, we have $w=w'$. 
Hence $w $ is an element  in $\Pc$, as desired. 
	\end{proof}


\section{Applications}
\label{sec:appl}

 
\subsection{From LR-vines to  MAT-labeled graphs}
\label{subsec:appl-vine-to-MAT}


\subsubsection{A poset characterization of MAT-free graphic arrangements}
\label{subsubsec:characterize}

The most important application of our main result (Theorem \ref{thm:1-to-1}) is an affirmative answer for the question of Cuntz-M{\"u}cksch  (Question \ref{ques:CM}) in the case of graphic arrangements: MAT-free graphic arrangements have a poset characterization by LR-vines. 
(Note that LR-vines generalize the root poset of type $A$ by Remark \ref{rem:D-vine-RP}.)
We give below two examples to illustrate the correspondence. 
First we need a definition.

 	\begin{definition}[C-vine] 
\label{def:C-vine}
		An R-vine is called a  \tbf{C-Vine} if each associated tree has the largest possible number of vertices of degree $1$.
		Equivalently, each associated tree is a \emph{star graph}. 
\end{definition}

D-vine and C-vine can be regarded as the ``extreme" cases of R-vines.

		\begin{example} 
			\label{ex:C-vine} 	
			In dimension $4$, there are exactly two non-isomorphic R-vine structures: D-vine and C-vine. 
			Likewise, there are exactly two non-isomorphic MAT-labeled complete graphs on $4$ vertices. 
			Figure~\ref{fig:C-vine} depicts a C-vine on $[4]$ (top right), the associated forests  (top left), the associated MAT-labeled complete graph (bottom right) via  the functor $\Omega$ from Lemma \ref{lem:functor2}, and the MAT-partition  (bottom left) of the corresponding graphic arrangement. 
			The C-vine in dimension $\ge 4$ is not an ideal of any D-vine hence the corresponding MAT-partition is not obtained from an ideal of the type $A$ root poset.
	\end{example} 

\begin{figure}[htbp!]
\begin{subfigure}{.5\textwidth}
   \centering
\begin{tikzpicture}[scale=.8]   
\draw (0,4) node[v](34){} node [above]{\tiny\textcolor{blue}{$34|12$}};
  \draw (1,3) node[v](24){} node [above right]{\tiny\textcolor{red}{$24|1$}};
  \draw (-1,3) node[v](23){} node [above left]{\tiny\textcolor{red}{$23|1$}};
  \draw (0,2) node[v](12){} node [above]{\tiny $12$};
    \draw (-2,2) node[v](13){} node [above]{\tiny $13$};
    \draw (2,2) node[v](14){} node [above]{\tiny $14$};    
      \draw (0,0) node[v](1){} node [above right]{\tiny$1$};
      \draw (0,1) node[v](2){} node [right]{\tiny$2$};
      \draw (-0.865,-0.5)  node[v](3){} node [left]{\tiny$3$};
      \draw  ( 0.865,-0.5)   node[v](4){} node [right]{\tiny$4$};   
  \draw (24) -- (23) ;
       \draw (13)-- (12) -- (14) ;
    \draw (1)--(3)  ;
      \draw (1)--(4)  ;
               \draw (1)--(2)  ;
\end{tikzpicture}
\caption*{Associated trees}
\end{subfigure}%
\begin{subfigure}{.5\textwidth}
   \centering
\begin{tikzpicture}[scale=1]
 
  \node (34) at (-1,3) {\textcolor{blue}{$34|12$}};
  \node (24) at (0,2) {\textcolor{red}{$24|1$}};
  \node (23) at (-2,2) {\textcolor{red}{$23|1$}};
  \node (14) at (1,1) {$14$};
  \node (13) at (-1,1) {$13$};
   \node (12) at (-3,1) {$12$};
         \node (4) at (2,0) {$4$};
       \node (3) at (0,0) {$3$};
     \node (2) at (-2,0) {$2$};
   \node (1) at (-4,0) {$1$};
   
  \draw (1)--(12) -- (23) -- (34) ;
    \draw (4)--(14) -- (24) -- (34) ;
    \draw  (3)--(13) -- (23) ;
        \draw (2)-- (12) -- (24) ;
          \draw (1)--(13)  ;
           \draw (1)--(14)  ;
\end{tikzpicture}
\caption*{C-vine}
\end{subfigure}%
\vskip 1em

\begin{subfigure}{.5\textwidth}
  \centering
\begin{tikzpicture}[scale=1]
  \node (34) at (-1,3) {\textcolor{blue}{$x_3-x_4$}};
  \node (24) at (0,2) {\textcolor{red}{$x_2-x_4$}};
  \node (23) at (-2,2) {\textcolor{red}{$x_2-x_3$}};
  \node (14) at (1,1) {$x_1-x_4$};
  \node (13) at (-1,1) {$x_1-x_3$};
   \node (12) at (-3,1) {$x_1-x_2$};
   
   \draw (12) -- (23) -- (34) --(24)--(14)  ;
    \draw  (13) -- (23) ;
        \draw (12) -- (24) ;
\end{tikzpicture}
  \caption*{MAT-partition}
\end{subfigure}%
\begin{subfigure}{.5\textwidth}
  \centering
\begin{tikzpicture}[scale=1.2]
\begin{scope}[every node/.style={circle,thick,draw, inner sep=1.7pt, fill=blue!20}]
    \node (v1) at (0,0) {$v_1$};
    \node (v2) at (2,0) {$v_2$};
    \node (v4) at (2,2) {$v_4$};
    \node (v3) at (0,2) {$v_3$};
\end{scope}
\draw[red, thick] (v3)--(v2) node [near end, above, red] {$2$};
\draw[red, thick] (v4)--(v2) node [midway, right, red] {$2$};
\draw[blue, thick] (v4)--(v3) node [midway, below, blue] {$3$};
\draw[thick] (v4)--(v1) node [near end, above] {$1$};
\draw[thick] (v2)--(v1) node [midway, below] {$1$};
\draw[thick] (v3)--(v1) node [midway, left] {$1$};
 
\end{tikzpicture}
  \caption*{MAT-labeled graph}
\end{subfigure}
\caption{\small C-vine on $4$ elements, the associated trees, MAT-labeled complete graph, and MAT-partition from Example \ref{ex:C-vine}.}
\label{fig:C-vine}
\end{figure}
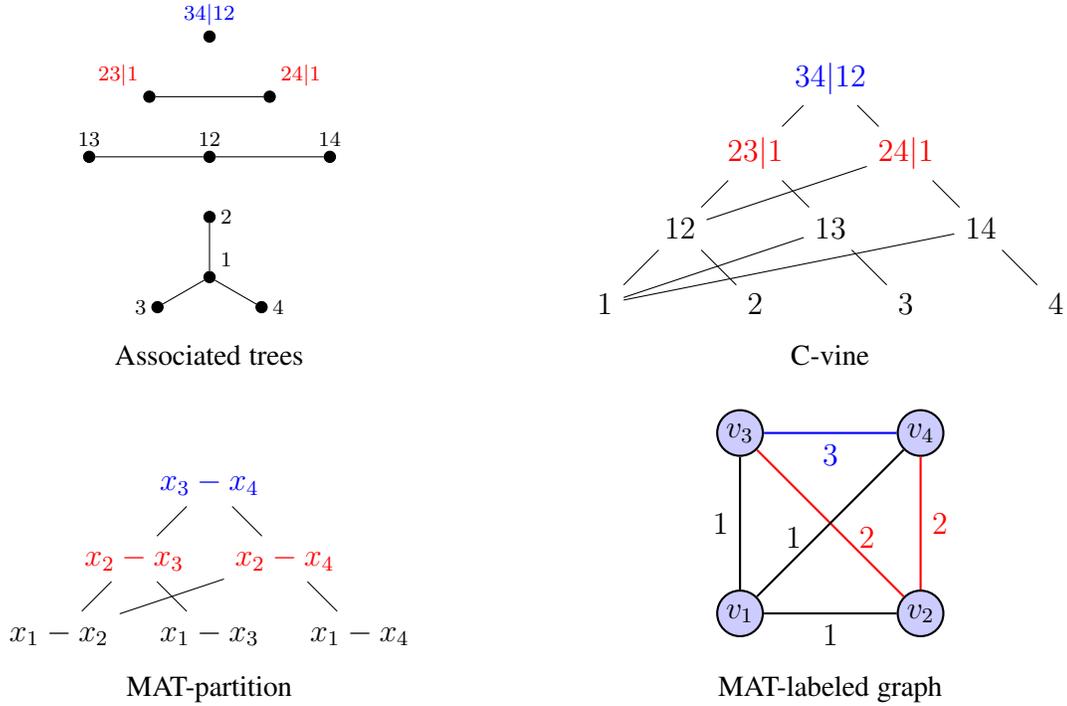

		\begin{example} 
			\label{ex:LRV} 	
			Figure~\ref{fig:LRV} depicts  on the right an MAT-labeled graph $ (G, \lambda)$ on $5$ vertices and an MAT-labeled complete graph $(K,\widetilde\lambda)$ such that $ (G, \lambda) \le (K,\widetilde\lambda) $. 
			The complementary edges are shown in dashed lines. 
			The graphs $ (G, \lambda)$ and $(K,\widetilde\lambda)$ correspond (via the functor  $  \Psi  $ from Lemma \ref{lem:functor1}) to the LR-vine $\Pc$ and R-vine $\scR$ on the left, respectively. 
			In this case, $\Pc$ is the $3$-lower truncation of $\scR$. 
 
	\end{example}

\begin{figure}[htbp!]
\begin{subfigure}{.57\textwidth}
   \centering
\begin{tikzpicture}[scale=1]

  \node (x) at (-4,2.5) {};
    \node (y) at (4,2.5){};
 
    \node (15) at (0,4) {\textcolor{brown}{\dbox{$15|234$}}};
    
   \node (25) at (1,3) {\textcolor{blue}{\dbox{$25|34$}}};
  \node (12) at (-1,3) {\textcolor{blue}{\dbox{$12|34$}}};
  
    \node (35) at (2,2) {\textcolor{red}{$35|4$}};
  \node (23) at (0,2) {\textcolor{red}{$23|4$}};
  \node (13) at (-2,2) {\textcolor{red}{$13|4$}};
  
  \node (45) at (3,1) {$45$};  
  \node (34) at (1,1) {$34$};
  \node (24) at (-1,1) {$24$};
   \node (14) at (-3,1) {$14$};
   
      \node (5) at (4,0) {$5$};
         \node (4) at (2,0) {$4$};
       \node (3) at (0,0) {$3$};
     \node (2) at (-2,0) {$2$};
   \node (1) at (-4,0) {$1$};
   
  \draw (1)--(14) -- (13) ;
      \draw [dashed]  (13) -- (12) ;
            \draw [dashed]  (23) -- (12) ;
    \draw [dashed]  (12) -- (15) ;
    \draw (2)--(24) -- (23);
        \draw [dashed]  (23) -- (25) ;
    \draw  (3)--(34)--(35) ;
        \draw  (4) --(45) ;
            \draw  (5)--(45)--(35) ;
                    \draw [dashed]  (35) -- (25) -- (15) ; 
  \draw (4)--(34) -- (13)   ;
    \draw (4)--(14)    ;
      \draw (4)--(24)    ;
            \draw (34)--(23)   ;
              \draw[ForestGreen, thick] (x) -- (y) ;
 
\end{tikzpicture}
\end{subfigure}%
\begin{subfigure}{.43\textwidth}
  \centering
\begin{tikzpicture}[scale=1.2]
\begin{scope}[every node/.style={circle,thick,draw, inner sep=1.7pt, fill=blue!20}]
 
        \node (v3) at (0,0) {$v_3$};
    \node (v4) at (2,0) {$v_4$};
    \node (v1) at (-1,1.7) {$v_1$};
    \node (v5) at (3,1.7) {$v_5$};
        \node (v2) at (1,3) {$v_2$};
        
\end{scope}
\draw[red, thick] (v3)--(v2) node [near end, left, red] {$2$};
\draw[thick] (v4)--(v2) node [near end, right] {$1$};
\draw[thick] (v4)--(v3) node [midway, below] {$1$};
\draw[thick] (v4)--(v1) node [near end, right, above] {$1$};
\draw[blue, dashed] (v2)--(v1) node [midway, above] {$3$};
\draw[blue,   dashed] (v5)--(v2) node [midway, above] {$3$};
\draw[brown,  dashed] (v5)--(v1) node [midway, above] {$4$};
\draw[red, thick] (v3)--(v1) node [midway, left, red] {$2$};
\draw[red, thick] (v3)--(v5) node [near end, above, red] {$2$};
\draw[thick] (v4)--(v5) node [midway, right] {$1$};
\end{tikzpicture}
\end{subfigure}
\caption{\small An MAT-labeled   graph on $5$ vertices (right) and the LR-vine $\Pc = \Psi(G,\lambda) $   (left) from Lemma \ref{lem:functor1} with its $3$-lower truncation.}
\label{fig:LRV}
\end{figure}
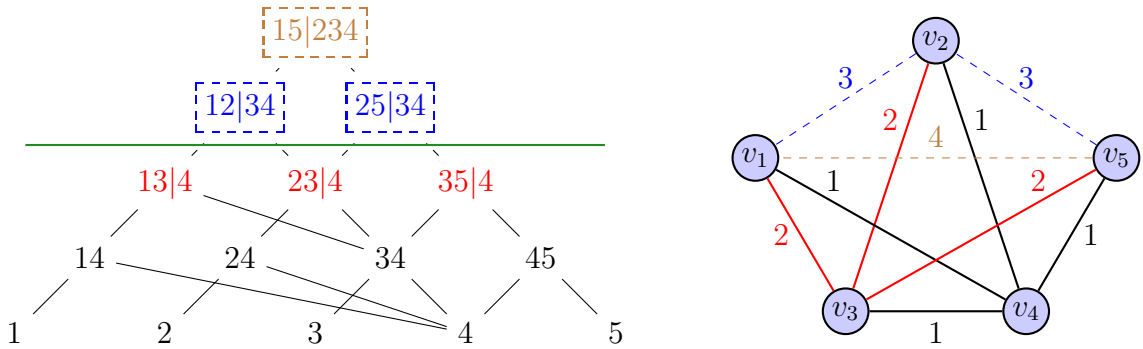


\subsubsection{Number of non-isomorphic MAT-labelings of complete graphs}
\label{subsubsec:MATCG}

The number of equivalence classes of R-vines in dimension $\ell$ is given in   \cite[\S 10.3]{KJ11}. 
By our Corollary \ref{cor:1-to-1-restrict}, this number is equal to the number of non-isomorphic MAT-labelings of the complete graph on $\ell$ vertices. 
We immediately have the following:
 		\begin{theorem}
	\label{thm:no.non-iso} 
The number $E_\ell$ of non-isomorphic MAT-labelings of the complete graph $K_\ell$ for $\ell \ge 1$ is given by $E_1 =E_2=E_3=1$ and $E_\ell=(A_\ell+B_\ell)/2$ for $\ell \ge 4$, where
$$A_\ell = 2^{(\ell-2)(\ell-3)/2}, \quad B_\ell = \sum_{k=1}^{\lfloor \ell/2 \rfloor -1}A_\ell c_k 2^{-k + \sum_{i=0}^{k-1} (\ell-4-2i)},$$
and $c_k = 1$ for all $k$ except $c_{\lfloor \ell/2 \rfloor -1} = 2$. 
	\end{theorem}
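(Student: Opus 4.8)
The plan is to deduce the formula from the equivalence of categories established in Corollary \ref{cor:1-to-1-restrict}, together with the known enumeration of equivalence classes of R-vines in the vine literature. The key observation is that, via our main correspondence, counting non-isomorphic MAT-labelings of $K_\ell$ becomes the same problem as counting $\ell$-dimensional R-vines up to isomorphism, and the latter count is already recorded in \cite[\S 10.3]{KJ11}. Thus the theorem is essentially a transport of a known formula across the functor $\Psi|_{\MCG}$.

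First I would make precise what ``non-isomorphic MAT-labelings of $K_\ell$'' means categorically. Since $K_\ell$ is complete, every vertex bijection automatically preserves adjacency, so by Definition \ref{def:lp-iso} two MAT-labelings $\lambda,\lambda'$ of $K_\ell$ are isomorphic precisely when some permutation of $[\ell]$ carries $\lambda$ to $\lambda'$. Hence $E_\ell$ is exactly the number of isomorphism classes of objects of $\MCG$ whose underlying graph has $\ell$ vertices. Next I would invoke Corollary \ref{cor:1-to-1-restrict}: the restricted functors $\Psi|_{\MCG}$ and $\Omega|_{\RV}$ give an equivalence between $\MCG$ and $\RV$. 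An equivalence of categories induces a bijection on isomorphism classes of objects, and since $\Psi|_{\MCG}$ sends an $\ell$-vertex MAT-labeled complete graph to an $\ell$-dimensional R-vine (by Definition \ref{def:G-to-P}, Theorem \ref{thm:MAT-to-poset} and Remark \ref{rem:constructed-LRvine}, the minimal nodes are exactly the singletons $\{i\}$ for $i\in[\ell]$), it restricts to a bijection between the isomorphism classes of $\ell$-vertex MAT-labeled complete graphs and the isomorphism classes of $\ell$-dimensional R-vines.

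Here I would insert the one comparison that requires care: verifying that isomorphism in $\RV$ (a rank- and join-preserving poset isomorphism, Definition \ref{def:poset-iso}) coincides with the classical equivalence relation on graphical R-vines underlying the count in \cite{KJ11}. This follows from Remark \ref{rem:graded}: a graphical R-vine is determined by its node poset, and a poset isomorphism preserving rank and join corresponds exactly to a relabeling of the ground set $[\ell]$ compatible with the associated tree structure of each forest $F_i$. Establishing that the two notions of equivalence genuinely enumerate the same objects is the main (though modest) obstacle, as it is the bridge on which the citation rests.

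Finally, the number of equivalence classes of $\ell$-dimensional R-vines is given by the closed formula of \cite[\S 10.3]{KJ11}, namely $E_1=E_2=E_3=1$ and $E_\ell=(A_\ell+B_\ell)/2$ for $\ell\ge4$ with $A_\ell$, $B_\ell$, and $c_k$ as stated; combining this with the bijection above yields the theorem. The base cases $\ell\le3$ may also be checked directly, since there is a unique MAT-labeling of $K_\ell$ for $\ell\le3$ by Lemma \ref{lem:card-complete} (the cardinalities $|\pi_k|=\ell-k$ leave no room for distinct labelings up to symmetry), which is consistent with $E_1=E_2=E_3=1$.
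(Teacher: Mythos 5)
Your proposal is correct and follows essentially the same route as the paper: the paper likewise obtains the formula by combining the equivalence of categories in Corollary \ref{cor:1-to-1-restrict} (identifying isomorphism classes of MAT-labelings of $K_\ell$ with equivalence classes of $\ell$-dimensional R-vines) with the known enumeration of R-vine equivalence classes in \cite[\S 10.3]{KJ11}. Your additional care in matching the categorical notion of isomorphism with the classical equivalence of graphical R-vines, and your check of the base cases, elaborate steps the paper treats as immediate.
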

	
	The first $8$ elements of the sequence $(E_\ell)$ are $1,1  ,1 , 2 , 6,40, 560, 17024$ mentioned in \S\ref{subsec:mov}.
In particular, $E_4=2$ and these MAT-labelings are given in Figures \ref{fig:D-vine} and \ref{fig:C-vine}.


\subsubsection{Upper truncation of MAT-labeled graphs}
\label{subsubsec:UT}

In Remark \ref{rem:low-upp}, we discussed two ways to obtain a new LR-vine from a given LR-vine by  upper or lower truncation. 
From our main result  \ref{thm:1-to-1}, the lower truncation simply corresponds to deleting the edges of high label in the associated MAT-labeled graph (Figure~\ref{fig:LRV}). 
The upper truncation, however, gives rise to a nontrivial graph operation which produces an MAT-labeled graph from a given one. 
We shall not give an explicit formulation of the operation but instead illustrate it by an example in Figure \ref{fig:upper-truncation}. 

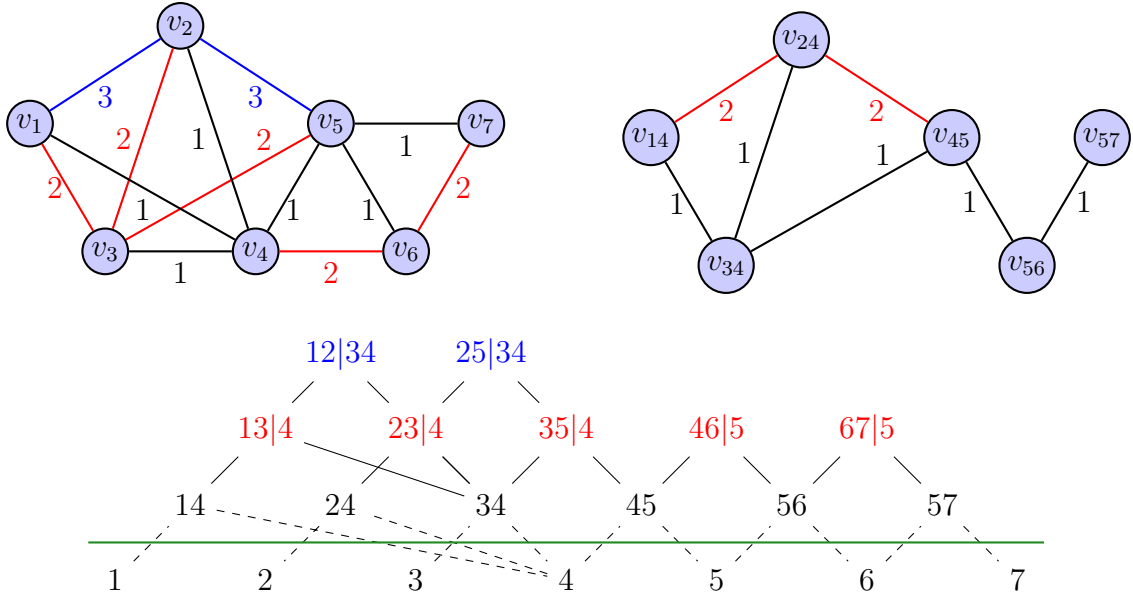
\begin{figure}[htbp!]
\begin{subfigure}{.5\textwidth}
  \centering
\begin{tikzpicture}[scale=1]
\begin{scope}[every node/.style={circle,thick,draw, inner sep=1.7pt, fill=blue!20}]
    \node (v3) at (0,0) {$v_3$};
    \node (v4) at (2,0) {$v_4$};
    \node (v6) at (4,0) {$v_6$};
    \node (v1) at (-1,1.7) {$v_1$};
    \node (v5) at (3,1.7) {$v_5$};
    \node (v7) at (5,1.7) {$v_7$};
        \node (v2) at (1,3) {$v_2$};
\end{scope}
\draw[red, thick] (v3)--(v1) node [near end, below, red] {$2$};
\draw[red, thick] (v3)--(v5) node [near end, above, red] {$2$};
\draw[red, thick] (v3)--(v2) node [midway, left, red] {$2$};
\draw[red, thick] (v4)--(v6) node [midway, below, red] {$2$};
\draw[red, thick] (v7)--(v6) node [midway, right, red] {$2$};

\draw[blue, thick] (v2)--(v1) node [midway, below, blue] {$3$};
\draw[blue, thick] (v2)--(v5) node [midway, below, blue] {$3$};

\draw[thick] (v4)--(v5) node [midway, below] {$1$};
 \draw[thick] (v6)--(v5) node [midway, below] {$1$};
  \draw[thick] (v7)--(v5) node [midway, below] {$1$};
  \draw[thick] (v4)--(v1) node [midway, below] {$1$};
  \draw[thick] (v4)--(v2) node [midway, left] {$1$};
  \draw[thick] (v4)--(v3) node [midway, below] {$1$};
 
\end{tikzpicture}
\end{subfigure}%
\begin{subfigure}{.5\textwidth}
  \centering
\begin{tikzpicture}[scale=1]
\begin{scope}[every node/.style={circle,thick,draw, inner sep=1.7pt, fill=blue!20}]
    \node (v3) at (0,0) {$v_{34}$};
    \node (v6) at (4,0) {$v_{56}$};
    \node (v1) at (-1,1.7) {$v_{14}$};
    \node (v5) at (3,1.7) {$v_{45}$};
    \node (v7) at (5,1.7) {$v_{57}$};
        \node (v2) at (1,3) {$v_{24}$};
\end{scope}
\draw[thick] (v3)--(v1) node [near end, below] {$1$};
\draw[thick] (v3)--(v5) node [near end, above] {$1$};
\draw[thick] (v3)--(v2) node [midway, left] {$1$};
 \draw[thick] (v7)--(v6) node [midway, right] {$1$};

\draw[red, thick] (v2)--(v1) node [midway, below] {$2$};
\draw[red, thick] (v2)--(v5) node [midway, below] {$2$};

  \draw[thick] (v6)--(v5) node [midway, left] {$1$};

\end{tikzpicture}
\end{subfigure}%
\vskip 1em

\begin{subfigure}{1\textwidth}
   \centering
\begin{tikzpicture}[scale=1]

  \node (x) at (-4.5,.5) {};
    \node (y) at (8.5,.5){};
  
     \node (25) at (1,3) {\textcolor{blue}{$25|34$}};
  \node (12) at (-1,3) {\textcolor{blue}{$12|34$}};
  
        \node (67) at (6,2) {\textcolor{red}{$67|5$}};
      \node (46) at (4,2) {\textcolor{red}{$46|5$}};
    \node (35) at (2,2) {\textcolor{red}{$35|4$}};
  \node (23) at (0,2) {\textcolor{red}{$23|4$}};
  \node (13) at (-2,2) {\textcolor{red}{$13|4$}};
  
    \node (57) at (7,1) {$57$};  
  \node (56) at (5,1) {$56$};  
  \node (45) at (3,1) {$45$};  
  \node (34) at (1,1) {$34$};
  \node (24) at (-1,1) {$24$};
   \node (14) at (-3,1) {$14$};
   
            \node (7) at (8,0) {$7$};
         \node (6) at (6,0) {$6$};
      \node (5) at (4,0) {$5$};
         \node (4) at (2,0) {$4$};
       \node (3) at (0,0) {$3$};
     \node (2) at (-2,0) {$2$};
   \node (1) at (-4,0) {$1$};
   
      \draw   (13) -- (12)--(23) -- (25)-- (35);
   \draw  (14) -- (13) --(34);
            \draw   (24) -- (23) -- (34)--(35)--(45)--(46)--(56) -- (67) -- (57);
                                         \draw [dashed]  (3)--(34)--(4) --(45) -- (5) -- (56) -- (6)-- (57) -- (7) ; 
          \draw [dashed] (1)--(14)--(4) --(24)--(2)   ;

            \draw (34)--(23)   ;
              \draw[ForestGreen, thick] (x) -- (y) ;
 
\end{tikzpicture}
\end{subfigure}
\caption{\small An MAT-labeled   graph on $7$ vertices (top left), the LR-vine $\Pc = \Psi(G,\lambda) $   (bottom) from Lemma \ref{lem:functor1} and its $1$-upper truncation $\overline{\Pc}_{\ge 1}$, and the MAT-labeled graph $\Omega(\overline{\Pc}_{\ge 1})$ (top right) from Lemma \ref{lem:functor2}.}
\label{fig:upper-truncation}
\end{figure}

In terms of hyperplane arrangement, any upper truncation of an MAT-free graphic arrangement is again MAT-free. 
This fact is not true in general. 
For example, the Weyl subarrangement defined by the $1$-upper truncation of the root poset of type $B_3$ is not free, hence not MAT-free.
It would be interesting to find for which MAT-free  arrangement or for which upper truncation of a given MAT-free  arrangement this property holds true.

 
\subsection{From MAT-labeled graphs to LR-vines}
\label{subsec:appl-MAT-to-vine}


\subsubsection{Strongly chordal graphs and m-vines}
\label{subsubsec:SC-mvine}
Given a strongly chordal graph $G$, Zhu-Kurowicka \cite[\S3.4]{ZK22} showed that there exists an m-vine, equivalently, an LR-vine $\Pc$ (by our Theorem \ref{thm:LR-characterize}) such that the principal ideals generated by the maximal elements of $\Pc$ are in one-to-one correspondence with the maximal cliques of $G$. 
Their method is based on the existence of a \emph{strong clique tree} of $G$. 
We give below a different construction of such an LR-vine thanks to the equivalence between the LR-vines and MAT-labeled graphs from Theorem \ref{thm:1-to-1}. 

 		\begin{theorem}
	\label{thm:max=max-alt} 
Given a strongly chordal graph $G$, there exists an LR-vine $\Pc$ such that the principal ideals generated by the maximal elements of $\Pc$ are in one-to-one correspondence with the maximal cliques of $G$.
 
	\end{theorem}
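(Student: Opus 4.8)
The plan is to realize the desired LR-vine directly as the poset $\Pc$ produced by the construction of Definition \ref{def:G-to-P}, and then to identify its maximal elements with the maximal cliques of $G$. First, since $G$ is strongly chordal, Theorem \ref{thm:MAT=SC} guarantees that $\A_G$ is MAT-free, so $G$ carries an MAT-labeling $\lambda$; fix one such $\lambda$. By Theorem \ref{thm:MAT-to-poset}, the poset $\Pc$ built from $(G,\lambda)$ via Definition \ref{def:G-to-P} is an LR-vine, whose underlying set consists of the singletons $\{i\}$ together with all principal cliques of $(G,\lambda)$, ordered by set inclusion. This already produces an LR-vine $\Pc$ attached to $G$; what remains is purely a matter of identifying the relevant principal ideals.

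The crux is to verify that $\max(\Pc)=\mcK(G)$, the set of maximal cliques of $G$. One inclusion uses Lemma \ref{lem:max-prin}: every maximal clique $C$ is principal, hence a node of $\Pc$; being maximal among all cliques (and therefore among all principal cliques) under inclusion, it is a maximal element of $\Pc$. For the reverse inclusion, I would take any maximal element $v \in \max(\Pc)$, which is a principal clique, and embed it in some maximal clique $C$ of $G$. Since $C$ is again a node of $\Pc$ by Lemma \ref{lem:max-prin} and $v \subseteq C$, maximality of $v$ forces $v = C$, so $v$ is itself a maximal clique. This settles $\max(\Pc) = \mcK(G)$, with isolated vertices being handled correctly as one-element maximal cliques.

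Finally, because a principal ideal $\Pc_{\le v}$ is generated by its unique maximal element $v$, the assignment $v \mapsto \Pc_{\le v}$ is injective on $\max(\Pc)$, and it therefore furnishes the sought one-to-one correspondence between the principal ideals generated by the maximal elements of $\Pc$ and the maximal cliques of $G$. I expect the only delicate point to be the identification $\max(\Pc)=\mcK(G)$: one must be careful that the nodes of $\Pc$ are \emph{exactly} the principal cliques (and singletons), so that no spurious maximal elements arise, and that the converse direction genuinely needs Lemma \ref{lem:max-prin} to guarantee that the enveloping maximal clique $C$ is again an element of $\Pc$. Everything else is a direct consequence of the construction and the equivalence established in Theorem \ref{thm:1-to-1}.
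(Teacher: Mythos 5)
Your proposal is correct and follows essentially the same route as the paper: fix an MAT-labeling of $G$ via Theorem \ref{thm:MAT=SC}, take $\Pc = \Psi(G,\lambda)$ from Definition \ref{def:G-to-P}, and show $\max(\Pc)=\mcK(G)$ using Lemma \ref{lem:max-prin}. Your write-up is in fact slightly more explicit than the paper's (which leaves the reverse inclusion to the reader), and you correctly flag the only delicate points, namely that the enveloping maximal clique is again a node of $\Pc$ and that isolated vertices appear as singleton maximal elements.
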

 		
	\begin{proof} 
By Theorem \ref{thm:MAT=SC}, there exists an MAT-labeling $\lambda$ of $G$. 
The construction of such a $\lambda$ can be found in \cite[Theorem 5.12]{TT23} based on the notion of \emph{clique intersection poset} of $G$ first appeared in \cite{NR15}.
Let $ \Pc= \Psi(G,\lambda)  $ (Lemma \ref{lem:functor1}). 
Theorem \ref{thm:max=max-alt} is proved once we prove that  the set $\max( \Pc)$ of maximal elements of $ \Pc$ coincides with the set $\mcK(G)$ of maximal cliques of $G$. 
By Lemma \ref{lem:max-prin}, any maximal clique $C$ in $G$ is principal hence an element in $ \Pc$. 
Moreover, $C \in\max( \Pc)$. 
Otherwise, there exists a clique $C'  \in\max( \Pc)$ such that $C \subsetneq C'$. 
This contradicts the maximality of $C$.
Hence $\mcK(G) \subseteq \max( \Pc)$. 
The reserve inclusion is proved similarly. 
Thus $ \Pc$ is a desired LR-vine.
	\end{proof}


\subsubsection{Marginalization and sampling order}
\label{subsubsec:SO}

The notions of \emph{marginalization} and  \emph{sampling order} of an R-vine were introduced in \cite[\S3]{CKW15}. 
We give below  an extension of these notions to an LR-vine. 

\begin{definition}[Marginalization] 
\label{def:margin}
Let $(\Pc, \le_\Pc, \rk_\Pc)$ be an LR-vine. 
Let $v \in \min(\Pc)$ be a minimal node.
The \tbf{marginalization} $(\Pc\,\|\, v, \le_\Pc)$ of $\Pc$ by $v$ is the induced subposet of $\Pc$ obtained by removing  $v$ and the nodes whose conditioned sets contain $v$, i.e.
$$\Pc\,\|\, v:=\Pc \setminus (\{v\} \cup \{ x \in \Pc \mid v \in C_x\} ).$$
\end{definition}

Let $\Qc$ be an  induced subposet of a finite graded poset $(\Pc, \le_\Pc, \rk_\Pc)$. 
We say that $\Qc$ is \tbf{graded by $\rk_\Pc$} if the restriction $\rk_\Pc |_\Qc$ is a rank function on $\Qc$.
The marginalization of $\Pc$ by a  minimal node is not necessarily graded by $\rk_\Pc$ in general. 
For example, let $\Pc$ be the C-vine with $4$ minimal elements $1,2,3,4$ in Figure~\ref{fig:C-vine}. 
Then the  marginalization  $\Qc:=(\Pc\,\|\, 2, \le_\Pc)$  is not  graded by $\rk_\Pc$ since $\rk_\Pc(\textcolor{blue}{34|12})=4$ while $\textcolor{blue}{34|12}$ cannot have rank $4$ in $\Qc$.

\begin{definition}[Sampling order] 
\label{def:S-order}
Let $(\Pc, \le_\Pc, \rk_\Pc)$ be an $\ell$-dimensional  (L)R-vine.
An ordering $ (v_{1}, \dots, v_{\ell}) $ of minimal nodes in $ \Pc$ is a \textbf{sampling order} if the marginalization $\Pc_{i}:= \Pc_{i+1} \,\|\,  v_{i+1}$ is an (L)R-vine graded by $\rk_\Pc$ for each $1 \le i \le \ell-1$. Here we let $\Pc=\Pc_\ell$.
\end{definition}

It is shown in  \cite[Theorem 5.1]{CKW15} that an R-vine always has a sampling order. 
Now we generalize this result to LR-vines. 

\begin{theorem} 
\label{thm:MATPEO-SO}
 Let $\Pc $ be an $\ell$-dimensional  LR-vine and let $ (G,\lambda) = \Omega( \Pc)$ (Lemma  \ref{lem:functor2}).
 If $ (v_{1}, \dots, v_{\ell}) $ is an MAT-PEO of $ (G,\lambda)$, then $ (v_{1}, \dots, v_{\ell}) $ is a sampling order  of   $\Pc$. 
 As a consequence, an LR-vine always has a sampling order.
\end{theorem}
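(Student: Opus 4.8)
The plan is to show that, under the categorical equivalence furnished by the functors $\Omega$ and $\Psi$ in Theorem \ref{thm:1-to-1}, marginalization of an LR-vine by a minimal node corresponds exactly to deletion of a vertex from the associated MAT-labeled graph. Granting this, an MAT-PEO of $\Omega(\Pc)$ — which removes MAT-simplicial vertices one at a time — translates directly into a sampling order of $\Pc$, and the final ``consequence'' follows because MAT-PEOs always exist.

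The key step I would isolate as a lemma is the following: if $v\in\min(\Pc)$ is MAT-simplicial in $(G,\lambda)=\Omega(\Pc)$, then the induced subposet $\Pc\,\|\,v$ is an LR-vine graded by $\rk_\Pc$, isomorphic to $\Psi(G\setminus v,\lambda')$ where $\lambda':=\lambda|_{E_{G\setminus v}}$. I would first identify $\Pc$ with $\widehat\Pc$ via the isomorphism $\eta_\Pc$ of Proposition \ref{prop:Phat}, so that each node $w$ becomes its complete union $U_w$, a principal clique of $G$, with $\rk_\Pc(w)=|U_w|$. By Lemma \ref{lem:MAT-simplicial} the restriction $(G\setminus v,\lambda')$ is an MAT-labeled graph, so $\Psi(G\setminus v,\lambda')$ is an LR-vine by Theorem \ref{thm:MAT-to-poset}; its nodes are the principal cliques of $G\setminus v$. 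Since $G\setminus v$ is an induced subgraph and a restriction of an MAT-labeling is an MAT-labeling, Definition \ref{def:MAT-labeled subgraph} together with Lemma \ref{lem:max-prin} (used in both directions via the intrinsic condition that $(G[C],\lambda|_{E_{G[C]}})$ be MAT-labeled) shows these are precisely the principal cliques of $G$ that avoid $v$.

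The heart of the matter is to match these cliques with the nodes of $\Pc\,\|\,v$. Writing a non-minimal node as $w=K_e$ with $e=\{i,j\}$, Corollary \ref{cor:cond-sets} gives $C_w=\{i,j\}$, so $v\in C_w$ forces $v\in U_w$; the reverse implication is exactly where MAT-simpliciality enters. If $v\in U_w\setminus C_w$, then, as $U_w=C_w\sqcup D_w$, we have $v\in D_w$, i.e.\ $v$ is a conditioning vertex of $e$, whence $\{i,v\},\{j,v\}\in E_G$ with $\lambda(i,v),\lambda(j,v)<\lambda(i,j)$ and $i,j\in\nbd_G(v)$; this contradicts condition (MS\ref{definition MAT-simplicial 3}). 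Thus for MAT-simplicial $v$ one has $v\in C_w\Leftrightarrow v\in U_w$, so deleting the nodes with $v$ in their conditioned set is the same as deleting all cliques containing $v$. Hence $\eta_\Pc$ carries $\Pc\,\|\,v$ onto $\Psi(G\setminus v,\lambda')$, and since $\eta_\Pc$ preserves rank, $\Pc\,\|\,v$ is an LR-vine graded by $\rk_\Pc$. With this lemma, the theorem follows by iteration: given an MAT-PEO $(v_1,\dots,v_\ell)$ of $(G,\lambda)=\Omega(\Pc)$, set $G_i:=G[\{v_1,\dots,v_i\}]$, $\lambda_i:=\lambda|_{E_{G_i}}$; by Definition \ref{def:MAT-PEO} each $(G_i,\lambda_i)$ is MAT-labeled and $v_{i+1}$ is MAT-simplicial in $(G_{i+1},\lambda_{i+1})$. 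Putting $\Pc_\ell:=\Pc$ and $\Pc_i:=\Pc_{i+1}\,\|\,v_{i+1}$, an induction applying the lemma to the LR-vine $\Pc_{i+1}$ (whose associated graph is $(G_{i+1},\lambda_{i+1})$ up to the natural isomorphism $\Omega\Psi\simeq 1_{\MG}$ from Theorem \ref{thm:1-to-1}) shows $\Pc_i\simeq\Psi(G_i,\lambda_i)$ is an LR-vine graded by $\rk_\Pc$ for each $1\le i\le\ell-1$; so $(v_1,\dots,v_\ell)$ is a sampling order (Definition \ref{def:S-order}). Finally, since every MAT-labeled graph admits an MAT-PEO by Theorem \ref{thm:MAT-PEO}, the graph $\Omega(\Pc)$ has one, giving a sampling order of $\Pc$.

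The step I expect to be the main obstacle is the identification $\Pc\,\|\,v\simeq\Psi(G\setminus v,\lambda')$, and in particular the clean separation — for a MAT-simplicial vertex — between the two ways a node's complete union can contain $v$, as a \emph{conditioned} versus a \emph{conditioning} element; it is precisely condition (MS\ref{definition MAT-simplicial 3}) that rules out the latter. The remaining points — that principal cliques behave well under passage to induced subgraphs, and that the grading transports correctly through $\eta_\Pc$ — are routine.
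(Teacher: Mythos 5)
Your proposal is correct and follows essentially the same route as the paper: both iterate along the MAT-PEO via Lemma \ref{lem:MAT-simplicial} and identify marginalization by a minimal node with deletion of the corresponding MAT-simplicial vertex. The only difference is that the paper justifies the key identification $\Psi(G_i,\lambda_i)=\widehat{\Pc_i}$ by citing the inductive construction in the proof of Theorem \ref{thm:MAT-to-poset}, whereas you prove it directly via the conditioned-versus-conditioning dichotomy and (MS\ref{definition MAT-simplicial 3}) --- a correct and somewhat more self-contained rendering of the same argument.
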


   \begin{proof}
   The assertion is clearly true when $\ell\le1$. 
   Suppose that $\ell \ge 2$ and $ (v_{1}, \dots, v_{\ell}) $ is an MAT-PEO of $ (G,\lambda)$. 
  By Lemma \ref{lem:MAT-simplicial}, $ (G_{i}, \lambda_{i}) $ is an MAT-labeled graph for each $1\le i \le \ell -1 $, where $ G_{i}:= G[\{v_{1}, \dots, v_{i}\}] $ and $ \lambda_{i}:= \lambda|_{E_{G_{i}}} $. 
By the proof of Theorem \ref{thm:MAT-to-poset},  for each $i$,  $\Psi (G_{i}, \lambda_{i})=\widehat{\Pc_{i}}$ where $ \Pc_{i}:= \Pc_{i+1} \,\|\,  v_{i+1}$, $\Pc_\ell:= \Pc$ and $ \widehat{\Pc_{i}}$ is the poset isomorphic to  $ \Pc_{i}$ in Proposition \ref{prop:Phat}. 
  Thus each  $ {\Pc_{i}}$ is   an  LR-vine graded by $\rk_\Pc$. 
  Hence $ (v_{1}, \dots, v_{\ell}) $ is a sampling order  of   $\Pc$. 
  The consequence is straightforward since an MAT-labeled graph always has an  MAT-PEO by Theorem \ref{thm:MAT-PEO}.
\end{proof}

	\begin{remark} 
			\label{rem:MATPEO-SO} 
The converse of the main assertion in Theorem \ref{thm:MATPEO-SO} is not true in general. 
Namely, a sampling order of $\Pc$ is not necessarily an MAT-PEO of $ (G,\lambda)$. 
The reason is that even if the marginalization $\Pc\,\|\, v$ for some $v \in \min(\Pc)$  is  an LR-vine graded by $\rk_\Pc$, the node $v$ is not necessarily an MAT-simplicial vertex of $ (G,\lambda)$. 
For example, let $\Pc$ be the $3$-lower truncation of the C-vine in Figure~\ref{fig:C-vine} (see Figure \ref{fig:inclined} below). 
The associated MAT-labeled graph $ (G,\lambda)= \Omega( \Pc)$ is the graph obtained from the corresponding complete graph by removing the edge $\{v_3,v_4\}$.
Then the  marginalization  $\Pc\,\|\, 2 $   is an  LR-vine graded by $\rk_\Pc$, but the vertex $v_2$ is not (MAT-)simplicial in $ (G,\lambda)$ since its neighborhood does not form a clique. 
In addition, $(1,3,4,2)$ is a sampling order of $\Pc$, but $(v_1,v_3,v_4,v_2)$ is not  an MAT-PEO of $ (G,\lambda)$. 
	\end{remark}
	
While upper or lower truncation of an LR-vine can be visualized as ``horizontal" truncation, the marginalization is a ``vertical" truncation. 
 Figure \ref{fig:inclined} depicts the $3$-lower truncation $\Pc$ of the C-vine in Figure~\ref{fig:C-vine} and the marginalization  $\Pc\,\|\, 4 $. 
We may continue marginalizing and get the sampling order  $(1,2,3,4)$ of  $\Pc$. 
Furthermore, $(v_1,v_2,v_3,v_4)$ is an MAT-PEO of the MAT-labeled graph associated to $\Pc$.

\begin{figure}[htbp!]
   \centering
\begin{tikzpicture}[scale=1]
 
   \node (x1) at (1.5,-0.5) {};
    \node (y1) at (-2,3){};
 
  \node (24) at (0,2) {\textcolor{red}{$24|1$}};
  \node (23) at (-2,2) {\textcolor{red}{$23|1$}};
  \node (14) at (1,1) {$14$};
  \node (13) at (-1,1) {$13$};
   \node (12) at (-3,1) {$12$};
         \node (4) at (2,0) {$4$};
       \node (3) at (0,0) {$3$};
     \node (2) at (-2,0) {$2$};
   \node (1) at (-4,0) {$1$};
   
  \draw (1)--(12) -- (23) ;
    \draw (4)--(14) -- (24)  ;
    \draw  (3)--(13) -- (23) ;
        \draw (2)-- (12) -- (24) ;
          \draw (1)--(13)  ;
           \draw (1)--(14)  ;
                         \draw[ForestGreen, thick] (x1) -- (y1) ;

\end{tikzpicture}
\caption{\small An LR-vine and its marginalization visualized as ``vertical" truncation.}
\label{fig:inclined}
\end{figure}
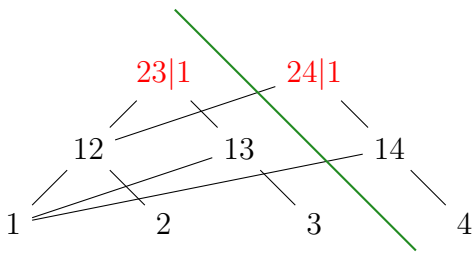


\subsubsection{Number of ideals of a D-vine}
\label{subsubsec:D-vine}
Given an arbitrary R-vine $\Pc$, a natural question is to find the number of m-vines or ideals in $\Pc$. 
This question is still open in general (see e.g., \cite[\S4]{ZK22}). 
However, in the particular case of D-vine, we have an explicit answer owing to a classical result of Shi \cite{Shi97} in the theory of hyperplane arrangements.

\begin{theorem} 
\label{thm:ideal-Catalan}
 The number of ideals in the D-vine of dimension $\ell-1$  for $\ell\ge2$ is given by the $\ell$-th Catalan number $\mathrm{Cat}_\ell ={\frac {1}{\ell +1}}{2\ell \choose \ell } =\prod \limits _{k=2}^{\ell}{\frac {\ell +k}{k}}.$
 
\end{theorem}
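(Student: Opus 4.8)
The plan is to reduce the count to the classical enumeration of order ideals (equivalently, antichains) in the root poset of type $A$, for which the answer is a Catalan number. First I would invoke Remark~\ref{rem:D-vine-RP}: the D-vine $\Pc$ of dimension $\ell-1$ is isomorphic, via the map $\mu$, to the root poset $\scR(A_{\ell-1})$ of type $A_{\ell-1}$. Since a poset isomorphism carries ideals to ideals bijectively, counting the ideals of $\Pc$ is exactly counting the ideals of $\scR(A_{\ell-1})$.

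Next I would recall the standard bijection between the order ideals of a finite poset and its antichains: each ideal is recovered as the down-closure of its (antichain of) maximal elements, and conversely each antichain generates an ideal. This reduces the problem to counting antichains in $\scR(A_{\ell-1})$.

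At this point I would cite the result of Shi~\cite{Shi97}: the number of antichains in the positive root poset of type $A_{n}$, equivalently the number of dominant regions of the Shi arrangement of type $A_n$, equals the Catalan number $\mathrm{Cat}_{n+1} = \frac{1}{n+2}\binom{2n+2}{n+1}$. Setting $n = \ell-1$ gives exactly $\mathrm{Cat}_\ell = \frac{1}{\ell+1}\binom{2\ell}{\ell}$, and the product form $\mathrm{Cat}_\ell = \prod_{k=2}^{\ell}\frac{\ell+k}{k}$ follows from the usual Catalan identity. A quick sanity check on the normalization confirms the shift: for $\ell = 2$ the poset $\scR(A_1)$ is a single element with $2 = \mathrm{Cat}_2$ ideals, and for $\ell = 3$ the poset $\scR(A_2)$ has the five ideals $\varnothing,\{\alpha_1\},\{\alpha_2\},\{\alpha_1,\alpha_2\},\scR(A_2)$, matching $\mathrm{Cat}_3 = 5$.

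The argument is thus a short chain of reductions, and I expect no serious obstacle beyond bookkeeping; the one point genuinely requiring care is the triple index shift between ``dimension $\ell-1$'', ``type $A_{\ell-1}$'', and ``$\mathrm{Cat}_\ell$'', which the small cases above pin down unambiguously. As an alternative to citing Shi, one could give a self-contained bijection between ideals of the staircase poset $\scR(A_{\ell-1})$ and Dyck paths of semilength $\ell$ (reading off a lattice path along the boundary of the ideal), but invoking the known enumeration is cleaner and keeps the proof brief.
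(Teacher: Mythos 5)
Your proof is correct and follows essentially the same route as the paper: both invoke Remark~\ref{rem:D-vine-RP} to identify the D-vine of dimension $\ell-1$ with the root poset of type $A_{\ell-1}$ and then appeal to Shi's enumeration \cite[Theorem 1.4]{Shi97}. The only cosmetic difference is that you pass through the ideal--antichain bijection before citing Shi, whereas the paper cites his count of ideals directly; your index bookkeeping and sanity checks are accurate.
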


   \begin{proof}
   By Remark \ref{rem:D-vine-RP}, the D-vine of dimension $\ell-1$ is isomorphic to the type $A_{\ell-1}$ root poset. 
 It is known that the number of ideals in the latter is given by the  $\ell$-th Catalan number $\mathrm{Cat}_\ell$ \cite[Theorem 1.4]{Shi97}.
\end{proof}

Some calculation on C-vines suggests us the following conjecture:

\begin{conjecture} 
\label{thm:ideal-C}
 The number of ideals in the C-vine of dimension $\ell\ge1$ is given by the OEIS sequence A047970 $N_\ell = \sum_{i=0}^\ell( (i+1)^{\ell-i} - i^{\ell-i}).$
 
\end{conjecture}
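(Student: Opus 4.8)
The plan is to make the enumeration explicit by first pinning down the poset $\Pc$ underlying the C-vine and then counting its order ideals directly. By Corollary~\ref{cor:1-to-1-restrict} the C-vine of dimension $\ell$ is $\Psi(K_\ell,\lambda)$ for a distinguished MAT-labeling $\lambda$ of the complete graph, and the pattern in Example~\ref{ex:C-vine} suggests taking $\lambda(v_i,v_j)=\min(i,j)$. I would verify that this is an MAT-labeling and that its associated R-vine is the C-vine (each forest $F_k$ turns out to be the star centred at $\{1,\dots,k\}$, matching Definition~\ref{def:C-vine}). Feeding $(K_\ell,\lambda)$ into Definition~\ref{def:G-to-P}, the conditioning vertices of the edge $\{v_i,v_j\}$ with $i<j$ are exactly $v_1,\dots,v_{i-1}$, so the principal cliques are the sets
$$N_{r,j}:=\{1,\dots,r-1\}\cup\{j\}\qquad(1\le r\le j\le\ell)$$
of rank $r$ (with the singletons at $r=1$). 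By Lemma~\ref{lem:pc-cover} the node $N_{r,j}$ covers precisely $N_{r-1,j}$ and the diagonal node $D_{r-1}:=N_{r-1,r-1}=\{1,\dots,r-1\}$.

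Next I would exploit this structure combinatorially. Reading $\Pc$ by columns, the nodes $N_{1,j}<N_{2,j}<\dots<N_{j,j}$ form a chain for each $j$, and the diagonal $D_1<D_2<\dots<D_\ell$ is a single chain through which all the columns interact: each $D_{r-1}$ is covered by the whole of row $r$, while every off-diagonal node has a unique upper cover inside its own column. Hence an order ideal is determined by its column heights $h_j\in\{0,\dots,j\}$, subject only to the constraint that $N_{r,j}\in\I$ forces $D_{r-1}\in\I$. Since the diagonal nodes lying in $\I$ form an initial segment $\{D_1,\dots,D_t\}$, I would condition on this depth $t$: then columns $1,\dots,t$ are full, and each remaining column $j>t$ can be filled to any height at most $\min(j-1,t+1)$. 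Requiring, in addition, that all $\ell$ minimal nodes survive (i.e.\ counting full-dimensional subvines), each such height ranges over $1,\dots,\min(j-1,t+1)$, giving
$$\sum_{t=1}^{\ell}\ \prod_{j=t+1}^{\ell}\min(j-1,t+1)=1+\sum_{t=1}^{\ell-1}t\,(t+1)^{\ell-t-1},$$
where the product collapses because $\min(j-1,t+1)=t+1$ once $j\ge t+2$.

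The last step is the algebraic identity matching this with the claimed $N_\ell=\sum_{i=0}^{\ell}\bigl((i+1)^{\ell-i}-i^{\ell-i}\bigr)$. Substituting $u=t+1$ on the left and $a=i+1$ on the right, both expressions reduce to $1+\sum_{u=2}^{\ell}\bigl(u^{\ell-u+1}-u^{\ell-u}\bigr)$, so the equality follows from a short reindexing; I have confirmed the numerics through $\ell=6$. If instead one counts \emph{all} order ideals of $\Pc$ (including the empty ideal and lower-dimensional subvines), the same decomposition gives $\sum_{t=0}^{\ell}\prod_{j=t+1}^{\ell}\bigl(\min(j-1,t+1)+1\bigr)=N_{\ell+1}$, so pinning the statement down requires fixing this convention.

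I expect the principal obstacle to be the rigorous identification of $\Pc$: proving that $\lambda(v_i,v_j)=\min(i,j)$ is an MAT-labeling whose R-vine is the C-vine, and that its principal cliques are exactly the $N_{r,j}$ with the stated covers (the star-centre computation for the $F_k$ is the crux). A second, more delicate point is the counting convention just noted: the bare phrase ``number of ideals'' yields $N_{\ell+1}$, whereas the indexing in the conjecture matches the count of full-dimensional subvines; settling which is intended --- and reconciling it with the parallel D-vine enumeration in Theorem~\ref{thm:ideal-Catalan}, which counts all ideals --- is where the real care lies. Once these are fixed, the depth decomposition makes the enumeration and the telescoping routine.
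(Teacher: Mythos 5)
The statement you are addressing is left as a \emph{conjecture} in the paper --- the authors offer no proof, only the remark that ``some calculation on C-vines suggests'' it --- so there is no argument of theirs to compare against; your proposal, written out in full, would actually settle it. I have checked the key steps and they are sound: $\lambda(v_i,v_j)=\min(i,j)$ is an MAT-labeling of $K_\ell$ whose image under $\Psi$ is the C-vine (the principal clique of $\{v_i,v_j\}$ with $i<j$ is $\{v_1,\dots,v_i,v_j\}$, so each $F_k$ is the star centred at $D_k=\{1,\dots,k\}$); the poset is exactly your array $N_{r,j}$ ordered by inclusion, every cross-column comparability factors through the diagonal chain $D_1<\dots<D_\ell$, and each off-diagonal node has its unique upper cover in its own column; the height/depth decomposition then gives $1+\sum_{t=1}^{\ell-1}t(t+1)^{\ell-t-1}$ for ideals containing $\min(\Pc)$ and $1+\sum_{t=0}^{\ell-1}(t+1)(t+2)^{\ell-1-t}$ for all order ideals; and both telescoping identities (to $N_\ell$ and $N_{\ell+1}$ respectively) check out.

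The one substantive point is the convention issue you flag at the end, and it deserves to be promoted from an ambiguity to a correction of the conjecture. Theorem \ref{thm:ideal-Catalan} unambiguously counts \emph{all} order ideals, empty ideal included: for dimension $3$ the C-vine and D-vine are the same poset, that theorem gives $\mathrm{Cat}_4=14$ ideals, whereas the conjecture as written would give $N_3=5$. So under the paper's own meaning of ``ideal'' the stated formula is off by one; the correct assertion is that the C-vine of dimension $\ell-1$ has $N_\ell$ ideals (matching the indexing of Theorem \ref{thm:ideal-Catalan}), equivalently that the C-vine of dimension $\ell$ has $N_{\ell+1}$ ideals. Your alternative reading --- that $N_\ell$ counts the ideals containing all $\ell$ minimal nodes --- also rescues the formula, but the dimension shift is the more likely intended fix given the parallel with the D-vine theorem. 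Either way, your decomposition proves the corrected statement; the only work remaining is to write out carefully the identification of the principal cliques and their covers, which you have correctly identified as the crux.
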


 \vskip 1em
\noindent
\textbf{Acknowledgements.} 
The second author was supported by a postdoctoral fellowship of the Alexander von Humboldt Foundation at Ruhr-Universit\"at Bochum. 
We thank Takuro Abe, Michael DiPasquale, Emanuele Delucchi, and Max Wakefield for interesting questions and comments. 

 \bibliographystyle{abbrv}
\bibliography{references}

\begin{thebibliography}{10}

\bibitem{ABCHT16}
T.~Abe, M.~Barakat, M.~Cuntz, T.~Hoge, and H.~Terao.
\newblock The freeness of ideal subarrangements of {W}eyl arrangements.
\newblock {\em J. Eur. Math. Soc. (JEMS)}, 18(6):1339--1348, 2016.

\bibitem{AT16}
T.~Abe and H.~Terao.
\newblock Free filtrations of affine {W}eyl arrangements and the ideal-{S}hi
  arrangements.
\newblock {\em J. Algebraic Combin.}, 43(1):33--44, 2016.

\bibitem{AT19}
T.~Abe and H.~Terao.
\newblock Multiple addition, deletion and restriction theorems for hyperplane
  arrangements.
\newblock {\em Proc. Amer. Math. Soc.}, 147(11):4835--4845, 2019.

\bibitem{BC01}
T.~Bedford and R.~M. Cooke.
\newblock Probability density decomposition for conditionally dependent random
  variables modeled by vines.
\newblock {\em Ann. Math. Artif. Intell.}, 32(1-4):245--268, 2001.

\bibitem{BC02}
T.~Bedford and R.~M. Cooke.
\newblock Vines---a new graphical model for dependent random variables.
\newblock {\em Ann. Statist.}, 30(4):1031--1068, 2002.

\bibitem{BCA12}
E.~C. Brechmann, C.~Czado, and K.~Aas.
\newblock Truncated regular vines in high dimensions with application to
  financial data.
\newblock {\em Canad. J. Statist.}, 40(1):68--85, 2012.

\bibitem{Coo97}
R.~M. Cooke.
\newblock {\em Markov and entropy properties of tree and vine-dependent
  variables}.
\newblock Proceedings of the ASA Section on Bayesian Statistical Science, 1997.

\bibitem{CKW15}
R.~M. Cooke, D.~Kurowicka, and K.~Wilson.
\newblock Sampling, conditionalizing, counting, merging, searching regular
  vines.
\newblock {\em J. Multivariate Anal.}, 138:4--18, 2015.

\bibitem{CK22}
M.~Cuntz and L.~K\"uhne.
\newblock On arrangements of hyperplanes from connected subgraphs.
\newblock arXiv preprint, 2022.
\newblock \url{https://arxiv.org/abs/2208.09251}.

\bibitem{CM20}
M.~Cuntz and P.~M\"{u}cksch.
\newblock M{AT}-free reflection arrangements.
\newblock {\em Electron. J. Combin.}, 27(1):Paper No. 1.28, 19, 2020.

\bibitem{Diss10}
J.~Di\ss{}mann.
\newblock {\em Statistical inference for regular vines and application}.
\newblock Diploma thesis, Technische Universit\"at M\"unchen, 2010.

\bibitem{ER94}
P.~H. Edelman and V.~Reiner.
\newblock Free hyperplane arrangements between {$A_{n-1}$} and {$B_n$}.
\newblock {\em Math. Z.}, 215(3):347--365, 1994.

\bibitem{FG65}
D.~R. Fulkerson and O.~A. Gross.
\newblock Incidence matrices and interval graphs.
\newblock {\em Pacific J. Math.}, 15:835--855, 1965.

\bibitem{Joe94}
H.~Joe.
\newblock Multivariate extreme-value distributions with applications to
  environmental data.
\newblock {\em Canad. J. Statist.}, 22(1):47--64, 1994.

\bibitem{KC03}
D.~Kurowicka and R.~Cooke.
\newblock A parameterization of positive definite matrices in terms of partial
  correlation vines.
\newblock {\em Linear Algebra Appl.}, 372:225--251, 2003.

\bibitem{KC06}
D.~Kurowicka and R.~M. Cooke.
\newblock Completion problem with partial correlation vines.
\newblock {\em Linear Algebra Appl.}, 418(1):188--200, 2006.

\bibitem{KJ11}
D.~Kurowicka and H.~Joe, editors.
\newblock {\em Dependence modeling}.
\newblock World Scientific Publishing Co. Pte. Ltd., Hackensack, NJ, 2011.
\newblock Vine copula handbook.

\bibitem{Lei14}
T.~Leinster.
\newblock {\em Basic category theory}, volume 143 of {\em Cambridge Studies in
  Advanced Mathematics}.
\newblock Cambridge University Press, Cambridge, 2014.

\bibitem{MW11}
M.~Miller and M.~Wakefield.
\newblock Formality of {P}ascal arrangements.
\newblock {\em Proc. Amer. Math. Soc.}, 139(12):4461--4466, 2011.

\bibitem{MR21}
P.~M\"{u}cksch and G.~R\"{o}hrle.
\newblock Accurate arrangements.
\newblock {\em Adv. Math.}, 383:Paper No. 107702, 30, 2021.

\bibitem{NR15}
R.~Nevries and C.~Rosenke.
\newblock Characterizing and computing the structure of clique intersections in
  strongly chordal graphs.
\newblock {\em Discrete Appl. Math.}, 181:221--234, 2015.

\bibitem{OT92}
P.~Orlik and H.~Terao.
\newblock {\em Arrangements of hyperplanes}, volume 300 of {\em Grundlehren der
  mathematischen Wissenschaften [Fundamental Principles of Mathematical
  Sciences]}.
\newblock Springer-Verlag, Berlin, 1992.

\bibitem{Shi97}
J.-Y. Shi.
\newblock The number of {$\oplus$}-sign types.
\newblock {\em Quart. J. Math. Oxford Ser. (2)}, 48(189):93--105, 1997.

\bibitem{ST06}
E.~Sommers and J.~Tymoczko.
\newblock Exponents for {$B$}-stable ideals.
\newblock {\em Trans. Amer. Math. Soc.}, 358(8):3493--3509, 2006.

\bibitem{St72}
R.~P. Stanley.
\newblock Supersolvable lattices.
\newblock {\em Algebra Universalis}, 2:197--217, 1972.

\bibitem{T80}
H.~Terao.
\newblock Arrangements of hyperplanes and their freeness {I}, {II}.
\newblock {\em J. Fac. Sci. Univ. Tokyo}, 27:293--320, 1980.

\bibitem{TT23}
T.~N. Tran and S.~Tsujie.
\newblock {MAT}-free graphic arrangements and a characterization of strongly
  chordal graphs by edge-labeling.
\newblock {\em Algebr. Comb.}, 6(6):1447--1467, 2023.

\bibitem{ZK22}
K.~Zhu and D.~Kurowicka.
\newblock Regular vines with strongly chordal pattern of (conditional)
  independence.
\newblock {\em Comput. Statist. Data Anal.}, 172:Paper No. 107461, 24, 2022.

\end{thebibliography}

\end{document}